\numberwithin{equation}{section}
\newtheorem{Theorem}{Theorem}[section]
\newtheorem{Corollary}[Theorem]{Corollary}
\newtheorem{Lemma}[Theorem]{Lemma}
\newtheorem{Proposition}[Theorem]{Proposition}
\newtheorem{Problem}[Theorem]{Problem}
 { \theoremstyle{definition}
\newtheorem{Definition}[Theorem]{Definition}
\newtheorem{Note}[Theorem]{Note}
\newtheorem{Example}[Theorem]{Example}}
\begin{document}

\newcommand{\arXivNumber}{2003.09666}

\renewcommand{\PaperNumber}{009}

\FirstPageHeading

\ShortArticleName{Double Lowering Operators on Polynomials}

\ArticleName{Double Lowering Operators on Polynomials}

\Author{Paul TERWILLIGER}

\AuthorNameForHeading{P.~Terwilliger}

\Address{Department of Mathematics, University of Wisconsin, Madison, WI 53706-1388, USA}
\Email{\href{mailto:terwilli@math.wisc.edu}{terwilli@math.wisc.edu}}

\ArticleDates{Received September 15, 2020, in final form January 19, 2021; Published online January 28, 2021}

\Abstract{Recently Sarah Bockting-Conrad introduced the double lowering operator $\psi$ for a tridiagonal pair. Motivated by $\psi$ we consider the following problem about polynomials. Let $\mathbb F$ denote an algebraically closed field. Let $x$ denote an indeterminate, and let $\mathbb F\lbrack x \rbrack$ denote the algebra consisting of the polynomials in $x$ that have all coefficients in $\mathbb F$. Let $N$ denote a positive integer or $\infty$. Let $\lbrace a_i\rbrace_{i=0}^{N-1}$, $\lbrace b_i\rbrace_{i=0}^{N-1}$ denote scalars in $\mathbb F$ such that $\sum_{h=0}^{i-1} a_h \not= \sum_{h=0}^{i-1} b_h$ for $1 \leq i \leq N$. For $0 \leq i \leq N$ define polynomials $\tau_i, \eta_i \in \mathbb F\lbrack x \rbrack$ by $\tau_i = \prod_{h=0}^{i-1} (x-a_h)$ and $\eta_i = \prod_{h=0}^{i-1} (x-b_h)$. Let $V$ denote the subspace of $\mathbb F\lbrack x \rbrack$ spanned by $\lbrace x^i\rbrace_{i=0}^N$. An element $\psi \in \operatorname{End}(V)$ is called double lowering whenever $\psi \tau_i \in \mathbb F \tau_{i-1}$ and $\psi \eta_i \in \mathbb F \eta_{i-1}$ for $0 \leq i \leq N$, where $\tau_{-1}=0$ and $\eta_{-1}=0$. We give necessary and sufficient conditions on $\lbrace a_i\rbrace_{i=0}^{N-1}$, $\lbrace b_i\rbrace_{i=0}^{N-1}$ for there to exist a nonzero double lowering map. There are four families of solutions, which we describe in detail.}

\Keywords{tridiagonal pair; $q$-exponential function; basic hypergeometric series; $q$-binomial theorem}

\Classification{33D15; 15A21}

\section{Introduction}\label{section1}

This paper is mainly about polynomials and special functions,
but in order to motivate things we first discuss a topic in linear
algebra. The topic has to do with tridiagonal pairs~\cite{TD00}
and their associated double lowering operator
\cite{bockting1,bockting,bocktingQexp, bocktingTer}.
A~reader unfamiliar with tridiagonal pairs can safely skip to Section~\ref{section2}.
Let $V$ denote a vector space with finite positive dimension.
A~tridiagonal pair on~$V$
is an ordered pair of linear maps
$A\colon V\to V$
and $A^*\colon V\to V$ that satisfy the following conditions:
\begin{enumerate}\itemsep=0pt
\item[\rm (i)] each of $A$, $A^*$ is diagonalizable;
\item[\rm (ii)] there exists an ordering $\lbrace V_i\rbrace_{i=0}^d$ of the eigenspaces
of $A$ such that
\begin{gather*}
A^* V_i \subseteq V_{i-1} + V_i + V_{i+1}, \qquad 0 \leq i \leq d,
\end{gather*}
where $V_{-1}=0$ and $V_{d+1}=0$;
\item[\rm (iii)] there exists an ordering $\lbrace V^*_i\rbrace_{i=0}^\delta$
of the eigenspaces of $A^*$ such that
\begin{gather*}
A V^*_i \subseteq V^*_{i-1} + V^*_i + V^*_{i+1}, \qquad 0 \leq i \leq \delta,
\end{gather*}
where $V^*_{-1}=0$ and $V^*_{\delta+1}=0$;
\item[\rm (iv)] there does not exist a subspace $W \subseteq V$
such that
$AW \subseteq W$ and
$A^*W \subseteq W$ and
$W \not=0$ and
$W \not=V$.
\end{enumerate}
The tridiagonal pair concept originated
in the theory of $Q$-polynomial distance-regular graphs~\cite{bannaiIto,BCN} where it
is used to describe how the adjacency matrix is related to each
dual adjacency matrix
\cite[Example~1.4]{TD00},
\cite[Section~3]{tersub1}.
Since that origin,
the tridiagonal pair concept has found applications to
all sorts of topics in special functions (orthogonal polynomials of the
Askey-scheme
\cite{aw, koekoek, NomTerK, ter2004, ter2005},
the Askey--Wilson algebra
\cite{zhedmut,
univTer,
vidter, zhedhidden}),
Lie theory
(the ${\mathfrak {sl}}_2$ loop algebra
\cite{IT:Krawt},
 the tetrahedron algebra \cite{Ha,Ev}),
statistical mechanics (the Onsager algebra \cite{DateRoan2, HT} and $q$-Onsager algebra
 \cite{bas2, bas5,
 augmented,
 qSerre,
pospart,
qOnsUniv}),
and quantum groups (the equitable presentation
\cite{
alnajjar,
equit, uawe},
 the quantum affine
${\mathfrak {sl}}_2$ algebra
\cite{alnajjarC,tdanduq,
nonnil,tdqrac},
 $L$-operators
\cite{miki,terLop}).
For more information about the above topics, see
\cite{NomTerTB, madrid}
and the references therein.

Let $A$, $A^*$ denote a tridiagonal pair on $V$, as in the above definition.
By
\cite[Lemma~4.5]{TD00}
the integers $d$ and $\delta$ from (ii), (iii) are equal;
this common value is called the diameter of the pair.
For $0 \leq i \leq d$ let $\theta_i$ (resp.~$\theta^*_i$) denote
the eigenvalue of $A$ (resp.~$A^*$) for the eigenspace $V_i$
(resp.~$V^*_i$). By \cite[Theorem~11.1]{TD00} the scalars
\begin{gather*}
\frac{\theta_{i-2}-\theta_{i+1}}{\theta_{i-1}-\theta_i},
\qquad
\frac{\theta^*_{i-2}-\theta^*_{i+1}}{\theta^*_{i-1}-\theta^*_i}
\end{gather*}
are equal and independent of $i$ for $2 \leq i \leq d-1$. For
this constraint the solutions can be given in closed form
\cite[Theorem~11.2]{TD00}.
The ``most general'' solution is called $q$-Racah, and will
be described shortly.

By construction the vector space $V$
has a direct sum decomposition into the eigenspaces
$\lbrace V_i\rbrace_{i=0}^d$
of $A$
and the eigenspaces $\lbrace V^*_i\rbrace_{i=0}^d$ of $A^*$.
The vector space $V$ has
two other direct sum decompositions of interest,
called the first split decomposition
$\lbrace U_i\rbrace_{i=0}^d$ and
second split decomposition
$\big\lbrace U^\Downarrow_i\big\rbrace_{i=0}^d$.
By \cite[Theorem~4.6]{TD00}
the first split decomposition satisfies
\begin{gather*}
U_0 + U_1+ \cdots + U_i = V^*_0 + V^*_1 + \cdots + V^*_i,
\\
U_i + U_{i+1}+ \cdots + U_d = V_i + V_{i+1} + \cdots + V_d
\end{gather*}
for $0 \leq i \leq d$.
By \cite[Theorem~4.6]{TD00}
the second split decomposition satisfies
\begin{gather*}
U^\Downarrow_0 + U^\Downarrow_1+ \cdots + U^\Downarrow_i = V^*_0 + V^*_1 + \cdots + V^*_i,
\\
U^\Downarrow_i + U^\Downarrow_{i+1}+ \cdots + U^\Downarrow_d =
V_0 + V_{1} + \cdots + V_{d-i}
\end{gather*}
for $0 \leq i \leq d$.
By \cite[Theorem~4.6]{TD00},
\begin{alignat*}{3}
& (A-\theta_i I)U_i \subseteq U_{i+1},
\qquad && (A^*-\theta^*_i I) U_i \subseteq U_{i-1},&
\\
& (A-\theta_{d-i} I)U^\Downarrow_i \subseteq U^\Downarrow_{i+1},
 \qquad&& (A^*-\theta^*_i I) U^\Downarrow_i \subseteq U^\Downarrow_{i-1}&
\end{alignat*}
for $0 \leq i \leq d$, where $U_{-1}=0$, $U_{d+1}=0$
and $U^\Downarrow_{-1}=0$, $U^\Downarrow_{d+1}=0$.

In \cite[Sections~11 and~15]{bockting1} Sarah Bockting-Conrad
introduces a linear map
$\Psi \colon V\to V$ such that
\begin{gather*}
\Psi U_i \subseteq U_{i-1}, \qquad
\Psi U^\Downarrow_i \subseteq U^\Downarrow_{i-1},
\qquad 0 \leq i \leq d.
\end{gather*}
This map is called the double lowering operator or Bockting operator.
In
\cite[Sections~9 and~15]{bockting1}
Bockting-Conrad introduces an invertible linear map
$\Delta\colon V\to V$ that commutes with $\Psi$ and
 sends $U_i$ onto $U^\Downarrow_i$ for $0 \leq i \leq d$.
The maps $\Psi$ and $\Delta$
are related in the following way.
For $0 \leq i \leq d$ define two polynomials
\begin{gather}
\tau_i = (x-\theta_0)(x-\theta_1)\cdots (x-\theta_{i-1}),
\label{eq:Tau}\\
\eta_i = (x-\theta_d)(x-\theta_{d-1})\cdots (x-\theta_{d-i+1})
\label{eq:Eta}
\end{gather}
in a variable $x$.
Define the scalars
\begin{gather*}
\vartheta_i = \sum_{h=0}^{i-1} \frac{\theta_h - \theta_{d-h}}{\theta_0 - \theta_d}, \qquad 1 \leq i \leq d.
\end{gather*}
By \cite[Theorem~17.1]{bockting1},
\begin{gather}
\Delta = \sum_{i=0}^d \frac{\eta_i (\theta_0)}{\vartheta_1 \vartheta_2
\cdots \vartheta_i} \Psi^i,
\qquad
\Delta^{-1} = \sum_{i=0}^d \frac{\tau_i (\theta_d)}{\vartheta_1 \vartheta_2
\cdots \vartheta_i} \Psi^i
\label{eq:DeltaDeltaInv}
\end{gather}
provided that each of $\vartheta_1, \vartheta_2,\ldots,\vartheta_d$ is nonzero.

Shortly we will describe $\Psi$ and $\Delta$ in more detail,
but first we restrict to the $q$-Racah case.
In this case there exist nonzero scalars $a$, $b$, $q$ such that $q^4 \not=1$ and
\begin{gather*}
\theta_i = a q^{d-2i}+ a^{-1} q^{2i-d}, \qquad
\theta^*_i = b q^{d-2i}+ b^{-1} q^{2i-d}
\end{gather*}
for $0 \leq i \leq d$.
Define a linear map $K\colon V\to V$
such that for $0 \leq i \leq d$, $U_i$ is an eigenspace of~$K$ with eigenvalue $q^{d-2i}$.
Define a linear map $B\colon V\to V$
such that for $0 \leq i \leq d$, $U^\Downarrow_i$ is an eigenspace of~$B$ with eigenvalue~$q^{d-2i}$. For notational convenience
define $\psi = \big(q-q^{-1}\big)\big(q^d-q^{-d}\big)\Psi$.
By \cite[Lemma~5.3]{bocktingQexp}
and \cite[Lemma~5.4]{bockting},
\begin{gather*}
B\Delta=\Delta K,
\qquad
K \psi = q^2 \psi K,
\qquad
B \psi = q^2 \psi B.
\end{gather*}
By \cite[Theorem~9.8]{bockting} the map $\psi$ is equal to
each of the following:
\begin{alignat*}{3}
& \frac{I - BK^{-1}}{q\big(aI - a^{-1}BK^{-1}\big)}, \qquad&&
\frac{I - KB^{-1}}{q\big(a^{-1}I - aKB^{-1}\big)},&\\
& \frac{q\big(I - K^{-1}B\big)}{aI - a^{-1}K^{-1}B}, \qquad&&
\frac{q\big(I - B^{-1}K\big)}{a^{-1}I - aB^{-1}K}.&
\end{alignat*}
This result is used in
\cite[Theorem~9.9]{bockting}
to obtain
\begin{gather*}
a K^2 -
\frac{a^{-1}q-aq^{-1}}{q-q^{-1}}KB
-
\frac{aq-a^{-1}q^{-1}}{q-q^{-1}}BK
+ a^{-1}B^2=0.
\end{gather*}
By
(\ref{eq:DeltaDeltaInv}) and
\cite[Theorem~7.2]{bocktingQexp},
\begin{gather*}
\Delta = \exp_q \left( \frac{a}{q-q^{-1}}\psi \right)
\exp_{q^{-1}} \left( - \frac{a^{-1}}{q-q^{-1}}\psi \right).
\end{gather*}
Motivated by this factorization, in \cite[Sections~6,7]{bocktingQexp}
Bockting-Conrad introduces an invertible linear map $M\colon V\to V$ such that
\begin{gather*}
K \exp_q \left( \frac{a^{-1}}{q-q^{-1}}\psi \right)
=\exp_{q} \left( \frac{a^{-1}}{q-q^{-1}}\psi \right) M,
\\
B \exp_q \left( \frac{a}{q-q^{-1}}\psi \right)
=
\exp_{q} \left( \frac{a}{q-q^{-1}}\psi \right) M.
\end{gather*}
By \cite[Section~6]{bocktingQexp},
\begin{gather*}
M = \frac{a K-a^{-1}B}{a-a^{-1}},
 \qquad
M \psi = q^2 \psi M.
\end{gather*}
By \cite[Lemma~6.2]{bocktingQexp},
 $M$ is equal to each of
\begin{alignat*}{3}
&\big(I-a^{-1}q\psi\big)^{-1}K, \qquad&& K\big(I-a^{-1}q^{-1}\psi\big)^{-1},&
\\ & (I-aq\psi)^{-1}B, \qquad&&
B\big(I-aq^{-1}\psi\big)^{-1}.&
\end{alignat*}
By \cite[Lemma~6.7]{bocktingQexp},
\begin{gather*}
\frac{q M^{-1}K-q^{-1}K M^{-1}}{q-q^{-1}} = I,
\qquad
\frac{q M^{-1}B-q^{-1}B M^{-1}}{q-q^{-1}} = I.
\end{gather*}

We just listed many results about
$\psi$, $\Delta$, $K$, $B$, $M$. In the present paper, we interpret
these results in terms of polynomials. The polynomials in
question are essentially
(\ref{eq:Tau}),
(\ref{eq:Eta})
although we adopt a more general point of view.
In the next section we will describe a problem about polynomials,
and for the rest of the paper we will describe the solution.
In this description we will encounter
analogs of the above results.
We hope that the above results are illuminated
by our description.

\section{Definitions and first steps}\label{section2}

We now begin our formal argument.
The following assumptions and notational conventions
apply throughout the paper.
Recall the natural numbers $\mathbb N = \lbrace 0,1,2,\ldots\rbrace$
and integers $\mathbb Z = \lbrace 0, \pm 1,\pm 2,\ldots \rbrace$.
Let $\mathbb F$ denote an algebraically closed field.
Every vector space
discussed in this paper is over $\mathbb F$.
Every algebra discussed in this paper is associative,
over $\mathbb F$, and has a~multiplicative identity.
Let~$x$ denote an indeterminate. Let $\mathbb F\lbrack x
\rbrack$ denote the algebra consisting of the polynomials in
$x $ that have all coefficients in $\mathbb F$.
Throughout the paper we use the following convention:
{\it the symbols $N$, $n$ refer to an integer or $\infty$; the
symbols $i$, $j$, $k$ refer to an integer.}

We now describe a problem about polynomials.
Let $N$ denote a positive integer or $\infty$.
We consider an ordered pair of sequences
\begin{gather}
\lbrace a_i \rbrace_{i=0}^{N-1}, \qquad
\lbrace b_i \rbrace_{i=0}^{N-1}
\label{eq:data}
\end{gather}
such that $a_i, b_i \in \mathbb F$ for $0 \leq i \leq N-1$.
To avoid degenerate situations, we assume that
\begin{gather}
a_0 + a_1 + \cdots + a_{i-1} \not=
b_0 + b_1 + \cdots + b_{i-1},
\qquad 1 \leq i \leq N.
\label{eq:nondeg}
\end{gather}
We call the ordered pair~(\ref{eq:data})
the {\it data}.
For $0 \leq i \leq N$ define polynomials
$\tau_i$, $\eta_i \in
\mathbb F\lbrack x \rbrack$ by
\begin{gather}
\tau_i = (x - a_0)(x-a_1)\cdots (x - a_{i-1}),\label{eq:tau}\\
\eta_i = (x - b_0)(x-b_1)\cdots (x - b_{i-1}).\label{eq:eta}
\end{gather}
The polynomials $\tau_i$, $\eta_i$ are monic of degree $i$.
For notational convenience define $\tau_{-1} = 0$ and
$\eta_{-1} = 0$.
Let $V$ denote the subspace of
$\mathbb F\lbrack x \rbrack$
spanned by $\lbrace x^i \rbrace_{i=0}^{N}$.
Each of
$\lbrace \tau_i \rbrace_{i=0}^{N}$,
$\lbrace \eta_i \rbrace_{i=0}^{N}$ is a~basis for $V$.
Let $\operatorname{End}(V)$ denote the algebra consisting of the
$\mathbb F$-linear maps from $V$ to $V$.
Define $\Delta \in \operatorname{End}(V)$ such that
\begin{gather}
\Delta \tau_i = \eta_i, \qquad 0 \leq i \leq N.
\label{def:Delta}
\end{gather}
Note that $\Delta$ is invertible.

\begin{Definition}\label{def:dlowering}
An element $\psi \in \operatorname{End}(V)$ is called {\it double lowering}
(with respect to the given data)
whenever both
\begin{gather*}
\psi \tau_i \in \mathbb F \tau_{i-1}, \qquad
\psi \eta_i \in \mathbb F \eta_{i-1}
\end{gather*}
for $0 \leq i \leq N$.
\end{Definition}

\begin{Definition}\label{def:dl}
Define
\begin{gather*}
\mathcal L = \lbrace \psi \in \operatorname{End}(V) \,|\, \psi \;
{\mbox{\rm is double lowering}}
\rbrace.
\end{gather*}
Note that
$\mathcal L $ is a subspace of the vector space
$\operatorname{End}(V)$. We call
$\mathcal L $ the {\it double lowering space} for
the given data.
\end{Definition}

\begin{Definition}\label{def:feasible} The data
(\ref{eq:data})
is called {\it double lowering} whenever the double lowering space
$\mathcal L \not=0$.
\end{Definition}

\begin{Problem} Find necessary and sufficient conditions
for the data
\eqref{eq:data}
to be double lowering. In this case describe
$\mathcal L$ and $\Delta$.
\end{Problem}

The above problem is solved in the present paper.
The necessary and sufficient conditions are given in
Theorem~\ref{thm:hr}. By that theorem, there are four cases.
For the first three cases, $\mathcal L$ and $\Delta$ are described
in Section~\ref{section6}. For the fourth case, $\mathcal L$ and $\Delta$
are described
in Section~\ref{section13}. We would like to acknowledge that the above
problem was previously solved by R.~Vidunas
under the assumption
that $a_i =\theta_i$ and $b_i=\theta_{N-i}$ for $0 \leq i \leq N-1$,
with $\lbrace \theta_i\rbrace_{i=0}^N$ mutually distinct~\cite{vidunas}.

We have some remarks. The polynomials~(\ref{eq:tau}), (\ref{eq:eta}) satisfy
\begin{gather}\label{eq:example}
\tau_0=1, \qquad
\tau_1=x-a_0, \qquad
\eta_0=1, \qquad
\eta_1=x-b_0.
\end{gather}
Moreover
\begin{gather}
 (x-a_i)\tau_i = \tau_{i+1}, \qquad
 (x-b_i)\eta_i = \eta_{i+1}, \qquad 0 \leq i \leq N-1.\label{eq:xrec}
\end{gather}

\begin{Lemma}\label{lem:dlcomment}Assume that $\psi \in \operatorname{End}(V)$ is double lowering. Then
$\psi 1 =0$. Moreover
\begin{gather}\label{eq:dlcomment}
\psi \tau_1 = \psi x = \psi \eta_1,
\end{gather}
and this common value is contained in $\mathbb F$.
\end{Lemma}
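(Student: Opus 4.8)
The plan is to unwind Definition~\ref{def:dlowering} at the two smallest values of $i$ and combine this with the explicit formulas~\eqref{eq:example}. The key observation is that $\tau_0 = \eta_0 = 1$ and $\tau_{-1} = \eta_{-1} = 0$, so the case $i=0$ of the double lowering condition already does most of the work.

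First I would take $i=0$ in Definition~\ref{def:dlowering}: since $\psi$ is double lowering, $\psi \tau_0 \in \mathbb F \tau_{-1} = 0$, and because $\tau_0 = 1$ this gives $\psi 1 = 0$. (The condition on the $\eta_i$ gives the same conclusion, so it is redundant here.) Next I would take $i=1$: double lowering gives $\psi \tau_1 \in \mathbb F \tau_0 = \mathbb F$ and $\psi \eta_1 \in \mathbb F \eta_0 = \mathbb F$, so both $\psi \tau_1$ and $\psi \eta_1$ are scalars in $\mathbb F$. It remains to identify these scalars with $\psi x$.

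For that, I would use~\eqref{eq:example}, namely $\tau_1 = x - a_0$ and $\eta_1 = x - b_0$, together with the linearity of $\psi$ and the fact $\psi 1 = 0$ just established: $\psi \tau_1 = \psi x - a_0 \psi 1 = \psi x$ and likewise $\psi \eta_1 = \psi x - b_0 \psi 1 = \psi x$. Hence $\psi \tau_1 = \psi x = \psi \eta_1$, and this common value equals $\psi \tau_1$, which we already saw lies in $\mathbb F$.

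I do not expect a genuine obstacle here: the statement is a direct consequence of the definitions and of the normalizations $\tau_{-1} = \eta_{-1} = 0$, $\tau_0 = \eta_0 = 1$. The only point worth care is noting that $\psi 1 = 0$ must be derived \emph{before} expanding $\tau_1$ and $\eta_1$, since it is precisely what makes the two expressions for $\psi x$ agree.
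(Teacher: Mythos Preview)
Your argument is correct and follows exactly the approach in the paper: apply $\psi$ to the equations in~\eqref{eq:example} and invoke Definition~\ref{def:dlowering} at $i=0$ and $i=1$. You have simply written out the details that the paper's one-line proof leaves implicit.
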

\begin{proof} Apply $\psi$ to each side of the equations in~(\ref{eq:example}), and use Definition~\ref{def:dlowering}.
\end{proof}

For $0 \leq n \leq N$ define
\begin{gather}
V_n = \operatorname{Span}\big\lbrace x^i \big\rbrace_{i=0}^{n}.\label{eq:Vn}
\end{gather}
We have $V_N=V$.
 We have $\dim (V_i)= i+1$ for $0 \leq i \leq N$,
and $V_{i-1} \subseteq V_i$ for $1 \leq i \leq N$.
For notational convenience define $V_{-1} = 0$.
\begin{Lemma}\label{lem:Vnbasis}
For $0 \leq n \leq N$,
each of
\begin{gather*}
\lbrace \tau_i \rbrace_{i=0}^{n},
\qquad
\lbrace \eta_i \rbrace_{i=0}^{n}
\end{gather*}
is a basis for $V_n$.
\end{Lemma}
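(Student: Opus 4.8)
The plan is to use the fact, recorded just after~\eqref{eq:eta}, that for $0 \leq i \leq N$ the polynomials $\tau_i$ and $\eta_i$ are monic of degree $i$. Fix $n$ with $0 \leq n \leq N$. Since $\deg \tau_i = i \leq n$ for $0 \leq i \leq n$, each such $\tau_i$ lies in $V_n$, and similarly each $\eta_i$ lies in $V_n$. So it suffices to show that $\lbrace \tau_i\rbrace_{i=0}^n$ spans $V_n$ and is linearly independent; the argument for $\lbrace \eta_i \rbrace_{i=0}^n$ is then identical, with $a_i$ replaced by $b_i$.

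For spanning I would first observe, using~\eqref{eq:xrec} in the form $\tau_{i+1} = x\tau_i - a_i \tau_i$ together with $\tau_0 = 1$, that an easy induction on $i$ shows $\tau_i - x^i$ has degree less than $i$ for $0 \leq i \leq n$. Then a second induction on $i$ gives $x^i \in \operatorname{Span}\lbrace \tau_j\rbrace_{j=0}^i$: the case $i=0$ is $x^0 = \tau_0$, and for the inductive step $x^i = \tau_i - (\tau_i - x^i)$, where $\tau_i - x^i$ has degree less than $i$ and hence lies in $\operatorname{Span}\lbrace x^j\rbrace_{j=0}^{i-1} \subseteq \operatorname{Span}\lbrace \tau_j\rbrace_{j=0}^{i-1}$ by the inductive hypothesis. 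Therefore $V_n = \operatorname{Span}\lbrace x^i\rbrace_{i=0}^n \subseteq \operatorname{Span}\lbrace \tau_i\rbrace_{i=0}^n \subseteq V_n$, forcing equality.

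For linear independence I would use the standard leading-term argument: in any finite linear combination $\sum_i c_i \tau_i$ that is not identically zero, let $k$ be the largest index with $c_k \neq 0$; then the coefficient of $x^k$ in $\sum_i c_i \tau_i$ equals $c_k \neq 0$, since $\tau_k$ is monic of degree $k$ and every $\tau_i$ with $i < k$ has degree less than $k$. Hence the combination is nonzero, so $\lbrace \tau_i\rbrace_{i=0}^n$ is linearly independent; combined with the spanning property it is a basis for $V_n$.

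I do not expect a genuine obstacle here. The only point requiring a little care is that $n$ (equivalently $N$) may be $\infty$, so I phrase spanning and independence in terms of arbitrary finite linear combinations rather than a dimension count; when $n$ is finite one may instead simply note that $\lbrace \tau_i\rbrace_{i=0}^n$ is a linearly independent subset of $V_n$ of cardinality $n+1 = \dim V_n$, and hence a basis.
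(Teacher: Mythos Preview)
Your proof is correct and follows the same approach as the paper's one-line proof, which simply observes that each of $\tau_i$, $\eta_i$ has degree $i$ for $0 \leq i \leq N$; you have just spelled out the standard details (spanning via triangularity, linear independence via leading terms) that the paper leaves implicit. Your remark about handling the case $n=\infty$ via finite linear combinations is a reasonable precaution, though the paper does not comment on this.
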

\begin{proof} Each of $\tau_i$, $\eta_i$ has degree $i$ for $0 \leq i \leq N$.
\end{proof}

\begin{Lemma}\label{lem:psiVn}
For $\psi \in \mathcal L$ and $0 \leq i \leq N$,
\begin{gather*}
\psi V_i \subseteq V_{i-1}.
\end{gather*}
Moreover $\psi^{i+1}V_i =0$.
\end{Lemma}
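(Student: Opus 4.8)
The plan is to read off the result from the basis $\lbrace \tau_i\rbrace_{i=0}^{n}$ of $V_n$ established in Lemma~\ref{lem:Vnbasis} together with the defining property of double lowering. First I would fix $\psi \in \mathcal L$ and an index $i$ with $0 \leq i \leq N$. For each $j$ with $0 \leq j \leq i$, Definition~\ref{def:dlowering} gives $\psi \tau_j \in \mathbb F \tau_{j-1}$; since $\tau_{j-1}$ has degree at most $i-1$ (interpreting $\tau_{-1}=0$ when $j=0$), we have $\tau_{j-1} \in V_{i-1}$, hence $\psi \tau_j \in V_{i-1}$. Because $\lbrace \tau_j\rbrace_{j=0}^{i}$ spans $V_i$ and $\psi$ is linear, this yields $\psi V_i \subseteq V_{i-1}$.

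For the second assertion I would simply iterate the containment just proved. Applying $\psi V_n \subseteq V_{n-1}$ successively for $n = i, i-1, \ldots, 1, 0$ gives
\begin{gather*}
\psi^{i+1} V_i \subseteq \psi^{i} V_{i-1} \subseteq \cdots \subseteq \psi V_0 \subseteq V_{-1} = 0,
\end{gather*}
so $\psi^{i+1} V_i = 0$.

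There is no genuine obstacle here; the only thing to handle with care is the boundary conventions $\tau_{-1}=0$ and $V_{-1}=0$, which ensure the chain terminates correctly at the bottom. One could run the same argument using the basis $\lbrace \eta_j\rbrace$ instead, but the $\tau$-basis already suffices.
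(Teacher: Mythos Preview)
Your argument is correct and is precisely the one the paper intends: it cites only Definition~\ref{def:dlowering} and Lemma~\ref{lem:Vnbasis}, and you have simply written out the details of how those two ingredients combine. The iteration for the second assertion is exactly what is meant, and your handling of the boundary conventions $\tau_{-1}=0$, $V_{-1}=0$ is fine.
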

\begin{proof} By Definition
\ref{def:dlowering}
and Lemma \ref{lem:Vnbasis}.
\end{proof}

For $T \in \operatorname{End}(V)$,
$T$ is called {\it nilpotent} whenever there exists a positive
integer $j$ such that $T^j=0$.
The map $T$ is called {\it locally nilpotent} whenever
for all $v \in V$ there exists a positive integer $j$ such
that $T^jv=0$. If $T$ is nilpotent then $T$ is locally nilpotent.
For $N\not=\infty$, if $T$ is locally nilpotent then $T$ is nilpotent.

\begin{Lemma}\label{lem:nilp}
Each element of $\mathcal L$ is locally nilpotent.
If $N\not=\infty$ then each element of $\mathcal L$ is nilpotent.
\end{Lemma}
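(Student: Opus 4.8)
The plan is to deduce everything directly from Lemma~\ref{lem:psiVn}, which already supplies the bound $\psi^{i+1}V_i = 0$ for $0 \le i \le N$; no further structural input is needed.

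First I would handle local nilpotency. Fix $\psi \in \mathcal{L}$ and let $v \in V$ be arbitrary. Since $V$ is spanned by $\lbrace x^i\rbrace_{i=0}^{N}$ and every polynomial has finite degree, there is an integer $n$ with $0 \le n \le N$ such that $v \in V_n$, in the notation of~(\ref{eq:Vn}). Then Lemma~\ref{lem:psiVn} gives $\psi^{n+1} v \in \psi^{n+1} V_n = 0$. As $v$ was arbitrary, $\psi$ is locally nilpotent, which proves the first assertion.

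Next I would treat the case $N \not= \infty$. Here $V = V_N$, so applying Lemma~\ref{lem:psiVn} with $i = N$ yields $\psi^{N+1} V = \psi^{N+1} V_N = 0$, whence $\psi^{N+1} = 0$ in $\operatorname{End}(V)$ and $\psi$ is nilpotent. Equivalently, one can simply invoke the remark recorded just before the lemma: for $N \not= \infty$ a locally nilpotent endomorphism of $V$ is automatically nilpotent, so the second assertion follows from the first.

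I do not expect any genuine obstacle: the statement is a direct corollary of the already-established bound $\psi^{i+1}V_i = 0$. The only point worth a word of care is that in the first step one uses that each $v \in V$ lies in $V_n$ for some \emph{finite} $n$, which holds even when $N = \infty$ precisely because elements of $V$ are polynomials and hence have finite degree.
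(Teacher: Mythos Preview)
Your proof is correct and follows exactly the paper's approach: the paper's proof reads in full ``By Lemma~\ref{lem:psiVn} and the comments below it,'' and you have simply unpacked those two references.
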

\begin{proof} By Lemma \ref{lem:psiVn} and the comments below it.
\end{proof}

\begin{Lemma}\label{lem:lterm}
For $1 \leq i \leq N$,
\begin{enumerate}\itemsep=0pt
\item[$(i)$] in $\tau_i$ the coefficient of $x^{i-1}$ is
\begin{gather*}
- a_0 - a_1 - \cdots - a_{i-1};
\end{gather*}
\item[$(ii)$] in $\eta_i$ the coefficient of $x^{i-1}$ is
\begin{gather*}
- b_0 - b_1 - \cdots - b_{i-1}.
\end{gather*}
\end{enumerate}
\end{Lemma}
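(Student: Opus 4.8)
\textbf{Proof proposal for Lemma~\ref{lem:lterm}.}
The statement is the first Vieta relation for a monic product of linear factors, so the plan is a short induction on $i$ using the recursion in~\eqref{eq:xrec}. I will prove~$(i)$; part~$(ii)$ is obtained from~$(i)$ by replacing $a_h$ with $b_h$ and $\tau_i$ with $\eta_i$ throughout, since the $\eta_i$ satisfy the same type of recursion.

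For the base case $i=1$, formula~\eqref{eq:example} gives $\tau_1 = x - a_0$, whose coefficient of $x^0$ is $-a_0$, as claimed. For the inductive step, assume $1 \le i \le N-1$ and that the coefficient of $x^{i-1}$ in $\tau_i$ equals $-a_0 - a_1 - \cdots - a_{i-1}$. By~\eqref{eq:xrec} we have $\tau_{i+1} = (x-a_i)\tau_i = x\,\tau_i - a_i\,\tau_i$. The coefficient of $x^i$ in $x\,\tau_i$ is the coefficient of $x^{i-1}$ in $\tau_i$, namely $-a_0-\cdots-a_{i-1}$ by the inductive hypothesis; the coefficient of $x^i$ in $-a_i\,\tau_i$ is $-a_i$ times the coefficient of $x^i$ in $\tau_i$, which is $-a_i$ because $\tau_i$ is monic of degree $i$. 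Adding these gives $-a_0-\cdots-a_{i-1}-a_i$, which is the desired coefficient of $x^i$ in $\tau_{i+1}$, completing the induction.

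There is essentially no obstacle here: the only points to keep in view are that $\tau_i$ is monic of degree $i$ (already recorded just after~\eqref{eq:eta}, and needed to evaluate the leading coefficient in the inductive step) and that the recursion~\eqref{eq:xrec} is valid in the range $0 \le i \le N-1$, so the induction reaches every $i$ with $1 \le i \le N$ (including the case $N=\infty$, where the statement is read for all positive integers $i$). Alternatively, one could bypass the induction entirely and simply expand $\tau_i = \prod_{h=0}^{i-1}(x-a_h)$, observing that the only way to obtain a term of degree $i-1$ is to choose the constant $-a_h$ from exactly one factor and $x$ from the remaining $i-1$ factors, which directly yields the sum $-\sum_{h=0}^{i-1} a_h$; I would mention this as the conceptual reason but present the induction as the formal argument.
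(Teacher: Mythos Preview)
Your proof is correct and follows the same approach as the paper, which simply says ``Use~\eqref{eq:tau},~\eqref{eq:eta}''; you have merely spelled out in detail the standard Vieta/induction argument that is implicit in that one-line reference.
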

\begin{proof} Use (\ref{eq:tau}), (\ref{eq:eta}).
\end{proof}

\begin{Lemma}\label{lem:etaMtau}
For $1 \leq i \leq N$,
\begin{enumerate}\itemsep=0pt
\item[$(i)$] $\eta_i - \tau_i \in V_{i-1}$;
\item[$(ii)$] in $\eta_i - \tau_i$ the coefficient of
$x^{i-1} $ is
\begin{gather*}
a_0 + a_1 + \cdots + a_{i-1} -
b_0 - b_1 - \cdots - b_{i-1}.
\end{gather*}
\end{enumerate}
\end{Lemma}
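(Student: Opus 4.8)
The plan is to deduce both parts directly from the explicit product formulas \eqref{eq:tau}, \eqref{eq:eta} and from Lemma~\ref{lem:lterm}. For part (i), I would first observe that $\tau_i$ and $\eta_i$ are each monic of degree $i$ (stated right after \eqref{eq:eta}). Consequently their difference $\eta_i - \tau_i$ has the leading $x^i$ terms cancel, so $\eta_i-\tau_i$ has degree at most $i-1$. Since $V_{i-1}=\operatorname{Span}\{x^h\}_{h=0}^{i-1}$ by \eqref{eq:Vn}, this gives $\eta_i - \tau_i \in V_{i-1}$, which is exactly (i). (Here one should note $i-1\ge 0$ since $i\ge 1$, so $V_{i-1}$ makes sense.)

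For part (ii), I would invoke Lemma~\ref{lem:lterm}: the coefficient of $x^{i-1}$ in $\tau_i$ is $-a_0-a_1-\cdots-a_{i-1}$, and the coefficient of $x^{i-1}$ in $\eta_i$ is $-b_0-b_1-\cdots-b_{i-1}$. Subtracting, the coefficient of $x^{i-1}$ in $\eta_i-\tau_i$ is
\[
(-b_0-b_1-\cdots-b_{i-1}) - (-a_0-a_1-\cdots-a_{i-1}) = a_0+a_1+\cdots+a_{i-1} - b_0-b_1-\cdots-b_{i-1},
\]
as claimed. Since $\eta_i - \tau_i$ lies in $V_{i-1}$ by part (i), the $x^{i-1}$-coefficient is the leading coefficient of this difference.

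There is essentially no obstacle here: the statement is an immediate bookkeeping consequence of the degree and leading-term data already recorded in the excerpt. If anything, the only point worth a word of care is the edge behavior—checking that the argument is vacuously fine when $i=N=\infty$ is not at issue since $i$ is by convention an integer, and for $i=1$ one simply has $\eta_1-\tau_1 = (x-b_0)-(x-a_0)=a_0-b_0 \in \mathbb F = V_0$, matching both (i) and (ii). It may also be remarked, for later use, that the nondegeneracy hypothesis \eqref{eq:nondeg} is precisely the statement that this $x^{i-1}$-coefficient is nonzero, so that $\eta_i-\tau_i$ has degree exactly $i-1$; I would flag this but it is not needed for the lemma itself.
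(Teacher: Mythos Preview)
Your proof is correct and follows exactly the same approach as the paper: part~(i) uses that both $\tau_i$ and $\eta_i$ are monic of degree $i$, and part~(ii) subtracts the coefficients supplied by Lemma~\ref{lem:lterm}. The extra remarks on edge cases and nondegeneracy are fine but not needed for the lemma.
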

\begin{proof} (i) Each of $\tau_i$, $\eta_i$ is monic with degree $i$.
(ii) Use Lemma \ref{lem:lterm}.
\end{proof}

\begin{Definition}\label{def:A}
Define a map $A\colon V_{N-1} \to V_N$, $v \mapsto x v$.
Note that $A$ is $\mathbb F$-linear.
\end{Definition}

\begin{Lemma} We have
\begin{gather*}
A V_{n-1} \subseteq V_n, \qquad 1 \leq n \leq N.
\end{gather*}
\end{Lemma}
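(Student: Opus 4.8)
The plan is to verify the containment on a spanning set for $V_{n-1}$ and then invoke linearity of $A$. Recall from~\eqref{eq:Vn} that $V_{n-1} = \operatorname{Span}\lbrace x^i\rbrace_{i=0}^{n-1}$, and from Definition~\ref{def:A} that $A$ sends $v \mapsto xv$. So it suffices to check that $A x^i \in V_n$ for each $i$ with $0 \leq i \leq n-1$.

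First I would compute $A x^i = x\,x^i = x^{i+1}$. Since $0 \leq i \leq n-1$, we have $1 \leq i+1 \leq n$, and hence $x^{i+1} \in \operatorname{Span}\lbrace x^j\rbrace_{j=0}^{n} = V_n$. Thus $A$ maps each basis vector $x^i$ of $V_{n-1}$ into $V_n$.

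Because $A$ is $\mathbb F$-linear (again by Definition~\ref{def:A}) and $V_n$ is a subspace of $V$, the image under $A$ of the span of $\lbrace x^i\rbrace_{i=0}^{n-1}$ is contained in $V_n$; that is, $A V_{n-1} \subseteq V_n$. This holds for each $n$ with $1 \leq n \leq N$, noting that for $n \leq N$ both $V_{n-1}$ and $V_n$ are well-defined subspaces of $V$, and $A$ is defined on all of $V_{N-1} \supseteq V_{n-1}$.

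There is no real obstacle here: the statement is immediate from the definitions of $V_n$ and of $A$ as multiplication by $x$, together with the observation that multiplying a polynomial of degree at most $n-1$ by $x$ yields a polynomial of degree at most $n$. The only thing to be careful about is the indexing conventions, in particular that the case $n=N$ is legitimate since $A$ is defined on $V_{N-1}$ with codomain $V_N = V$.
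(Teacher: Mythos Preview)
Your proof is correct and takes essentially the same approach as the paper: the paper simply cites the definition~\eqref{eq:Vn} of $V_n$, and your argument spells out explicitly why that definition, together with $A\colon v \mapsto xv$, immediately gives the containment.
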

\begin{proof} By
(\ref{eq:Vn}).
\end{proof}
\begin{Lemma}\label{lem:Ataui}
For $0 \leq i \leq N-1$,
\begin{gather*}
A \tau_i = a_i \tau_i + \tau_{i+1},
\qquad
A \eta_i = b_i \eta_i + \eta_{i+1}.
\end{gather*}
\end{Lemma}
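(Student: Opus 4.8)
The plan is to unwind the definition of the map $A$ and then apply the recursion \eqref{eq:xrec}. By Definition~\ref{def:A}, for $v \in V_{N-1}$ we have $Av = xv$. In particular, for $0 \leq i \leq N-1$ the polynomial $\tau_i$ has degree $i \leq N-1$, so it lies in $V_{N-1}$ and $A\tau_i = x\tau_i$; likewise $A\eta_i = x\eta_i$. These products lie in $V_N = V$ since they have degree $i+1 \leq N$, so the application of $A$ is legitimate.

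For the first identity I would write $x = (x - a_i) + a_i$ and use the left-hand equation of \eqref{eq:xrec}:
\begin{gather*}
A\tau_i = x\tau_i = (x-a_i)\tau_i + a_i\tau_i = \tau_{i+1} + a_i\tau_i .
\end{gather*}
For the second identity, replace $a_i$ by $b_i$ and $\tau$ by $\eta$ throughout, using the right-hand equation of \eqref{eq:xrec}:
\begin{gather*}
A\eta_i = x\eta_i = (x-b_i)\eta_i + b_i\eta_i = \eta_{i+1} + b_i\eta_i .
\end{gather*}

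There is no real obstacle here: the argument is a one-line manipulation, and the only point requiring any care is to confirm that the index stays in the range $0 \leq i \leq N-1$ on which \eqref{eq:xrec} is valid — which is precisely the range in the statement — and that the relevant polynomials have small enough degree for $A$ to be applied. Thus the ``hard part'' is essentially vacuous, and the lemma follows immediately.
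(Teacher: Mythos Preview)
Your proof is correct and follows exactly the paper's approach: the paper simply cites \eqref{eq:xrec} and Definition~\ref{def:A}, and you have written out the same computation in full detail.
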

\begin{proof} By
(\ref{eq:xrec})
and
Definition~\ref{def:A}.
\end{proof}

We mention an elementary result for later use.
\begin{Lemma}\label{lem:gs}
Assume that $T \in \operatorname{End}(V)$
is locally nilpotent. Then $I-T$ is invertible, and $(I-T)^{-1}= \sum_{i=0}^N T^i$.
\end{Lemma}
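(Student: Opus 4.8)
The plan is to first make sense of the operator $S := \sum_{i=0}^N T^i$ as an element of $\operatorname{End}(V)$, and then to verify directly that $S$ is a two-sided inverse of $I-T$ by a telescoping-sum argument.

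First I would address well-definedness of $S$. If $N \neq \infty$ the sum is finite, so there is nothing to check. If $N = \infty$, fix $v \in V$; since $T$ is locally nilpotent there is a positive integer $j$ with $T^j v = 0$, hence $T^i v = 0$ for all $i \geq j$, so $\sum_{i=0}^\infty T^i v$ has only finitely many nonzero terms and defines an element $Sv \in V$. Linearity of $v \mapsto Sv$ follows because for $u,v \in V$ one may pick a single exponent $j$ that annihilates $T^j u$ and $T^j v$ simultaneously (take the larger of the two), reducing the claim to linearity of the finite partial sums $\sum_{i=0}^{j-1} T^i$. Thus $S \in \operatorname{End}(V)$ in all cases.

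Next I would prove $(I-T)S = I$ and $S(I-T) = I$. For any positive integer $m$ there is the elementary identity $(I-T)\sum_{i=0}^m T^i = I - T^{m+1} = \big(\sum_{i=0}^m T^i\big)(I-T)$ in $\operatorname{End}(V)$, obtained by cancelling consecutive terms. Now fix $v \in V$ and choose $m$ large enough that $T^{m+1}v = 0$ and $Sv = \sum_{i=0}^m T^i v$ (possible by local nilpotency); then $(I-T)Sv = v - T^{m+1}v = v$. For the other composite, $S(I-T)v = Sv - S(Tv)$, and with $m$ chosen so that $T^{m+1}v = 0$ one has $S(Tv) = \sum_{i=0}^m T^{i+1}v = \sum_{i=1}^{m+1} T^i v$, whence $Sv - S(Tv) = T^0 v = v$. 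Therefore $S(I-T) = I = (I-T)S$, so $I-T$ is invertible with $(I-T)^{-1} = S = \sum_{i=0}^N T^i$.

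The argument is almost entirely routine; the only point requiring a moment's care is the case $N = \infty$, where one must first observe that local nilpotency causes the ostensibly infinite sum $\sum_{i=0}^\infty T^i$ to act as a finite sum on each vector, so that $S$ is a genuinely well-defined linear endomorphism of $V$ before one can even speak of it as an inverse.
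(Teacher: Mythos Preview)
Your proof is correct. The paper itself states this lemma as an ``elementary result for later use'' and gives no proof at all, so your argument is more detailed than what the paper provides; the telescoping verification and the care taken with the $N=\infty$ case are exactly what is needed.
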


\section{Adjusting the data}\label{section3}

In this section we describe how the double lowering space is affected when we adjust the data in an affine way.

Let ${\rm GL}_2(\mathbb F)$ denote the group of
invertible $2\times 2$ matrices that have all entries in $\mathbb F$.
\begin{Definition}\label{def:G}
Let $G$ denote the subgroup of
${\rm GL}_2(\mathbb F)$ consisting of the matrices
\begin{gather*}
	 \left(
	 \begin{matrix}
	 1 & t \\
	 0 & s
		 \end{matrix}
		 \right),
\qquad 0 \not=s\in \mathbb F, \qquad t \in \mathbb F.
\end{gather*}
The above matrix is denoted $g(s,t)$.
\end{Definition}

\begin{Lemma}\label{lem:group}
With reference to Definition~{\rm \ref{def:G}},
\begin{enumerate}\itemsep=0pt
\item[$(i)$]
$g(s,t) g(S,T) = g(sS,T+tS)$;
\item[$(ii)$]
the inverse of $g(s,t)$ is $g\big(s^{-1}, -s^{-1}t\big)$.
\end{enumerate}
\end{Lemma}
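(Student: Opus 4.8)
The plan is to prove part $(i)$ by a direct $2\times 2$ matrix multiplication, and then obtain part $(ii)$ as an immediate consequence of $(i)$ together with the observation that $g(1,0)$ is the identity matrix.

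For $(i)$, I would simply write out $g(s,t)$ and $g(S,T)$ as the matrices from Definition~\ref{def:G} and form their product:
\[
\left(\begin{matrix} 1 & t \\ 0 & s \end{matrix}\right)
\left(\begin{matrix} 1 & T \\ 0 & S \end{matrix}\right)
=
\left(\begin{matrix} 1 & T+tS \\ 0 & sS \end{matrix}\right).
\]
The lower-left entry is again $0$, and since $s,S$ are nonzero so is $sS$; hence the product lies in $G$ and, by the notation of Definition~\ref{def:G}, equals $g(sS,\,T+tS)$. This proves $(i)$.

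For $(ii)$, first note that $g(1,0)$ is the $2\times 2$ identity matrix, hence the identity element of ${\rm GL}_2(\mathbb F)$ and of $G$. Now apply $(i)$ with $(S,T)=\big(s^{-1},-s^{-1}t\big)$: the product $g(s,t)\,g\big(s^{-1},-s^{-1}t\big)$ equals $g\big(s\cdot s^{-1},\,-s^{-1}t + t\cdot s^{-1}\big)=g(1,0)$. Applying $(i)$ in the other order, $g\big(s^{-1},-s^{-1}t\big)\,g(s,t)$ equals $g\big(s^{-1}\cdot s,\,t + (-s^{-1}t)\cdot s\big)=g(1,0)$. Since both products equal the identity, $g\big(s^{-1},-s^{-1}t\big)$ is the two-sided inverse of $g(s,t)$, which is $(ii)$.

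The computation is entirely routine and there is no genuine obstacle; the only point meriting a moment's attention is checking that the matrices produced actually remain in $G$ (lower-left entry $0$, lower-right entry nonzero), which is immediate from the triangular shape prescribed in Definition~\ref{def:G}.
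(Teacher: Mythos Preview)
Your proof is correct and follows exactly the approach indicated in the paper, which simply says ``Routine matrix multiplication.'' You have merely written out the routine details explicitly, and they are all correct.
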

\begin{proof} Routine matrix multiplication.
\end{proof}

For an algebra $\mathcal A$, an {\it automorphism of
$\mathcal A$} is an algebra isomorphism $\mathcal A\to \mathcal A$.

\begin{Lemma} The group $G$ acts on the algebra
${\mathbb F}\lbrack x \rbrack$ as a group of automorphisms
in the following way: each element $g(s,t) \in G$ sends
$x \mapsto s x + t$.
\end{Lemma}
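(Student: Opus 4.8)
The plan is to verify directly that the prescription $g(s,t)\colon x\mapsto sx+t$ extends uniquely to an algebra endomorphism of $\mathbb{F}[x]$, and that this endomorphism is invertible with the expected inverse, after which the group-action axioms follow from Lemma~\ref{lem:group}. First I would invoke the universal property of the polynomial algebra: since $\mathbb{F}[x]$ is free on the single generator $x$, any choice of image for $x$ in a commutative $\mathbb{F}$-algebra determines a unique $\mathbb{F}$-algebra homomorphism. Applying this with the target algebra also equal to $\mathbb{F}[x]$ and with $x\mapsto sx+t$, we obtain a well-defined $\mathbb{F}$-algebra endomorphism, which I will temporarily denote $\phi_{s,t}$; concretely it sends $\sum_k c_k x^k \mapsto \sum_k c_k (sx+t)^k$.

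Next I would check that $\phi$ respects composition in the sense matching Lemma~\ref{lem:group}(i). Because $\phi_{S,T}$ is an algebra homomorphism, for any polynomial $p$ we have $\phi_{s,t}\big(\phi_{S,T}(p)\big) = \phi_{s,t}(p)\big|_{x\mapsto sx+t}$ evaluated after the inner substitution; tracking the generator, $\phi_{s,t}\circ\phi_{S,T}$ sends $x\mapsto \phi_{s,t}(Sx+T) = S(sx+t)+T = sSx + (T+tS)$. By the uniqueness clause of the universal property this composite equals $\phi_{sS,\,T+tS} = \phi_{g(s,t)g(S,T)}$, which is exactly the claimed action identity. Taking $g(S,T) = g(1,0)$ shows $\phi_{1,0} = \mathrm{id}$, since $x\mapsto x$ forces the identity map.

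Invertibility of each $\phi_{s,t}$ is then immediate: by Lemma~\ref{lem:group}(ii) and the composition identity just established, $\phi_{s,t}\circ\phi_{s^{-1},-s^{-1}t} = \phi_{g(s,t)g(s^{-1},-s^{-1}t)} = \phi_{1,0} = \mathrm{id}$, and symmetrically in the other order, so $\phi_{s,t}$ is an algebra automorphism with inverse $\phi_{g(s,t)^{-1}}$. Finally, the map $g(s,t)\mapsto \phi_{s,t}$ from $G$ to $\operatorname{Aut}(\mathbb{F}[x])$ is a group homomorphism by the composition identity, and it is the asserted action. I do not anticipate a serious obstacle here; the only point requiring a little care is to phrase the substitution rule so that the order of composition in $\operatorname{Aut}(\mathbb{F}[x])$ matches the matrix-multiplication order in Lemma~\ref{lem:group}(i) rather than its opposite, and this is settled by the computation $x\mapsto S(sx+t)+T$ above.
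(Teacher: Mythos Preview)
Your proof is correct and is essentially what the paper intends: the paper's own proof is simply ``This is routinely checked using Lemma~\ref{lem:group},'' and your argument carries out precisely that routine check (universal property for well-definedness, then the composition computation matching Lemma~\ref{lem:group}(i) for the action axiom, and Lemma~\ref{lem:group}(ii) for invertibility). One minor wording quibble: the sentence ``$\phi_{s,t}\big(\phi_{S,T}(p)\big) = \phi_{s,t}(p)\big|_{x\mapsto sx+t}$ evaluated after the inner substitution'' is garbled, but your subsequent computation on the generator $x$ is the correct one and suffices by the uniqueness clause you invoke.
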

\begin{proof} This is routinely checked using
Lemma~\ref{lem:group}.
\end{proof}

Recall
from Definition~\ref{def:dl}
the double lowering space
$\mathcal L$ for the data
(\ref{eq:data}).
Pick $0 \not=s\in \mathbb F$ and $t\in \mathbb F$.
Let $\mathcal L'$ denote the double lowering space for the data
\begin{gather*}
\lbrace sa_i + t\rbrace_{i=0}^{N-1}, \qquad
\lbrace sb_i + t\rbrace_{i=0}^{N-1}.
\end{gather*}

\begin{Proposition}\label{prop:LLp}
The following $(i)$--$(iii)$ hold for the above scalars $s$, $t$ and $g=g(s,t)$:
\begin{enumerate}\itemsep=0pt
\item[$(i)$]
there exists an $\mathbb F$-linear map
$\mathcal L \to \mathcal L'$,
 $\psi \mapsto g^{-1} \psi g$;
\item[$(ii)$]
there exists an $\mathbb F$-linear map
$\mathcal L' \to \mathcal L$,
$\zeta \mapsto g \zeta g^{-1}$;
\item[$(iii)$] the maps in $(i)$, $(ii)$ above are inverses,
and hence bijections.
\end{enumerate}
\end{Proposition}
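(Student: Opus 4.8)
The plan is to exploit the fact that the group element $g=g(s,t)$ acts on $\mathbb F[x]$ as an algebra automorphism sending $x\mapsto sx+t$, and that this automorphism carries the original data to the new data in a way that matches the defining polynomials $\tau_i,\eta_i$ up to a scalar. Concretely, first I would record how $g$ acts on the $\tau_i$: since $g$ is an automorphism and $g(x-a_h)=sx+t-a_h=s\bigl(x-(a_h-t)/s\bigr)$, I would like the new data to be $\{(a_i-t)/s\}$, $\{(b_i-t)/s\}$. That is not quite the data $\{sa_i+t\}$, $\{sb_i+t\}$ stated in the Proposition, so the cleanest route is to use $g^{-1}=g(s^{-1},-s^{-1}t)$ from Lemma~\ref{lem:group}(ii): the automorphism $g^{-1}$ sends $x\mapsto s^{-1}x-s^{-1}t$, hence $g^{-1}(x-a_h)=s^{-1}(x-(sa_h+t))$, and therefore if $\tau'_i=\prod_{h=0}^{i-1}(x-(sa_h+t))$ denotes the polynomial built from the new data, then $g^{-1}\tau_i = s^{-i}\tau'_i$ and likewise $g^{-1}\eta_i = s^{-i}\eta'_i$. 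Equivalently $g\tau'_i=s^i\tau_i$ and $g\eta'_i=s^i\eta_i$. I would also note $g$ and $g^{-1}$ map $V$ to $V$ (they preserve degree), so conjugation by $g$ is a well-defined operation on $\operatorname{End}(V)$.

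Next, for part~(i), take $\psi\in\mathcal L$ and set $\psi'=g^{-1}\psi g\in\operatorname{End}(V)$. Using the intertwining relations just derived, $\psi'\tau'_i = g^{-1}\psi g\tau'_i = s^i g^{-1}\psi\tau_i$. By Definition~\ref{def:dlowering}, $\psi\tau_i=c_i\tau_{i-1}$ for some $c_i\in\mathbb F$ (with $\tau_{-1}=0$), so $\psi'\tau'_i = s^i c_i\, g^{-1}\tau_{i-1} = s^i c_i s^{-(i-1)}\tau'_{i-1} = s c_i\,\tau'_{i-1}\in\mathbb F\tau'_{i-1}$. The identical computation with $\eta$ in place of $\tau$ gives $\psi'\eta'_i\in\mathbb F\eta'_{i-1}$. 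Hence $\psi'\in\mathcal L'$, proving that $\psi\mapsto g^{-1}\psi g$ maps $\mathcal L\to\mathcal L'$; $\mathbb F$-linearity is immediate since conjugation is linear in $\psi$. Part~(ii) is the same argument with the roles of the two data sets swapped: apply (i) to $g^{-1}=g(s^{-1},-s^{-1}t)$, whose associated new data built from $\{sa_i+t\}$, $\{sb_i+t\}$ is $\{s^{-1}(sa_i+t)-s^{-1}t\}=\{a_i\}$, $\{b_i\}$, recovering the original data; so $\zeta\mapsto (g^{-1})^{-1}\zeta g^{-1} = g\zeta g^{-1}$ maps $\mathcal L'\to\mathcal L$.

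Finally, part~(iii) is purely formal: for $\psi\in\mathcal L$ we have $g(g^{-1}\psi g)g^{-1}=\psi$, and for $\zeta\in\mathcal L'$ we have $g^{-1}(g\zeta g^{-1})g=\zeta$, so the two maps compose to the identity in both orders; being mutually inverse $\mathbb F$-linear maps, they are bijections. I don't expect any serious obstacle here; the only point requiring a little care is the bookkeeping of which group element (namely $g^{-1}$, not $g$) realises the data transformation $a_i\mapsto sa_i+t$, together with the scalar factors $s^{\pm i}$ that appear when $g^{\pm1}$ acts on $\tau_i,\eta_i$. Once those factors are pinned down, everything reduces to the fact that $\tau_{i-1}$ (resp.\ $\eta_{i-1}$) is sent to a nonzero multiple of $\tau'_{i-1}$ (resp.\ $\eta'_{i-1}$), which is exactly what is needed to transport the condition $\psi\tau_i\in\mathbb F\tau_{i-1}$ across the conjugation.
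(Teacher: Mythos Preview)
Your proposal is correct and follows essentially the same approach as the paper: both arguments hinge on the identity $g\tau'_i=s^i\tau_i$ (and its $\eta$-analogue), then transport the lowering condition through conjugation and conclude (ii), (iii) by symmetry and formal composition. You supply a bit more detail on the scalar bookkeeping and the observation that $g$ preserves $V$, but the underlying idea is identical.
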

\begin{proof} (i) For $\psi \in \mathcal L$ we show that
$g^{-1}\psi g \in \mathcal L'$.
For $\alpha \in \mathbb F$
define $\alpha' = s \alpha + t$. For $0 \leq i \leq N$
define
\begin{gather*}
\tau'_i = (x - a'_0)(x-a'_1)\cdots (x - a'_{i-1}),\\
\eta'_i = (x - b'_0)(x-b'_1)\cdots (x - b'_{i-1}).
\end{gather*}
For notational convenience define $\tau'_{-1}=0$ and
$\eta'_{-1}=0$.
Pick an integer $i$, $0 \leq i \leq N$.
One checks that $g$ sends
\begin{gather*}
\tau'_i \mapsto s^i \tau_i, \qquad
\eta'_i \mapsto s^i \eta_i.
\end{gather*}
By Definition~\ref{def:dlowering}, $\psi$ sends
$\tau_i$ (resp. $\eta_i$) to a scalar multiple of
$\tau_{i-1}$ (resp.~$\eta_{i-1}$).
By these comments $g^{-1} \psi g$ sends
$\tau'_i$ (resp.~$\eta'_i$) to a scalar multiple of
$\tau'_{i-1}$ (resp.~$\eta'_{i-1}$).
Therefore \mbox{$g^{-1} \psi g \in \mathcal L'$}.
We have shown that there exists a map
 $\mathcal L \to \mathcal L'$,
$\psi \mapsto g^{-1} \psi g$.
By construction this map is $\mathbb F$-linear.

(ii) Similar to the proof of (i) above. (iii) By construction.
\end{proof}

\begin{Corollary}\label{cor:affine}
Pick $0 \not=s \in \mathbb F$ and
$t \in \mathbb F$. Then the data~\eqref{eq:data} is double lowering if and only if
the data
\begin{gather*}
\lbrace sa_i + t\rbrace_{i=0}^{N-1}, \qquad
\lbrace sb_i + t\rbrace_{i=0}^{N-1}
\end{gather*}
is double lowering.
\end{Corollary}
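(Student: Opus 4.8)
The plan is to deduce this directly from Proposition \ref{prop:LLp}. Recall that the data \eqref{eq:data} is double lowering precisely when its double lowering space $\mathcal L$ is nonzero (Definition \ref{def:feasible}), and similarly the adjusted data $\lbrace sa_i+t\rbrace_{i=0}^{N-1}$, $\lbrace sb_i+t\rbrace_{i=0}^{N-1}$ is double lowering precisely when its double lowering space $\mathcal L'$ is nonzero. So the statement to be proved is simply the equivalence $\mathcal L \not= 0 \iff \mathcal L' \not= 0$.

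First I would note that the adjusted data indeed satisfies the nondegeneracy hypothesis \eqref{eq:nondeg}: for $1 \leq i \leq N$ we have $\sum_{h=0}^{i-1}(sa_h+t) - \sum_{h=0}^{i-1}(sb_h+t) = s\big(\sum_{h=0}^{i-1}a_h - \sum_{h=0}^{i-1}b_h\big)$, which is nonzero since $s \not= 0$ and the original sums differ. Hence $\mathcal L'$ is well-defined and Proposition \ref{prop:LLp} applies to the pair $(s,t)$.

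Then I would invoke Proposition \ref{prop:LLp}: with $g = g(s,t)$, the maps $\psi \mapsto g^{-1}\psi g$ and $\zeta \mapsto g\zeta g^{-1}$ are mutually inverse bijections between $\mathcal L$ and $\mathcal L'$. A bijection between two vector spaces sends $0$ to $0$ and is injective, so $\mathcal L = 0$ forces $\mathcal L' = 0$ and vice versa; equivalently $\mathcal L \not= 0 \iff \mathcal L' \not= 0$. By Definition \ref{def:feasible} this is exactly the claimed equivalence, so the proof is complete.

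There is essentially no obstacle here: the entire content has already been established in Proposition \ref{prop:LLp}, and the only thing one must not forget is the routine check that the adjusted data still satisfies \eqref{eq:nondeg} so that "double lowering space of the adjusted data" is a legitimate object. The corollary is a one-line consequence once that is observed.
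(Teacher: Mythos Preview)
Your proposal is correct and follows exactly the paper's approach: the paper's proof reads simply ``By Definition~\ref{def:feasible} and Proposition~\ref{prop:LLp}.'' Your additional verification that the adjusted data still satisfies the nondegeneracy condition~\eqref{eq:nondeg} is a worthwhile detail that the paper leaves implicit.
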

\begin{proof} By Definition~\ref{def:feasible}
and Proposition~\ref{prop:LLp}.
\end{proof}

We have a comment.

\begin{Lemma}\label{lem:a0b0free}
Referring to the data~\eqref{eq:data},
for distinct $a'_0, b'_0 \in \mathbb F$ define
\begin{gather*}
s = \frac{a'_0 - b'_0}{ a_0-b_0}, \qquad
t= \frac{a_0 b'_0 - a'_0 b_0}{ a_0-b_0}.
\end{gather*}
Then $s\not=0$ and
\begin{gather*}
 s a_0 + t = a'_0, \qquad
 s b_0 + t = b'_0.
\end{gather*}
\end{Lemma}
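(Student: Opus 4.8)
The statement to prove is Lemma~\ref{lem:a0b0free}, which is purely a computation. The plan is to verify the three claimed facts in the obvious order: first that $s \neq 0$, then that $sa_0 + t = a'_0$, then that $sb_0 + t = b'_0$.

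For $s \neq 0$: by hypothesis $a'_0 \neq b'_0$, so the numerator $a'_0 - b'_0$ of $s$ is nonzero. The denominator $a_0 - b_0$ is nonzero by the nondegeneracy assumption \eqref{eq:nondeg} applied with $i = 1$ (this uses $N \geq 1$, which holds since $N$ is a positive integer or $\infty$). Hence $s$ is a ratio of two nonzero scalars in the field $\mathbb F$, so $s \neq 0$.

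For the two affine identities: I would simply substitute the given expressions for $s$ and $t$ and clear the common denominator $a_0 - b_0$. Computing
\begin{gather*}
s a_0 + t = \frac{(a'_0 - b'_0) a_0 + a_0 b'_0 - a'_0 b_0}{a_0 - b_0}
= \frac{a'_0 a_0 - a'_0 b_0}{a_0 - b_0} = \frac{a'_0 (a_0 - b_0)}{a_0 - b_0} = a'_0,
\end{gather*}
where the middle step uses the cancellation $-b'_0 a_0 + a_0 b'_0 = 0$. The second identity is symmetric:
\begin{gather*}
s b_0 + t = \frac{(a'_0 - b'_0) b_0 + a_0 b'_0 - a'_0 b_0}{a_0 - b_0}
= \frac{a_0 b'_0 - b'_0 b_0}{a_0 - b_0} = \frac{b'_0 (a_0 - b_0)}{a_0 - b_0} = b'_0,
\end{gather*}
using the cancellation $a'_0 b_0 - a'_0 b_0 = 0$.

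There is essentially no obstacle here: the only nontrivial input is recognizing that $a_0 \neq b_0$ is guaranteed by \eqref{eq:nondeg} with $i=1$, so the division defining $s$ and $t$ is legitimate. Everything else is routine algebra over the field $\mathbb F$. The role of this lemma in the paper is clear from its placement: combined with Corollary~\ref{cor:affine}, it shows that in studying whether data is double lowering we may freely normalize $a_0$ and $b_0$ to any prescribed pair of distinct values, since an affine change of variable realizes this normalization without affecting the double lowering property.
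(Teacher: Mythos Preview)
Your proof is correct and is exactly the routine verification the paper has in mind; the paper's own proof consists of the single word ``Routine.'' You have simply made explicit the straightforward algebra and the use of \eqref{eq:nondeg} at $i=1$ to ensure $a_0\neq b_0$.
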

\begin{proof} Routine.
\end{proof}

Corollary \ref{cor:affine} and Lemma~\ref{lem:a0b0free} show that for double lowering data~(\ref{eq:data}),
the scalars $a_0$ and $b_0$ are ``free'', with $a_0\not=b_0$ being
the only constraint.

\section[The parameters vartheta i]{The parameters $\boldsymbol{\vartheta_i}$}\label{section4}

We continue to discuss the double lowering space $\mathcal L$ for the data~(\ref{eq:data}). In this section we use $\mathcal L$
 to define some scalars $\lbrace \vartheta_i \rbrace_{i=0}^N$ that will play a role in our theory.

\begin{Definition}\label{def:vartheta}
For $0 \leq i \leq N$ define
\begin{gather*}
\vartheta_i = \frac{a_0 + a_1 + \cdots + a_{i-1}-b_0 - b_1 - \cdots - b_{i-1}}
{a_0 -b_0}.
\end{gather*}
\end{Definition}
Referring to Definition~\ref{def:vartheta}, we have $\vartheta_0 = 0$ and $\vartheta_1 = 1$.
By~(\ref{eq:nondeg}) we have
\begin{gather*}
\vartheta_i \not=0, \qquad 1 \leq i \leq N.
\end{gather*}

\begin{Proposition}\label{prop:psiVT}
The following $(i)$--$(iii)$ hold
for $\psi \in \mathcal L$ and $1 \leq i \leq N$:
\begin{enumerate}\itemsep=0pt
\item[$(i)$] for the polynomial $\psi x^i$ the coefficient
of $x^{i-1}$ is $\vartheta_i \psi x$;
\item[$(ii)$] $\psi \tau_i = (\vartheta_i \psi x) \tau_{i-1}$;
\item[$(iii)$] $\psi \eta_i = (\vartheta_i \psi x) \eta_{i-1}$.
\end{enumerate}
\end{Proposition}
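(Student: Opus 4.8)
The plan is to prove the three statements simultaneously by induction on $i$, exploiting the recursions in Lemma~\ref{lem:Ataui} together with the fact that $\psi$ is double lowering. First I would record the base case $i=1$: statement~(i) says the coefficient of $x^0$ in $\psi x$ is $\vartheta_1 \psi x = \psi x$, which is immediate since $\vartheta_1 = 1$ and $\psi x \in \mathbb F$ by Lemma~\ref{lem:dlcomment}; statements~(ii), (iii) are exactly the content of \eqref{eq:dlcomment} in Lemma~\ref{lem:dlcomment}, since $\tau_1 = x - a_0$, $\eta_1 = x - b_0$, $\tau_0 = \eta_0 = 1$, and $\psi 1 = 0$.

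For the inductive step, suppose the three statements hold at $i-1$ (for $2 \le i \le N$), and aim to establish them at $i$. The natural route to~(ii) is to write $\tau_i = (x - a_{i-1})\tau_{i-1} = A\tau_{i-1} - a_{i-1}\tau_{i-1}$, using Definition~\ref{def:A} and \eqref{eq:xrec}. Applying $\psi$ gives $\psi\tau_i = \psi A \tau_{i-1} - a_{i-1}\psi\tau_{i-1}$. Now $\psi\tau_i \in \mathbb F\tau_{i-1}$ and $\psi\tau_{i-1} \in \mathbb F\tau_{i-2}$ by the double lowering property, so I would compare coefficients in the basis $\{\tau_j\}$. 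The key is to compute $\psi A \tau_{i-1}$: by the inductive form of~(ii), $\psi\tau_{i-1} = (\vartheta_{i-1}\psi x)\tau_{i-2}$, and then $A\tau_{i-2} = a_{i-2}\tau_{i-2} + \tau_{i-1}$ by Lemma~\ref{lem:Ataui}; but I would actually rather commute $\psi$ and $A$ via $\psi A\tau_{i-1} = A\psi\tau_{i-1} + (\psi A - A\psi)\tau_{i-1}$, or more cleanly just track the $\tau_{i-1}$-coefficient of $\psi A\tau_{i-1}$ directly. The cleanest bookkeeping is probably via~(i): write $x^i$ in terms of lower powers, or rather expand $\psi x^i = \psi(x \cdot x^{i-1})$ and use that $\psi x^{i-1}$ has $x^{i-2}$-coefficient $\vartheta_{i-1}\psi x$ by the inductive~(i). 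Multiplying by $x$ inside $A$ and then applying that $\psi A = A\psi$ fails in general, so instead I would argue at the level of leading terms: $\psi x^i \in V_{i-1}$ by Lemma~\ref{lem:psiVn}, and its $x^{i-1}$-coefficient is what must equal $\vartheta_i \psi x$.

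The mechanism that makes~(i) work is the relation $\tau_i = x^i - (a_0 + \cdots + a_{i-1})x^{i-1} + (\text{lower})$ from Lemma~\ref{lem:lterm}(i), and similarly for $\eta_i$; since $\psi$ kills degree (maps $V_j$ into $V_{j-1}$), the $x^{i-1}$-coefficient of $\psi\tau_i$ equals the $x^{i-1}$-coefficient of $\psi x^i$, and likewise the $x^{i-1}$-coefficient of $\psi\eta_i$ equals that of $\psi x^i$. Writing $\psi\tau_i = c\,\tau_{i-1}$ for some $c \in \mathbb F$ (double lowering), the $x^{i-1}$-coefficient of $c\,\tau_{i-1}$ is just $c$ since $\tau_{i-1}$ is monic of degree $i-1$. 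So the $x^{i-1}$-coefficient of $\psi x^i$ equals $c$, i.e.\ $\psi\tau_i = (\text{that coefficient})\cdot\tau_{i-1}$, and the same reasoning with $\eta$ forces $\psi\eta_i = (\text{same coefficient})\cdot\eta_{i-1}$ — a parallel of Lemma~\ref{lem:dlcomment}. Thus the three parts are genuinely one statement once I identify the common scalar, and the content reduces to showing that this scalar is $\vartheta_i \psi x$. For that I would use $x^i - x^{i-1} a_{i-1} = \tau$-type manipulation, or most efficiently: $\tau_i - \tau_{i-1}(x - a_{i-1}) = 0$ is trivial, so use instead the mixed relation by subtracting; the honest computation is to apply $\psi$ to $x^i = \tau_i + (a_0+\cdots+a_{i-1})x^{i-1} + \cdots$ and use induction on the $x^{i-1}$-coefficient of $\psi x^{i-1}$, together with Lemma~\ref{lem:etaMtau}(ii) and the definition of $\vartheta_i$ to package $a_0 + \cdots + a_{i-1} - b_0 - \cdots - b_{i-1} = (a_0 - b_0)\vartheta_i$.

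I expect the main obstacle to be the recursion bookkeeping in part~(i): one must combine the double lowering constraint (which forces $\psi\tau_i$ and $\psi\eta_i$ to be \emph{scalar} multiples of $\tau_{i-1}$, $\eta_{i-1}$ respectively, a priori \emph{different} scalars) with the degree-drop property to conclude the scalars coincide and equal $\vartheta_i\psi x$. Concretely, the subtlety is that $\psi\tau_i$ and $\psi\eta_i$ have the same $x^{i-1}$-coefficient (because $\tau_i$ and $\eta_i$ differ only below degree $i-1$... no — they differ already at $x^{i-1}$, cf.\ Lemma~\ref{lem:etaMtau}), so one must carefully use $\eta_i - \tau_i \in V_{i-1}$ and track how $\psi$ acts on that difference using the inductive hypotheses at levels $i-1$ and below. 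Once the identity $(a_0 - b_0)\vartheta_i = \sum_{h<i}(a_h - b_h)$ from Definition~\ref{def:vartheta} is invoked, part~(i) follows, and parts~(ii), (iii) are then immediate as explained above. I would write it up as a single induction proving (i) first, then deducing (ii) and (iii) from (i) and the double lowering property.
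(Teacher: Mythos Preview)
Your proposal is correct and follows essentially the same approach as the paper's proof: define $\alpha_i$ as the $x^{i-1}$-coefficient of $\psi x^i$, use monicity of $\tau_i$, $\eta_i$ together with the double lowering property and $\psi V_{i-1}\subseteq V_{i-2}$ to get $\psi\tau_i=\alpha_i\tau_{i-1}$ and $\psi\eta_i=\alpha_i\eta_{i-1}$, then identify $\alpha_i=\vartheta_i\psi x$ by applying $\psi$ to $\eta_i-\tau_i$, invoking Lemma~\ref{lem:etaMtau}(ii) and Definition~\ref{def:vartheta}, and comparing $x^{i-2}$-coefficients using the inductive hypothesis~(i) at level $i-1$ (dividing by $(a_0-b_0)\vartheta_{i-1}\ne 0$). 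The paper packages the last step as a single algebraic identity showing $(a_0-b_0)\vartheta_{i-1}x^{i-2}(\alpha_i-\vartheta_i\psi x)\in V_{i-3}$, but the substance is exactly what you outline; your exploratory detour through the commutator $\psi A-A\psi$ is unnecessary and can be dropped in the write-up.
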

\begin{proof} We use induction on $i$. First assume that $i=1$,
and recall $\vartheta_1=1$. Assertion (i) is vacuously true.
Assertions (ii), (iii) hold by
(\ref{eq:dlcomment}) and since $\tau_0=1=\eta_0$.
 Next assume that $i\geq 2$. By Lemma~\ref{lem:psiVn},
 $\psi x^i
\in V_{i-1}$. Let the scalar $\alpha_i$ be the
coefficient of $x^{i-1}$ in $\psi x^i$.
Since $\psi \in \mathcal L$,
$\psi \tau_i \in \mathbb F \tau_{i-1}$. Since each of
$\tau_i$, $\tau_{i-1}$ is monic,
\begin{gather}
\psi \tau_i= \alpha_i \tau_{i-1}.
\label{eq:one}
\end{gather}
Similarly,
\begin{gather}
\psi \eta_i= \alpha_i \eta_{i-1}.
\label{eq:two}
\end{gather}
We show that $\alpha_i = \vartheta_i \psi x$.
Using~(\ref{eq:one}),
(\ref{eq:two}) we see that
\begin{gather}
\label{eq:three}
(a_0 - b_0) \vartheta_{i-1} x^{i-2}
\end{gather}
times
\begin{gather}
\alpha_i - \vartheta_i \psi x
\label{eq:four}
\end{gather}
is equal to
\begin{gather}
\psi(\eta_i - \tau_i) - (a_0 - b_0) \vartheta_i \psi x^{i-1}
\label{eq:five}
\end{gather}
minus $\alpha_i$ times
\begin{gather}
\eta_{i-1} - \tau_{i-1} - (a_0 - b_0) \vartheta_{i-1} x^{i-2}
\label{eq:six}
\end{gather}
plus $(a_0-b_0) \vartheta_i$ times
\begin{gather}
\psi x^{i-1}-(\psi x) \vartheta_{i-1} x^{i-2}.\label{eq:seven}
\end{gather}
By Lemma~\ref{lem:etaMtau} and Definition~\ref{def:vartheta},
\begin{gather}
\eta_i - \tau_i - (a_0 - b_0)\vartheta_i x^{i-1} \in V_{i-2}.
\label{eq:eight}
\end{gather}
In this inclusion, we apply $\psi$ to each side and use
Lemma~\ref{lem:psiVn}
to find that~(\ref{eq:five}) is contained in~$V_{i-3}$.
In~(\ref{eq:eight}) we replace
$i$ by $i-1$, to find that~(\ref{eq:six}) is contained in~$V_{i-3}$.
By induction (\ref{eq:seven}) is contained in~$V_{i-3}$.
By these comments the polynomial~(\ref{eq:three}) times the scalar~(\ref{eq:four}) is
contained in~$V_{i-3}$.
Consider the factors in the polynomial~(\ref{eq:three}).
We have
$a_0 - b_0 \not=0$ and
$\vartheta_{i-1} \not=0$ and
$x^{i-2} \not\in V_{i-3}$.
So the polynomial~(\ref{eq:three})
is not contained in $V_{i-3}$. Consequently the scalar~(\ref{eq:four}) is zero, so
$\alpha_i = \vartheta_i \psi x$.
The result follows from this
and~(\ref{eq:one}),~(\ref{eq:two}).
\end{proof}

\begin{Corollary}\label{cor:psiL}
The map $\mathcal L \to \mathbb F$, $\psi \mapsto \psi x $ is injective.
\end{Corollary}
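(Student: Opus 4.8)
The plan is to observe that any $\psi \in \mathcal L$ is completely determined by the single scalar $\psi x \in \mathbb F$, so that the (clearly $\mathbb F$-linear) map $\psi \mapsto \psi x$ has trivial kernel. First I would note that this map $\mathcal L \to \mathbb F$ is $\mathbb F$-linear, since it is just evaluation of a linear map on the fixed vector $x \in V$; hence it is injective if and only if its kernel is zero.

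So I would suppose $\psi \in \mathcal L$ satisfies $\psi x = 0$ and deduce $\psi = 0$. By Proposition~\ref{prop:psiVT}(ii), $\psi \tau_i = (\vartheta_i \psi x)\tau_{i-1} = 0$ for $1 \leq i \leq N$, and $\psi \tau_0 = \psi 1 = 0$ by Lemma~\ref{lem:dlcomment} (equivalently, because $\tau_0 = 1$). Since $\lbrace \tau_i \rbrace_{i=0}^N$ is a basis for $V$, this forces $\psi = 0$, which is what we want.

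I do not expect any real obstacle here: Proposition~\ref{prop:psiVT} has already done the essential work, exhibiting the action of $\psi$ on the basis $\lbrace \tau_i \rbrace$ explicitly in terms of $\psi x$. The only points deserving a word of care are handling $i = 0$ separately via $\psi 1 = 0$, and noting that the coefficient $\vartheta_i \psi x$ vanishes once $\psi x = 0$ regardless of the value of $\vartheta_i$ (and in any case $\vartheta_i \neq 0$ for $1 \leq i \leq N$ by~\eqref{eq:nondeg}).
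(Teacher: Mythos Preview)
Your proof is correct and essentially identical to the paper's: both assume $\psi x = 0$, use Proposition~\ref{prop:psiVT}(ii) to conclude $\psi \tau_i = 0$ for $1 \le i \le N$, handle $i=0$ separately (the paper cites Definition~\ref{def:dlowering} rather than Lemma~\ref{lem:dlcomment}, but this is cosmetic), and finish by noting $\{\tau_i\}_{i=0}^N$ is a basis.
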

\begin{proof}
For $\psi \in \mathcal L$ such that $\psi x=0$,
we show that $\psi=0$.
The vector space $V$ has a basis~$\lbrace \tau_i \rbrace_{i=0}^N$.
By Definition~\ref{def:dlowering},
$\psi \tau_0=0$.
By Proposition~\ref{prop:psiVT}(ii), $\psi \tau_i=0$ for $1 \leq i \leq N$.
By these comments $\psi=0$.
\end{proof}

\begin{Corollary}\label{cor:L01}
Assume that $\mathcal L\not=0$. Then
the map in Corollary~{\rm \ref{cor:psiL}} is a bijection.
 Moreover $\mathcal L$ has dimension $1$.
\end{Corollary}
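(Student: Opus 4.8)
The plan is to combine Corollary~\ref{cor:psiL} with an explicit surjectivity argument. Corollary~\ref{cor:psiL} already tells us the map $\mathcal L \to \mathbb F$, $\psi \mapsto \psi x$ is injective and $\mathbb F$-linear, so it remains only to show it is onto when $\mathcal L \not= 0$; bijectivity then forces $\dim \mathcal L = \dim \mathbb F = 1$. Since $\mathcal L$ is a subspace of $\operatorname{End}(V)$ (Definition~\ref{def:dl}), the injective linear map $\psi \mapsto \psi x$ exhibits $\mathcal L$ as (linearly isomorphic to) a subspace of the one-dimensional space $\mathbb F$, so $\dim \mathcal L \le 1$; the hypothesis $\mathcal L \ne 0$ gives $\dim \mathcal L = 1$, and a one-dimensional subspace of $\mathbb F$ is all of $\mathbb F$, so the map is also surjective, hence a bijection.

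So the core is really just: an injective linear map from $\mathcal L$ into the one-dimensional space $\mathbb F$, together with $\mathcal L \ne 0$, forces $\mathcal L$ to be one-dimensional and the map to be a bijection. First I would invoke Corollary~\ref{cor:psiL} to get injectivity and recall that $\psi \mapsto \psi x$ is $\mathbb F$-linear (it is the composition of $\psi \mapsto \psi$ with evaluation structure on $V$; more simply, $(\alpha \psi + \beta \phi)x = \alpha(\psi x) + \beta(\phi x)$, and by Lemma~\ref{lem:dlcomment} the value $\psi x$ lies in $\mathbb F$). Then I would note that an injective linear map into a $1$-dimensional space has domain of dimension at most $1$. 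Finally, since $\mathcal L \ne 0$ by assumption, $\dim \mathcal L = 1$, and the image is then a $1$-dimensional subspace of $\mathbb F$, i.e.\ all of $\mathbb F$, so the map is surjective and therefore bijective.

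There is essentially no obstacle here — this is a bookkeeping corollary whose content is entirely in the already-proved Corollary~\ref{cor:psiL} (which in turn rests on Proposition~\ref{prop:psiVT}, the substantive result that a double lowering map is completely determined by the single scalar $\psi x$). The only thing to be slightly careful about is making explicit that $\psi \mapsto \psi x$ takes values in $\mathbb F$ (guaranteed by Lemma~\ref{lem:dlcomment}) so that "one-dimensional target" is literally correct, and that $\mathcal L$ is genuinely a subspace (Definition~\ref{def:dl}) so that talking about its dimension makes sense.
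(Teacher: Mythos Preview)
Your argument is correct and is exactly the approach the paper takes: the paper's proof is the single line ``By Corollary~\ref{cor:psiL},'' and your write-up simply unpacks the standard linear-algebra fact that an injective linear map from a nonzero space into a one-dimensional space is a bijection.
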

\begin{proof} By Corollary~\ref{cor:psiL}.
\end{proof}

\begin{Definition}\label{def:norm}
An element $\psi \in \mathcal L$ is called {\it normalized}
whenever $\psi x=1$.
\end{Definition}

\begin{Lemma}\label{lem:norm}
The double lowering space
$\mathcal L$ contains a normalized element if and only
if
$\mathcal L \not=0$; in this case
the normalized element is unique.
\end{Lemma}
\begin{proof} By Corollary~\ref{cor:L01}
and Definition~\ref{def:norm}.
\end{proof}

\begin{Lemma}\label{lem:normBasis}
Assume that $\mathcal L\not=0$.
Then the vector space $\mathcal L$ has a basis consisting
of its normalized element.
\end{Lemma}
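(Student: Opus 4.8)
The statement to prove is Lemma~\ref{lem:normBasis}: assuming $\mathcal L\neq 0$, the vector space $\mathcal L$ has a basis consisting of its normalized element.

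The plan is to assemble this directly from the results already established in this section. First I would invoke Corollary~\ref{cor:L01}, which tells us two things under the hypothesis $\mathcal L\neq 0$: the evaluation map $\mathcal L\to\mathbb F$, $\psi\mapsto\psi x$, is a bijection, and $\mathcal L$ has dimension $1$. Next I would recall Lemma~\ref{lem:norm}, which (again using $\mathcal L\neq 0$) guarantees that $\mathcal L$ contains a unique normalized element, say $\psi_0$, characterized by $\psi_0 x=1$. The final step is the routine linear-algebra observation: a nonzero vector in a one-dimensional vector space is automatically a basis, and $\psi_0\neq 0$ since $\psi_0 x=1\neq 0$. Hence $\{\psi_0\}$ is a basis for $\mathcal L$.

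There is essentially no obstacle here; the lemma is a bookkeeping consequence of Corollary~\ref{cor:L01} and Lemma~\ref{lem:norm}, recording the normalized element in the role of a basis vector so that later sections can write an arbitrary element of $\mathcal L$ as a scalar multiple of it. If I wanted to be slightly more self-contained I could argue without explicitly citing the dimension: the normalized element $\psi_0$ is nonzero, and for any $\psi\in\mathcal L$ the element $\psi-(\psi x)\psi_0$ lies in $\mathcal L$ and is killed by the evaluation map $\psi\mapsto\psi x$, hence is zero by the injectivity in Corollary~\ref{cor:psiL}; thus $\psi=(\psi x)\psi_0$, showing $\{\psi_0\}$ spans $\mathcal L$, and it is linearly independent because it is a single nonzero vector. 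Either route is a one-line proof.
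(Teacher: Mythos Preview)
Your proposal is correct and follows essentially the same approach as the paper, which simply notes that $\mathcal L$ has dimension~$1$ (Corollary~\ref{cor:L01}) and that its normalized element is nonzero. Your alternative self-contained argument via injectivity is also fine but unnecessary given the prior results.
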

\begin{proof} The vector space $\mathcal L$ has dimension~1, and
its normalized element is nonzero.
\end{proof}

\begin{Lemma}\label{lem:charNorm}
For $\psi \in \operatorname{End}(V)$ the following
are equivalent:
\begin{enumerate}\itemsep=0pt
\item[$(i)$] $\psi \in \mathcal L$ and $\psi$ is normalized;
\item[$(ii)$] for $0 \leq i \leq N$ both
\begin{gather}
\label{eq:Normal}
\psi \tau_i = \vartheta_i \tau_{i-1}, \qquad \psi \eta_i = \vartheta_i \eta_{i-1}.
\end{gather}
\end{enumerate}
\end{Lemma}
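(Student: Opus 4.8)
The plan is to prove the two implications separately, using Proposition~\ref{prop:psiVT} and Lemma~\ref{lem:norm} for the forward direction, and a direct verification against Definition~\ref{def:dlowering} and Definition~\ref{def:norm} for the backward direction.

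First I would prove (i)$\Rightarrow$(ii). Assume $\psi\in\mathcal L$ and $\psi$ is normalized, so $\psi x=1$. For $i=0$ the equations in~(\ref{eq:Normal}) read $\psi\tau_0=0$ and $\psi\eta_0=0$, which hold by Definition~\ref{def:dlowering} since $\tau_0=\eta_0=1$ and $\psi 1=0$ by Lemma~\ref{lem:dlcomment} (or directly, since $\tau_{-1}=\eta_{-1}=0$). For $1\leq i\leq N$, Proposition~\ref{prop:psiVT}(ii),(iii) gives $\psi\tau_i=(\vartheta_i\,\psi x)\tau_{i-1}$ and $\psi\eta_i=(\vartheta_i\,\psi x)\eta_{i-1}$; substituting $\psi x=1$ yields exactly~(\ref{eq:Normal}). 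This direction is essentially immediate.

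Next I would prove (ii)$\Rightarrow$(i). Assume~(\ref{eq:Normal}) holds for $0\leq i\leq N$. Then for each $i$ we have $\psi\tau_i\in\mathbb F\tau_{i-1}$ and $\psi\eta_i\in\mathbb F\eta_{i-1}$, so $\psi$ is double lowering by Definition~\ref{def:dlowering}; hence $\psi\in\mathcal L$. It remains to check that $\psi$ is normalized, i.e., $\psi x=1$. Using~(\ref{eq:example}) we have $x=\tau_1+a_0=\tau_1+a_0\tau_0$, so $\psi x=\psi\tau_1+a_0\psi\tau_0$. By~(\ref{eq:Normal}) with $i=1$ and $i=0$, $\psi\tau_1=\vartheta_1\tau_0=1$ (recall $\vartheta_1=1$ and $\tau_0=1$) and $\psi\tau_0=0$. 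Therefore $\psi x=1$, so $\psi$ is normalized by Definition~\ref{def:norm}. This completes the equivalence.

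There is no real obstacle here: the content of the lemma has already been packaged into Proposition~\ref{prop:psiVT}, and the only mild subtlety is checking the base cases $i=0,1$ correctly and recalling the normalizations $\vartheta_0=0$, $\vartheta_1=1$, $\tau_0=\eta_0=1$. The one point worth stating cleanly is that in the backward direction one must recover $\psi x=1$ from the $i=1$ instance of~(\ref{eq:Normal}) rather than assume it; writing $x$ in terms of $\tau_1,\tau_0$ (or equivalently $\eta_1,\eta_0$) makes this transparent.
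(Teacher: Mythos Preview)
Your proof is correct and follows essentially the same approach as the paper's: the forward direction substitutes $\psi x=1$ into Proposition~\ref{prop:psiVT}(ii),(iii), and the backward direction reads off $\psi\in\mathcal L$ from Definition~\ref{def:dlowering} and recovers $\psi x=1$ from the $i=1$ case. The only cosmetic difference is that the paper invokes Lemma~\ref{lem:dlcomment} (which gives $\psi x=\psi\tau_1$) to get normalization, whereas you do the equivalent one-line computation $\psi x=\psi(\tau_1+a_0\tau_0)=\vartheta_1\tau_0+0=1$ directly.
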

\begin{proof}
(i) $\Rightarrow$ (ii) Set $\psi x=1$
in Proposition~\ref{prop:psiVT}(ii), (iii).
 (ii) $\Rightarrow$ (i) We have $\psi \in \mathcal L$ by
Definition~\ref{def:dl} and~(\ref{eq:Normal}). To see that $\psi$ is normalized, set
$i=1$ in~(\ref{eq:Normal}) and use Lemma~\ref{lem:dlcomment} to obtain
$\psi x =1$.
\end{proof}

\section[Describing L using Delta]{Describing $\boldsymbol{\mathcal L}$ using $\boldsymbol{\Delta}$}\label{section5}

We continue to discuss the double lowering space
$\mathcal L$ for the data~(\ref{eq:data}). In this section we describe~$\mathcal L$ using the map~$\Delta$ from~(\ref{def:Delta}).

\begin{Proposition}\label{prop:Delta}
For $\psi \in \operatorname{End}(V)$ the following $(i)$--$(iii)$
are equivalent:
\begin{enumerate}\itemsep=0pt
\item[$(i)$] $\Delta \psi = \psi \Delta$ and
\begin{gather}\label{eq:psiTau}
\psi \tau_i \in \mathbb F \tau_{i-1}, \qquad 0 \leq i \leq N;
\end{gather}
\item[$(ii)$] $\Delta \psi = \psi \Delta$ and
\begin{gather}\label{eq:psiEta}
\psi \eta_i \in \mathbb F \eta_{i-1}, \qquad 0 \leq i \leq N;
\end{gather}
\item[$(iii)$] $\psi \in \mathcal L$.
\end{enumerate}
\end{Proposition}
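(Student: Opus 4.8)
The plan is to prove the four implications (iii)$\Rightarrow$(i), (i)$\Rightarrow$(iii), (iii)$\Rightarrow$(ii), (ii)$\Rightarrow$(iii); these give (i)$\Leftrightarrow$(iii) and (ii)$\Leftrightarrow$(iii), hence the full three-way equivalence. Moreover the $\tau\leftrightarrow\eta$ symmetry (which interchanges $\Delta$ with $\Delta^{-1}$, since $\Delta\tau_i=\eta_i$ by \eqref{def:Delta} and $\Delta$ is invertible) means there are really only two distinct arguments to carry out.

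For (iii)$\Rightarrow$(i): assume $\psi\in\mathcal L$. Then \eqref{eq:psiTau} is immediate from Definition~\ref{def:dlowering}. For the commutation, I would invoke Proposition~\ref{prop:psiVT}(ii),(iii), which gives $\psi\tau_i=(\vartheta_i\psi x)\tau_{i-1}$ and $\psi\eta_i=(\vartheta_i\psi x)\eta_{i-1}$ for $1\le i\le N$ with the \emph{same} scalar $\vartheta_i\psi x$ (and $\psi x\in\mathbb F$ by Lemma~\ref{lem:dlcomment}), together with $\psi\tau_0=\psi\eta_0=0$ from Lemma~\ref{lem:dlcomment}; since $\vartheta_0=0$ and $\tau_{-1}=\eta_{-1}=0$, these formulas then hold for all $0\le i\le N$. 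Hence, for each $i$, $\Delta\psi\tau_i=(\vartheta_i\psi x)\Delta\tau_{i-1}=(\vartheta_i\psi x)\eta_{i-1}=\psi\eta_i=\psi\Delta\tau_i$, using \eqref{def:Delta}. Since $\lbrace\tau_i\rbrace_{i=0}^N$ is a basis of $V$ (Lemma~\ref{lem:Vnbasis}), this forces $\Delta\psi=\psi\Delta$. The proof of (iii)$\Rightarrow$(ii) is identical with $\tau$ and $\eta$ interchanged.

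For (i)$\Rightarrow$(iii): assume $\Delta\psi=\psi\Delta$ and \eqref{eq:psiTau}, say $\psi\tau_i=c_i\tau_{i-1}$ with $c_i\in\mathbb F$. Applying $\Delta$ and using \eqref{def:Delta} together with the commutation, $\psi\eta_i=\psi\Delta\tau_i=\Delta\psi\tau_i=c_i\Delta\tau_{i-1}=c_i\eta_{i-1}\in\mathbb F\eta_{i-1}$. Thus both conditions of Definition~\ref{def:dlowering} hold, so $\psi\in\mathcal L$. For (ii)$\Rightarrow$(iii) one argues the same way, applying $\Delta^{-1}$ (which commutes with $\psi$ since $\Delta$ does) to $\psi\eta_i=c_i\eta_{i-1}$ to deduce $\psi\tau_i=\psi\Delta^{-1}\eta_i=\Delta^{-1}\psi\eta_i=c_i\tau_{i-1}$.

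I do not anticipate a genuine obstacle here. The one point deserving care is, in the step (iii)$\Rightarrow$(i), noticing that Proposition~\ref{prop:psiVT} yields \emph{the same} proportionality constant $\vartheta_i\psi x$ for $\tau_i$ and for $\eta_i$; this shared constant is exactly what makes $\Delta$ intertwine $\psi$ with itself. Once that is observed, every remaining step is a direct computation on the bases $\lbrace\tau_i\rbrace_{i=0}^N$ and $\lbrace\eta_i\rbrace_{i=0}^N$ of $V$.
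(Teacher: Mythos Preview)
Your proof is correct and follows essentially the same approach as the paper's. The only cosmetic differences are that the paper organizes the implications as (i)$\Rightarrow$(ii)$\Rightarrow$(i), (i),(ii)$\Rightarrow$(iii), (iii)$\Rightarrow$(i), and in the (iii)$\Rightarrow$(i) step the paper first reduces to the normalized case via Lemma~\ref{lem:normBasis} before invoking Lemma~\ref{lem:charNorm}, whereas you work directly with Proposition~\ref{prop:psiVT}; both routes rest on the same key observation that the proportionality constants for $\tau_i$ and $\eta_i$ coincide.
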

\begin{proof}
(i) $\Rightarrow$ (ii) We show (\ref{eq:psiEta}). Using~(\ref{def:Delta}),
\begin{gather*}
\psi \eta_i = \psi \Delta \tau_i = \Delta \psi \tau_i \in \mathbb F \Delta \tau_{i-1} = \mathbb F \eta_{i-1}.
\end{gather*}
(ii) $\Rightarrow$ (i) Similar to the proof of (i)~$\Rightarrow$~(ii).
(i), (ii) $\Rightarrow$ (iii) By Definitions~\ref{def:dlowering},~\ref{def:dl}.
(iii) $\Rightarrow$ (i) We have~(\ref{eq:psiTau}) by Definition~\ref{def:dl}. We show $\Delta \psi = \psi \Delta$.
By Lemma~\ref{lem:normBasis} we may assume that $\psi$ is normalized.
The vector space $V$ has a basis $\lbrace \tau_i \rbrace_{i=0}^N$.
By Lemma~\ref{lem:charNorm}
we have
\begin{gather*}
\psi \tau_i = \vartheta_i \tau_{i-1}, \qquad
\psi \eta_i = \vartheta_i \eta_{i-1}, \qquad 0 \leq i \leq N.
\end{gather*}
So for $0 \leq i \leq N$,
\begin{gather*}
\Delta \psi \tau_i = \vartheta_i \Delta \tau_{i-1} =
\vartheta_i \eta_{i-1} = \psi \eta_i = \psi \Delta \tau_i.
\end{gather*}
By these comments $\Delta \psi = \psi \Delta$.
\end{proof}

We introduce some notation. For $0 \leq i\leq j\leq N$
define
\begin{gather}
\left[\begin{matrix} j \\
i \end{matrix} \right]_\vartheta =
\frac{
\vartheta_j \vartheta_{j-1} \cdots \vartheta_{j-i+1}}
{\vartheta_1 \vartheta_2 \cdots \vartheta_i}.
\label{eq:brackij}
\end{gather}
Note that
\begin{gather*}
\left[\begin{matrix} j \\
i \end{matrix} \right]_\vartheta =
\left[\begin{matrix} j \\
j-i \end{matrix} \right]_\vartheta,
 \qquad 0 \leq i \leq j \leq N.
\end{gather*}

\begin{Proposition}\label{prop:tfae4}
The following $(i)$--$(iii)$ are equivalent:
\begin{enumerate}\itemsep=0pt
\item[$(i)$] $\mathcal L \not=0$;
\item[$(ii)$] for $0 \leq j\leq N$,
\begin{gather}
\eta_j = \sum_{i=0}^j \eta_{j-i} (a_0)
\left[\begin{matrix} j \\
i \end{matrix} \right]_\vartheta \tau_i;
\label{eq:etaSum}
\end{gather}
\item[$(iii)$] for $0 \leq j\leq N$,
\begin{gather*}
\tau_j = \sum_{i=0}^j \tau_{j-i} (b_0)
\left[\begin{matrix} j \\
i \end{matrix} \right]_\vartheta \eta_i.
\end{gather*}
\end{enumerate}
Assume that $(i)$--$(iii)$ hold, and let
$\psi \in \mathcal L$ be normalized. Then
\begin{gather}
\label{eq:DSum}
\Delta = \sum_{i=0}^N \frac{\eta_i(a_0)}{\vartheta_1 \vartheta_2 \cdots\vartheta_i} \psi^i,\\
\label{eq:DiSum}
\Delta^{-1}= \sum_{i=0}^N \frac{\tau_i(b_0)}{\vartheta_1 \vartheta_2 \cdots\vartheta_i} \psi^i.
\end{gather}
\end{Proposition}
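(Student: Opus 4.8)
The plan is to establish $(i)\Leftrightarrow(ii)$ together with \eqref{eq:DSum} by a direct computation, and then to obtain $(i)\Leftrightarrow(iii)$ and \eqref{eq:DiSum} for free from the evident symmetry of the setup under interchanging $\lbrace a_i\rbrace$ with $\lbrace b_i\rbrace$: this swap interchanges $\tau_i\leftrightarrow\eta_i$, sends $\Delta$ to $\Delta^{-1}$ (it interchanges the two bases in \eqref{def:Delta}), and fixes each $\vartheta_i$, the space $\mathcal L$, and any normalized $\psi\in\mathcal L$. Before anything else I would record the action of the powers of a normalized $\psi$: from Lemma~\ref{lem:charNorm} and an easy induction, $\psi^i\tau_j=\vartheta_j\vartheta_{j-1}\cdots\vartheta_{j-i+1}\,\tau_{j-i}$ and $\psi^i\eta_j=\vartheta_j\vartheta_{j-1}\cdots\vartheta_{j-i+1}\,\eta_{j-i}$ for $0\le i\le j\le N$, with $\psi^i$ annihilating $\tau_j$ and $\eta_j$ whenever $i>j$. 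Combined with Lemma~\ref{lem:nilp} (local nilpotence) this shows the series in \eqref{eq:DSum} and \eqref{eq:DiSum} act locally finitely on $V$, hence define genuine elements of $\operatorname{End}(V)$.

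For $(i)\Rightarrow(ii)$ I would fix a normalized $\psi\in\mathcal L$ (which exists by Lemma~\ref{lem:norm}) and expand $\eta_j=\sum_{i=0}^j c_{j,i}\tau_i$ in the basis $\lbrace\tau_i\rbrace_{i=0}^j$ of $V_j$ (Lemma~\ref{lem:Vnbasis}). Applying $\psi$, using $\psi\eta_j=\vartheta_j\eta_{j-1}$ and $\psi\tau_i=\vartheta_i\tau_{i-1}$ from Lemma~\ref{lem:charNorm}, and comparing coefficients in the basis $\lbrace\tau_i\rbrace_{i=0}^{j-1}$ of $V_{j-1}$ gives the recursion $\vartheta_i\,c_{j,i}=\vartheta_j\,c_{j-1,i-1}$ for $1\le i\le j$. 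Iterating this down to $c_{j-i,0}$ yields $c_{j,i}=\left[\begin{matrix} j \\ i \end{matrix}\right]_\vartheta c_{j-i,0}$, and evaluating $\eta_m=\sum_i c_{m,i}\tau_i$ at $x=a_0$ (where $\tau_0(a_0)=1$ and $\tau_i(a_0)=0$ for $i\ge1$) gives $c_{m,0}=\eta_m(a_0)$; this is exactly \eqref{eq:etaSum}. For \eqref{eq:DSum} I would apply the right-hand side to $\tau_j$, use the power formula above to obtain $\sum_{i=0}^j\eta_i(a_0)\left[\begin{matrix} j \\ i \end{matrix}\right]_\vartheta\tau_{j-i}$, reindex $i\mapsto j-i$, invoke the symmetry $\left[\begin{matrix} j \\ i \end{matrix}\right]_\vartheta=\left[\begin{matrix} j \\ j-i \end{matrix}\right]_\vartheta$ noted after \eqref{eq:brackij}, and recognize the result as $\eta_j=\Delta\tau_j$ by \eqref{eq:etaSum}; since $\lbrace\tau_j\rbrace$ is a basis the two maps coincide.

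For $(ii)\Rightarrow(i)$ I would define $\psi\in\operatorname{End}(V)$ on the basis $\lbrace\tau_i\rbrace$ by $\psi\tau_i=\vartheta_i\tau_{i-1}$, so that $\psi\ne0$ because $\psi\tau_1=\vartheta_1\tau_0=1$. Rewriting $\eta_j$ via \eqref{eq:etaSum} and applying $\psi$ gives $\psi\eta_j=\sum_{i\ge1}\eta_{j-i}(a_0)\left[\begin{matrix} j \\ i \end{matrix}\right]_\vartheta\vartheta_i\tau_{i-1}$; after reindexing and using the elementary identity $\vartheta_{i+1}\left[\begin{matrix} j \\ i+1 \end{matrix}\right]_\vartheta=\vartheta_j\left[\begin{matrix} j-1 \\ i \end{matrix}\right]_\vartheta$ this collapses to $\vartheta_j\eta_{j-1}$ by \eqref{eq:etaSum} with $j$ replaced by $j-1$. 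Hence $\psi\tau_i\in\mathbb F\tau_{i-1}$ and $\psi\eta_i\in\mathbb F\eta_{i-1}$ for all $i$, so $\psi\in\mathcal L$ and $\mathcal L\ne0$. Finally, applying the now-proved equivalence $(i)\Leftrightarrow(ii)$ and formula \eqref{eq:DSum} to the data with $\lbrace a_i\rbrace$ and $\lbrace b_i\rbrace$ interchanged yields $(i)\Leftrightarrow(iii)$ and \eqref{eq:DiSum}. I expect the only real obstacle to be the index bookkeeping in the $\vartheta$-binomial manipulations — verifying the recursion $\vartheta_i c_{j,i}=\vartheta_j c_{j-1,i-1}$ and the Pascal-type identity $\vartheta_{i+1}\left[\begin{matrix} j \\ i+1 \end{matrix}\right]_\vartheta=\vartheta_j\left[\begin{matrix} j-1 \\ i \end{matrix}\right]_\vartheta$, and keeping the reindexing straight — together with the mild care needed so that the ``compare coefficients'' and ``infinite series'' steps are legitimate when $N=\infty$, which is exactly what Lemmas~\ref{lem:Vnbasis} and~\ref{lem:nilp} guarantee.
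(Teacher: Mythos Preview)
Your proposal is correct and follows essentially the same route as the paper. The only notable variation is in $(ii)\Rightarrow(i)$: you verify $\psi\eta_j=\vartheta_j\eta_{j-1}$ directly via the Pascal-type identity, whereas the paper first establishes \eqref{eq:DSum}, observes that $\Delta$ is therefore a polynomial in $\psi$ (hence $\psi\Delta=\Delta\psi$), and then invokes Proposition~\ref{prop:Delta} to conclude $\psi\in\mathcal L$; both arguments are short and equivalent in spirit.
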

\begin{proof}
(i) $\Rightarrow$ (ii) Let $j$ be given. There exist scalars $\lbrace \alpha_i \rbrace_{i=0}^j$
in $\mathbb F$ such that
\begin{gather}
\eta_j = \sum_{i=0}^j \alpha_i \tau_i.
\label{eq:etaTau}
\end{gather}
For $0 \leq i \leq j$ we show
\begin{gather}
\alpha_i = \eta_{j-i}(a_0)
\left[\begin{matrix} j \\
i \end{matrix} \right]_\vartheta. \label{eq:goalalpha}
\end{gather}
Let $\psi \in \mathcal L$ be normalized. In
(\ref{eq:etaTau}) we apply $\psi^i$ to each side,
and evaluate the
result using
Lemma~\ref{lem:charNorm}.
We then set $x=a_0$, and use the fact that
$\tau_0=1$ and $\tau_k (a_0)=0$ for $1 \leq k \leq N$.
By these comments
\begin{gather*}
\eta_{j-i} (a_0)
\vartheta_j \vartheta_{j-1} \cdots \vartheta_{j-i+1}
= \alpha_i \vartheta_1 \vartheta_2 \cdots \vartheta_i.
\end{gather*}
From this equation we obtain
(\ref{eq:goalalpha}).
By (\ref{eq:etaTau}),
(\ref{eq:goalalpha})
we obtain (\ref{eq:etaSum}), so (ii) holds.

(ii) $\Rightarrow$ (i) Define
 $\psi \in \operatorname{End}(V)$ such that
 $\psi \tau_i = \vartheta_i \tau_{i-1}$ for $0 \leq i \leq N$.
 We have $\psi \not=0$ since $N\geq 1$ and $\vartheta_1=1$,
 $\tau_0=1$. We show $\psi \in \mathcal L$.
To do this, it is convenient to first show that $\psi$ satisfies~(\ref{eq:DSum}).
To obtain~(\ref{eq:DSum}),
for $0 \leq j \leq N$ we apply
each side of~(\ref{eq:DSum}) to $\tau_j$.
Concerning the left-hand side of~(\ref{eq:DSum}),
we have $\Delta \tau_j = \eta_j$
by~(\ref{def:Delta}).
Concerning the right-hand side of~(\ref{eq:DSum}),
\begin{align*}
\sum_{i=0}^j \frac{\eta_i(a_0)}{\vartheta_1 \vartheta_2\cdots \vartheta_i}
\psi^i \tau_j &=
\sum_{i=0}^j \frac{\eta_i(a_0)
\vartheta_j \vartheta_{j-1} \cdots \vartheta_{j-i+1}
}{\vartheta_1 \vartheta_2\cdots \vartheta_i}
 \tau_{j-i}
=
\sum_{i=0}^j \eta_i(a_0)
\left[\begin{matrix} j \\
i \end{matrix} \right]_\vartheta
 \tau_{j-i}
\\
 &=
\sum_{i=0}^j \eta_{j-i}(a_0)
\left[\begin{matrix} j \\
i \end{matrix} \right]_\vartheta
 \tau_{i}
=\eta_j.
\end{align*}
We have shown that each side of
(\ref{eq:DSum}) sends $\tau_j \mapsto \eta_j$ for $0 \leq j \leq N$.
Therefore (\ref{eq:DSum}) holds. By~(\ref{eq:DSum}), $\Delta$ is a polynomial in $\psi$.
Consequently $\psi \Delta = \Delta \psi$.
The map $\psi$ satisfies
Proposition~\ref{prop:Delta}(i),
so $\psi \in \mathcal L$ by
Proposition~\ref{prop:Delta}(i),~(iii).
Therefore $\mathcal L \not=0$.

(i) $\Leftrightarrow$ (iii)
Interchange the roles of
$\lbrace a_i\rbrace_{i=0}^{N-1}$,
$\lbrace b_i\rbrace_{i=0}^{N-1}$ in the proof of
(i) $\Leftrightarrow$ (ii).

Now assume that (i)--(iii) hold. We saw in
the proof of
(ii) $\Rightarrow$ (i) that~(\ref{eq:DSum}) holds.
Interchanging the roles of
$\lbrace a_i\rbrace_{i=0}^{N-1}$,
$\lbrace b_i\rbrace_{i=0}^{N-1}$ in that proof, we see that~(\ref{eq:DiSum}) holds.
\end{proof}

Later in the paper, we will obtain necessary and sufficient
conditions for the data~(\ref{eq:data}) to satisfy conditions
(i)--(iii)
in Proposition~\ref{prop:tfae4}; our result is Theorem~\ref{thm:hr}.
In order to motivate this result,
we look at some examples of double lowering data.
This will be done in the next section.

\section{First examples of double lowering data}\label{section6}

We continue to discuss the double lowering
space $\mathcal L$ for
the data (\ref{eq:data}). In this section we give
three assumptions under which this data is double lowering.
Under each assumption we describe the polynomials
$\lbrace \tau_i\rbrace_{i=0}^{N-1}$,
$\lbrace \eta_i\rbrace_{i=0}^{N-1}$ from
(\ref{eq:tau}),
(\ref{eq:eta}),
the parameters $\lbrace \vartheta_i \rbrace_{i=0}^{N}$
from Definition
\ref{def:vartheta},
and the
map $\Delta$ from
(\ref{def:Delta}).

As a warmup, we examine the condition (\ref{eq:etaSum}) for some small values of $j$.

\begin{Lemma}\label{lem:Prop}
The following $(i)$--$(iv)$ hold.
\begin{enumerate}\itemsep=0pt
\item[$(i)$] $\eta_0 = \tau_0$.
\item[$(ii)$]
 $\eta_1 = \eta_1(a_0)\tau_0 + \tau_1$.
\item[$(iii)$] For $N\geq 2$,
\begin{gather*}
\eta_2 = \eta_2(a_0)\tau_0 + \vartheta_2 \eta_1(a_0) \tau_1 + \tau_2.
\end{gather*}
\item[$(iv)$] For $N\geq 3$,
\begin{gather*}
\eta_3 = \eta_3(a_0)\tau_0 +
\vartheta_3 \eta_2(a_0) \tau_1+
\vartheta_3 \eta_1(a_0) \tau_2+
\tau_3 +(x - a_0) \varepsilon,
\end{gather*}
where
\begin{gather*}
\varepsilon =
(b_0 - a_1)(b_2-a_1)-
(a_0 - b_1)(a_2-b_1).
\end{gather*}
\end{enumerate}
\end{Lemma}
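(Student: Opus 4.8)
The plan is to verify each of the four identities by directly expressing $\eta_j$ in the basis $\{\tau_i\}_{i=0}^j$ of $V_j$ (Lemma~\ref{lem:Vnbasis}) and matching coefficients, using only the recurrences~(\ref{eq:xrec}) and the leading-term information in Lemmas~\ref{lem:lterm} and~\ref{lem:etaMtau}, together with the definition of $\vartheta_i$ (Definition~\ref{def:vartheta}). Parts $(i)$ and $(ii)$ are immediate: $\eta_0=1=\tau_0$, and $\eta_1-\tau_1=(x-b_0)-(x-a_0)=a_0-b_0=\eta_1(a_0)$, which is a scalar, hence equals $\eta_1(a_0)\tau_0$. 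So the substance is in parts $(iii)$ and $(iv)$.

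For part $(iii)$, write $\eta_2=\tau_2+c_1\tau_1+c_0\tau_0$ with $c_0,c_1\in\mathbb F$ (the leading coefficient is $1$ since both are monic of degree $2$). Comparing the coefficient of $x^1$ and using Lemma~\ref{lem:etaMtau}$(ii)$ gives $c_1 = a_0+a_1-b_0-b_1 = (a_0-b_0)\vartheta_2$. On the other hand $\eta_1(a_0)=a_0-b_0$, so $c_1=\vartheta_2\eta_1(a_0)$, as claimed. Finally evaluating $\eta_2=\tau_2+c_1\tau_1+c_0\tau_0$ at $x=a_0$ kills $\tau_1$ and $\tau_2$ (both vanish at $a_0$), leaving $c_0=\eta_2(a_0)$. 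This settles $(iii)$.

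For part $(iv)$, the same strategy applies but now there is a genuine computation, and this is the step I expect to be the main obstacle. Write $\eta_3=\tau_3+d_2\tau_2+d_1\tau_1+d_0\tau_0$. The coefficient of $x^2$ in $\eta_3-\tau_3$ is $a_0+a_1+a_2-b_0-b_1-b_2=(a_0-b_0)\vartheta_3$ by Lemma~\ref{lem:etaMtau}$(ii)$, and $\eta_2(a_0)=(a_0-b_0)(a_0-b_1)$, so one must check $d_2 = \vartheta_3\eta_2(a_0)/\bigl((a_0-b_0)(a_0-b_1)\bigr)\cdot(a_0-b_0)(a_0-b_1)$; more carefully, matching the degree-$2$ coefficient forces $d_2=(a_0-b_0)\vartheta_3$, and one checks $\vartheta_3\eta_2(a_0)$ does \emph{not} equal this in general, which is exactly why the correction term $(x-a_0)\varepsilon$ must appear. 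So instead one posits $\eta_3 = \eta_3(a_0)\tau_0+\vartheta_3\eta_2(a_0)\tau_1+\vartheta_3\eta_1(a_0)\tau_2+\tau_3+(x-a_0)\varepsilon$ with $\varepsilon\in\mathbb F$ to be determined, notes that $(x-a_0)\varepsilon\in V_1$ so the degree-$3$ and degree-$2$ coefficients on both sides already agree (degree $3$: monic; degree $2$: $\vartheta_3\eta_1(a_0)=\vartheta_3(a_0-b_0)$ must equal $(a_0-b_0)\vartheta_3$, true), and then expands everything as explicit cubics in $x$ using~(\ref{eq:xrec}). Comparing the coefficient of $x^1$ determines $\varepsilon$, and the resulting expression simplifies—after expanding $\eta_1(a_0)=a_0-b_0$, $\eta_2(a_0)=(a_0-b_0)(a_0-b_1)$, $\vartheta_3=(a_0+a_1+a_2-b_0-b_1-b_2)/(a_0-b_0)$, and the coefficients of $\tau_1,\tau_2,\tau_3$ from Lemma~\ref{lem:lterm}—to $\varepsilon=(b_0-a_1)(b_2-a_1)-(a_0-b_1)(a_2-b_1)$. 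Evaluating the whole identity at $x=a_0$ then kills every term except $\eta_3(a_0)\tau_0$, confirming the constant coefficient and completing the proof. The only delicate point is the bookkeeping in the $x^1$-coefficient comparison, since several terms contribute products of pairs of the $a_i,b_i$; I would organize this by writing each side as $x^3 + (\text{deg }2)x^2 + (\text{deg }1)x + (\text{deg }0)$ and reading off the linear coefficient symbolically.
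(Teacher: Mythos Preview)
Your approach is correct and is essentially the paper's own proof spelled out in detail: the paper says only ``evaluate the terms using (\ref{eq:tau}), (\ref{eq:eta}) and Definition~\ref{def:vartheta},'' and you carry out exactly that direct computation, organized via the expansion of $\eta_j$ in the basis $\{\tau_i\}$ and evaluation at $x=a_0$.

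One small wrinkle in your write-up of $(iv)$: the sentence ``one checks $\vartheta_3\eta_2(a_0)$ does \emph{not} equal this in general'' compares the wrong quantities---$\vartheta_3\eta_2(a_0)$ is the claimed coefficient of $\tau_1$, not of $\tau_2$, so comparing it to $d_2$ is a mismatch. Since $(x-a_0)\varepsilon=\varepsilon\,\tau_1$, the correction affects only the $\tau_1$-coefficient, and the $\tau_2$-coefficient $\vartheta_3\eta_1(a_0)=\vartheta_3(a_0-b_0)$ already matches $d_2$ exactly, as you then correctly note. With that motivational sentence removed or fixed, the rest of your outline (verify degrees $3$ and $2$; read off $d_0=\eta_3(a_0)$ by evaluating at $a_0$; compute the $x^1$-coefficient to identify $\varepsilon$) is valid and completes the proof.
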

\begin{proof} To verify these equations, evaluate the terms using~(\ref{eq:tau}), (\ref{eq:eta}) and Definition~\ref{def:vartheta}.
\end{proof}

\begin{Lemma}\label{lem:n2}
Assume that $N\leq 2$. Then $\mathcal L\not=0$.
\end{Lemma}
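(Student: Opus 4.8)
The plan is to show directly that when $N\le 2$ the double lowering space is nonzero by invoking the equivalence in Proposition~\ref{prop:tfae4}, specifically condition~(ii) which asserts $\eta_j = \sum_{i=0}^j \eta_{j-i}(a_0)\big[\begin{smallmatrix}j\\i\end{smallmatrix}\big]_\vartheta\tau_i$ for $0\le j\le N$. Since $N\le 2$, there are only the cases $j=0,1,2$ to check, and these are precisely parts $(i)$, $(ii)$, $(iii)$ of Lemma~\ref{lem:Prop}. So after unwinding the notation, the lemma is essentially already proved.

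Concretely, I would first spell out what $\big[\begin{smallmatrix}j\\i\end{smallmatrix}\big]_\vartheta$ equals for $j\le 2$ using~\eqref{eq:brackij}: since $\vartheta_0=0$, $\vartheta_1=1$, one has $\big[\begin{smallmatrix}0\\0\end{smallmatrix}\big]_\vartheta=1$, $\big[\begin{smallmatrix}1\\0\end{smallmatrix}\big]_\vartheta=\big[\begin{smallmatrix}1\\1\end{smallmatrix}\big]_\vartheta=1$, and $\big[\begin{smallmatrix}2\\0\end{smallmatrix}\big]_\vartheta=\big[\begin{smallmatrix}2\\2\end{smallmatrix}\big]_\vartheta=1$ while $\big[\begin{smallmatrix}2\\1\end{smallmatrix}\big]_\vartheta=\vartheta_2$. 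Substituting these into~\eqref{eq:etaSum} gives exactly the three displayed identities $\eta_0=\tau_0$, $\eta_1=\eta_1(a_0)\tau_0+\tau_1$, and $\eta_2=\eta_2(a_0)\tau_0+\vartheta_2\eta_1(a_0)\tau_1+\tau_2$, which are Lemma~\ref{lem:Prop}$(i)$--$(iii)$. Thus condition~(ii) of Proposition~\ref{prop:tfae4} holds for all $0\le j\le N$, and therefore condition~(i) of that proposition holds, i.e.\ $\mathcal L\ne 0$.

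There is essentially no obstacle here: the only thing requiring a moment's care is confirming that Lemma~\ref{lem:Prop}$(i)$--$(iii)$ really are the $j=0,1,2$ instances of~\eqref{eq:etaSum} after the $\vartheta$-binomial coefficients are evaluated — which follows immediately from $\vartheta_1=1$. One might instead argue even more directly: define $\psi\in\operatorname{End}(V)$ by $\psi\tau_i=\vartheta_i\tau_{i-1}$ and verify by hand on the basis $\{\eta_0,\eta_1,\eta_2\}$ (using Lemma~\ref{lem:Prop}) that $\psi\eta_i\in\mathbb F\eta_{i-1}$ as well, but routing through Proposition~\ref{prop:tfae4} is cleaner and avoids recomputation. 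Either way the proof is a one-line appeal to Lemma~\ref{lem:Prop} and Proposition~\ref{prop:tfae4}.
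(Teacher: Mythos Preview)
Your proof is correct and follows exactly the paper's approach: invoke Proposition~\ref{prop:tfae4}(i),(ii) and verify condition~(ii) using Lemma~\ref{lem:Prop}(i)--(iii). The paper's proof is literally the one-line citation you arrive at, and your explicit computation of the $\vartheta$-binomial coefficients just unpacks why Lemma~\ref{lem:Prop} matches~\eqref{eq:etaSum} for $j\le 2$.
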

\begin{proof} By Proposition \ref{prop:tfae4}(i),~(ii) and Lemma~\ref{lem:Prop}.
\end{proof}

\begin{Lemma}\label{lem:N3}
Assume that $N=3$. Then $\mathcal L\not=0$ if and only if
\begin{gather*}
(a_0 - b_1)(a_2-b_1) = (b_0 - a_1)(b_2-a_1).
\end{gather*}
\end{Lemma}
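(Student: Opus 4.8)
The statement to prove is Lemma~\ref{lem:N3}: for $N=3$ the data is double lowering if and only if $(a_0-b_1)(a_2-b_1)=(b_0-a_1)(b_2-a_1)$. The natural route is through Proposition~\ref{prop:tfae4}(i)$\Leftrightarrow$(ii): with $N=3$, the data is double lowering precisely when the identity \eqref{eq:etaSum} holds for $j=0,1,2,3$. By Lemma~\ref{lem:Prop}(i)--(iii), the cases $j=0,1,2$ hold automatically for any data, so the only condition is that \eqref{eq:etaSum} holds for $j=3$.

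\textbf{Key steps.} First I would write down what \eqref{eq:etaSum} demands when $j=3$: namely
\begin{gather*}
\eta_3 = \eta_3(a_0)\tau_0 + \vartheta_3\eta_2(a_0)\tau_1 + \vartheta_3\eta_1(a_0)\tau_2 + \tau_3,
\end{gather*}
using $\left[\begin{smallmatrix}3\\0\end{smallmatrix}\right]_\vartheta = \left[\begin{smallmatrix}3\\3\end{smallmatrix}\right]_\vartheta = 1$ and $\left[\begin{smallmatrix}3\\1\end{smallmatrix}\right]_\vartheta = \left[\begin{smallmatrix}3\\2\end{smallmatrix}\right]_\vartheta = \vartheta_3$ (the latter from $\vartheta_1=1$, $\vartheta_2\vartheta_1$ in the denominator cancelling against $\vartheta_3\vartheta_2$ in the numerator). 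Now compare with Lemma~\ref{lem:Prop}(iv), which expresses $\eta_3$ as exactly this right-hand side \emph{plus} the extra term $(x-a_0)\varepsilon$ with $\varepsilon = (b_0-a_1)(b_2-a_1) - (a_0-b_1)(a_2-b_1)$. Hence \eqref{eq:etaSum} at $j=3$ holds if and only if $(x-a_0)\varepsilon = 0$ in $\mathbb F[x]$, which — since $x-a_0$ is a nonzero polynomial — is equivalent to $\varepsilon = 0$, i.e.\ to $(a_0-b_1)(a_2-b_1) = (b_0-a_1)(b_2-a_1)$. Combining: the data is double lowering $\iff$ \eqref{eq:etaSum} holds for all $j\in\{0,1,2,3\}$ $\iff$ it holds for $j=3$ $\iff$ $\varepsilon=0$, which is the claimed identity.

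\textbf{Main obstacle.} There is essentially no obstacle: all the hard computation has already been packaged into Lemma~\ref{lem:Prop}. The only points requiring a moment's care are (a) confirming that for $N=3$ one really only needs to check \eqref{eq:etaSum} at $j=3$ — this is immediate from Lemma~\ref{lem:Prop}(i)--(iii) covering $j=0,1,2$ unconditionally — and (b) evaluating the $\vartheta$-binomial coefficients $\left[\begin{smallmatrix}3\\i\end{smallmatrix}\right]_\vartheta$ to see that the ``main part'' of Lemma~\ref{lem:Prop}(iv) matches \eqref{eq:etaSum} term by term, so that the entire discrepancy is the single summand $(x-a_0)\varepsilon$. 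Once those are in place the equivalence drops out, and the proof is just a citation of Proposition~\ref{prop:tfae4} together with Lemma~\ref{lem:Prop}.
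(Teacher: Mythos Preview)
Your proposal is correct and follows exactly the route the paper takes: the paper's proof is the single sentence ``By Proposition~\ref{prop:tfae4}(i),~(ii) and Lemma~\ref{lem:Prop},'' and you have simply unpacked that citation, correctly computing the $\vartheta$-binomial coefficients $\left[\begin{smallmatrix}3\\i\end{smallmatrix}\right]_\vartheta$ and matching the required identity \eqref{eq:etaSum} at $j=3$ against Lemma~\ref{lem:Prop}(iv) to isolate the discrepancy $(x-a_0)\varepsilon$.
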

\begin{proof} By Proposition \ref{prop:tfae4}(i),(ii) and Lemma~\ref{lem:Prop}.
\end{proof}

\begin{Lemma}\label{lem:five}
Assume that $a_{i-1} = b_i$ for $1 \leq i \leq N-1$. Then the following {\rm (i)--(v)} hold:
\begin{enumerate}\itemsep=0pt
\item[$(i)$] $\mathcal L\not=0$;
\item[$(ii)$] $\eta_i = (x-b_0) \tau_{i-1}$ for $1 \leq i \leq N$;
\item[$(iii)$] $\eta_i(a_0)=0$ for $2 \leq i \leq N$;
\item[$(iv)$] $\vartheta_i = \frac{a_{i-1}-b_0}{a_0-b_0}$ for $
1 \leq i \leq N$;
\item[$(v)$] $\Delta = I + (a_0-b_0)\psi$, where $\psi \in \mathcal L$ is normalized.
\end{enumerate}
\end{Lemma}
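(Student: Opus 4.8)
The plan is to verify parts (ii)--(iv) by direct computation from the definitions, and then deduce (i) and (v) as consequences. Throughout, the standing hypothesis is $a_{i-1}=b_i$ for $1\le i\le N-1$, i.e.\ the sequence $\{b_i\}_{i=1}^{N-1}$ is the shift of $\{a_i\}_{i=0}^{N-2}$.

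First I would prove (ii). For $1\le i\le N$ we have, by \eqref{eq:eta}, $\eta_i=(x-b_0)(x-b_1)\cdots(x-b_{i-1})$. Using $b_h=a_{h-1}$ for $1\le h\le i-1$ this equals $(x-b_0)(x-a_0)(x-a_1)\cdots(x-a_{i-2})=(x-b_0)\tau_{i-1}$ by \eqref{eq:tau}. For (iii), evaluate $\eta_i=(x-b_0)\tau_{i-1}$ at $x=a_0$: since $\tau_{i-1}(a_0)=(a_0-a_0)(a_0-a_1)\cdots=0$ whenever $i-1\ge 1$, i.e.\ $i\ge 2$, we get $\eta_i(a_0)=0$. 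For (iv), apply Lemma~\ref{lem:etaMtau} or compute directly: from Definition~\ref{def:vartheta}, $\vartheta_i(a_0-b_0)=\sum_{h=0}^{i-1}a_h-\sum_{h=0}^{i-1}b_h$. The telescoping uses $b_h=a_{h-1}$ for $1\le h\le i-1$, so $\sum_{h=0}^{i-1}b_h = b_0+\sum_{h=1}^{i-1}a_{h-1}=b_0+\sum_{h=0}^{i-2}a_h$. Hence $\sum_{h=0}^{i-1}a_h-\sum_{h=0}^{i-1}b_h = a_{i-1}-b_0$, giving $\vartheta_i=\frac{a_{i-1}-b_0}{a_0-b_0}$ for $1\le i\le N$; note this is consistent with $\vartheta_1=1$.

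Next, part (i): by Proposition~\ref{prop:tfae4}(i),(ii) it suffices to check that $\eta_j=\sum_{i=0}^{j}\eta_{j-i}(a_0)\left[\begin{smallmatrix}j\\i\end{smallmatrix}\right]_\vartheta\tau_i$ for $0\le j\le N$. Using (iii), every term with $j-i\ge 2$ vanishes, so only $i=j$ and $i=j-1$ survive: the right-hand side becomes $\left[\begin{smallmatrix}j\\j\end{smallmatrix}\right]_\vartheta\tau_j + \eta_1(a_0)\left[\begin{smallmatrix}j\\j-1\end{smallmatrix}\right]_\vartheta\tau_{j-1}$. Here $\left[\begin{smallmatrix}j\\j\end{smallmatrix}\right]_\vartheta=1$, $\left[\begin{smallmatrix}j\\j-1\end{smallmatrix}\right]_\vartheta=\left[\begin{smallmatrix}j\\1\end{smallmatrix}\right]_\vartheta=\vartheta_j$ by \eqref{eq:brackij}, and $\eta_1(a_0)=a_0-b_0$ by \eqref{eq:example}. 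So the sum equals $\tau_j+\vartheta_j(a_0-b_0)\tau_{j-1}=\tau_j+(a_{j-1}-b_0)\tau_{j-1}$ by (iv). On the other hand (ii) gives $\eta_j=(x-b_0)\tau_{j-1}=(x-a_{j-1})\tau_{j-1}+(a_{j-1}-b_0)\tau_{j-1}=\tau_j+(a_{j-1}-b_0)\tau_{j-1}$, using \eqref{eq:xrec}. The two agree, so (i) holds (the cases $j=0,1$ being \eqref{eq:example}).

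Finally part (v): since $\mathcal L\ne 0$, let $\psi\in\mathcal L$ be normalized. By \eqref{eq:DSum} in Proposition~\ref{prop:tfae4}, $\Delta=\sum_{i=0}^{N}\frac{\eta_i(a_0)}{\vartheta_1\cdots\vartheta_i}\psi^i$; by (iii) only the $i=0$ and $i=1$ terms survive, giving $\Delta=\eta_0(a_0)I+\frac{\eta_1(a_0)}{\vartheta_1}\psi=I+(a_0-b_0)\psi$ since $\eta_0=1$, $\vartheta_1=1$, and $\eta_1(a_0)=a_0-b_0$. I expect no serious obstacle here; the only mild care needed is the index bookkeeping in the telescoping sum for (iv) and making sure the edge cases $j\le 1$ in (i) are handled by \eqref{eq:example} rather than by the vanishing argument.
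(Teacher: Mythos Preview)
Your proposal is correct and follows essentially the same approach as the paper: verify (ii)--(iv) directly from the definitions, deduce (i) via Proposition~\ref{prop:tfae4}(i),(ii), and obtain (v) from~\eqref{eq:DSum} together with the vanishing in (iii). The paper's proof is terser but uses exactly these ingredients in the same order; your explicit verification of the identity in Proposition~\ref{prop:tfae4}(ii) simply fills in details the paper leaves to the reader.
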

\begin{proof} (ii)~By (\ref{eq:tau}), (\ref{eq:eta}).
(iii)~By (ii) and since $\tau_j(a_0)=0 $ for $1 \leq j \leq N$.
(iv)~Use Definition~\ref{def:vartheta}.
(i)~Apply Proposition~\ref{prop:tfae4}(i),~(ii).
(v)~By~(\ref{eq:DSum}) and~(iii) above.
\end{proof}

\begin{Lemma}\label{lem:fivedual}
Assume that $a_{i} = b_{i-1}$ for $1 \leq i \leq N-1$.
Then the following $(i)$--$(v)$ hold:
\begin{enumerate}\itemsep=0pt
\item[$(i)$] $\mathcal L\not=0$;
\item[$(ii)$] $\tau_i = (x-a_0) \eta_{i-1}$ for $1 \leq i \leq N$;
\item[$(iii)$] $\tau_i(b_0)=0$ for $2 \leq i \leq N$;
\item[$(iv)$] $\displaystyle{
\vartheta_i = \frac{a_{0}-b_{i-1}}{a_0-b_0}}$ for $ 1 \leq i \leq N$;
\item[$(v)$] $\Delta^{-1} = I + (b_0-a_0)\psi$, where $\psi \in \mathcal L$
is normalized.
\end{enumerate}
\end{Lemma}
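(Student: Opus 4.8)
\textbf{Proof proposal for Lemma~\ref{lem:fivedual}.}

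The plan is to obtain this lemma by duality, exploiting the symmetry between the two sequences of the data that has been used repeatedly above (e.g.\ in the proof of Proposition~\ref{prop:tfae4}, where the implication (i)~$\Leftrightarrow$~(iii) is deduced from (i)~$\Leftrightarrow$~(ii) by interchanging $\{a_i\}$ and $\{b_i\}$). The hypothesis $a_i = b_{i-1}$ for $1 \le i \le N-1$ is precisely the hypothesis of Lemma~\ref{lem:five} with the roles of $\{a_i\}_{i=0}^{N-1}$ and $\{b_i\}_{i=0}^{N-1}$ swapped: indeed Lemma~\ref{lem:five} assumes $a_{i-1} = b_i$ for $1 \le i \le N-1$. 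So the main idea is to apply Lemma~\ref{lem:five} to the ``swapped'' data $\{b_i\}_{i=0}^{N-1}$, $\{a_i\}_{i=0}^{N-1}$.

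First I would observe that under the swap, the polynomials $\tau_i$ and $\eta_i$ interchange, the parameter $\vartheta_i$ of Definition~\ref{def:vartheta} is unchanged (its numerator negates and so does its denominator), and the map $\Delta$ is replaced by $\Delta^{-1}$ (since $\Delta$ sends $\tau_i \mapsto \eta_i$, its inverse sends $\eta_i \mapsto \tau_i$, which is the $\Delta$ of the swapped data). Also $\mathcal L$ is unchanged as a subspace of $\operatorname{End}(V)$, by Definition~\ref{def:dlowering}, which is symmetric in the two conditions; and a normalized $\psi$ for the original data is normalized for the swapped data, since $\psi x$ does not reference the data. With these dictionary entries in hand, parts (i)--(v) here are exactly the translations of parts (i)--(v) of Lemma~\ref{lem:five}:~(i) $\mathcal L \neq 0$ transfers verbatim;~(ii) $\tau_i = (x-a_0)\eta_{i-1}$ is the image of $\eta_i = (x-b_0)\tau_{i-1}$;~(iii) $\tau_i(b_0)=0$ is the image of $\eta_i(a_0)=0$;~(iv) $\vartheta_i = \frac{a_0 - b_{i-1}}{a_0 - b_0}$ is the image of $\vartheta_i = \frac{a_{i-1}-b_0}{a_0-b_0}$ after applying $a_j \leftrightarrow b_j$ (note $b_{i-1}$ becomes $a_{i-1}$, and one rewrites using $a_0 - b_0$ in the denominator by negating, or simply uses Definition~\ref{def:vartheta} directly); and~(v) $\Delta^{-1} = I + (b_0 - a_0)\psi$ is the image of $\Delta = I + (a_0 - b_0)\psi$ under $\Delta \mapsto \Delta^{-1}$, $a_0 \leftrightarrow b_0$.

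Alternatively, and perhaps more transparently for the reader, each part can be checked directly without invoking the swap, mirroring the proof of Lemma~\ref{lem:five}:~(ii) follows from (\ref{eq:tau}), (\ref{eq:eta}) since the hypothesis gives $\tau_i = (x-a_0)(x-b_0)(x-b_1)\cdots(x-b_{i-2}) = (x-a_0)\eta_{i-1}$;~(iii) follows from (ii) since $\eta_j(b_0)=0$ for $1 \le j \le N-1$;~(iv) is an immediate evaluation of Definition~\ref{def:vartheta} using the telescoping $a_0 + a_1 + \cdots + a_{i-1} - b_0 - \cdots - b_{i-1} = a_0 - b_{i-1}$;~(i) follows from Proposition~\ref{prop:tfae4}(i),(iii) together with (iii) above (the sum in part~(iii) of that proposition collapses); and~(v) follows from (\ref{eq:DiSum}) and (iii) above, since all terms with $i \ge 2$ vanish and $\tau_1(b_0) = b_0 - a_0$. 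I expect no real obstacle here: the only point requiring a moment's care is the bookkeeping of signs in~(iv) and~(v) under the swap, which is why I would lean toward the direct verification; it is essentially the same one-line argument as in Lemma~\ref{lem:five}, and I would simply write ``Similar to the proof of Lemma~\ref{lem:five}, with the roles of $\{a_i\}_{i=0}^{N-1}$ and $\{b_i\}_{i=0}^{N-1}$ interchanged, using (\ref{eq:DiSum}) in place of (\ref{eq:DSum}) for part~(v).''
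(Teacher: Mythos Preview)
Your proposal is correct and takes essentially the same approach as the paper: the paper's proof is the single line ``Interchange the roles of $\lbrace a_i\rbrace_{i=0}^{N-1}$, $\lbrace b_i\rbrace_{i=0}^{N-1}$ in Lemma~\ref{lem:five},'' which is exactly the duality argument you describe (and your suggested final wording matches it almost verbatim).
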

\begin{proof} Interchange the roles of
$\lbrace a_i\rbrace_{i=0}^{N-1}$,
$\lbrace b_i\rbrace_{i=0}^{N-1}$ in Lemma~\ref{lem:five}.
\end{proof}

For the rest of this section,
assume that $N\geq 2$. Also for the rest of this section,
fix $\theta \in \mathbb F$ and assume
\begin{gather}
a_0 \not=\theta, \qquad b_0 \not=\theta;\label{eq:c1}\\
a_i = \theta, \qquad b_i = \theta, \qquad 1 \leq i \leq N-2;\label{eq:c2}\\
\frac{\theta- a_{N-1}}{\theta-b_0} =
\frac{\theta- b_{N-1}}{\theta-a_0} \qquad \text{if} \ N\not=\infty.\label{eq:c3}
\end{gather}
Using Definition~\ref{def:vartheta},
\begin{gather*}
\vartheta_i = 1, \qquad 1 \leq i \leq N-1
\end{gather*}
and for $N\not=\infty$,
\begin{gather*}
a_{N-1} = b_0 +\vartheta_N (\theta-b_0), \qquad
b_{N-1} = a_0 +\vartheta_N (\theta-a_0).
\end{gather*}
Using~(\ref{eq:brackij}),
\begin{gather*}
\left[\begin{matrix} j \\
i \end{matrix} \right]_\vartheta = 1,
 \qquad 0 \leq i\leq j\leq N-1
\end{gather*}
and for $N\not=\infty$,
\begin{gather*}
\left[\begin{matrix} N \\
i \end{matrix} \right]_\vartheta = \vartheta_N,
 \qquad 1 \leq i \leq N-1.
\end{gather*}
For $0 \leq i \leq N$ the polynomials $\tau_i$, $\eta_i$
are described in the table below:
\begin{center}
\begin{tabular}[t]{c|cc}
 $i$ & $\tau_i$ & $\eta_i$
\\
\hline
$0$ &
$1$ & $1$
\\
$1 \leq i \leq N-1$ &
$(x-a_0)(x-\theta)^{i-1}$ &
$(x-b_0)(x-\theta)^{i-1}$
\\
$N$ &
$(x-a_0)(x-\theta)^{N-2}(x-a_{N-1})$ &
$(x-b_0)(x-\theta)^{N-2}(x-b_{N-1})$
\end{tabular}
\end{center}

For $1 \leq i \leq N$ the values of $\eta_i - \tau_i$
and $\eta_i(a_0)$ are described in the table below:
\begin{center}
\begin{tabular}[t]{c|cc}
 $i$ & $\eta_i - \tau_i$ & $\eta_i(a_0)$
\\
\hline
$1 \leq i \leq N-1$ &
$(a_0-b_0)(x-\theta)^{i-1}$ &
$(a_0-b_0)(a_0-\theta)^{i-1}$
\\
$N$ &
$\vartheta_N (a_0-b_0)(x-\theta)^{N-1}$ &
$\vartheta_N (a_0-b_0)(a_0-\theta)^{N-1}$
\end{tabular}
\end{center}

\begin{Lemma}\label{lem:deg} Under assumptions
\eqref{eq:c1}--\eqref{eq:c3} the following $(i)$--$(iii)$ hold:
\begin{enumerate}\itemsep=0pt
\item[$(i)$] $\mathcal L\not=0$;
\item[$(ii)$] $\Delta =\frac{I + (\theta- b_0)\psi}{I + (\theta-a_0)\psi}$;
\item[$(iii)$] $\Delta^{-1} =\frac{I + (\theta- a_0)\psi}{I + (\theta-b_0)\psi}$.
\end{enumerate}
In the above lines $\psi \in \mathcal L$ is normalized.
\end{Lemma}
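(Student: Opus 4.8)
The plan is to verify Proposition~\ref{prop:tfae4}(ii) directly using the explicit formulas for $\tau_i$, $\eta_i$, $\vartheta_i$, and the $\vartheta$-binomials tabulated above; once part~(i) is established, parts~(ii),~(iii) will follow by plugging those same tabulated quantities into~(\ref{eq:DSum}),~(\ref{eq:DiSum}) and recognizing the resulting series as geometric. For part~(i), I would split into the cases $0\le j\le N-1$ and $j=N$ (the latter vacuous when $N=\infty$). For $0\le j\le N-1$ all the $\vartheta$-binomials equal $1$ and $\eta_{j-i}(a_0)=(a_0-b_0)(a_0-\theta)^{j-i-1}$ for $1\le j-i\le N-1$ while $\eta_0(a_0)=1$; I would substitute into~(\ref{eq:etaSum}) and check that $\sum_{i=0}^j \eta_{j-i}(a_0)\tau_i$ telescopes to $\eta_j=(x-b_0)(x-\theta)^{j-1}$. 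Concretely, writing $\tau_i=(x-a_0)(x-\theta)^{i-1}$ for $i\ge 1$, the sum over $1\le i\le j-1$ of $(a_0-b_0)(a_0-\theta)^{j-i-1}(x-a_0)(x-\theta)^{i-1}$ plus the boundary terms $\eta_j(a_0)\tau_0$ and $\tau_j$ should collapse; the key algebraic identity is the finite geometric-type sum $\sum_{k} (a_0-\theta)^{m-k}(x-\theta)^{k} \cdot(\text{stuff}) = \frac{(x-\theta)^{m+1}-(a_0-\theta)^{m+1}}{(x-\theta)-(a_0-\theta)}\cdot(\text{stuff})$, and $(x-\theta)-(a_0-\theta)=x-a_0$, which is exactly the factor needed to cancel against $x-a_0$ appearing in each $\tau_i$.

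For the case $j=N$ (with $N$ finite), the $\vartheta$-binomial $\left[\begin{matrix} N \\ i\end{matrix}\right]_\vartheta$ equals $\vartheta_N$ for $1\le i\le N-1$ and equals $1$ for $i=0$ and $i=N$, and similarly $\eta_N(a_0)=\vartheta_N(a_0-b_0)(a_0-\theta)^{N-1}$. I would substitute these into~(\ref{eq:etaSum}), pull out the common factor $\vartheta_N$ from the interior terms, run the same geometric summation as before, and then verify that the leftover pieces assemble into $\eta_N=(x-b_0)(x-\theta)^{N-2}(x-b_{N-1})$ using the identity $b_{N-1}=a_0+\vartheta_N(\theta-a_0)$ recorded above; the point at which $\vartheta_N$ enters is precisely the point that forces constraint~(\ref{eq:c3}), so this is where that hypothesis gets used. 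I expect this $j=N$ computation — keeping track of the three inhomogeneous boundary terms and correctly using the formula for $b_{N-1}$ — to be the main obstacle, though it is still just a bounded calculation.

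Finally, with~(i) in hand, let $\psi\in\mathcal L$ be normalized. By~(\ref{eq:DSum}), $\Delta=\sum_{i=0}^N \frac{\eta_i(a_0)}{\vartheta_1\cdots\vartheta_i}\psi^i$. Substituting $\eta_i(a_0)/(\vartheta_1\cdots\vartheta_i)=(a_0-b_0)(a_0-\theta)^{i-1}$ for $1\le i\le N-1$ and $=(a_0-b_0)(a_0-\theta)^{N-1}$ for $i=N$ (the $\vartheta_N$'s cancelling), the series becomes $I+(a_0-b_0)\sum_{i\ge 1}(a_0-\theta)^{i-1}\psi^i = I+(a_0-b_0)\psi\sum_{i\ge 0}\bigl((a_0-\theta)\psi\bigr)^i$. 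Since $\psi$ is locally nilpotent (Lemma~\ref{lem:nilp}), Lemma~\ref{lem:gs} gives $\sum_{i\ge 0}\bigl((a_0-\theta)\psi\bigr)^i=\bigl(I-(a_0-\theta)\psi\bigr)^{-1}$, so $\Delta = I+(a_0-b_0)\psi\bigl(I-(a_0-\theta)\psi\bigr)^{-1}$. Writing $a_0-b_0=(a_0-\theta)-(b_0-\theta)$ and combining over the common (invertible, commuting) denominator $I-(a_0-\theta)\psi=I+(\theta-a_0)\psi$ yields $\Delta=\bigl(I+(\theta-b_0)\psi\bigr)\bigl(I+(\theta-a_0)\psi\bigr)^{-1}$, which is~(ii). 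Part~(iii) is immediate by inverting, or equivalently by running the identical argument with $\tau$, $\eta$ and $a_0$, $b_0$ interchanged in~(\ref{eq:DiSum}).
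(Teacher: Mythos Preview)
Your proposal is correct and follows essentially the same approach as the paper: verify Proposition~\ref{prop:tfae4}(ii) case-by-case using the tabulated values of $\tau_i$, $\eta_i$, $\eta_i(a_0)$, $\vartheta_i$, and the $\vartheta$-binomials via a geometric sum, then obtain (ii) by summing~(\ref{eq:DSum}) as a geometric series in $\psi$. The only cosmetic difference is that the paper carries out the geometric summation for $\Delta$ locally on each $V_j$ (using $\psi^{j+1}V_j=0$), whereas you invoke local nilpotency and Lemma~\ref{lem:gs} globally; for finite $N$ your $\sum_{i\ge 1}$ really means $\sum_{i=1}^N$, but the missing tail is $\psi\cdot\bigl((a_0-\theta)\psi\bigr)^N=0$ since $\psi^{N+1}=0$, so the two computations agree.
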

\begin{proof} (i)
We invoke Proposition~\ref{prop:tfae4}(i),~(ii).
For $0 \leq j \leq N$ we verify~(\ref{eq:etaSum}).
We may assume that $2 \leq j \leq N$; otherwise we are done
by Lemma~\ref{lem:Prop}. For $N \not=\infty$ we separate
the cases $2\leq j\leq N-1$ and $j=N$.
It suffices to show that
\begin{gather}
\eta_j = \tau_j + \eta_j(a_0)+
\sum_{i=1}^{j-1} \eta_{j-i}(a_0)\tau_i,
\qquad 2 \leq j \leq N-1,\label{eq:jnotN}
\\
\eta_N = \tau_N + \eta_N(a_0)+ \vartheta_N \sum_{i=1}^{N-1}
\eta_{N-i}(a_0)\tau_i, \qquad
\text{if} \ N\not=\infty.
\label{eq:jisN}
\end{gather}
For $2 \leq j \leq N$ the values of $\eta_j-\tau_j$ and
$\eta_j(a_0)$ are given in the table above the lemma statement.
Also for $2 \leq j \leq N$,{\samepage
\begin{align*}
\sum_{i=1}^{j-1} \eta_{j-i}(a_0)\tau_i
&=(a_0-b_0)(x-a_0) \sum_{i=1}^{j-1} (a_0-\theta)^{j-i-1}(x-\theta)^{i-1}\\
&=(a_0-b_0)(x-a_0)(a_0-\theta)^{j-2} \sum_{k=0}^{j-2}\left(
\frac{x-\theta}{a_0-\theta}\right)^k\\
&= (a_0-b_0)(x-\theta)^{j-1} - (a_0-b_0)(a_0-\theta)^{j-1}.
\end{align*}
Using the above comments we routinely verify~(\ref{eq:jnotN}),~(\ref{eq:jisN}).}

(ii) We will verify the equation by showing
that the two sides agree on $V_j$ for $2 \leq j \leq N$.
Using~(\ref{eq:DSum}) and $\psi^{j+1}V_j=0$ we see that on $V_j$,
\begin{align*}
\Delta - I &= \sum_{i=1}^j \frac{\eta_i(a_0)}{\vartheta_1 \vartheta_2\cdots
\vartheta_i}\psi^i
= \frac{\eta_j(a_0)}{\vartheta_j} \psi^j + \sum_{i=1}^{j-1}
\eta_i(a_0) \psi^i
= (a_0-b_0) \psi \sum_{k=0}^{j-1} (a_0-\theta)^k \psi^k
\\
&= (a_0-b_0)\psi \frac{ I - (a_0-\theta)^j \psi^j}{I - (a_0-\theta) \psi}
= \frac{(a_0-b_0)\psi}{I - (a_0-\theta) \psi}.
\end{align*}
The result follows.
(iii)~By (ii) above.
\end{proof}

We just gave some examples of double lowering data. There is another example that is somewhat more involved; it will be described later in the paper.

\section{Extending the data}\label{section7}

Throughout this section, we assume that
$N$ is an integer at least~2.
Recall the data $\lbrace a_i \rbrace_{i=0}^{N-1}$,
$\lbrace b_i \rbrace_{i=0}^{N-1}$ from~(\ref{eq:data}), and assume that this data is double lowering.
Let $a_N, b_N \in \mathbb F$ satisfy
\begin{gather*}
a_0 + a_1 + \cdots + a_N \not=
b_0 + b_1 + \cdots + b_N,
\end{gather*}
giving data
\begin{gather}
\lbrace a_i \rbrace_{i=0}^{N}, \qquad
\lbrace b_i \rbrace_{i=0}^{N}.
\label{eq:dataExt}
\end{gather}
In this section we obtain necessary
and sufficient conditions on $a_N$, $b_N$
for the data (\ref{eq:dataExt}) to be double lowering.
By~(\ref{eq:xrec}),
\begin{gather*}
\tau_{N+1} = (x-a_N)\tau_N, \qquad
\eta_{N+1} = (x-b_N)\eta_N.
\end{gather*}
\begin{Lemma}\label{lem:feas3}
The following $(i)$--$(iii)$ are equivalent:
\begin{enumerate}\itemsep=0pt
\item[$(i)$]
the data \eqref{eq:dataExt}
is double lowering;
\item[$(ii)$]
we have
\begin{gather}
\eta_{N+1} = \sum_{i=0}^{N+1} \eta_{N-i+1} (a_0)
\left[\begin{matrix} N+1 \\
i \end{matrix} \right]_\vartheta \tau_i;\label{eq:etaSumExt}
\end{gather}
\item[$(iii)$] we have
\begin{gather*}
\tau_{N+1} = \sum_{i=0}^{N+1} \tau_{N-i+1} (b_0)
\left[\begin{matrix} N+1 \\
i \end{matrix} \right]_\vartheta \eta_i.
\end{gather*}
\end{enumerate}
\end{Lemma}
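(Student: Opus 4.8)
The plan is to deduce this from Proposition~\ref{prop:tfae4} applied to the extended data~(\ref{eq:dataExt}). The key point is that the hypotheses of Section~\ref{section7} already tell us that the truncated data $\lbrace a_i\rbrace_{i=0}^{N-1}$, $\lbrace b_i\rbrace_{i=0}^{N-1}$ is double lowering, so by Proposition~\ref{prop:tfae4}(i),(ii) the identity~(\ref{eq:etaSum}) holds for $0 \leq j \leq N$. Thus, when we apply Proposition~\ref{prop:tfae4} to the extended data (whose parameters $\lbrace \vartheta_i\rbrace_{i=0}^{N+1}$ extend the old ones, since $\vartheta_i$ for $i \leq N$ depends only on $a_0,\dots,a_{i-1}$ and $b_0,\dots,b_{i-1}$), the family of identities indexed by $0 \leq j \leq N+1$ reduces to the single new identity at $j = N+1$, which is exactly~(\ref{eq:etaSumExt}). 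Similarly the condition in Proposition~\ref{prop:tfae4}(iii) for the extended data reduces to the single identity at $j=N+1$, which is the equation in part~(iii) of the present lemma.

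Concretely, first I would observe that the nondegeneracy condition~(\ref{eq:nondeg}) for the extended data holds: for $1 \leq i \leq N$ it is inherited from the old data, and for $i = N+1$ it is precisely the displayed hypothesis $a_0 + \cdots + a_N \neq b_0 + \cdots + b_N$. Hence the extended data is admissible and Proposition~\ref{prop:tfae4} applies to it. Next I would note that the polynomials $\tau_i$, $\eta_i$ and the scalars $\vartheta_i$, $\left[\begin{smallmatrix} j \\ i\end{smallmatrix}\right]_\vartheta$ for the extended data agree with those for the old data in all indices $\leq N$, and that~(\ref{eq:tau}),~(\ref{eq:eta}) give $\tau_{N+1} = (x-a_N)\tau_N$, $\eta_{N+1} = (x-b_N)\eta_N$ as already recorded. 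Then, invoking Proposition~\ref{prop:tfae4} for the extended data: its condition~(i), namely that the double lowering space of~(\ref{eq:dataExt}) is nonzero, is equivalent to its condition~(ii), the family of identities~(\ref{eq:etaSum}) for $0 \leq j \leq N+1$. Since the identities for $0 \leq j \leq N$ hold automatically (the old data being double lowering), condition~(ii) for the extended data is equivalent to the single identity at $j = N+1$, i.e.\ to~(\ref{eq:etaSumExt}). This gives (i)~$\Leftrightarrow$~(ii). The equivalence (i)~$\Leftrightarrow$~(iii) follows the same way from Proposition~\ref{prop:tfae4}(i),(iii) applied to the extended data, using that the identities in Proposition~\ref{prop:tfae4}(iii) for $0 \leq j \leq N$ hold because the old data is double lowering.

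There is essentially no serious obstacle here; the only thing that needs a word of care is the bookkeeping observation that the parameters $\vartheta_i$ and the $\vartheta$-binomials for $i,j \leq N$ are unchanged under extension, so that the truncation of the extended-data identities really does coincide with the old-data identities. Once that is in place, the lemma is an immediate specialization of Proposition~\ref{prop:tfae4}.
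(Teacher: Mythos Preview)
Your proposal is correct and follows exactly the approach the paper takes: the paper's proof is the single line ``By Proposition~\ref{prop:tfae4},'' and you have simply spelled out the bookkeeping (nondegeneracy of the extended data, agreement of $\vartheta_i$ and the $\vartheta$-binomials for indices $\leq N$, and the fact that the identities for $0\leq j\leq N$ already hold) that this citation leaves implicit.
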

\begin{proof} By Proposition~\ref{prop:tfae4}.
\end{proof}

\begin{Lemma}\label{lem:etalt}
We have
\begin{gather}
\eta_{N+1}= \sum_{i=0}^N \eta_{N-i}(a_0)
\left[\begin{matrix} N \\
i \end{matrix} \right]_\vartheta
(a_i-b_N)\tau_i
 + \sum_{i=1}^{N+1} \eta_{N-i+1}(a_0)
\left[\begin{matrix} N \\
i-1 \end{matrix} \right]_\vartheta \tau_i,
\label{eq:etaalt}
\\
\tau_{N+1}= \sum_{i=0}^N \tau_{N-i}(b_0)
\left[\begin{matrix} N \\
i \end{matrix} \right]_\vartheta
(b_i-a_N)\eta_i
 + \sum_{i=1}^{N+1} \tau_{N-i+1}(b_0)
\left[\begin{matrix} N \\
i-1 \end{matrix} \right]_\vartheta \eta_i.\label{eq:taualt}
\end{gather}
\end{Lemma}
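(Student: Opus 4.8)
The plan is to prove only the first identity \eqref{eq:etaalt}, since the second identity \eqref{eq:taualt} then follows by interchanging the roles of $\lbrace a_i\rbrace$ and $\lbrace b_i\rbrace$ (which swaps $\tau_i \leftrightarrow \eta_i$, $a_0 \leftrightarrow b_0$, and leaves $\vartheta_i$ and hence the $\vartheta$-binomial coefficients fixed, because $\vartheta_i$ is defined symmetrically up to sign in numerator and denominator). So the whole task reduces to establishing \eqref{eq:etaalt}.

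The key computational input is the recurrence $\eta_{N+1} = (x-b_N)\eta_N$ from \eqref{eq:xrec}, combined with the expansion of $\eta_N$ in the basis $\lbrace\tau_i\rbrace_{i=0}^N$ furnished by \eqref{eq:etaSum} in Proposition~\ref{prop:tfae4}(ii), which applies because the data $\lbrace a_i\rbrace_{i=0}^{N-1}$, $\lbrace b_i\rbrace_{i=0}^{N-1}$ is assumed double lowering. First I would write
\begin{gather*}
\eta_{N+1} = (x-b_N)\,\eta_N = (x-b_N)\sum_{i=0}^N \eta_{N-i}(a_0)\left[\begin{matrix} N \\ i \end{matrix}\right]_\vartheta \tau_i .
\end{gather*}
Then I split $x-b_N = (x-a_i) + (a_i - b_N)$ inside the $i$-th summand, so that $(x-a_i)\tau_i = \tau_{i+1}$ by \eqref{eq:xrec}. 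This produces two sums: the $(a_i-b_N)$ piece is already exactly the first sum on the right-hand side of \eqref{eq:etaalt}, and the $(x-a_i)\tau_i = \tau_{i+1}$ piece becomes $\sum_{i=0}^N \eta_{N-i}(a_0)\left[\begin{smallmatrix} N \\ i\end{smallmatrix}\right]_\vartheta \tau_{i+1}$. Reindexing the latter with $i \mapsto i-1$ turns it into $\sum_{i=1}^{N+1}\eta_{N-i+1}(a_0)\left[\begin{smallmatrix} N \\ i-1\end{smallmatrix}\right]_\vartheta \tau_i$, which is precisely the second sum in \eqref{eq:etaalt}. This completes the derivation.

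The main thing to be careful about — though it is routine rather than a genuine obstacle — is the range bookkeeping after the reindexing: one must check that $\left[\begin{smallmatrix} N \\ i-1\end{smallmatrix}\right]_\vartheta$ makes sense for $i$ up to $N+1$ (it does, since $\left[\begin{smallmatrix} N\\N\end{smallmatrix}\right]_\vartheta = 1$ by \eqref{eq:brackij}) and that no boundary term from $\eta_{-1}=0$ or degree considerations is mishandled. There is no deep step here; the lemma is essentially a one-line consequence of \eqref{eq:xrec} applied twice together with the expansion \eqref{eq:etaSum}, and its role is to rewrite the feasibility condition of Lemma~\ref{lem:feas3} in a form where the dependence on the new parameters $a_N$, $b_N$ is isolated in a single explicit sum, setting up the analysis of the extended data in the sequel.
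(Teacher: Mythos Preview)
Your proof is correct and follows essentially the same route as the paper: expand $\eta_N$ via Proposition~\ref{prop:tfae4}(ii), multiply by $(x-b_N)$, split $(x-b_N)\tau_i = (a_i-b_N)\tau_i + \tau_{i+1}$ using \eqref{eq:xrec}, and reindex; the second identity is obtained by the symmetric argument. The only cosmetic difference is that the paper writes the split as $(x-b_N)\tau_i = (a_i-b_N)\tau_i + \tau_{i+1}$ directly rather than first decomposing $x-b_N$, but the computation is identical.
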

\begin{proof} We show
(\ref{eq:etaalt}). Using Proposition
\ref{prop:tfae4},
\begin{align*}
\eta_{N+1} & = (x-b_N)\eta_N
= (x-b_N)\sum_{i=0}^N \eta_{N-i}(a_0)
\left[\begin{matrix} N \\
i \end{matrix} \right]_\vartheta \tau_i
\\
&= \sum_{i=0}^N \eta_{N-i}(a_0)
\left[\begin{matrix} N \\
i \end{matrix} \right]_\vartheta
\bigl((a_i-b_N)\tau_i+\tau_{i+1}\bigr)
\\
&= \sum_{i=0}^N \eta_{N-i}(a_0)
\left[\begin{matrix} N \\
i \end{matrix} \right]_\vartheta
(a_i-b_N)\tau_i
 + \sum_{i=1}^{N+1} \eta_{N-i+1}(a_0)
\left[\begin{matrix} N \\
i-1 \end{matrix} \right]_\vartheta
\tau_i.
\end{align*}
Line~(\ref{eq:taualt}) is similarly obtained.
\end{proof}

\begin{Proposition}\label{prop:feas3}
The following
$(i)$--$(iii)$ are equivalent:
\begin{enumerate}\itemsep=0pt
\item[$(i)$] the data~\eqref{eq:dataExt}
is double lowering;
\item[$(ii)$] for $0 \leq i \leq N-1$ such that $\eta_i(a_0)\not=0$,
\begin{gather*}
(a_0+ \cdots + a_i -b_0-\cdots - b_i)(a_{N-i}-b_N)\\
\qquad{} = (a_0-b_i)(a_{N-i}+ \cdots + a_N-b_{N-i}- \cdots -b_N);
\end{gather*}
\item[$(iii)$] for $0 \leq i \leq N-1$ such that $\tau_i(b_0)\not=0$,
\begin{gather*}
(b_0+ \cdots + b_i -a_0-\cdots - a_i)(b_{N-i}-a_N)\\
\qquad{} = (b_0-a_i)(b_{N-i}+ \cdots + b_N-a_{N-i}- \cdots -a_N).
\end{gather*}
\end{enumerate}
\end{Proposition}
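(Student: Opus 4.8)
The plan is to prove the equivalence (i) $\Leftrightarrow$ (ii) and then obtain (i) $\Leftrightarrow$ (iii) by the symmetry that interchanges the roles of $\{a_i\}$ and $\{b_i\}$ (which swaps $\tau$ with $\eta$ and negates each $\vartheta_i$, but leaves the bracket symbols $\left[\begin{smallmatrix} j\\ i\end{smallmatrix}\right]_\vartheta$ unchanged). So the work is entirely in (i) $\Leftrightarrow$ (ii). By Lemma~\ref{lem:feas3}, the data~\eqref{eq:dataExt} is double lowering if and only if the expansion~\eqref{eq:etaSumExt} holds. The strategy is to compare~\eqref{eq:etaSumExt} with the alternative expansion~\eqref{eq:etaalt} from Lemma~\ref{lem:etalt}: both are expressions for $\eta_{N+1}$ in the basis $\{\tau_i\}_{i=0}^{N+1}$, so~\eqref{eq:etaSumExt} holds if and only if the two right-hand sides agree coefficient-by-coefficient in each $\tau_i$, $0 \le i \le N+1$.

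First I would extract, for each $i$ with $0 \le i \le N+1$, the coefficient of $\tau_i$ on each side. On the side of~\eqref{eq:etaSumExt} the coefficient of $\tau_i$ is $\eta_{N-i+1}(a_0)\left[\begin{smallmatrix} N+1\\ i\end{smallmatrix}\right]_\vartheta$. On the side of~\eqref{eq:etaalt}, $\tau_i$ picks up two contributions for $1 \le i \le N$: from the first sum the term $\eta_{N-i}(a_0)\left[\begin{smallmatrix} N\\ i\end{smallmatrix}\right]_\vartheta(a_i-b_N)$, and from the second sum the term $\eta_{N-i+1}(a_0)\left[\begin{smallmatrix} N\\ i-1\end{smallmatrix}\right]_\vartheta$; the extreme cases $i=0$ and $i=N+1$ each receive only one contribution, and one checks directly (using $\left[\begin{smallmatrix} N+1\\ 0\end{smallmatrix}\right]_\vartheta = \left[\begin{smallmatrix} N\\ 0\end{smallmatrix}\right]_\vartheta = 1$ and $\left[\begin{smallmatrix} N+1\\ N+1\end{smallmatrix}\right]_\vartheta = \left[\begin{smallmatrix} N\\ N\end{smallmatrix}\right]_\vartheta = 1$, together with $\eta_0(a_0)=1$ and the fact that the only $\tau_{N+1}$-term on both sides is $1\cdot\tau_{N+1}$) that the extreme cases are automatic. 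So~\eqref{eq:etaSumExt} is equivalent to the system of identities, for $1 \le i \le N$,
\begin{gather*}
\eta_{N-i+1}(a_0)\left[\begin{matrix} N+1\\ i\end{matrix}\right]_\vartheta
= \eta_{N-i}(a_0)\left[\begin{matrix} N\\ i\end{matrix}\right]_\vartheta(a_i-b_N)
+ \eta_{N-i+1}(a_0)\left[\begin{matrix} N\\ i-1\end{matrix}\right]_\vartheta.
\end{gather*}

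Next I would simplify this system using the Pascal-type relation among the $\vartheta$-brackets: from~\eqref{eq:brackij} one has $\vartheta_{N+1}\left[\begin{smallmatrix} N\\ i-1\end{smallmatrix}\right]_\vartheta = \vartheta_{N-i+1}\left[\begin{smallmatrix} N+1\\ i\end{smallmatrix}\right]_\vartheta$ and $\vartheta_i\left[\begin{smallmatrix} N\\ i\end{smallmatrix}\right]_\vartheta = \vartheta_{N-i+1}\left[\begin{smallmatrix} N+1\\ i\end{smallmatrix}\right]_\vartheta$, which lets me rewrite $\left[\begin{smallmatrix} N\\ i\end{smallmatrix}\right]_\vartheta$ and $\left[\begin{smallmatrix} N\\ i-1\end{smallmatrix}\right]_\vartheta$ in terms of $\left[\begin{smallmatrix} N+1\\ i\end{smallmatrix}\right]_\vartheta$. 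Reindexing $i \mapsto N-i$ so that the running index matches the index appearing in $\eta_i(a_0)$, and canceling the common factor $\left[\begin{smallmatrix} N+1\\ N-i\end{smallmatrix}\right]_\vartheta$ (nonzero, since all $\vartheta$'s in range are nonzero), the $i$-th identity becomes a relation that holds automatically when $\eta_i(a_0)=0$ and, when $\eta_i(a_0)\ne 0$, reduces after canceling $\eta_i(a_0)$ and clearing the $\vartheta$-denominators to precisely
\begin{gather*}
(a_0+\cdots+a_i-b_0-\cdots-b_i)(a_{N-i}-b_N) = (a_0-b_i)(a_{N-i}+\cdots+a_N-b_{N-i}-\cdots-b_N),
\end{gather*}
upon recognizing $\vartheta_{i+1}-\vartheta_i$, $\vartheta_{N+1}-\vartheta_{N-i}$, etc., in terms of the partial sums via Definition~\ref{def:vartheta}. (Here $\eta_i(a_0)\ne 0$ is exactly the case split in condition (ii), since for $0\le i\le N-1$ the polynomial $\eta_i$ already belongs to the original double-lowering data, so $\eta_i(a_0)$ is the meaningful quantity; note $\eta_{i+1}(a_0) = (a_0-b_i)\eta_i(a_0)$ by~\eqref{eq:xrec}, which is what produces the factor $a_0-b_i$ on the right.) Thus~\eqref{eq:etaSumExt} is equivalent to condition (ii).

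The main obstacle I expect is the bookkeeping in the middle step: correctly matching the two index conventions (the brackets are indexed by a ``binomial-coefficient'' slot while the $\eta$'s are indexed by degree, and the reindexing $i \leftrightarrow N-i$ must be tracked through both), and verifying that the factor $\eta_i(a_0)$ rather than $\eta_{i+1}(a_0)$ is the right thing to cancel — i.e., that after the cancellation the residual identity is genuinely vacuous when $\eta_i(a_0)=0$, so that only the stated finite list of conditions survives. Once the algebra of the $\vartheta$-brackets (the two Pascal relations above) is in hand, the rest is a direct computation translating partial sums of $a$'s and $b$'s into $\vartheta$'s via Definition~\ref{def:vartheta}.
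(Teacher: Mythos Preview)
Your approach is essentially identical to the paper's: compare \eqref{eq:etaSumExt} with \eqref{eq:etaalt} coefficientwise in the $\tau$-basis, reduce each coefficient identity using the relations among $\left[\begin{smallmatrix} N\\ i\end{smallmatrix}\right]_\vartheta$, $\left[\begin{smallmatrix} N\\ i-1\end{smallmatrix}\right]_\vartheta$, $\left[\begin{smallmatrix} N+1\\ i\end{smallmatrix}\right]_\vartheta$ together with $\eta_{N-i+1}(a_0)=(a_0-b_{N-i})\eta_{N-i}(a_0)$, reindex $i\leftrightarrow N-i$, and translate the resulting $\vartheta$-identity back into partial sums via Definition~\ref{def:vartheta}. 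One slip to fix: your two ``Pascal-type'' relations are transposed---the correct forms are $\vartheta_{N+1}\left[\begin{smallmatrix} N\\ i-1\end{smallmatrix}\right]_\vartheta=\vartheta_i\left[\begin{smallmatrix} N+1\\ i\end{smallmatrix}\right]_\vartheta$ and $\vartheta_{N+1}\left[\begin{smallmatrix} N\\ i\end{smallmatrix}\right]_\vartheta=\vartheta_{N-i+1}\left[\begin{smallmatrix} N+1\\ i\end{smallmatrix}\right]_\vartheta$ (equivalently, the paper factors out $\left[\begin{smallmatrix} N\\ i\end{smallmatrix}\right]_\vartheta$ rather than $\left[\begin{smallmatrix} N+1\\ i\end{smallmatrix}\right]_\vartheta$); with that correction the computation goes through exactly as you describe.
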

\begin{proof}
(i) $\Leftrightarrow$ $(ii)$
We invoke Lemma~\ref{lem:feas3}(i),~(ii).
Subtract~(\ref{eq:etaSumExt})
from~(\ref{eq:etaalt}) to obtain an equation
$0 = \sum_{i=1}^N d_i \tau_i$ where
\begin{gather}\label{eq:di}
d_i=
 \eta_{N-i}(a_0)
\left[\begin{matrix} N \\
i \end{matrix} \right]_\vartheta
(a_i-b_N)
+
 \eta_{N-i+1}(a_0)
\left[\begin{matrix} N \\
i-1 \end{matrix} \right]_\vartheta
-
\eta_{N-i+1}(a_0)
\left[\begin{matrix} N+1 \\
i \end{matrix} \right]_\vartheta
\end{gather}
for $1 \leq i \leq N$.
Note that~(\ref{eq:etaSumExt}) holds iff
$0 = \sum_{i=1}^N d_i \tau_i$ iff
$d_i=0$ for $1\leq i \leq N$.
For $1 \leq i \leq N$
we simplify~(\ref{eq:di}) using
\begin{gather*}
\eta_{N-i+1}(a_0)= \eta_{N-i}(a_0)(a_0-b_{N-i})
\end{gather*}
and
\begin{gather*}
\left[\begin{matrix} N \\
i-1 \end{matrix} \right]_\vartheta
=
\left[\begin{matrix} N \\
i \end{matrix} \right]_\vartheta \frac{\vartheta_i}{\vartheta_{N-i+1}},
 \qquad
\left[\begin{matrix} N+1 \\
i \end{matrix} \right]_\vartheta
=
\left[\begin{matrix} N \\
i \end{matrix} \right]_\vartheta \frac{\vartheta_{N+1}}{\vartheta_{N-i+1}}.
\end{gather*}
We find that $d_i$ is equal to
\begin{gather*}
\frac{\eta_{N-i}(a_0)}{\vartheta_{N-i+1}}
\left[\begin{matrix} N \\
i \end{matrix} \right]_\vartheta
\end{gather*}
times
\begin{gather*}
(a_{i}-b_N)\vartheta_{N-i+1}
+(a_0-b_{N-i})(\vartheta_{i}-\vartheta_{N+1})
\end{gather*}
for $1 \leq i \leq N$.
Therefore,
(\ref{eq:etaSumExt}) holds if and only if
\begin{gather*}
\eta_{N-i}(a_0)=0
\qquad \text{or}\qquad
(a_{i}-b_N)\vartheta_{N-i+1} = (a_0-b_{N-i})(\vartheta_{N+1}-\vartheta_{i})
\end{gather*}
for $1 \leq i \leq N$.
Replacing $i$ by $N-i$,
we see that~(\ref{eq:etaSumExt}) holds if and only if
\begin{gather*}
\eta_{i}(a_0)=0
\qquad \text{or}\qquad
(a_{N-i}-b_N)\vartheta_{i+1} = (a_0-b_{i})(\vartheta_{N+1}-\vartheta_{N-i})
\end{gather*}
for $0 \leq i \leq N-1$.
The result follows in view of
Definition~\ref{def:vartheta}.

(i) $\Leftrightarrow$ (iii) Similar to the proof of~(i)~$\Leftrightarrow$~(ii).
\end{proof}

Our next general goal is to solve the equations
in Proposition~\ref{prop:feas3}(ii), (iii). The main solution will involve a type
of sequence, said to be recurrent.

\section{Recurrent sequences}\label{section8}

 Throughout this section
let $n$ denote an integer at least 2, or $\infty$.
let $\lbrace a_i\rbrace_{i=0}^n$ denote
 scalars in $\mathbb F$.

\begin{Definition}[{see \cite[Definition~8.2]{2lintrans}}]\label{lem:beginthreetermS99}
Let $\beta$, $\gamma$, $\varrho$ denote scalars
in $\mathbb F$.
\begin{enumerate}\itemsep=0pt
\item[(i)] The sequence
 $\lbrace a_i\rbrace_{i=0}^n$
is said to be {\it $(\beta,\gamma,\varrho)$-recurrent} whenever
\begin{equation}
a^2_{i-1}-\beta a_{i-1}a_i+a^2_i
-\gamma (a_{i-1} +a_i)=\varrho
\label{eq:varrhothreetermS99}
\end{equation}
 for
$1 \leq i \leq n$.
\item[(ii)] The sequence
 $\lbrace a_i\rbrace_{i=0}^n$
is said to be {\it $(\beta,\gamma)$-recurrent} whenever
\begin{equation}
a_{i-1}-\beta a_i+a_{i+1}=\gamma
\label{eq:gammathreetermS99}
\end{equation}
 for
$1 \leq i \leq n-1$.
\item[(iii)] The sequence
 $\lbrace a_i\rbrace_{i=0}^n$
is said to be {\it $\beta$-recurrent} whenever
\begin{equation}
a_{i-2}-(\beta+1)a_{i-1}+(\beta +1)a_i -a_{i+1}
\label{eq:betarecS99}
\end{equation}
is zero for
$2 \leq i \leq n-1$.
\item[(iv)] The sequence
 $\lbrace a_i\rbrace_{i=0}^n$
is said to be {\it recurrent} whenever there exists
$\beta \in \mathbb F$ such that
 $\lbrace a_i\rbrace_{i=0}^n$ is $\beta$-recurrent.
\end{enumerate}
\end{Definition}

\begin{Lemma}\label{lem:recvsbrecS99}
The following are equivalent:
\begin{enumerate}\itemsep=0pt
\item[$(i)$] the sequence
 $\lbrace a_i\rbrace_{i=0}^n$
is recurrent;
\item[$(ii)$]
there exists $\beta \in \mathbb F$ such that
 $\lbrace a_i\rbrace_{i=0}^n$
 is $\beta$-recurrent.
\end{enumerate}
\end{Lemma}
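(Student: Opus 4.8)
The statement to be proved, Lemma~\ref{lem:recvsbrecS99}, is essentially a tautology: reading Definition~\ref{lem:beginthreetermS99}(iv), ``$\lbrace a_i\rbrace_{i=0}^n$ is recurrent'' \emph{means} that there exists $\beta\in\mathbb F$ such that $\lbrace a_i\rbrace_{i=0}^n$ is $\beta$-recurrent, which is exactly statement (ii). So the whole content of the proof is to observe that (i) and (ii) are literally the same assertion, unwound from the definition.

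The plan is therefore very short. First I would invoke Definition~\ref{lem:beginthreetermS99}(iv) directly: by that definition, (i) holds if and only if there exists $\beta\in\mathbb F$ for which $\lbrace a_i\rbrace_{i=0}^n$ is $\beta$-recurrent in the sense of Definition~\ref{lem:beginthreetermS99}(iii), i.e., the quantity in~\eqref{eq:betarecS99} vanishes for $2\le i\le n-1$. That is precisely the content of (ii). Hence (i) $\Leftrightarrow$ (ii), and there is nothing further to check.

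There is no real obstacle here — the lemma is a bookkeeping restatement, presumably recorded so that later sections can cite ``recurrent'' and ``$\beta$-recurrent'' interchangeably without re-deriving the equivalence each time. The only thing to be careful about is not to overcomplicate: one should resist the temptation to drag in the $(\beta,\gamma)$-recurrent or $(\beta,\gamma,\varrho)$-recurrent notions of parts (i) and (ii) of the definition, since those are genuinely different conditions and are not what this lemma is about. The proof is a single sentence citing Definition~\ref{lem:beginthreetermS99}(iii),(iv).

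\begin{proof}
This is immediate from Definition~\ref{lem:beginthreetermS99}(iii),(iv).
\end{proof}
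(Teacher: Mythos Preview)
Your proposal is correct and matches the paper's approach exactly: the paper's entire proof is the single line ``By Definition~\ref{lem:beginthreetermS99}.'' Your citation of parts (iii),(iv) is a harmless refinement of the same observation.
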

\begin{proof} By Definition~\ref{lem:beginthreetermS99}.
\end{proof}

\begin{Lemma}\label{lem:brecvsbgrecS99}
For $\beta \in \mathbb F$ the following are equivalent:
\begin{enumerate}\itemsep=0pt
\item[$(i)$] the sequence
 $\lbrace a_i\rbrace_{i=0}^n$
is $\beta$-recurrent;
\item[$(ii)$] there exists $\gamma \in \mathbb F$ such that
 $\lbrace a_i\rbrace_{i=0}^n$ is $(\beta,\gamma)$-recurrent.
\end{enumerate}
\end{Lemma}
\begin{proof}
(i) $\Rightarrow$ (ii) For $2\leq i \leq n-1$, the expression
(\ref{eq:betarecS99}) is zero by assumption,
so
\begin{gather*}
a_{i-2}-\beta a_{i-1}+a_i =
a_{i-1}-\beta a_i+a_{i+1}.
\end{gather*}
The left-hand side of~(\ref{eq:gammathreetermS99}) is independent of $i$, and
the result follows.

(ii) $\Rightarrow$ (i) For $2\leq i \leq n-1$,
subtract the equation~(\ref{eq:gammathreetermS99}) at $i$ from the corresponding equation
obtained by replacing $i$ by $i-1$, to find~(\ref{eq:betarecS99}) is zero.
\end{proof}

\begin{Lemma}\label{lem:bgrecvsbgdrecS99}
The following $(i)$, $(ii)$ hold
for all $\beta, \gamma \in \mathbb F$.
\begin{enumerate}\itemsep=0pt
\item[$(i)$] Suppose
 $\lbrace a_i\rbrace_{i=0}^n$
is $(\beta,\gamma)$-recurrent. Then
there exists $\varrho \in \mathbb F$ such that
 $\lbrace a_i\rbrace_{i=0}^n$
 is $(\beta,\gamma,\varrho)$-recurrent.
\item[$(ii)$] Suppose
 $\lbrace a_i\rbrace_{i=0}^n$
is $(\beta,\gamma,\varrho)$-recurrent, and that $a_{i-1}\not=a_{i+1}$
for $1 \leq i\leq n-1$. Then
 $\lbrace a_i\rbrace_{i=0}^n$
 is $(\beta,\gamma)$-recurrent.
\end{enumerate}
\end{Lemma}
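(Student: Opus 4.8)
The plan is to introduce, for $1 \leq i \leq n$, the quantity
\begin{gather*}
f_i = a_{i-1}^2 - \beta a_{i-1} a_i + a_i^2 - \gamma(a_{i-1} + a_i),
\end{gather*}
so that $(\beta,\gamma,\varrho)$-recurrence is precisely the statement that $f_i = \varrho$ for $1 \leq i \leq n$. The entire lemma then rests on a single algebraic identity: for $1 \leq i \leq n-1$,
\begin{gather*}
f_{i+1} - f_i = (a_{i+1} - a_{i-1})\bigl(a_{i+1} + a_{i-1} - \beta a_i - \gamma\bigr).
\end{gather*}
I would verify this by expanding $f_{i+1} - f_i = a_{i+1}^2 - a_{i-1}^2 - \beta a_i(a_{i+1} - a_{i-1}) - \gamma(a_{i+1} - a_{i-1})$ and factoring out $a_{i+1} - a_{i-1}$. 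This is the computational heart of the argument, and it is short; there is no real obstacle, only bookkeeping.

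For part (i): assuming $\{a_i\}_{i=0}^n$ is $(\beta,\gamma)$-recurrent, equation (\ref{eq:gammathreetermS99}) says $a_{i-1} - \beta a_i + a_{i+1} = \gamma$, equivalently $a_{i+1} + a_{i-1} - \beta a_i - \gamma = 0$, for $1 \leq i \leq n-1$. Substituting into the displayed identity gives $f_{i+1} - f_i = 0$ for $1 \leq i \leq n-1$, so $f_i$ is independent of $i$. Setting $\varrho = f_1$ (this is vacuous/consistent also when $n = \infty$, where one just takes the common value), we conclude that $\{a_i\}_{i=0}^n$ is $(\beta,\gamma,\varrho)$-recurrent.

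For part (ii): assuming $\{a_i\}_{i=0}^n$ is $(\beta,\gamma,\varrho)$-recurrent, we have $f_i = \varrho$ for $1 \leq i \leq n$, hence $f_{i+1} - f_i = 0$ for $1 \leq i \leq n-1$. By the displayed identity, $(a_{i+1} - a_{i-1})(a_{i+1} + a_{i-1} - \beta a_i - \gamma) = 0$ for $1 \leq i \leq n-1$. The hypothesis $a_{i-1} \neq a_{i+1}$ forces the second factor to vanish, i.e., $a_{i-1} - \beta a_i + a_{i+1} = \gamma$ for $1 \leq i \leq n-1$, which is exactly (\ref{eq:gammathreetermS99}). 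Thus $\{a_i\}_{i=0}^n$ is $(\beta,\gamma)$-recurrent. The only point requiring a moment's care is handling the range of $i$ correctly and noting that part (i) produces a genuine $\varrho$ even in the $n = \infty$ case; otherwise the proof is a direct consequence of the factorization identity.
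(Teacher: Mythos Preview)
Your proof is correct and follows essentially the same approach as the paper: define the left-hand side of \eqref{eq:varrhothreetermS99} as a quantity depending on $i$, establish the factorization identity for consecutive differences, and read off both parts directly. The paper's proof is identical up to sign conventions (it writes $p_i - p_{i+1}$ rather than your $f_{i+1} - f_i$) and omits the routine details you spelled out.
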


\begin{proof}Let $p_i$ denote the expression on the left in~(\ref{eq:varrhothreetermS99}),
and observe
 \begin{gather*}
p_i-p_{i+1} = (a_{i-1}-a_{i+1})(a_{i-1}-\beta a_i +a_{i+1} - \gamma)
\end{gather*}
for $1 \leq i \leq n-1$. Assertions (i), (ii) are both routine consequences of this.
\end{proof}

\begin{Definition}\label{def:pt}
Assume that $\lbrace a_i \rbrace_{i=0}^n$ is recurrent.
By a {\it parameter triple} for
$\lbrace a_i \rbrace_{i=0}^n$ we mean a 3-tuple
 $\beta$, $\gamma$, $\varrho$ of scalars in $\mathbb F$
such that $\lbrace a_i \rbrace_{i=0}^n$ is
$(\beta,\gamma)$-recurrent and $(\beta, \gamma, \varrho)$-recurrent.
\end{Definition}

Note that a recurrent sequence has at least one parameter triple.

\section{Recurrent sequences in closed form }\label{section9}

In this section, we describe the recurrent sequences in closed form.
Let $n$ denote an integer at least~2, or~$\infty$.

\begin{Lemma}[{see \cite[Lemma~9.2]{2lintrans}}]\label{lem:closedformthreetermS99}
 The recurrent sequences
 $\lbrace a_i\rbrace_{i=0}^n$
are described in the table below:
\begin{center}
\begin{tabular}[t]{c|c|c}
 {\rm case} & $a_i$ & {\rm comments}
\\
\hline
{\rm I} & $ \alpha_1 + \alpha_2 q^i + \alpha_3 q^{-i} $
&
$q\not\in \lbrace 0,1,-1\rbrace$
 \\
{\rm II} &
$ \alpha_1 + \alpha_2 i + \alpha_3 \binom{i}{2} $ &
\\
{\rm III} &
$ \alpha_1 + \alpha_2 (-1)^i + \alpha_3 i(-1)^i$ &
${\rm char}(\mathbb F)\not=2$
\\
\end{tabular}
\end{center}

In the above table $q$, $\alpha_1$, $\alpha_2$, $\alpha_3$ are scalars in~$\mathbb F$.
\end{Lemma}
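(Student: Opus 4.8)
The plan is to recognize a recurrent sequence as a solution of a constant-coefficient linear recurrence of order~$3$, and then apply the standard closed-form theory for such recurrences, taking care of the degenerate cases in characteristic~$2$ and of short sequences. (This is essentially \cite[Lemma~9.2]{2lintrans}; I sketch the argument.) First I would use Lemma~\ref{lem:recvsbrecS99} to fix $\beta \in \mathbb F$ for which $\lbrace a_i\rbrace_{i=0}^n$ is $\beta$-recurrent, so that
\[
a_{i+1} = (\beta+1)a_i - (\beta+1)a_{i-1} + a_{i-2}, \qquad 2 \leq i \leq n-1 .
\]
The associated characteristic polynomial $\lambda^3 - (\beta+1)\lambda^2 + (\beta+1)\lambda - 1$ factors as $(\lambda-1)(\lambda^2-\beta\lambda+1)$; since $\mathbb F$ is algebraically closed we may write $\lambda^2-\beta\lambda+1=(\lambda-q)(\lambda-q^{-1})$ with $q\in\mathbb F$ and $q+q^{-1}=\beta$, so the roots are $1$, $q$, $q^{-1}$.

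Next I would split into cases according to the root structure. If $\beta\notin\lbrace 2,-2\rbrace$ then $1,q,q^{-1}$ are three distinct nonzero elements (so $q\notin\lbrace 0,1,-1\rbrace$), and the solution space of the recurrence on all of $\mathbb N$ is spanned by $1$, $q^i$, $q^{-i}$ (Case~I). If $\beta=2$, or if ${\rm char}(\mathbb F)=2$ and $\beta=0$, the characteristic polynomial is $(\lambda-1)^3$ and the solution space is spanned by $1$, $i$, $\binom{i}{2}$ (Case~II). If ${\rm char}(\mathbb F)\neq 2$ and $\beta=-2$, the characteristic polynomial is $(\lambda-1)(\lambda+1)^2$ and the solution space is spanned by $1$, $(-1)^i$, $i(-1)^i$ (Case~III). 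In each case one checks directly, using that $1,q,q^{-1}$ are roots of the characteristic polynomial, or a third-difference computation for the polynomial and alternating families, that the three displayed model sequences satisfy the $\beta$-recurrence for all~$i$.

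It then remains to match a given recurrent sequence to a member of the appropriate family. In the relevant case I would solve the $3\times 3$ linear system expressing $a_0$, $a_1$, $a_2$ in terms of $\alpha_1$, $\alpha_2$, $\alpha_3$: the coefficient matrix is Vandermonde in $1,q,q^{-1}$ in Case~I (determinant $(q-1)(q^{-1}-1)(q^{-1}-q)\neq 0$), unitriangular in Case~II, and of determinant $-4$ in Case~III (nonzero as ${\rm char}(\mathbb F)\neq 2$). In each case the system has a solution $(\alpha_1,\alpha_2,\alpha_3)$, giving a model sequence $\lbrace \tilde a_i\rbrace$ agreeing with $\lbrace a_i\rbrace$ at $i=0,1,2$. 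Since both sequences obey the same order-$3$ recurrence for $2\leq i\leq n-1$, an induction on $i$ yields $a_i=\tilde a_i$ for all $0\leq i\leq n$; when $n\in\lbrace 2,3\rbrace$ the recurrence imposes at most one equation and the conclusion is immediate from the matching step alone (Case~II works in every characteristic). For the converse, each family in the table is recurrent, since $\alpha_1+\alpha_2 q^i+\alpha_3 q^{-i}$ is $(q+q^{-1})$-recurrent, and Cases~II and~III are $2$-recurrent and $(-2)$-recurrent respectively.

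The main obstacle I anticipate is bookkeeping rather than conceptual: ensuring the case division handles ${\rm char}(\mathbb F)=2$ correctly, where $2=-2=0$ and the degenerate Case~III merges into Case~II, and verifying cleanly in all characteristics the third-difference identities
\[
a_{i-2}-3a_{i-1}+3a_i-a_{i+1}=0, \qquad a_{i-2}+a_{i-1}-a_i-a_{i+1}=0
\]
for the bases $\lbrace 1,i,\binom{i}{2}\rbrace$ and $\lbrace 1,(-1)^i,i(-1)^i\rbrace$ respectively.
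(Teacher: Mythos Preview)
The paper does not supply its own proof of this lemma; it simply cites \cite[Lemma~9.2]{2lintrans}. Your sketch is the standard argument---reduce the $\beta$-recurrence to a third-order linear recurrence, factor the characteristic polynomial as $(\lambda-1)(\lambda-q)(\lambda-q^{-1})$, case-split on root multiplicities, and match initial data via a $3\times 3$ system---and it is correct, including the handling of characteristic~$2$ where Case~III collapses into Case~II.
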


\begin{Lemma}\label{lem:bgv}
The following scalars $\beta$, $\gamma$, $\varrho$ give a parameter triple
for the recurrent sequence $\lbrace a_i \rbrace_{i=0}^n$ in
Lemma~{\rm \ref{lem:closedformthreetermS99}}.

Case I:
\begin{gather*}
\beta = q + q^{-1},
\qquad
\gamma = - \alpha_1 (q-1)^2 q^{-1},
\qquad
\varrho = \alpha_1^2(q-1)^2 q^{-1} - \alpha_2 \alpha_3 \big(q-q^{-1}\big)^2.
\end{gather*}

Case II:
\begin{gather*}
\beta = 2, \qquad \gamma = \alpha_3, \qquad
\varrho = \alpha_2^2 - \alpha_2 \alpha_3 - 2 \alpha_1 \alpha_3.
\end{gather*}

Case III:
\begin{gather*}
\beta = -2,
\qquad
\gamma = 4 \alpha_1,
\qquad
\varrho = \alpha_3^2 - 4 \alpha_1^2.
\end{gather*}
\end{Lemma}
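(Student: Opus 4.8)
The plan is to establish the claim in each of the three cases by directly substituting the closed form for $a_i$ from Lemma~\ref{lem:closedformthreetermS99} into the two conditions of Definition~\ref{lem:beginthreetermS99}: that $\lbrace a_i\rbrace_{i=0}^n$ is $(\beta,\gamma)$-recurrent, i.e.\ satisfies~\eqref{eq:gammathreetermS99}, and that it is $(\beta,\gamma,\varrho)$-recurrent, i.e.\ satisfies~\eqref{eq:varrhothreetermS99}. The structural reason the computation goes through is that in each case the three ``basis functions'' of the parameter $i$ each obey a linear recurrence whose characteristic polynomial is $t^2-\beta t+1$ for the stated $\beta$: in Case~I these are $1,q^i,q^{-i}$ and $t^2-\beta t+1=(t-q)(t-q^{-1})$; in Case~II these are $1,i,\binom{i}{2}$ and $t^2-\beta t+1=(t-1)^2$; in Case~III these are $1,(-1)^i,i(-1)^i$ and $t^2-\beta t+1=(t+1)^2$.

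First I would verify the $(\beta,\gamma)$-recurrence. Forming $a_{i-1}-\beta a_i+a_{i+1}$ from the closed form, the two non-constant basis functions contribute nothing: in Case~I because $q$ and $q^{-1}$ are roots of $t^2-\beta t+1$; in Case~III because $-1$ is a double root of $t^2+2t+1$; in Case~II because the combination is the second difference $a_{i+1}-2a_i+a_{i-1}$, which annihilates $\alpha_1+\alpha_2 i$ and sends $\binom{i}{2}$ to its second difference $1$ (as $\binom{i}{2}$ is quadratic in $i$ with leading coefficient $\tfrac12$). What remains is the constant contribution $\alpha_1(2-\beta)$, plus $\alpha_3$ in Case~II. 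Using $2-\beta=2-q-q^{-1}=-(q-1)^2q^{-1}$ in Case~I, $2-\beta=0$ in Case~II, and $2-\beta=4$ in Case~III, this residue is exactly the claimed $\gamma=-\alpha_1(q-1)^2q^{-1}$, $\gamma=\alpha_3$, and $\gamma=4\alpha_1$ respectively.

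Next I would pin down $\varrho$. Since $\lbrace a_i\rbrace_{i=0}^n$ is now known to be $(\beta,\gamma)$-recurrent, the identity $p_i-p_{i+1}=(a_{i-1}-a_{i+1})(a_{i-1}-\beta a_i+a_{i+1}-\gamma)$ from the proof of Lemma~\ref{lem:bgrecvsbgdrecS99} shows the left-hand side of~\eqref{eq:varrhothreetermS99} is independent of $i$ on its range, so it suffices to evaluate it at $i=1$:
\begin{gather*}
\varrho=a_0^2-\beta a_0a_1+a_1^2-\gamma(a_0+a_1).
\end{gather*}
Substituting the closed forms for $a_0$ and $a_1$ and simplifying gives the stated $\varrho$ in each case. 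Alternatively, for Case~I one can substitute the closed form into~\eqref{eq:varrhothreetermS99} for general $i$: all terms in $q^{\pm 2i}$ and $q^{\pm i}$ cancel, again by the root structure of $t^2-\beta t+1$, and collecting the constant remainder (from $\alpha_1^2$ and from $\alpha_2\alpha_3=(\alpha_2 q^i)(\alpha_3 q^{-i})$) yields $\varrho=\alpha_1^2(q-1)^2q^{-1}-\alpha_2\alpha_3\big(q-q^{-1}\big)^2$.

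The only real obstacle is bookkeeping, and it is confined to Case~I: tracking the monomials in $q^{\pm i}$, $q^{\pm 2i}$ and in $\alpha_1,\alpha_2,\alpha_3$, and checking that the $i$-dependent pieces cancel. Cases~II and~III are shorter — in Case~II one works only with polynomials in $i$, and in Case~III the $(-1)^i$ relations collapse things quickly. Once these elementary computations are carried out, the three displayed triples $(\beta,\gamma,\varrho)$ are exactly parameter triples for the respective sequences, which is the assertion of the lemma.
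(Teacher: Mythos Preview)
Your proposal is correct and matches the paper's approach: the paper's proof is simply ``This is routinely checked using Definition~\ref{def:pt},'' and you have carried out precisely that routine check, with the helpful structural observation about the characteristic polynomial $t^2-\beta t+1$ making the cancellations transparent.
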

\begin{proof} This is routinely checked using Definition~\ref{def:pt}.
\end{proof}

\begin{Lemma}\label{lem:asum}
Referring to Lemma~{\rm \ref{lem:closedformthreetermS99}}, for $0 \leq i \leq n+1$
the sum $a_0+a_1+ \cdots + a_{i-1}$ is given in the table below:
\begin{center}
\begin{tabular}[t]{c|c}
case & $a_0+a_1+ \cdots + a_{i-1}$
\\
\hline
I & $\alpha_1 i + \alpha_2 \frac{1-q^i}{1-q} + \alpha_3
\frac{1-q^{-i}}{1-q^{-1}} $
 \\
II &
$\alpha_1 i + \alpha_2 \binom{i}{2} + \alpha_3 \binom{i}{3} $
\\
III, $i$ even &
 $\alpha_1 i - \alpha_3 i/2$
\\
III, $i$ odd &
$\alpha_1 i + \alpha_2 + \alpha_3(i-1)/2$
\end{tabular}
\end{center}
\end{Lemma}
\begin{proof} Use induction on $i$.
\end{proof}

\begin{Note}Referring to Case~III of the above table,
the subcases $i$ even and $i$ odd can be handled
in the following uniform way. For $0 \leq i \leq n+1$,
\begin{gather*}
a_0 + a_1+\cdots + a_{i-1} = \frac{2\alpha_2-\alpha_3 +4\alpha_1 i
+ (\alpha_3-2\alpha_2)(-1)^i -2\alpha_3 i (-1)^i}{4}.
\end{gather*}
\end{Note}

\section{Twin recurrent sequences}\label{section10}

Let $n$ denote an integer at least 2, or $\infty$.
Let $\lbrace a_i\rbrace_{i=0}^n$, $\lbrace b_i\rbrace_{i=0}^n$ denote scalars in~$\mathbb F$.

\begin{Definition}\label{def:twin}
Assume that the sequences
$\lbrace a_i\rbrace_{i=0}^n$,
$\lbrace b_i\rbrace_{i=0}^n$ are recurrent.
These sequences are called {\it twins}
whenever they have a parameter triple in common.
\end{Definition}

\begin{Lemma}\label{lem:twin}
 Assume that the sequences $\lbrace a_i\rbrace_{i=0}^n$,
$\lbrace b_i\rbrace_{i=0}^n$ are recurrent.
These sequences are twins if and only if
they are related in the following way:

Case I:
\begin{gather*}
 a_i= \alpha_1 + \alpha_2 q^i + \alpha_3 q^{-i},
\qquad
 b_i= \alpha_1 + \alpha'_2 q^i + \alpha'_3 q^{-i},
\qquad
 \alpha_2' \alpha_3' = \alpha_2 \alpha_3.
\end{gather*}

Case II:
\begin{gather*}
 a_i= \alpha_1 + \alpha_2 i + \alpha_3 \binom{i}{2},
\qquad
 b_i= \alpha'_1 + \alpha'_2 i + \alpha_3 \binom{i}{2},
\\
(\alpha_2-\alpha'_2)(\alpha_2+\alpha'_2-\alpha_3)=
2(\alpha_1-\alpha'_1)\alpha_3.
\end{gather*}

Case III:
\begin{gather*}
 a_i= \alpha_1 + \alpha_2 (-1)^i + \alpha_3 i(-1)^i,\qquad
 b_i= \alpha_1 + \alpha'_2 (-1)^i + \alpha'_3 i(-1)^i,\\
\alpha'_3= \alpha_3 \qquad \text{or} \qquad \alpha'_3=-\alpha_3 .
\end{gather*}
\end{Lemma}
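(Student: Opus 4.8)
The plan is to reduce everything to the closed-form classification of Lemma~\ref{lem:closedformthreetermS99} together with the explicit parameter triples of Lemma~\ref{lem:bgv}. The key auxiliary observation, which I would record first, is that \emph{for a fixed $\beta$ a recurrent sequence $\{a_i\}_{i=0}^n$ has at most one parameter triple of the form $(\beta,\gamma,\varrho)$}: the $i=1$ instance of~\eqref{eq:gammathreetermS99} forces $\gamma=a_0-\beta a_1+a_2$ (here $n\geq2$ is exactly what makes this instance available), and then the $i=1$ instance of~\eqref{eq:varrhothreetermS99} forces $\varrho=a_0^2-\beta a_0a_1+a_1^2-\gamma(a_0+a_1)$.

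For the forward direction, suppose $\{a_i\}_{i=0}^n$ and $\{b_i\}_{i=0}^n$ are twins with common parameter triple $(\beta,\gamma,\varrho)$. By Definition~\ref{def:pt} and Lemma~\ref{lem:brecvsbgrecS99} both sequences are $\beta$-recurrent, hence both lie in the solution space of the order-three linear recursion with characteristic polynomial $(\lambda-1)(\lambda^2-\beta\lambda+1)$. I would split on the factorization of $\lambda^2-\beta\lambda+1$ over the algebraically closed field $\mathbb F$: if $\beta\neq\pm2$ it has distinct roots $q,q^{-1}$ with $q\notin\{0,1,-1\}$ and the solution space is spanned by $1,q^i,q^{-i}$; if $\beta=2$ it is spanned by $1,i,\binom{i}{2}$; and if $\beta=-2$ and ${\rm char}(\mathbb F)\neq2$ it is spanned by $1,(-1)^i,i(-1)^i$. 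Fitting the first three terms of each sequence to the relevant basis (a nonsingular linear system in $\alpha_1,\alpha_2,\alpha_3$) then writes both $\{a_i\}$ and $\{b_i\}$ in the \emph{same} one of Cases~I, II, III, and with the \emph{same} $q$ in Case~I (replacing $q$ by $q^{-1}$ merely interchanges $\alpha_2$ and $\alpha_3$). Now Lemma~\ref{lem:bgv}, applied to the closed form of $\{a_i\}$, produces a parameter triple with this same $\beta$, which by the uniqueness observation must equal $(\beta,\gamma,\varrho)$; likewise for $\{b_i\}$. Equating the two $\gamma$-values forces $\alpha_1=\alpha_1'$ in Cases~I and~III (dividing by the nonzero factor $(q-1)^2q^{-1}$, resp.\ by $4$) and $\alpha_3=\alpha_3'$ in Case~II, and equating the two $\varrho$-values then yields, after a short rearrangement, exactly the stated relation: $\alpha_2\alpha_3=\alpha_2'\alpha_3'$ (using $q-q^{-1}\neq0$), resp.\ $(\alpha_2-\alpha_2')(\alpha_2+\alpha_2'-\alpha_3)=2(\alpha_1-\alpha_1')\alpha_3$, resp.\ $\alpha_3^2=(\alpha_3')^2$.

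For the reverse direction I would just run this backwards. Given two sequences presented as in Case~I, II or~III with the stated relation, Lemma~\ref{lem:closedformthreetermS99} makes them recurrent and Lemma~\ref{lem:bgv} exhibits parameter triples $(\beta,\gamma_a,\varrho_a)$ for $\{a_i\}$ and $(\beta,\gamma_b,\varrho_b)$ for $\{b_i\}$ sharing the value $\beta$ ($=q+q^{-1}$, $2$, or $-2$, respectively); the stated relation is precisely what makes $\gamma_a=\gamma_b$ and $\varrho_a=\varrho_b$, so the two sequences possess a common parameter triple and are twins.

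The step I expect to be most delicate is the bookkeeping around degenerate sequences: a constant sequence is $\beta$-recurrent for every $\beta$, so the value $q+q^{-1}$ "canonically" attached to a Case~I presentation need not coincide with the $\beta$ of the common triple. This is why the argument above never fixes a presentation in advance, but instead \emph{builds} the Case~I/II/III presentations of both sequences out of the given common $\beta$; once this is done a constant sequence automatically appears as $\alpha_1+0\cdot q^i+0\cdot q^{-i}$ (or the analogue) and its twin relation holds trivially, and the whole argument goes through uniformly for all $n\geq2$ and for $n=\infty$.
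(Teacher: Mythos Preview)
Your proof is correct and follows essentially the same route as the paper's: both directions are handled by combining the closed-form classification (Lemma~\ref{lem:closedformthreetermS99}) with the explicit parameter triples (Lemma~\ref{lem:bgv}). The paper's proof is a two-sentence sketch that leaves implicit exactly the mechanism you spell out---in particular your uniqueness observation (that $\gamma$ and $\varrho$ are determined by $\beta$ and the sequence via the $i=1$ instances) is precisely what underwrites the terse phrase ``it follows from Definition~\ref{lem:beginthreetermS99} and Lemma~\ref{lem:closedformthreetermS99}'' in the paper.
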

\begin{proof}
First assume that the sequences $\lbrace a_i \rbrace_{i=0}^n$,
 $\lbrace b_i \rbrace_{i=0}^n$ are related in the specified way.
Then these sequences share the parameter triple
$\beta$, $\gamma$, $\varrho$ from
Lemma~\ref{lem:bgv}. Therefore these sequences are twins.
Next assume that the sequences
 $\lbrace a_i \rbrace_{i=0}^n$,
 $\lbrace b_i \rbrace_{i=0}^n$ are twins.
It follows from Definition~\ref{lem:beginthreetermS99}
and Lemma~\ref{lem:closedformthreetermS99}
that they are related in the specified way.
\end{proof}

\section{A characterization of twin recurrent sequences}\label{section11}

In this section we explain what twin recurrent
sequences have to do with the equations in
Proposition~\ref{prop:feas3}.
Let $n$ denote an integer at least 2, or $\infty$.
Let $\lbrace a_i \rbrace_{i=0}^n$,
 $\lbrace b_i \rbrace_{i=0}^n$ denote scalars in~$\mathbb F$.

\begin{Definition}\label{def:eij}
For $0 \leq i\leq j\leq n$ let $E(i,j)$ denote the equation
\begin{gather*}
(a_0+ \cdots + a_i -b_0-\cdots - b_i)
(a_{j-i}-b_j)
=
(a_{0}-b_i)(a_{j-i}+ \cdots + a_j -b_{j-i}-\cdots - b_j).
\end{gather*}
\end{Definition}
\begin{Lemma}\label{lem:triv}
The equations $E(0,j)$ and $E(j,j)$ hold for $0 \leq j \leq n$.
\end{Lemma}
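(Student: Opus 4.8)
The statement to prove is Lemma~\ref{lem:triv}: the equations $E(0,j)$ and $E(j,j)$ hold for $0 \leq j \leq n$.

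\medskip

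The plan is to simply unpack Definition~\ref{def:eij} at the two special index values $i=0$ and $i=j$ and observe that both sides collapse to literally the same expression, so the equations become tautologies. First I would handle $E(0,j)$: setting $i=0$ in Definition~\ref{def:eij}, the left-hand side is $(a_0 - b_0)(a_j - b_j)$ and the right-hand side is $(a_0 - b_0)(a_j + \cdots + a_j - b_j - \cdots - b_j)$, where the sums $a_{j-i} + \cdots + a_j$ and $b_{j-i} + \cdots + b_j$ each have exactly one term since $j-i = j$. Thus the right-hand side equals $(a_0 - b_0)(a_j - b_j)$ as well, so $E(0,j)$ holds identically for every $j$ with $0 \leq j \leq n$.

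\medskip

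Next I would handle $E(j,j)$: setting $i=j$ in Definition~\ref{def:eij}, the factor $a_{j-i} - b_j$ on the left becomes $a_0 - b_j$, and the left-hand side is $(a_0 + \cdots + a_j - b_0 - \cdots - b_j)(a_0 - b_j)$. On the right-hand side, the factor $a_0 - b_i$ becomes $a_0 - b_j$, and the sums $a_{j-i} + \cdots + a_j$ and $b_{j-i} + \cdots + b_j$ run from index $0$ to index $j$, giving $(a_0 - b_j)(a_0 + \cdots + a_j - b_0 - \cdots - b_j)$. This is the same product as the left-hand side (the two factors appear in the opposite order), so $E(j,j)$ holds identically for every $j$ with $0 \leq j \leq n$.

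\medskip

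There is no real obstacle here; the only care needed is in reading off the endpoints of the telescoping-type sums correctly when $i$ takes its extreme values, and in checking the trivial edge case $j=0$ (where $E(0,0)$ and $E(j,j)$ coincide and both sides are $(a_0-b_0)^2$, or simply $0=0$ after the obvious cancellation in the defining relation). This lemma is presumably recorded here only so that later, when analyzing the system $\{E(i,j)\}$, one may restrict attention to the ranges $1 \leq i \leq j-1$ without loss of generality.
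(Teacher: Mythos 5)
Your proof is correct and matches the paper's approach: the paper simply states that the lemma is "routinely checked," and your computation is exactly that routine check, confirming that for $i=0$ both sides reduce to $(a_0-b_0)(a_j-b_j)$ and for $i=j$ both sides reduce to $(a_0-b_j)(a_0+\cdots+a_j-b_0-\cdots-b_j)$.
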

\begin{proof} This is routinely checked.
\end{proof}

\begin{Proposition}\label{lem:betaGam}
Assume that the sequences $\lbrace a_i \rbrace_{i=0}^{n}$, $\lbrace b_i \rbrace_{i=0}^{n}$ are recurrent and twins.
Then $E(i,j)$ holds for $0 \leq i\leq j \leq n$.
\end{Proposition}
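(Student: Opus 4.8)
The plan is to use the closed-form description of twin recurrent sequences from Lemma~\ref{lem:twin}, and verify $E(i,j)$ by direct substitution in each of the three cases. Since both sides of $E(i,j)$ are polynomial identities in the parameters $\alpha_1,\alpha_2,\alpha_3,\alpha_2',\alpha_3'$ (and $q$), the verification is a finite algebraic computation once the relevant sums $a_0+\cdots+a_i$ and partial sums $a_{j-i}+\cdots+a_j$ are written in closed form using Lemma~\ref{lem:asum}. So the first step is to record, in each case, the expressions for $a_i$, $b_i$, for $a_0+\cdots+a_i$ and $b_0+\cdots+b_i$, and for the ``tail sums'' $a_{j-i}+\cdots+a_j = (a_0+\cdots+a_j)-(a_0+\cdots+a_{j-i-1})$ and likewise for $b$.

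Next I would handle Case~I, which I expect to be the most instructive. Here $a_i = \alpha_1 + \alpha_2 q^i + \alpha_3 q^{-i}$ and $b_i = \alpha_1 + \alpha_2' q^i + \alpha_3' q^{-i}$ with $\alpha_2'\alpha_3' = \alpha_2\alpha_3$. Using Lemma~\ref{lem:asum} one gets $a_0+\cdots+a_i = \alpha_1(i+1) + \alpha_2\frac{1-q^{i+1}}{1-q} + \alpha_3\frac{1-q^{-(i+1)}}{1-q^{-1}}$ (shifting the index appropriately). The differences $a_0+\cdots+a_i - b_0-\cdots-b_i$ and $a_0-b_i$ both factor through $\alpha_2-\alpha_2'$ and $\alpha_3-\alpha_3'$, and the constant $\alpha_1$ term cancels. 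The key algebraic fact making $E(i,j)$ work is the constraint $\alpha_2'\alpha_3'=\alpha_2\alpha_3$, which lets one collapse cross-terms; I would expand both sides, collect by monomials in $q$, and check that the difference is a multiple of $\alpha_2\alpha_3 - \alpha_2'\alpha_3'$, hence zero. Cases~II and~III are analogous but with polynomial/alternating data instead of geometric; in Case~II the constraint $(\alpha_2-\alpha_2')(\alpha_2+\alpha_2'-\alpha_3) = 2(\alpha_1-\alpha_1')\alpha_3$ plays the analogous role, and in Case~III the sign constraint $\alpha_3'=\pm\alpha_3$ is what is needed.

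An alternative, possibly cleaner route would be to avoid the case split: since the sequences are twins they share a parameter triple $(\beta,\gamma,\varrho)$, so both are $(\beta,\gamma)$-recurrent and $(\beta,\gamma,\varrho)$-recurrent. One could then try to derive $E(i,j)$ directly from the recurrences \eqref{eq:varrhothreetermS99} and \eqref{eq:gammathreetermS99} by induction on $j-i$, using $E(0,j)$ and $E(j,j)$ (Lemma~\ref{lem:triv}) as base cases and expressing the step from $E(i,j)$ to $E(i+1,j+1)$ via the three-term relations. This would be more conceptual and would subsume all cases at once, but setting up the induction so that the recurrence relations exactly produce the needed cancellation is delicate.

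The main obstacle, in either approach, is bookkeeping: $E(i,j)$ is a genuinely bilinear identity mixing a ``prefix sum'' factor with a ``suffix sum'' factor, so a naive expansion produces many terms, and one must organize the computation (e.g.\ by factoring out $\alpha_2-\alpha_2'$, $\alpha_3-\alpha_3'$ early, or by working modulo the defining constraint) so that the cancellation is transparent rather than miraculous. I expect the cleanest writeup is the case-by-case substitution of Lemma~\ref{lem:twin} together with Lemma~\ref{lem:asum}, with the remark that after clearing denominators each case reduces to verifying that a specific polynomial is divisible by the constraint polynomial, which is then checked by inspection.
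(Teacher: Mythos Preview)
Your proposal is correct and takes essentially the same approach as the paper: the paper's proof simply states that the identity is routinely verified for each of Cases~I--III in Lemma~\ref{lem:twin}, using the closed-form partial sums from Lemma~\ref{lem:asum}. Your writeup is considerably more detailed than the paper's one-line argument, but the strategy is identical.
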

\begin{proof}This is routinely verified for each Case~I--III in Lemma~\ref{lem:twin}. To carry out the verification, use the
formulas in Lemma~\ref{lem:asum}.
\end{proof}

In the next two lemmas, we give some additional
solutions for the equations $E(i,j)$ in Definition~\ref{def:eij}.

\begin{Lemma}Assume that
$a_i = b_{i-1}$ for $1 \leq i \leq n$. Then
$E(i,j)$ holds for $0 \leq i\leq j \leq n$.
\end{Lemma}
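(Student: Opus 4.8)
The plan is to verify the equations $E(i,j)$ directly under the hypothesis $a_i=b_{i-1}$ for $1\le i\le n$. The key observation is that this hypothesis makes the two sequences essentially the same, just shifted by one index: the multiset $\{a_0,a_1,\dots,a_i\}$ and $\{b_0,b_1,\dots,b_i\}$ differ only in that the first contains $a_0$ where the second contains $b_i$, since $a_k=b_{k-1}$ for $1\le k\le i$. Concretely, I would record the two consequences
\[
a_0+a_1+\cdots+a_i-b_0-b_1-\cdots-b_i=a_0-b_i,
\]
obtained by cancelling $a_k=b_{k-1}$ termwise, and more generally, for the ``tail'' sums appearing on the right-hand side of $E(i,j)$, the analogous telescoping
\[
a_{j-i}+a_{j-i+1}+\cdots+a_j-b_{j-i}-b_{j-i+1}-\cdots-b_j=a_{j-i}-b_j,
\]
which again follows by cancelling $a_k=b_{k-1}$ for $j-i+1\le k\le j$ (here one must note $j-i\ge 1$ when the shift is applied, but the case $j-i=0$ is already covered by Lemma~\ref{lem:triv}, so we may assume $1\le i\le j\le n$).

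Substituting these two identities into the statement of $E(i,j)$ from Definition~\ref{def:eij}, the left-hand side becomes $(a_0-b_i)(a_{j-i}-b_j)$ and the right-hand side becomes $(a_0-b_i)(a_{j-i}-b_j)$ as well, so the equation holds identically. Thus the only real content is the two telescoping computations, each of which is a one-line induction on the number of terms (or simply a reindexing). I would present it as: by Lemma~\ref{lem:triv} we may assume $1\le i<j\le n$, derive the two displayed cancellation identities from $a_k=b_{k-1}$, and then observe that both sides of $E(i,j)$ equal $(a_0-b_i)(a_{j-i}-b_j)$.

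There is essentially no obstacle here; the statement is a formal consequence of termwise cancellation. The only point requiring a moment's care is bookkeeping of the index ranges — making sure the shift identity $a_k=b_{k-1}$ is applied only for indices $k$ with $1\le k\le n$, which is why the degenerate cases $i=0$ and $i=j$ are dispatched first via Lemma~\ref{lem:triv}. I expect the author's proof to be the terse ``this is routinely checked'' in the same spirit as Lemma~\ref{lem:triv}, and indeed the verification reduces to the two cancellations above.

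\begin{proof}
By Lemma~\ref{lem:triv} the equations $E(0,j)$ and $E(j,j)$ hold, so it suffices to verify $E(i,j)$ for $1\le i<j\le n$. From the hypothesis $a_k=b_{k-1}$ for $1\le k\le n$ we obtain, by termwise cancellation,
\[
a_0+a_1+\cdots+a_i-b_0-b_1-\cdots-b_i=a_0-b_i
\]
and
\[
a_{j-i}+a_{j-i+1}+\cdots+a_j-b_{j-i}-b_{j-i+1}-\cdots-b_j=a_{j-i}-b_j.
\]
Substituting these into Definition~\ref{def:eij}, both sides of $E(i,j)$ equal $(a_0-b_i)(a_{j-i}-b_j)$. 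Hence $E(i,j)$ holds for $0\le i\le j\le n$.
\end{proof}
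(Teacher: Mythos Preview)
Your proof is correct and follows essentially the same approach as the paper: the paper's proof is the one-line remark that under the hypothesis each side of $E(i,j)$ equals $(a_0-b_i)(a_{j-i}-b_j)$, and you have simply written out the two telescoping identities that justify this. (Your caution about needing $j-i\ge 1$ is slightly overcautious---the shift $a_k=b_{k-1}$ is applied for $k$ ranging over $j-i+1,\dots,j$, and $j-i+1\ge 1$ already follows from $i\le j$---but this does no harm.)
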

\begin{proof} For
$0 \leq i\leq j \leq n$,
each side of
$E(i,j)$ is equal to
$(a_0-b_i)(a_{j-i}-b_j)$.
\end{proof}
\begin{Lemma}
Pick $\theta \in \mathbb F$ and assume
\begin{gather}
a_i = \theta,\qquad b_i = \theta, \qquad 1 \leq i \leq n-1,\nonumber\\
(\theta-a_0)(\theta-a_{n})=(\theta-b_0)(\theta-b_{n})
 \qquad \text{if} \ n\not=\infty.\label{eq:cond}
\end{gather}
Then $E(i,j)$ holds for $0 \leq i\leq j \leq n$.
\end{Lemma}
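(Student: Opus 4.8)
The plan is to reduce to the nontrivial pairs $(i,j)$ via Lemma~\ref{lem:triv} and then evaluate both sides of $E(i,j)$ directly from the explicit form of the data, exactly as in the preceding lemma.

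First I would invoke Lemma~\ref{lem:triv}: the equations $E(0,j)$ and $E(j,j)$ hold for $0 \leq j \leq n$, so it remains to treat $1 \leq i \leq j-1$, and in particular $2 \leq j \leq n$ and $1 \leq j-i \leq j-1 \leq n-1$. Using $a_k = b_k = \theta$ for $1 \leq k \leq n-1$, I would simplify the factors in $E(i,j)$ (Definition~\ref{def:eij}). Since $1 \leq i \leq n-1$, the partial sums are $a_0 + a_1 + \cdots + a_i = a_0 + i\theta$ and $b_0 + b_1 + \cdots + b_i = b_0 + i\theta$, so the left-hand first factor equals $a_0 - b_0$; likewise $a_0 - b_i = a_0 - \theta$ because $i \geq 1$.

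Next I would split into two subcases. If $j \leq n-1$, then $1 \leq j-i \leq j \leq n-1$, so $a_{j-i} = b_j = \theta$ and every index in the bracket $a_{j-i} + \cdots + a_j - b_{j-i} - \cdots - b_j$ lies in $[1,n-1]$; hence that bracket and $a_{j-i}-b_j$ both vanish, and both sides of $E(i,j)$ are $0$. If $j = n$ (which forces $n \neq \infty$), then $a_{j-i} = a_{n-i} = \theta$ since $1 \leq n-i \leq n-1$, so $a_{n-i} - b_n = \theta - b_n$, while $a_{n-i} + \cdots + a_n - (b_{n-i} + \cdots + b_n) = (i\theta + a_n) - (i\theta + b_n) = a_n - b_n$. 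Thus $E(i,n)$ becomes $(a_0-b_0)(\theta-b_n) = (a_0-\theta)(a_n-b_n)$; writing $a_0-b_0 = (a_0-\theta)-(b_0-\theta)$ and $a_n-b_n = (a_n-\theta)-(b_n-\theta)$ and expanding, the difference of the two sides collapses to $(b_0-\theta)(b_n-\theta) - (a_0-\theta)(a_n-\theta)$, which is zero by~\eqref{eq:cond}.

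I do not expect a genuine obstacle: the argument is a finite case check. The only points requiring care are the bookkeeping of which indices equal $\theta$ near the two ends $0$ and $n$ of the sequences, and the observation that the subcase $j = n$ arises only when $n$ is finite, so that~\eqref{eq:cond} is available precisely when it is needed.
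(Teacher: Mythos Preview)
Your proposal is correct and follows essentially the same approach as the paper: reduce via Lemma~\ref{lem:triv} to $1\le i<j\le n$, observe that for $j<n$ both sides of $E(i,j)$ vanish, and for $j=n$ reduce $E(i,n)$ to $(a_0-b_0)(\theta-b_n)=(a_0-\theta)(a_n-b_n)$, which is a restatement of~\eqref{eq:cond}. Your write-up simply fills in the intermediate simplifications that the paper leaves implicit.
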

\begin{proof}By Lemma \ref{lem:triv} it suffices to verify
$E(i,j)$ for $1\leq i< j\leq n$.
Let $i,j$ be given. For $j<n$, each side of $E(i,j)$ is zero.
For
$n\not=\infty $ and $j=n$, the equation $E(i,j)$ becomes
\begin{gather*}
(a_0-b_0)(\theta-b_n)= (a_0-\theta)(a_n-b_n)
\end{gather*}
which is a reformulation of~(\ref{eq:cond}).
\end{proof}

\begin{Proposition}\label{prop:hr}
Assume that $a_1 \not=b_0$ and $a_1 \not=b_1$.
Further assume that
$E(i,j)$ holds
for $1 \leq i \leq 2$ and $i+1\leq j\leq n$. Then
the sequences $\lbrace a_i \rbrace_{i=0}^n$,
$\lbrace b_i \rbrace_{i=0}^n$ are recurrent and twins.
\end{Proposition}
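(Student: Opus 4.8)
The plan is to extract, from the hypothesis that $E(i,j)$ holds for $i\in\{1,2\}$ and $j\ge i+1$, enough linear recurrences to force both $\lbrace a_i\rbrace$ and $\lbrace b_i\rbrace$ to be $\beta$-recurrent for a \emph{common} $\beta$, and then to pin down a common parameter triple so that they are twins. First I would write $S^a_i = a_0+\cdots+a_{i-1}$ and $S^b_i = b_0+\cdots+b_{i-1}$, so that $E(i,j)$ reads
\begin{gather*}
(S^a_{i+1}-S^b_{i+1})(a_{j-i}-b_j) = (a_0-b_i)\bigl((S^a_{j+1}-S^a_{j-i})-(S^b_{j+1}-S^b_{j-i})\bigr).
\end{gather*}
The cleanest route is to first use the equations $E(1,j)$. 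Setting $i=1$ gives, for each $j$ with $2\le j\le n$, a relation among $a_{j-1},b_{j-1},a_j,b_j,a_0,b_0,b_1$ together with the cumulative difference $S^a_2-S^b_2 = (a_0+a_1)-(b_0+b_1)$. Because $a_1\ne b_0$ and $a_1\ne b_1$ the coefficients $a_0-b_1$ and $S^a_2-S^b_2 = (a_0-b_1)+(a_1-b_0)$ are available and, crucially, the factor multiplying the "new" terms does not degenerate; this lets me solve $E(1,j)$ for $a_j-b_j$ (or for a fixed linear combination of $a_j$ and $b_j$) as an affine function of $a_{j-1}-b_{j-1}$, i.e.\ a first-order recurrence in the \emph{difference} sequence $c_i := a_i-b_i$. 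I then do the same with $E(2,j)$, which couples $c_{j-2}$ and $c_j$ (and the individual values $a_{j-2},b_j$) via coefficients controlled by $a_0-b_2$ and $S^a_3-S^b_3$. Combining the $E(1,\cdot)$ and $E(2,\cdot)$ families should produce a genuine linear recurrence of order $\le 2$ for the $a_i$ and, symmetrically, for the $b_i$, with the \emph{same} characteristic data — this is exactly the statement that the sequences are $\beta$-recurrent with a common $\beta$.

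Concretely, I expect to derive from the two families an identity of the shape $a_{i-2}-(\beta+1)a_{i-1}+(\beta+1)a_i-a_{i+1}=0$ for the relevant range of $i$, which by Lemma~\ref{lem:recvsbrecS99} says $\lbrace a_i\rbrace$ is recurrent; the identical manipulation with the roles of $a$ and $b$ interchanged in $E(i,j)$ (or simply by symmetry of the derived recurrence) gives $\lbrace b_i\rbrace$ recurrent with the same $\beta$. Having $\beta$-recurrence of both, Lemma~\ref{lem:brecvsbgrecS99} upgrades each to $(\beta,\gamma)$-recurrence; the point is then to show the \emph{same} $\gamma$ works for both. For this I would feed the closed forms of Lemma~\ref{lem:closedformthreetermS99} (Cases I, II, III) into the surviving constraints from $E(1,j)$ and $E(2,j)$ — using the summation formulas of Lemma~\ref{lem:asum} — and check that in each case the extra equations force precisely the twin relations listed in Lemma~\ref{lem:twin} (e.g.\ $\alpha_2'\alpha_3'=\alpha_2\alpha_3$ in Case~I). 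Once the sequences are seen to be twins, Definition~\ref{def:twin} gives a common parameter triple, completing the proof.

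The main obstacle, I expect, is the bookkeeping of \emph{which} instances of $E(i,j)$ are actually available and non-degenerate, and extracting a clean recurrence from them without dividing by something that might vanish. In particular, $E(2,j)$ is only assumed for $j\ge 3$, so the recurrence it yields has a restricted range, and I must make sure the combined recurrence still covers enough indices to invoke Lemma~\ref{lem:closedformthreetermS99}; the hypotheses $a_1\ne b_0$, $a_1\ne b_1$ are presumably exactly what guarantee the leading coefficients $a_0-b_1$, $(a_0-b_1)+(a_1-b_0)$ stay nonzero at the start of the induction. A secondary subtlety is that the recurrence one first obtains is for the difference sequence $c_i=a_i-b_i$; promoting this to separate recurrences for $a_i$ and $b_i$ with a common $\beta$ requires using a second equation family to decouple, and I would keep an eye on the possibility that $a_i=b_i$ for many $i$ (handled separately, or absorbed into Case I/II/III with degenerate parameters). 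After that, the case-by-case verification against Lemma~\ref{lem:twin} is routine in the sense of Proposition~\ref{lem:betaGam}'s proof, just run in reverse.
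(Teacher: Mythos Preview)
Your proposal outlines a plausible strategy, but it diverges substantially from the paper's argument and leaves the central step unresolved. The paper does not extract a recurrence for the difference sequence $c_i=a_i-b_i$ and then ``decouple.'' Instead it proceeds as follows. From $a_1\ne b_1$ there is a unique pair $(\beta,\gamma)$ with $a_0-\beta a_1+a_2=\gamma$ and $b_0-\beta b_1+b_2=\gamma$; rewriting $E(1,2)$ in these variables shows that $a_0^2-\beta a_0a_1+a_1^2-\gamma(a_0+a_1)$ and $b_0^2-\beta b_0b_1+b_1^2-\gamma(b_0+b_1)$ share a common value $\varrho$. One then proves by induction on $j$ that $\{a_i\}_{i=0}^j$ and $\{b_i\}_{i=0}^j$ are $(\beta,\gamma)$- and $(\beta,\gamma,\varrho)$-recurrent. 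For the inductive step, $E(1,j)$ and $E(2,j)$ are treated as a $2\times 2$ linear system in the unknowns $a_j,b_j$; the coefficient matrix has determinant $(a_1-b_0)(a_1-b_1)$, which is nonzero precisely by the two hypotheses, so the solution is unique. Meanwhile the candidates $a'_j,b'_j$ produced by extending the recurrence one step also satisfy $E(1,j)$ and $E(2,j)$, by the forward direction Proposition~\ref{lem:betaGam}. Uniqueness forces $a_j=a'_j$ and $b_j=b'_j$, closing the induction and yielding a common parameter triple directly.

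Your plan has a genuine gap at the decoupling step. A first-order recurrence for $c_i=a_i-b_i$ does not by itself give $\beta$-recurrence of $a_i$ and $b_i$ separately; one needs a second independent relation at each index, and you neither identify it nor argue that its leading coefficient is nonzero. You also propose to finish by inserting the closed forms of Lemma~\ref{lem:closedformthreetermS99} and checking the twin constraints of Lemma~\ref{lem:twin} case by case. This is both unnecessary---the paper's induction already produces a common $(\beta,\gamma,\varrho)$, which \emph{is} the twin condition---and circular in spirit, since choosing the correct closed-form case presupposes the recurrence you are trying to establish. The key idea you are missing is the use of Proposition~\ref{lem:betaGam} as an existence witness: build the recurrent extension, observe that it satisfies the same $E(1,j)$, $E(2,j)$, and let the nonsingular $2\times 2$ system do the identification.
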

\begin{proof}Using $E(1,2)$,
\begin{gather}\label{eq:twin2}
(a_0-b_1)(a_2-b_1)=
(b_0-a_1)(b_2-a_1).
\end{gather}
Since $a_1 \not=b_1$, there exists a unique pair
$\beta$, $\gamma$ of scalars in $\mathbb F$
such that
\begin{gather*}
a_0 - \beta a_1 + a_2 = \gamma,
\qquad
b_0 - \beta b_1 + b_2 = \gamma.
\end{gather*}
Using these equations we eliminate $a_2$, $b_2$ in~(\ref{eq:twin2}):
\begin{gather*}
(a_0-b_1)(\gamma + \beta a_1-a_0-b_1) =
(b_0-a_1)(\gamma + \beta b_1-b_0-a_1).
\end{gather*}
In this equation we rearrange terms to get
\begin{gather*}
a^2_{0}-\beta a_{0}a_1 + a^2_1 - \gamma (a_{0}+a_1) =
b^2_{0}-\beta b_{0}b_1 + b^2_1 - \gamma (b_{0}+b_1).
\end{gather*}
Let $\varrho$ denote this common value.
We show that each of
$\lbrace a_i \rbrace_{i=0}^n$,
$\lbrace b_i \rbrace_{i=0}^n$ is
 $(\beta, \gamma)$-recurrent
and $(\beta, \gamma,\varrho)$-recurrent.
To do this, we show that for $2\leq j \leq n$, each of
$\lbrace a_i \rbrace_{i=0}^j$,
$\lbrace b_i \rbrace_{i=0}^j$ is
 $(\beta, \gamma)$-recurrent
and $(\beta, \gamma,\varrho)$-recurrent.
We will use induction on $j$.
First assume that $j=2$. By construction
$\lbrace a_i \rbrace_{i=0}^2$ is $(\beta, \gamma)$-recurrent.
By construction
and Lemma~\ref{lem:bgrecvsbgdrecS99}(i), the sequence
$\lbrace a_i \rbrace_{i=0}^2$ is $(\beta, \gamma,\varrho)$-recurrent.
Similarly $\lbrace b_i \rbrace_{i=0}^2$ is $(\beta, \gamma)$-recurrent and
$(\beta, \gamma,\varrho)$-recurrent.
We are done for $j=2$.
Next assume that $j\geq 3$.
By $E(1,j)$,
\begin{align}
(a_0+a_1-b_0-b_1)(a_{j-1}-b_j)=
(a_0-b_1)(a_{j-1}+a_j-b_{j-1}-b_j).
\label{eq:E1n}
\end{align}
By $E(2,j)$,
\begin{gather}
(a_0+a_1+a_2-b_0-b_1-b_2)(a_{j-2}-b_j)\nonumber\\
\qquad{} =(a_0-b_2)(a_{j-2}+a_{j-1}+a_j-b_{j-2}-b_{j-1}-b_j).\label{eq:E2n}
\end{gather}
The equations (\ref{eq:E1n}), (\ref{eq:E2n}) give a linear system in the unknowns $a_j$, $b_j$. For this system the coefficient matrix
is
\begin{gather*}
C =
	 \left(
	 \begin{matrix}
	 a_0-b_1 & a_1-b_0 \\
	 a_0-b_2 & a_1+a_2-b_0-b_1
		 \end{matrix}
		 \right).
\end{gather*}
We have
\begin{gather*}
\det (C) = (a_0-b_1)(a_1+a_2-b_0-b_1)-(a_0-b_2) (a_1-b_0).
\end{gather*}
In the above equation we simplify the right-hand side using~(\ref{eq:twin2}), to obtain
\begin{gather*}
\det (C) = (a_1-b_0)(a_1-b_1).
\end{gather*}
Therefore $\det (C) \not=0$, so the system~(\ref{eq:E1n}),~(\ref{eq:E2n})
has a unique solution for $a_j$, $b_j$.
We now describe the solution. By induction the sequences
$\lbrace a_i \rbrace_{i=0}^{j-1}$,
$\lbrace b_i \rbrace_{i=0}^{j-1}$ are
$(\beta,\gamma)$-recurrent and
$(\beta,\gamma,\varrho)$-recurrent.
Define $a'_j$, $b'_j$ such that
\begin{gather*}
a_{j-2}-\beta a_{j-1} + a'_j = \gamma,
\qquad
b_{j-2}-\beta b_{j-1} + b'_j = \gamma.
\end{gather*}
Consider the two sequences
\begin{gather}
a_0,a_1, \ldots, a_{j-1}, a'_j;\label{eq:aprime}\\
b_0,b_1, \ldots, b_{j-1}, b'_j.\label{eq:bprime}
\end{gather}
By construction, each of~(\ref{eq:aprime}),
(\ref{eq:bprime}) is $(\beta,\gamma)$-recurrent.
By construction and
Lemma~\ref{lem:bgrecvsbgdrecS99}(i),
each of~(\ref{eq:aprime}),
(\ref{eq:bprime}) is $(\beta,\gamma,\varrho)$-recurrent.
We show that $a_j=a'_j$ and
 $b_j=b'_j$.
The sequences~(\ref{eq:aprime}),
(\ref{eq:bprime}) are recurrent and twins, so by
Proposition~\ref{lem:betaGam} they satisfy~$E(1,j)$ and~$E(2,j)$.
 Therefore, the equations~(\ref{eq:E1n}),
(\ref{eq:E2n}) still hold if we replace~$a_j$,~$b_j$
by $a'_j$, $b'_j$. We mentioned earlier that
the system~(\ref{eq:E1n}), (\ref{eq:E2n}) has a unique solution for
$a_j$, $b_j$.
By these comments $a_j=a'_j$ and $b_j=b'_j$.
Consequently each of $\lbrace a_i \rbrace_{i=0}^{j}$,
$\lbrace b_i \rbrace_{i=0}^{j}$ is
$(\beta,\gamma)$-recurrent and
$(\beta,\gamma,\varrho)$-recurrent.
The above argument shows that each of
 $\lbrace a_i \rbrace_{i=0}^{n}$,
$\lbrace b_i \rbrace_{i=0}^{n}$ is
$(\beta,\gamma)$-recurrent and
$(\beta,\gamma,\varrho)$-recurrent.
\end{proof}

\begin{Lemma}\label{lem:ce}
Assume that $a_{i-1}=b_i$ for $1 \leq i \leq n$.
Then the equation $E(1,j)$ holds for $1 \leq j \leq n$.
However, in general it is not the case that $E(i,j)$ holds
for $0 \leq i\leq j\leq n$.
\end{Lemma}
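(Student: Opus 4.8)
The plan is to handle the two assertions of the lemma in turn: the first by a direct substitution, the second by a small explicit counterexample.

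For the first assertion, I would feed the hypothesis $a_{i-1}=b_i$ (for $1\le i\le n$) into the equation $E(1,j)$ of Definition~\ref{def:eij} and observe that each side acquires a vanishing factor. Indeed, the hypothesis gives $b_1=a_0$ and $b_j=a_{j-1}$ for $1\le j\le n$. Writing $E(1,j)$ out, its left-hand side is $(a_0+a_1-b_0-b_1)(a_{j-1}-b_j)$, where $a_{j-1}-b_j=0$; its right-hand side is $(a_0-b_1)(a_{j-1}+a_j-b_{j-1}-b_j)$, where $a_0-b_1=0$. Hence both sides are $0$, so $E(1,j)$ holds for $1\le j\le n$. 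The only point requiring attention is that the identity $b_j=a_{j-1}$ is available over the whole range $1\le j\le n$, which is exactly the stated hypothesis.

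For the second assertion it suffices to exhibit one instance of the hypothesis for which some $E(i,j)$ fails. Such an instance must have $n\ge 3$: for $n\le 2$ every equation $E(i,j)$ with $0\le i\le j\le n$ is either one of $E(0,j)$, $E(j,j)$ (which hold by Lemma~\ref{lem:triv}) or the equation $E(1,2)$ (which holds by the first assertion). So I would take $n=3$ and the scalars $a_0=a_1=a_3=0$, $a_2=1$, together with $b_0=0$, which forces $b_1=b_2=0$ and $b_3=1$ by the hypothesis; one checks immediately that $a_{i-1}=b_i$ for $1\le i\le 3$. Then, evaluating $E(2,3)$ directly from Definition~\ref{def:eij}, the left-hand side is $(a_0+a_1+a_2-b_0-b_1-b_2)(a_1-b_3)=1\cdot(-1)=-1$ while the right-hand side is $(a_0-b_2)(a_1+a_2+a_3-b_1-b_2-b_3)=0\cdot 0=0$. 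Since $-1\ne 0$, the equation $E(2,3)$ fails, establishing the second assertion.

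I do not expect a genuine obstacle here; the lemma is essentially bookkeeping. The only things to watch are the index ranges in the first part (so that $b_j=a_{j-1}$ may legitimately be used) and, in the second part, checking that the chosen scalars really do satisfy $a_{i-1}=b_i$ for all $1\le i\le n$ while still violating $E(2,3)$.
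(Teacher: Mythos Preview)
Your proof is correct and follows essentially the same approach as the paper: for the first assertion you exploit the vanishing factors $a_{j-1}-b_j=0$ and $a_0-b_1=0$ in $E(1,j)$, and for the second you exhibit an explicit $n=3$ counterexample where $E(2,3)$ fails. The paper differs only cosmetically---it first simplifies the general $E(i,j)$ under the hypothesis (the sums telescope to $(a_i-b_0)(a_{j-i}-b_j)=(a_0-b_i)(a_j-b_{j-i})$) before specializing to $i=1$, and it uses a different numerical counterexample---but the substance is the same.
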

\begin{proof} For $0\leq i \leq j\leq n$ the equation
$E(i,j)$ becomes
\begin{gather}
(a_i-b_0)(a_{j-i}-b_j) =
(a_0-b_i)(a_{j}-b_{j-i}).
\label{eq:eijCase}
\end{gather}
If $i=1$ then each side of~(\ref{eq:eijCase}) is zero,
so $E(1,j)$ holds.
Assume that $n=3$ and
\begin{gather*}
a_0=0=b_1, \qquad a_1=1=b_2, \qquad a_2=0=b_3,\qquad a_3=1, \qquad b_0=0.
\end{gather*}
Then~(\ref{eq:eijCase}) fails for $i=2$ and $j=3$.
\end{proof}

\section{The classification of the double lowering data}\label{section12}

Recall the data
$\lbrace a_i\rbrace_{i=0}^{N-1}$,
$\lbrace b_i\rbrace_{i=0}^{N-1}$ from~(\ref{eq:data}). In this section we obtain
necessary and sufficient conditions for this
data to be double lowering.
In view of Lemma~\ref{lem:n2}
 we assume $N\geq 3$.

\begin{Theorem}\label{thm:hr}
Let $N$ denote an integer at least~$3$, or $\infty$. Let
\begin{gather}
\lbrace a_i \rbrace_{i=0}^{N-1}, \qquad
\lbrace b_i \rbrace_{i=0}^{N-1} \label{eq:datamth}
\end{gather}
denote scalars in $\mathbb F$ such that
\begin{gather}
a_0 + a_1 + \cdots + a_{i-1} \not=
b_0 + b_1 + \cdots + b_{i-1}, \qquad 1 \leq i \leq N.
\label{eq:dontforget}
\end{gather}
Then the data~\eqref{eq:datamth} is double lowering if and only if at least one
of the following $(i)$--$(iv)$ holds:
\begin{enumerate}\itemsep=0pt
\item[$(i)$] $a_{i-1}= b_i$ for $1 \leq i \leq N-1$;
\item[$(ii)$] $a_{i}= b_{i-1}$ for $1 \leq i \leq N-1$;
\item[$(iii)$] there exists $\theta \in \mathbb F$ such that
\begin{gather*}
 a_0 \not=\theta, \qquad b_0 \not=\theta, \qquad
 a_i = \theta, \qquad b_i = \theta, \qquad 1 \leq i \leq N-2,\\
\frac{\theta- a_{N-1}}{\theta-b_0} =\frac{\theta- b_{N-1}}{\theta-a_0}
 \qquad
\text{if} \ N\not=\infty.
\end{gather*}
\item[$(iv)$] the sequences \eqref{eq:datamth} are recurrent and twins.
\end{enumerate}
\end{Theorem}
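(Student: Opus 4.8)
The plan is to establish both directions by reducing everything to the equations $E(i,j)$ of Definition~\ref{def:eij}. By Proposition~\ref{prop:tfae4} the data is double lowering iff condition~(i)--(iii) there holds, and I will connect this to the $E(i,j)$ via the extension mechanism of Section~\ref{section7}. More precisely, the key observation is that for $N=n$ the equations $E(i,j)$ with $0\le i\le j\le n$ are exactly the accumulated feasibility conditions coming from Proposition~\ref{prop:feas3}(ii): extending the data one step at a time from length $2$ (always double lowering, by Lemma~\ref{lem:n2}) up to length $N$, at each stage Proposition~\ref{prop:feas3}(ii) imposes precisely the conditions $E(i,j)$ with $j$ equal to the current top index (modulo the caveat about $\eta_i(a_0)=0$, which I must handle). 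So the first step is to prove the clean statement: \emph{the data \eqref{eq:datamth} is double lowering if and only if $E(i,j)$ holds for all $0\le i\le j\le N$ for which the relevant coefficient is nonzero}; then I will remove the nonzero-coefficient restriction by showing the exceptional equations are automatically satisfied in the surviving cases.

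For the ``if'' direction I would simply invoke the four lemmas already proved in Section~\ref{section11}: cases (i)--(iv) of the theorem match Lemma~\ref{lem:ce}'s hypothesis (together with Lemma~\ref{lem:five}, which already does case~(i) directly via Proposition~\ref{prop:tfae4}), the lemma with $a_i=b_{i-1}$ (case~(ii)), the lemma built around \eqref{eq:cond} (case~(iii), matching Lemma~\ref{lem:deg}), and Proposition~\ref{lem:betaGam} (case~(iv)). In each case those results give $E(i,j)$ for all $0\le i\le j\le N$, hence by the equivalence just described the data is double lowering. Case~(i) needs slightly more care since $E(i,j)$ does \emph{not} hold in general there (Lemma~\ref{lem:ce}), so for that case I would not go through $E(i,j)$ at all but cite Lemma~\ref{lem:five}(i) directly; similarly case~(ii) is covered by Lemma~\ref{lem:fivedual}(i).

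For the ``only if'' direction, assume the data is double lowering. Then by Proposition~\ref{prop:tfae4} and the iterated use of Proposition~\ref{prop:feas3} I get $E(i,j)$ whenever $\eta_i(a_0)\ne0$ (equivalently $\tau_i(b_0)\ne0$), for all relevant $i,j$. Now I split on cases according to the small-index equations. If $a_1=b_0$: I claim this forces case~(i) or case~(iii); indeed combining $a_1=b_0$ with the nondegeneracy \eqref{eq:dontforget} and the equations $E(1,j)$ (which hold because $\eta_1(a_0)=a_0-b_0\ne0$) propagates a rigid structure on the $a_i,b_i$. Symmetrically $a_1=b_1$ is handled. If instead $a_1\ne b_0$ and $a_1\ne b_1$, then Proposition~\ref{prop:hr} applies and yields case~(iv), \emph{provided} I first check that the hypotheses of Proposition~\ref{prop:hr} are met, namely that $E(i,j)$ holds for $i\in\{1,2\}$ and $i+1\le j\le N$ — this is where the $\eta_i(a_0)\ne0$ caveat bites, since $E(2,j)$ is only directly available when $\eta_2(a_0)\ne0$, i.e.\ $a_0\ne b_1$ (using $\eta_2(a_0)=(a_0-b_0)(a_0-b_1)$). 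So I must separately dispose of the possibility $a_0=b_1$ while $a_1\notin\{b_0,b_1\}$: here I would show, using the available $E(1,j)$ and the extension equations, that one is again forced into case~(ii) or case~(iii) (the $a_0=b_1$ relation is the first instance of $a_{i+1}=b_i$). The main obstacle, and the part demanding the most bookkeeping, is exactly this analysis of the degenerate sub-cases where some $\eta_i(a_0)$ vanishes: I have to show that each such degeneracy either is harmless (the corresponding $E(i,j)$ holds anyway) or pins the data into one of the four listed families. The cleanest route is probably to prove a small lemma: if $a_0=b_1$ (resp.\ $a_1=b_0$) and the data is double lowering, then one of (i), (ii), (iii) holds — after which the generic case is pure Proposition~\ref{prop:hr}.
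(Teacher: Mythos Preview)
Your overall strategy matches the paper's: for the ``if'' direction you cite Lemmas~\ref{lem:five}, \ref{lem:fivedual}, \ref{lem:deg} for cases (i)--(iii) and use Proposition~\ref{lem:betaGam} plus the extension mechanism of Proposition~\ref{prop:feas3} for case~(iv); for the ``only if'' direction you iterate Proposition~\ref{prop:feas3} to harvest the equations $E(i,j)$, split on small-index degeneracies, and invoke Proposition~\ref{prop:hr} in the generic case. That is exactly the paper's architecture.

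However, your case analysis for the ``only if'' direction is scrambled in a way that matters. You have cases~(i) and~(ii) reversed: $a_1=b_0$ is the first instance of $a_i=b_{i-1}$, i.e.\ case~(ii), not case~(i), and it forces~(ii) cleanly rather than ``(i) or (iii)''. Indeed, with $a_1=b_0$ the equation $E(1,j)$ collapses to $(a_0-b_1)(a_{j-1}-b_j)=(a_0-b_1)(a_{j-1}+a_j-b_{j-1}-b_j)$; since $a_0\ne b_1$ (otherwise $a_0+a_1=b_0+b_1$, contradicting \eqref{eq:dontforget}), this gives $a_j=b_{j-1}$. Symmetrically, $a_0=b_1$ forces case~(i), not ``(ii) or (iii)''. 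More seriously, $a_1=b_1$ is \emph{not} symmetric to $a_1=b_0$ under the $a\leftrightarrow b$ swap (that swap exchanges $a_1=b_0$ with $a_0=b_1$); the condition $a_1=b_1$ is precisely the one that leads to case~(iii), and it is the case requiring the most work: with $a_0\ne b_1$ and $a_1\ne b_0$ one has $\eta_2(a_0)=(a_0-b_0)(a_0-b_1)\ne 0$, so both $E(1,j)$ and $E(2,j)$ are available, and a short induction pins down $a_k=b_k=\theta$ for $1\le k\le N-2$ together with the boundary relation at $k=N-1$.

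The paper's clean split is therefore: $a_0=b_1\Rightarrow$~(i) using only $E(1,j)$; $a_1=b_0\Rightarrow$~(ii) by the $a\leftrightarrow b$ swap; $a_0\ne b_1$, $a_1\ne b_0$, $a_1=b_1\Rightarrow$~(iii); and $a_0\ne b_1$, $a_1\ne b_0$, $a_1\ne b_1\Rightarrow$~(iv) via Proposition~\ref{prop:hr}. Note that in the last two branches the inequality $a_0\ne b_1$ already guarantees $\eta_2(a_0)\ne 0$, so the $E(2,j)$ you worried about are automatically available and no further sub-case is needed.
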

\begin{proof} First assume that at least one of (i)--(iii) holds.
Then~(\ref{eq:datamth}) is double lowering, by Lemmas~\ref{lem:five},~\ref{lem:fivedual},~\ref{lem:deg}.
Next assume that (iv) holds
and $N\not=\infty$.
By Proposition~\ref{lem:betaGam} (with $n=N-1$) the equations
$E(i,j)$ hold for $0 \leq i \leq j\leq N-1$.
We delete $a_{N-1}$, $b_{N-1}$ from~(\ref{eq:datamth}) and consider the data
\begin{gather}
\lbrace a_i \rbrace_{i=0}^{N-2}, \qquad
\lbrace b_i \rbrace_{i=0}^{N-2}. \label{eq:Rdatamth}
\end{gather}
By Lemma~\ref{lem:n2} and
induction on $N$, we may assume that
the sequences (\ref{eq:Rdatamth}) are double lowering.
By Proposition \ref{prop:feas3}(i),~(ii) (with $N$ replaced by
$N-1$) we find that~(\ref{eq:datamth}) is double lowering.
Next assume that
(iv) holds and $N=\infty$. Then for all integers $j\geq 2$
the sequences
$\lbrace a_i \rbrace_{i=0}^j$,
$\lbrace b_i \rbrace_{i=0}^j$ are recurrent and twins.
Consequently the data
$\lbrace a_i \rbrace_{i=0}^j$,
$\lbrace b_i \rbrace_{i=0}^j$ is double lowering.
Therefore the data
$\lbrace a_i \rbrace_{i=0}^\infty$,
$\lbrace b_i \rbrace_{i=0}^\infty$ is double lowering.
We are done in one direction.

We now reverse the direction.
Next assume that
(\ref{eq:datamth}) is double lowering. We break the argument into
cases.

{\bf Case} $a_0 = b_1$.
We show that (i) holds.
We have $\eta_i(a_0)\not=0$ for $0 \leq i \leq 1$.
By assumption the data~(\ref{eq:datamth}) is double lowering,
so the data
$\lbrace a_i \rbrace_{i=0}^j$,
$\lbrace b_i \rbrace_{i=0}^j$ is double lowering for
$1 \leq j\leq N-1$. Applying Proposition~\ref{prop:feas3}(i),~(ii) repeatedly
(with $N$ replaced by $2,3,\ldots, N-1$) we find that
$E(1,j)$ holds for
$2\leq j\leq N-1$. Using these equations
and (\ref{eq:dontforget})
we routinely obtain $a_{j-1} = b_j$ for $2 \leq j \leq N-1$.
This and $a_0=b_1$ implies (i).

{\bf Case} $a_1 = b_0$. Interchanging the roles of
$\lbrace a_i \rbrace_{i=0}^{N-1}$,
$\lbrace b_i \rbrace_{i=0}^{N-1}$ in the previous case, we find
that (ii) holds.

{\bf Case}
$a_0 \not = b_1$,
$a_1 \not = b_0$,
$a_1 = b_1$.
We show that (iii) holds.
Define $\theta=a_1=b_1$, and note that $a_0 \not=\theta$,
$b_0 \not=\theta$.
We have $\eta_i(a_0)\not=0$ for $0 \leq i \leq 2$.
By assumption the data~(\ref{eq:datamth}) is double lowering,
so the data
$\lbrace a_i \rbrace_{i=0}^j$,
$\lbrace b_i \rbrace_{i=0}^j$ is double lowering for
$1 \leq j\leq N-1$. Applying Proposition~\ref{prop:feas3}(i),~(ii) repeatedly
(with $N$ replaced by $2,3,\ldots, N-1$) we find that
$E(i,j)$ holds for $1 \leq i \leq 2$
and $i+1\leq j\leq N-1$.
Next we show that $a_k=b_k=\theta$ for $2 \leq k \leq N-2$.
We will use induction on $k$. Assume $N\geq 4$; otherwise
there is nothing to prove.
Using $E(1,2)$, $E(1,3)$
and
$a_0 \not=b_1$
we obtain
\begin{gather}
a_2 = b_2 \zeta + \theta (1-\zeta),\label{eq:a2}
\\
a_{3} = b_{3} \zeta + b_2 \big(1-\zeta^2\big) -\theta \zeta (1-\zeta),\label{eq:a3}
\end{gather}
where
\begin{gather}
\zeta=\frac{b_0-\theta}{a_0-\theta}.\label{eq:xi}
\end{gather}
In the equation $E(2,3)$ we eliminate $a_2$, $a_3$ using~(\ref{eq:a2}),
(\ref{eq:a3}). We evaluate the result using~(\ref{eq:dontforget}) (with $i=3$), to obtain
$b_2=\theta$.
In~(\ref{eq:a2}) we set $b_2=\theta$
to obtain $a_2=\theta$.
Next assume that $3 \leq k \leq N-2$.
By induction each of
 $a_2,a_3,\ldots, a_{k-1}$,
 $b_2,b_3,\ldots, b_{k-1}$ is equal to $\theta$.
Using this we evaluate $E(1,k)$, $E(1,k+1)$ to obtain
\begin{gather}
a_k = b_k \zeta + \theta (1-\zeta),\label{eq:ak}\\
a_{k+1} = b_{k+1} \zeta + b_k \big(1-\zeta^2\big) -\theta \zeta (1-\zeta).\label{eq:akp1}
\end{gather}
Using~(\ref{eq:ak}),~(\ref{eq:akp1}) we evaluate $E(2,k+1)$ to obtain
$b_k=\theta$. In~(\ref{eq:ak})
we set $b_k=\theta$
to obtain $a_k=\theta$.
We have shown that
$a_k = b_k = \theta$ for $2 \leq k \leq N-2$.
Now for $N\not=\infty$ we use $E(1,N-1)$
to obtain
\begin{gather*}
a_{N-1} = b_{N-1} \zeta + \theta (1-\zeta).
\end{gather*}
Evaluating this using~(\ref{eq:xi}) we obtain
\begin{gather*}
\frac{\theta- a_{N-1}}{\theta-b_0} =\frac{\theta- b_{N-1}}{\theta-a_0}.
\end{gather*}
We have shown that (iii) holds.

{\bf Case} $a_0 \not = b_1$,
$a_1 \not = b_0$,
$a_1 \not= b_1$.
We show that (iv) holds.
Using Proposition~\ref{prop:feas3}(i),~(ii)
as in the previous case, we find that
$E(i,j)$ holds for $1 \leq i \leq 2$
and $i+1\leq j\leq N-1$.
Now by Proposition~\ref{prop:hr}
(with $n=N-1$),
the sequences~(\ref{eq:datamth}) are recurrent and twins.
We have shown that~(iv) holds.
\end{proof}

\section[L and Delta for twin recurrent data]{$\boldsymbol{\mathcal L}$ and $\boldsymbol{\Delta}$ for twin recurrent data}\label{section13}

Our goal for the rest of the paper is to give a comprehensive
description of $\mathcal L$ and $\Delta$ for twin recurrent data.
We will focus on Case~I in Lemma~\ref{lem:twin}, or more precisely, an adjusted version
of this case as described in
Section~\ref{section3}.

For the rest of this paper we assume that $N$ is an
integer at least~2, or~$\infty$.
Recall the double lowering space~$\mathcal L$
for the data~(\ref{eq:data}).
For the rest of this paper, fix nonzero $a,b,q \in \mathbb F$ and assume
\begin{gather}
 a_i= a q^i + a^{-1} q^{-i},
 \qquad
b_i= b q^i + b^{-1} q^{-i}
\label{eq:aibi}
\end{gather}
for $ 0 \leq i \leq N-1$. By~(\ref{eq:nondeg}) we have
 $a \not=b$ and
\begin{gather*}
q^i \not=1, \qquad
abq^{i-1} \not=1, \qquad 1 \leq i \leq N.
\end{gather*}
\begin{Note}\label{note:qi}
The data
$\lbrace a_i \rbrace_{i=0}^{N-1}$,
$\lbrace b_i \rbrace_{i=0}^{N-1}$
is unchanged if we replace
\begin{gather*}
q \mapsto q^{-1},
\qquad
a\mapsto a^{-1},
\qquad
b\mapsto b^{-1}.
\end{gather*}
\end{Note}

\begin{Lemma}\label{lem:rtwin}
The sequences \eqref{eq:data} are recurrent and twins.
\end{Lemma}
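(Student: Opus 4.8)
\emph{Approach.} The plan is to recognise the data \eqref{eq:aibi} as an instance of Case~I in the classification of recurrent, and of twin recurrent, sequences (Lemmas~\ref{lem:closedformthreetermS99}, \ref{lem:bgv}, \ref{lem:twin}), with common additive constant $0$.

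First I would record that $q \notin \{0,1,-1\}$: we have $q \neq 0$ by hypothesis, while by the remarks following \eqref{eq:aibi} the condition \eqref{eq:nondeg} gives $q^i \neq 1$ for $1 \leq i \leq N$, so taking $i = 1,2$ (possible since $N \geq 2$ throughout this section) yields $q \neq 1$ and $q \neq -1$. Hence $\{a_i\}_{i=0}^{N-1}$ is of the Case~I shape $\alpha_1 + \alpha_2 q^i + \alpha_3 q^{-i}$ of Lemma~\ref{lem:closedformthreetermS99} with $\alpha_1 = 0$, $\alpha_2 = a$, $\alpha_3 = a^{-1}$, and $\{b_i\}_{i=0}^{N-1}$ is of this shape with the same $\alpha_1 = 0$ and with $\alpha_2' = b$, $\alpha_3' = b^{-1}$; in particular both sequences are recurrent.

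It then remains to produce a parameter triple common to the two sequences, since that is precisely what Definition~\ref{def:twin} asks for. Applying Lemma~\ref{lem:bgv} in Case~I with $\alpha_1 = 0$, a sequence $\alpha_1 + \alpha_2 q^i + \alpha_3 q^{-i}$ has the parameter triple $\bigl(q + q^{-1},\, 0,\, -\alpha_2\alpha_3 (q-q^{-1})^2\bigr)$; since $a\,a^{-1} = 1 = b\,b^{-1}$, both $\{a_i\}$ and $\{b_i\}$ carry the single triple $\bigl(q+q^{-1},\, 0,\, -(q-q^{-1})^2\bigr)$, so they are twins. (One could instead invoke the ``if'' direction of Lemma~\ref{lem:twin}, Case~I, whose hypothesis $\alpha_2'\alpha_3' = \alpha_2\alpha_3$ here reads $1=1$; or one could forgo the classification and verify directly from \eqref{eq:aibi}, by a routine computation, that $a_{i-1} - (q+q^{-1})a_i + a_{i+1} = 0$ and $a_{i-1}^2 - (q+q^{-1})a_{i-1}a_i + a_i^2 = -(q-q^{-1})^2$, with the identical identities for $b$.) There is no real obstacle here; the argument is a direct pattern match against the earlier classification, and the one point that uses anything is the verification $q \notin\{0,1,-1\}$, which rests on the standing assumption $N \geq 2$ together with \eqref{eq:nondeg}.
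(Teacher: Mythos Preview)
Your proof is correct and follows essentially the same approach as the paper: the paper's own proof is the one-liner ``By Lemmas~\ref{lem:closedformthreetermS99} and~\ref{lem:twin},'' i.e., recognise \eqref{eq:aibi} as Case~I with $\alpha_1=0$, $\alpha_2\alpha_3=1=\alpha_2'\alpha_3'$. You simply unpack this and add the explicit check that $q\notin\{0,1,-1\}$ (needed for Case~I to apply), which the paper leaves implicit.
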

\begin{proof}
By Lemmas~\ref{lem:closedformthreetermS99} and~\ref{lem:twin}.
\end{proof}
\begin{Corollary}
The data~\eqref{eq:data} is double lowering.
\end{Corollary}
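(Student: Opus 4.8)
The corollary is an immediate consequence of the machinery just assembled, so the proof is very short. The plan is to chain together Lemma~\ref{lem:rtwin} with the classification theorem. Specifically, Lemma~\ref{lem:rtwin} asserts that under the standing assumption~(\ref{eq:aibi}) the sequences $\lbrace a_i\rbrace_{i=0}^{N-1}$, $\lbrace b_i\rbrace_{i=0}^{N-1}$ are recurrent and twins, which is precisely condition~(iv) of Theorem~\ref{thm:hr}. Hence that theorem tells us the data is double lowering, at least when $N\geq 3$.

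The one wrinkle is that Theorem~\ref{thm:hr} is stated for $N$ an integer at least $3$ or $\infty$, whereas the standing assumption in this section only requires $N\geq 2$. So the plan is: first dispose of the small cases $N\leq 2$ by citing Lemma~\ref{lem:n2}, which says $\mathcal L\neq 0$ whenever $N\leq 2$ with no further hypotheses. Then for $N\geq 3$ (including $N=\infty$), invoke Lemma~\ref{lem:rtwin} to get condition~(iv) of Theorem~\ref{thm:hr}, and conclude via that theorem that $\mathcal L\neq 0$, i.e., the data is double lowering in the sense of Definition~\ref{def:feasible}.

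There is essentially no obstacle here; the work has all been done in Lemma~\ref{lem:rtwin} (which in turn rests on the closed-form description of recurrent sequences and the characterization of twins) and in Theorem~\ref{thm:hr}. The only thing to be a little careful about is not to forget the $N\leq 2$ case, since the section's hypothesis is weaker than the theorem's. I would write it as follows.

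\begin{proof}
If $N\leq 2$ then $\mathcal L\not=0$ by Lemma~\ref{lem:n2}, so the data~(\ref{eq:data}) is double lowering. Now assume $N\geq 3$. By Lemma~\ref{lem:rtwin} the sequences~(\ref{eq:data}) are recurrent and twins, so assertion~(iv) of Theorem~\ref{thm:hr} holds. By that theorem the data~(\ref{eq:data}) is double lowering.
\end{proof}
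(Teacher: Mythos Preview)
Your proof is correct and follows essentially the same approach as the paper's own proof, which simply cites Lemma~\ref{lem:rtwin} together with Lemma~\ref{lem:n2} and Theorem~\ref{thm:hr}(iv). You have unpacked the case split implicit in that citation, correctly noting that Lemma~\ref{lem:n2} handles $N=2$ while Theorem~\ref{thm:hr} requires $N\geq 3$.
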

\begin{proof}
By Lemma~\ref{lem:rtwin}
 along with
Lemma~\ref{lem:n2}
and
 Theorem~\ref{thm:hr}(iv).
\end{proof}

Our next general goal is to describe
the polynomials $\lbrace \tau_i\rbrace_{i=0}^{N-1}$,
$\lbrace \eta_i\rbrace_{i=0}^{N-1}$ from~(\ref{eq:tau}),~(\ref{eq:eta}),
the parameters $\lbrace \vartheta_i \rbrace_{i=0}^{N}$
from Definition~\ref{def:vartheta},
and the
map~$\Delta$ from~(\ref{def:Delta}).

We mention some formulas for later use.
\begin{Lemma}\label{lem:aidiff}
For $1 \leq i \leq N-1$,
\begin{alignat*}{3}
&qa_i - a_{i-1} = \big(q-q^{-1}\big)aq^{i},\qquad && q b_i - b_{i-1} = \big(q-q^{-1}\big)bq^{i},&\\
&a_i - a_{i-1} = (q-1)\big(aq^{i-1}-a^{-1}q^{-i}\big),\qquad && b_i - b_{i-1} = (q-1)\big(bq^{i-1}-b^{-1}q^{-i}\big),& \\
&a_i - q a_{i-1} = \big(1-q^2\big)a^{-1}q^{-i},\qquad && b_i - q b_{i-1} = \big(1-q^2\big)b^{-1}q^{-i}.
\end{alignat*}
\end{Lemma}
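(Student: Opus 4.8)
The plan is to verify each of the six identities by direct substitution of the closed forms \eqref{eq:aibi} followed by routine collection of terms; no idea beyond careful bookkeeping of the exponents of $q$ is needed, since every quantity appearing is a linear combination of $aq^{\pm i}$ and $a^{-1}q^{\pm i}$ (resp.\ with $b$ in place of $a$), and the claimed right-hand sides are obtained just by cancelling and regrouping.

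First I would treat the three identities involving $a$; the three involving $b$ are then word-for-word the same with $a$ replaced by $b$ (one could instead invoke Note~\ref{note:qi}, but it is just as quick to redo them). Writing $a_i = aq^i + a^{-1}q^{-i}$ and $a_{i-1} = aq^{i-1} + a^{-1}q^{-i+1}$: for the first identity the terms $a^{-1}q^{-i+1}$ cancel in $qa_i - a_{i-1}$, leaving $aq^{i+1} - aq^{i-1} = \big(q-q^{-1}\big)aq^{i}$; for the second, $a_i - a_{i-1}$ groups as $aq^{i-1}(q-1) + a^{-1}q^{-i}(1-q) = (q-1)\big(aq^{i-1} - a^{-1}q^{-i}\big)$; for the third, the terms $aq^{i}$ cancel in $a_i - qa_{i-1}$, leaving $a^{-1}q^{-i} - a^{-1}q^{-i+2} = \big(1-q^2\big)a^{-1}q^{-i}$.

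There is no real obstacle here: the only points requiring attention are the signs and the exponent arithmetic, and the observation that each identity is asserted exactly on the range $1 \leq i \leq N-1$ on which both $a_i$ and $a_{i-1}$ (resp.\ $b_i$ and $b_{i-1}$) are defined by \eqref{eq:aibi}. Assembling the three $a$-identities and the three $b$-identities in this way completes the proof.
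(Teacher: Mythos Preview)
Your proof is correct and follows exactly the same approach as the paper, which simply says ``Use~(\ref{eq:aibi}).'' Your explicit verification of each identity by substituting the closed forms and collecting terms is precisely the intended routine computation.
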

\begin{proof} Use~(\ref{eq:aibi}).
\end{proof}

\begin{Lemma}\label{lem:preL}
For $0 \leq i \leq N$,
\begin{gather*}
a_0 + a_1 + \cdots + a_{i-1} = \frac{1-q^i}{1-q} \big(a + a^{-1} q^{1-i}\big),\\
b_0 + b_1 + \cdots + b_{i-1} = \frac{1-q^i}{1-q} \big(b + b^{-1} q^{1-i}\big).
\end{gather*}
\end{Lemma}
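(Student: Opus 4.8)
The claim is the closed form
$$\sum_{h=0}^{i-1} a_h = \frac{1-q^i}{1-q}\bigl(a+a^{-1}q^{1-i}\bigr)$$
and the companion formula for the $b_h$. The natural approach is a direct summation using the explicit form $a_h = aq^h + a^{-1}q^{-h}$ from \eqref{eq:aibi}, splitting the sum into two geometric series. We would write
$$\sum_{h=0}^{i-1} a_h = a\sum_{h=0}^{i-1} q^h + a^{-1}\sum_{h=0}^{i-1} q^{-h} = a\,\frac{1-q^i}{1-q} + a^{-1}\,\frac{1-q^{-i}}{1-q^{-1}},$$
valid since $q\neq 1$ (which is part of the nondegeneracy constraint $q^i\neq 1$ for $1\le i\le N$). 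The only real task is then to rewrite the second geometric sum so the common factor $(1-q^i)/(1-q)$ can be pulled out.

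For that rewriting, note $\dfrac{1-q^{-i}}{1-q^{-1}} = \dfrac{q^{-i}(q^i-1)}{q^{-1}(q-1)} = q^{1-i}\,\dfrac{1-q^i}{1-q}$. Substituting this into the displayed expression gives
$$\sum_{h=0}^{i-1} a_h = \frac{1-q^i}{1-q}\bigl(a + a^{-1}q^{1-i}\bigr),$$
which is the first assertion. The case $i=0$ is the empty sum and both sides are $0$, so that is consistent. The formula for the $b_h$ is obtained verbatim by replacing $a$ with $b$ throughout, since $b_h = bq^h + b^{-1}q^{-h}$ has exactly the same shape.

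This is an entirely routine computation — there is no genuine obstacle. If anything, the one point to be careful about is that the proof should cite \eqref{eq:aibi} for the form of $a_i$ and invoke $q\neq 1$ (from the nondegeneracy conditions following \eqref{eq:aibi}) to justify dividing by $1-q$; the rest is algebraic manipulation of two finite geometric series. One could alternatively prove it by induction on $i$, checking the base case $i=0$ and verifying that adding $a_i = aq^i+a^{-1}q^{-i}$ to the right-hand side at $i$ produces the right-hand side at $i+1$, but the direct summation is cleaner. In the write-up I would present the direct computation and leave the telescoping identity $\frac{1-q^{-i}}{1-q^{-1}} = q^{1-i}\frac{1-q^i}{1-q}$ as the single substitution that makes the factorization visible.
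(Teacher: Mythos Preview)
Your proof is correct and follows exactly the approach the paper intends: the paper's own proof consists of the single sentence ``Use~(\ref{eq:aibi}),'' and your direct summation of the two geometric series is precisely the routine computation that sentence points to. There is nothing to add.
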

\begin{proof} Use~(\ref{eq:aibi}).
\end{proof}

Next we describe $\lbrace \vartheta_i \rbrace_{i=0}^{N}$.
We give two versions.
\begin{Lemma}
\label{lem:vartheta}
For $0 \leq i \leq N$,
\begin{enumerate}\itemsep=0pt
\item[$(i)$]
$\displaystyle{
\vartheta_i = \frac{1-q^i}{1-q} \frac{1-abq^{i-1}}{1-ab} q^{1-i}}$;
\item[$(ii)$]
$\displaystyle{
 \vartheta_i = \frac{1-q^{-i}}{1-q^{-1}} \frac{1-a^{-1}b^{-1}q^{1-i}}{1-a^{-1}b^{-1}}
 q^{i-1}}$.
\end{enumerate}
\end{Lemma}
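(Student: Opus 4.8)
The plan is to compute $\vartheta_i$ directly from Definition~\ref{def:vartheta} by inserting the closed forms for the partial sums supplied by Lemma~\ref{lem:preL}, and then to deduce the second version from the first via the symmetry recorded in Note~\ref{note:qi}.

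First I would write, using Lemma~\ref{lem:preL}, the numerator of $\vartheta_i$ as
\[
a_0+\cdots+a_{i-1}-b_0-\cdots-b_{i-1}
=\frac{1-q^i}{1-q}\bigl((a-b)+\bigl(a^{-1}-b^{-1}\bigr)q^{1-i}\bigr).
\]
Using $a^{-1}-b^{-1}=-(a-b)/(ab)$ this factors as $\frac{1-q^i}{1-q}(a-b)\cdot\frac{ab-q^{1-i}}{ab}$, and writing $ab-q^{1-i}=-q^{1-i}\bigl(1-abq^{i-1}\bigr)$ puts the numerator in the shape $-\frac{1-q^i}{1-q}(a-b)\cdot\frac{q^{1-i}(1-abq^{i-1})}{ab}$. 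The denominator $a_0-b_0$ is handled the same way: $a_0-b_0=(a-b)+\bigl(a^{-1}-b^{-1}\bigr)=-(a-b)\frac{1-ab}{ab}$. Dividing, the factors $a-b$ and $ab$ cancel, leaving exactly the formula in part~(i).

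For part~(ii), rather than redoing the computation I would invoke Note~\ref{note:qi}: the data $\lbrace a_i\rbrace$, $\lbrace b_i\rbrace$ — and hence $\vartheta_i$, which is built only from the data — is unchanged under the simultaneous replacement $q\mapsto q^{-1}$, $a\mapsto a^{-1}$, $b\mapsto b^{-1}$. Applying this substitution to the right-hand side of~(i) and simplifying $q^{-(1-i)}=q^{i-1}$ and $a^{-1}b^{-1}q^{-(i-1)}=a^{-1}b^{-1}q^{1-i}$ yields precisely the right-hand side of~(ii). (Alternatively one could verify directly that the two expressions in~(i) and~(ii) coincide by clearing denominators, but the symmetry argument is shorter.)

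I do not anticipate any genuine obstacle here; the content is a short algebraic manipulation, and the only thing to watch is tracking the signs and powers of $q$ correctly when factoring $ab-q^{1-i}$ and $ab-1$. The conditions $q^i\neq 1$ and $abq^{i-1}\neq 1$ from~\eqref{eq:nondeg} then make $\vartheta_i\neq 0$ for $1\le i\le N$ transparent from either closed form, consistent with the remark following Definition~\ref{def:vartheta}, though they play no role in the identity itself.
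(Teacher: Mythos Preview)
Your proof is correct and follows exactly the route the paper takes: part~(i) by inserting the partial sums from Lemma~\ref{lem:preL} into Definition~\ref{def:vartheta} and simplifying, part~(ii) by applying the symmetry of Note~\ref{note:qi} to part~(i). You have simply spelled out the algebra that the paper leaves implicit.
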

\begin{proof} (i) By Definition~\ref{def:vartheta}
and Lemma \ref{lem:preL}.
(ii)~By Note~\ref{note:qi} and~(i) above.
\end{proof}

We mention some formulas for later use.
\begin{Lemma}\label{lem:vthdiff}
For $0 \leq i \leq N-1$,
\begin{gather*}
q \vartheta_{i+1} - \vartheta_i = \frac{q+ab-(q+1)ab q^i}{1-ab},\\
 \vartheta_{i+1} - \vartheta_i = \frac{q^{-i}-abq^i}{1-ab},\\
 \vartheta_{i+1} - q \vartheta_i = \frac{(1+q)q^{-i}-q-ab}{1-ab}.
\end{gather*}
\end{Lemma}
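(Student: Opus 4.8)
The plan is to prove all three identities by direct substitution of the closed forms already in hand and routine simplification over the common denominator $1-ab$; this is legitimate since $1-ab\neq 0$ (as noted right after~(\ref{eq:aibi})).

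First I would dispatch the middle identity $\vartheta_{i+1}-\vartheta_i=\frac{q^{-i}-abq^i}{1-ab}$, which is the linchpin for the other two. The point is that the numerator in Definition~\ref{def:vartheta} telescopes, so $\vartheta_{i+1}-\vartheta_i=\frac{a_i-b_i}{a_0-b_0}$. Substituting~(\ref{eq:aibi}) and using $a^{-1}-b^{-1}=-(a-b)/(ab)$, both numerator and denominator acquire a factor $a-b$ which cancels, leaving $\frac{abq^i-q^{-i}}{ab-1}=\frac{q^{-i}-abq^i}{1-ab}$.

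Next I would handle the first identity by writing $q\vartheta_{i+1}-\vartheta_i=q(\vartheta_{i+1}-\vartheta_i)+(q-1)\vartheta_i$ and feeding in the middle identity together with Lemma~\ref{lem:vartheta}(i). Since $(q-1)/(1-q)=-1$, one gets $(q-1)\vartheta_i=\bigl(-q^{1-i}+ab+q-abq^i\bigr)/(1-ab)$ after expanding $q^{1-i}(1-q^i)(1-abq^{i-1})$; adding $q\bigl(q^{-i}-abq^i\bigr)/(1-ab)$ the $q^{1-i}$ terms cancel and the remainder is $\frac{q+ab-(q+1)abq^i}{1-ab}$. The third identity is obtained the same way from $\vartheta_{i+1}-q\vartheta_i=(\vartheta_{i+1}-\vartheta_i)-(q-1)\vartheta_i$: subtracting the same expansion of $(q-1)\vartheta_i$ from the middle identity collapses to $\frac{(1+q)q^{-i}-q-ab}{1-ab}$. (Alternatively, the third identity follows from the first by the symmetry of Note~\ref{note:qi}, $q\mapsto q^{-1}$, $a\mapsto a^{-1}$, $b\mapsto b^{-1}$, after clearing denominators; I would probably just cite the direct computation.)

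I do not anticipate a real obstacle: the whole argument rests on Lemma~\ref{lem:vartheta} and the telescoping form of $\vartheta_i$. The only thing demanding attention is the bookkeeping of signs and exponents of $q$ when expanding $(1-q^i)(1-abq^{i-1})$ and tracking the cancellation of the $q^{1-i}$ prefactor, plus the two uses of $a^{-1}-b^{-1}=-(a-b)/(ab)$.
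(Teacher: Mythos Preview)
Your proposal is correct and essentially matches the paper's approach: the paper's proof is the one-line ``Use Lemma~\ref{lem:vartheta}'', i.e., substitute the closed form of $\vartheta_i$ and simplify, which is exactly what your argument amounts to. Your extra structure (doing the middle identity first via the telescoping $\vartheta_{i+1}-\vartheta_i=(a_i-b_i)/(a_0-b_0)$ and then deducing the other two) is a perfectly fine organization of the same routine computation.
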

\begin{proof} Use
Lemma~\ref{lem:vartheta}.
\end{proof}

We recall some notation. For an element $\alpha$ in
any algebra, define
\begin{gather*}
(\alpha;q)_i = (1-\alpha) (1-\alpha q) \cdots \big(1-\alpha q^{i-1}\big),
\qquad i \in \mathbb N.
\end{gather*}
We interpret $(\alpha;q)_0=1$.
We remark that for $j\geq i\geq 0$,
\begin{gather}
\big(q^{-j};q\big)_i(q;q)_{j-i} =(-1)^i(q;q)_jq^{\binom{i}{2}}q^{-ij}.\label{eq:qint}
\end{gather}

\begin{Lemma}\label{lem:usefulvt}
For $0 \leq i \leq N$,
\begin{gather}
\vartheta_1 \vartheta_2 \cdots \vartheta_i =
\frac{(q;q)_i (ab;q)_i q^{-\binom{i}{2}}}{(1-q)^i(1-ab)^i}.
\label{eq:vthprod}
\end{gather}
For $0 \leq i\leq j\leq N$,
\begin{gather}
\vartheta_j \vartheta_{j-1} \cdots \vartheta_{j-i+1} =
\frac{
\big(q^{-j};q\big)_i \big(a^{-1}b^{-1}q^{1-j};q\big)_i q^{i(j-i)} q^{\binom{i}{2}}}
{\big(1-q^{-1}\big)^i\big(1-a^{-1}b^{-1}\big)^i}.\label{eq:vthprod2}
\end{gather}
\end{Lemma}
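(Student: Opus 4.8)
The plan is to prove both identities by direct multiplication, feeding in the two closed forms for $\vartheta_i$ recorded in Lemma~\ref{lem:vartheta}. Each formula then reduces to collecting $q$-shifted products and checking one exponent of $q$; there is no real obstacle, only bookkeeping, the only nontrivial input being the elementary identity $\sum_{k=1}^{i}(k-1)=\binom{i}{2}$ and its shifted version.

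For \eqref{eq:vthprod} I would use Lemma~\ref{lem:vartheta}(i), which reads $\vartheta_k=\frac{1-q^{k}}{1-q}\cdot\frac{1-abq^{k-1}}{1-ab}\cdot q^{1-k}$. Multiplying over $1\le k\le i$, the first numerators collect to $\prod_{k=1}^{i}(1-q^{k})=(q;q)_i$, the second to $\prod_{k=1}^{i}(1-abq^{k-1})=(ab;q)_i$, and the denominators to $(1-q)^{i}(1-ab)^{i}$. The power of $q$ is $\prod_{k=1}^{i}q^{1-k}=q^{\sum_{k=1}^{i}(1-k)}=q^{-\binom{i}{2}}$. Assembling these three pieces yields \eqref{eq:vthprod}.

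For \eqref{eq:vthprod2} I would instead use Lemma~\ref{lem:vartheta}(ii), $\vartheta_k=\frac{1-q^{-k}}{1-q^{-1}}\cdot\frac{1-a^{-1}b^{-1}q^{1-k}}{1-a^{-1}b^{-1}}\cdot q^{k-1}$, and take the product over $j-i+1\le k\le j$. (This index range lies in $\{1,\dots,N\}$ because $i\le j$ forces $j-i+1\ge 1$, so no $\vartheta_0$ intervenes; the degenerate case $i=0$ is the empty product, equal to $1$ on both sides.) The first numerators collect to $(1-q^{-j})(1-q^{1-j})\cdots(1-q^{i-1-j})=(q^{-j};q)_i$, the second analogously to $(a^{-1}b^{-1}q^{1-j};q)_i$, and the denominators to $(1-q^{-1})^{i}(1-a^{-1}b^{-1})^{i}$. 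The power of $q$ is $\prod_{k=j-i+1}^{j}q^{k-1}=q^{\sum_{k=j-i+1}^{j}(k-1)}$, and $\sum_{k=j-i+1}^{j}(k-1)=\binom{j}{2}-\binom{j-i}{2}=ij-i-\binom{i}{2}=i(j-i)+\binom{i}{2}$, which matches the exponent in \eqref{eq:vthprod2}. This completes the second identity.

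As a remark, \eqref{eq:vthprod2} could alternatively be deduced from \eqref{eq:vthprod} by writing $\vartheta_j\vartheta_{j-1}\cdots\vartheta_{j-i+1}=(\vartheta_1\cdots\vartheta_j)/(\vartheta_1\cdots\vartheta_{j-i})$ and converting the resulting ratios $(q;q)_j/(q;q)_{j-i}$ and $(ab;q)_j/(ab;q)_{j-i}$ via \eqref{eq:qint} (and its $ab$-analogue); since that route forces exactly the same exponent arithmetic as above and an extra sign-tracking step, I would prefer the direct computation from Lemma~\ref{lem:vartheta}(ii).
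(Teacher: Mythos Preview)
Your proof is correct and follows exactly the route the paper takes: the paper's proof simply says to obtain \eqref{eq:vthprod} from Lemma~\ref{lem:vartheta}(i) and \eqref{eq:vthprod2} from Lemma~\ref{lem:vartheta}(ii), and you have carried out precisely that computation with all the bookkeeping made explicit. Your optional remark about deriving \eqref{eq:vthprod2} from \eqref{eq:vthprod} via \eqref{eq:qint} is a fine observation but, as you note, not the cleaner path.
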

\begin{proof}
To obtain~(\ref{eq:vthprod}),
use Lemma \ref{lem:vartheta}(i).
To obtain (\ref{eq:vthprod2}), use Lemma~\ref{lem:vartheta}(ii).
\end{proof}

\begin{Lemma}\label{lem:ijN}
For $0 \leq i\leq j\leq N$,
\begin{gather*}
\left[\begin{matrix} j \\
i \end{matrix} \right]_\vartheta =
\frac{
\big(q^{-j};q\big)_i
\big(a^{-1} b^{-1} q^{1-j};q\big)_i
q^{ij}
a^ib^i}
{
(q;q)_i
(ab;q)_i
}.
\end{gather*}
\end{Lemma}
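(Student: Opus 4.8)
The plan is to obtain the formula by dividing the two product formulas recorded in Lemma~\ref{lem:usefulvt}. By the definition~\eqref{eq:brackij} of the $\vartheta$-binomial coefficient,
\[
\left[\begin{matrix} j \\ i \end{matrix}\right]_\vartheta = \frac{\vartheta_j \vartheta_{j-1}\cdots \vartheta_{j-i+1}}{\vartheta_1 \vartheta_2 \cdots \vartheta_i},
\]
so I would substitute~\eqref{eq:vthprod2} into the numerator and~\eqref{eq:vthprod} into the denominator. The $q$-shifted factorials $\big(q^{-j};q\big)_i$ and $\big(a^{-1}b^{-1}q^{1-j};q\big)_i$ pass straight through to the numerator of the answer, and $(q;q)_i$ and $(ab;q)_i$ pass straight through to the denominator, so the only remaining task is to collect the scalar prefactors.

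First I would combine the powers of $q$: the numerator contributes $q^{i(j-i)}q^{\binom{i}{2}}$ from~\eqref{eq:vthprod2}, together with a further $q^{\binom{i}{2}}$ coming from inverting the $q^{-\binom{i}{2}}$ in~\eqref{eq:vthprod}, giving $q^{i(j-i)+2\binom{i}{2}} = q^{i(j-i)+i(i-1)} = q^{i(j-1)}$. Next I would combine the remaining factors $(1-q)^{\pm i}$, $\big(1-q^{-1}\big)^{\pm i}$, $(1-ab)^{\pm i}$, $\big(1-a^{-1}b^{-1}\big)^{\pm i}$: using $(1-q)/\big(1-q^{-1}\big) = -q$ and $(1-ab)/\big(1-a^{-1}b^{-1}\big) = -ab$, these assemble into $(-q)^i(-ab)^i = q^i a^i b^i$, with the two copies of $(-1)^i$ cancelling. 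Multiplying $q^{i(j-1)}\cdot q^i a^i b^i = q^{ij}a^i b^i$ then yields the asserted identity.

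There is no real obstacle here; the argument is a direct simplification of the quotient, and the only point requiring a little care is the sign bookkeeping in the ratios $(1-q)/\big(1-q^{-1}\big)$ and $(1-ab)/\big(1-a^{-1}b^{-1}\big)$, where the factors of $-1$ happen to cancel in pairs. (Alternatively one could expand both products directly via Lemma~\ref{lem:vartheta}(i) and invoke~\eqref{eq:qint}, but routing through Lemma~\ref{lem:usefulvt} as above is cleaner.)
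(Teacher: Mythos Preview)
Your argument is correct and is exactly the approach the paper takes: it simply says to evaluate~\eqref{eq:brackij} using Lemma~\ref{lem:usefulvt}, and your write-up spells out that division and scalar simplification in full detail.
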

\begin{proof} Evaluate (\ref{eq:brackij}) using Lemma~\ref{lem:usefulvt}.
\end{proof}

We comment on the notation.
Let $y$ denote an indeterminate. Let
${\mathbb F}\big\lbrack y, y^{-1}\big\rbrack$
denote the algebra consisting of the Laurent polynomials in $y$ that have
all coefficients
in $\mathbb F$.
This algebra has an automorphism that sends $y\mapsto y^{-1}$.
An element of
${\mathbb F}\big\lbrack y, y^{-1}\big\rbrack$
that is fixed by the automorphism is called symmetric.
The symmetric elements form a subalgebra of
${\mathbb F}\big\lbrack y, y^{-1}\big\rbrack$
called its symmetric part.
There exists an injective algebra homomorphism
$\iota\colon {\mathbb F}\lbrack x \rbrack \to
{\mathbb F}\big\lbrack y, y^{-1}\big\rbrack$
that sends $x \mapsto y+y^{-1}$.
The image of
${\mathbb F}\lbrack x \rbrack$
under $\iota$ is the symmetric part of
${\mathbb F}\lbrack y, y^{-1}\rbrack$.
Via $\iota$ we
identify
${\mathbb F}\lbrack x \rbrack$
 with the symmetric part of
${\mathbb F}\lbrack y, y^{-1}\rbrack$.

\begin{Lemma}\label{lem:qte}For $0 \leq i \leq N$,
\begin{enumerate}\itemsep=0pt
\item[$(i)$] $\tau_i = (-1)^i a^{-i} q^{-\binom{i}{2}}(ay;q)_i \big(ay^{-1};q\big)_i$;
\item[$(ii)$] $\eta_i = (-1)^i b^{-i} q^{-\binom{i}{2}} (by;q)_i \big(by^{-1};q\big)_i$.
\end{enumerate}
In the above lines $x = y+ y^{-1}$.
\end{Lemma}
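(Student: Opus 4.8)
The plan is to verify the claimed closed forms for $\tau_i$ and $\eta_i$ directly, using the factorization $x = y + y^{-1}$ to recognize each linear factor $x - a_h$ as a product of two linear factors in $y$. Concretely, from \eqref{eq:aibi} we have $a_h = aq^h + a^{-1}q^{-h}$, so under $\iota$,
\begin{gather*}
x - a_h = y + y^{-1} - aq^h - a^{-1}q^{-h} = y^{-1}\bigl(1 - aq^h y\bigr)\bigl(1 - a q^h y^{-1}\bigr)\cdot(-1)\cdot\frac{1}{?}
\end{gather*}
— more precisely, one checks the identity $\bigl(1 - aq^h y\bigr)\bigl(1 - aq^h y^{-1}\bigr) = 1 + a^2q^{2h} - aq^h\bigl(y + y^{-1}\bigr) = -aq^h\bigl(x - a_h\bigr)$. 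Thus $x - a_h = -a^{-1}q^{-h}\bigl(1 - aq^h y\bigr)\bigl(1 - aq^h y^{-1}\bigr)$.

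Next I would take the product over $h = 0, 1, \ldots, i-1$ as in \eqref{eq:tau}. The scalar prefactors multiply to $\prod_{h=0}^{i-1}\bigl(-a^{-1}q^{-h}\bigr) = (-1)^i a^{-i} q^{-(0+1+\cdots+(i-1))} = (-1)^i a^{-i} q^{-\binom{i}{2}}$. The $y$-factors multiply to $\prod_{h=0}^{i-1}\bigl(1 - aq^h y\bigr) = (ay;q)_i$ by the definition of the $q$-Pochhammer symbol recalled before Lemma~\ref{lem:usefulvt}, and similarly $\prod_{h=0}^{i-1}\bigl(1 - aq^h y^{-1}\bigr) = \bigl(ay^{-1};q\bigr)_i$. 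Assembling these gives exactly
\begin{gather*}
\tau_i = (-1)^i a^{-i} q^{-\binom{i}{2}} (ay;q)_i \bigl(ay^{-1};q\bigr)_i,
\end{gather*}
which is part~(i). Part~(ii) is identical with $a$ replaced by $b$ throughout, since $\eta_i$ is built from $b_h = bq^h + b^{-1}q^{-h}$ in precisely the same way; alternatively one can simply invoke the symmetry between the two sequences.

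There is essentially no obstacle here: the only thing to be careful about is the bookkeeping of the prefactor, namely confirming $\sum_{h=0}^{i-1} h = \binom{i}{2}$ so that the power of $q$ comes out as $q^{-\binom{i}{2}}$, and checking the base case $i = 0$ where both sides equal $1$ (the empty product, with $(\alpha;q)_0 = 1$ as stipulated). One should also note that the identification of $\mathbb{F}[x]$ with the symmetric part of $\mathbb{F}[y,y^{-1}]$ via $\iota$ makes the displayed formulas well-posed: the right-hand sides are manifestly symmetric under $y \mapsto y^{-1}$, hence lie in the image of $\iota$, so the equalities are genuine identities in $\mathbb{F}[x]$.
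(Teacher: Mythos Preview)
Your argument is correct and is essentially identical to the paper's own proof: factor each $x - a_h$ as $-a^{-1}q^{-h}(1-ayq^h)(1-ay^{-1}q^h)$ and take the product over $h=0,\ldots,i-1$. The only cosmetic issue is the unfinished scratch line ending in ``$\cdot(-1)\cdot\frac{1}{?}$'', which you should delete since the clean identity you give immediately afterward is exactly what is needed.
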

\begin{proof}(i) We use~(\ref{eq:tau}) and~(\ref{eq:aibi}).
For $0 \leq j \leq i-1$,
\begin{gather*}
x - a_{j} = y+y^{-1} - a q^{j} - a^{-1} q^{-j}= -a^{-1} q^{-j} \big(1-ayq^{j}\big)\big(1-ay^{-1} q^{j}\big).
\end{gather*}
The result follows. (ii) Similar to the proof of~(i) above.
\end{proof}

\begin{Lemma}\label{lem:tauai}
For $0 \leq i \leq N$,
\begin{enumerate}\itemsep=0pt
\item[$(i)$] $\tau_i(b_0) = (-1)^i a^{-i} q^{-\binom{i}{2}}
(ab;q)_i \big(ab^{-1};q\big)_i$;
\item[$(ii)$] $\eta_i(a_0) = (-1)^i b^{-i} q^{-\binom{i}{2}}
(ab;q)_i \big(a^{-1}b;q\big)_i$.
\end{enumerate}
\end{Lemma}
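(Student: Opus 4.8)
The plan is to deduce this immediately from the $q$-exponential style formulas of Lemma~\ref{lem:qte} by a single substitution. Recall that via the homomorphism $\iota$ we have identified $\mathbb F\lbrack x\rbrack$ with the symmetric part of $\mathbb F\lbrack y,y^{-1}\rbrack$ under $x\mapsto y+y^{-1}$; in particular, for a polynomial $f\in\mathbb F\lbrack x\rbrack$ and a nonzero scalar $c\in\mathbb F$, evaluating $f$ at $x=c+c^{-1}$ is the same as substituting $y=c$ into $\iota(f)$. Since $a,b$ are nonzero, both $a_0=a+a^{-1}$ and $b_0=b+b^{-1}$ (from~(\ref{eq:aibi}) at $i=0$) are of this form.

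First I would prove (i): in the formula $\tau_i=(-1)^i a^{-i} q^{-\binom{i}{2}}(ay;q)_i(ay^{-1};q)_i$ of Lemma~\ref{lem:qte}(i), substitute $y=b$, so that $x=b+b^{-1}=b_0$. Then $(ay;q)_i$ becomes $(ab;q)_i$ and $(ay^{-1};q)_i$ becomes $(ab^{-1};q)_i$, giving exactly the stated value of $\tau_i(b_0)$. For (ii) I would carry out the mirror-image computation: in the formula $\eta_i=(-1)^i b^{-i} q^{-\binom{i}{2}}(by;q)_i(by^{-1};q)_i$ of Lemma~\ref{lem:qte}(ii), substitute $y=a$, so that $x=a+a^{-1}=a_0$; this yields $\eta_i(a_0)=(-1)^i b^{-i} q^{-\binom{i}{2}}(ba;q)_i(ba^{-1};q)_i$, and rewriting $(ba;q)_i=(ab;q)_i$ and $(ba^{-1};q)_i=(a^{-1}b;q)_i$ gives the claim.

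The only point that needs a word of justification is that the expressions in Lemma~\ref{lem:qte} are genuine identities in $\mathbb F\lbrack y,y^{-1}\rbrack$, not merely in $\mathbb F\lbrack x\rbrack$, so that the specializations $y=b$ (resp.\ $y=a$) are legitimate; but this is clear from the factor-by-factor derivation of Lemma~\ref{lem:qte}, where each factor $x-a_j$ is written as $-a^{-1}q^{-j}(1-ayq^j)(1-ay^{-1}q^j)$ in $\mathbb F\lbrack y,y^{-1}\rbrack$. Hence there is no real obstacle: Lemma~\ref{lem:tauai} is a direct corollary of Lemma~\ref{lem:qte} obtained by evaluating at $x=b_0$ and $x=a_0$ respectively.
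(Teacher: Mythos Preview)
Your proof is correct and matches the paper's own argument exactly: the paper also proves (i) by setting $y=b$ in Lemma~\ref{lem:qte}(i) and using $b_0=b+b^{-1}$, with (ii) handled symmetrically. Your additional remarks about the legitimacy of the substitution in $\mathbb F\lbrack y,y^{-1}\rbrack$ are sound but not strictly needed, since the identification via $\iota$ is set up precisely so that such evaluations make sense.
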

\begin{proof} (i) Set $y=b$ in
Lemma~\ref{lem:qte}(i), and use $b_0=b+b^{-1}$.
(ii)~Similar to the proof of (i) above.
\end{proof}

Our data is double lowering, so $\mathcal L\not=0$.
For the rest of the paper, let $ \psi \in \mathcal L$ be normalized.
The maps $\Delta$, $\psi$ are related
by~(\ref{eq:DSum}),
(\ref{eq:DiSum}). Our next goal is to interpret
(\ref{eq:DSum}),
(\ref{eq:DiSum}) using the
 $q$-exponential function.
This function is defined as follows.
For locally nilpotent $T \in \operatorname{End}(V)$,
\begin{gather}
\exp_q(T) = \sum_{i=0}^N \frac{q^{\binom{i}{2}}(1-q)^i T^i}{ (q;q)_i}.\label{eq:qExp}
\end{gather}
The map
$\exp_q(T)$ is invertible; its inverse is
\begin{gather}\label{eq:qExpi}
\exp_{q^{-1}}(-T) = \sum_{i=0}^N\frac{(-1)^i (1-q)^i T^i}{ (q;q)_i}.
\end{gather}

\begin{Lemma}\label{lem:qexp}
For locally nilpotent $T \in \operatorname{End}(V)$,
\begin{gather*}
\bigl(1-(q-1)T\bigr) \exp_q(qT) = \exp_q (T).
\end{gather*}
\end{Lemma}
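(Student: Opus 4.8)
The plan is to verify the identity $\bigl(1-(q-1)T\bigr)\exp_q(qT) = \exp_q(T)$ by comparing coefficients of $T^i$ on both sides, using the series definition \eqref{eq:qExp}. First I would expand the left-hand side: using \eqref{eq:qExp} with $qT$ in place of $T$, the $i$-th term of $\exp_q(qT)$ is $q^{\binom{i}{2}}(1-q)^i q^i T^i/(q;q)_i$. Multiplying by $1-(q-1)T = 1+(1-q)T$ produces, for each $i\geq 1$, a contribution to the coefficient of $T^i$ coming from the degree-$i$ term of $\exp_q(qT)$ (coefficient $q^{\binom{i}{2}}(1-q)^i q^i/(q;q)_i$) together with $(1-q)$ times the degree-$(i-1)$ term of $\exp_q(qT)$ (contributing $(1-q)\cdot q^{\binom{i-1}{2}}(1-q)^{i-1}q^{i-1}/(q;q)_{i-1}$). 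For $i=0$ the coefficient is just $1$, matching $\exp_q(T)$; this handles the base case and also confirms that $\exp_q(qT)$ is itself locally nilpotent, so all series involved are finite sums and the manipulation is legitimate (cf.\ the remarks preceding Lemma~\ref{lem:gs}).

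The heart of the computation is then the scalar identity, for $1\leq i\leq N$:
\begin{gather*}
\frac{q^{\binom{i}{2}}(1-q)^i q^i}{(q;q)_i}
+ (1-q)\,\frac{q^{\binom{i-1}{2}}(1-q)^{i-1}q^{i-1}}{(q;q)_{i-1}}
= \frac{q^{\binom{i}{2}}(1-q)^i}{(q;q)_i}.
\end{gather*}
I would clear the common factor $(1-q)^i/(q;q)_i$ from every term; since $(q;q)_i = (q;q)_{i-1}(1-q^i)$, the middle term becomes $q^{\binom{i-1}{2}}q^{i-1}(1-q^i)/(1)$ after this normalization, up to sign bookkeeping from the $(1-q)$ factors. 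Using the elementary relation $\binom{i}{2} = \binom{i-1}{2} + (i-1)$ one sees that $q^{\binom{i}{2}}q^i = q^{\binom{i-1}{2}}q^{i-1}\cdot q^i$, and the claimed identity reduces to $q^{i} + (1-q^i)\cdot(\text{sign}) = 1$, i.e.\ to $q^i - q^i + 1 = 1$ after the signs are tracked correctly. This is where care is needed: the factor $1-(q-1)T = 1+(1-q)T$ contributes a $+(1-q)$, and $(1-q)^i = (1-q)\cdot(1-q)^{i-1}$, so the two occurrences of $(1-q)$ combine to give exactly the $(1-q)^i$ needed, with no residual sign.

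I expect the only real obstacle to be this sign/exponent bookkeeping in the $q$-Pochhammer and the quadratic exponents $\binom{i}{2}$; there is no conceptual difficulty, and no appeal to deeper results is needed beyond the definition \eqref{eq:qExp} and local nilpotence. An alternative, essentially equivalent, route would be to recognize the statement as the operator form of the classical $q$-exponential functional equation $e_q(qt)\bigl(1-(q-1)t\bigr)=e_q(t)$ and simply substitute the locally nilpotent operator $T$ for the scalar $t$, justified because all series terminate; I would mention this as a remark but carry out the coefficient comparison for self-containedness.
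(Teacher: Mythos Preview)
Your approach is correct and is exactly the one the paper uses: compare the coefficient of $T^i$ on each side, reducing to the scalar identity $q^{\binom{i}{2}+i}+q^{\binom{i}{2}}(1-q^i)=q^{\binom{i}{2}}$ via $(q;q)_i=(q;q)_{i-1}(1-q^i)$ and $\binom{i-1}{2}+(i-1)=\binom{i}{2}$. The paper's proof is the single sentence ``compare the coefficient of $T^i$ on each side,'' so your write-up is more detailed but not different in substance.
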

\begin{proof} To verify this equation, for $0 \leq i \leq N$ compare the
coefficient of $T^i$ on each side.
\end{proof}

\begin{Proposition}\label{prop:DeltaExp}
We have
\begin{gather}
\Delta = \exp_q \big( a^{-1} \xi \psi \big)
 \exp_{q^{-1}} \big({-}b^{-1} \xi \psi \big),\label{eq:DeltaProd}
\end{gather}
where $\xi = 1-ab$.
\end{Proposition}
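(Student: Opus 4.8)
The plan is to write both sides of (\ref{eq:DeltaProd}) as explicit power series in $\psi$ and compare the coefficient of $\psi^n$, the comparison being precisely an instance of the $q$-binomial theorem. First I would simplify $\Delta$ using (\ref{eq:DSum}). Substituting the value $\eta_i(a_0) = (-1)^i b^{-i} q^{-\binom{i}{2}}(ab;q)_i(a^{-1}b;q)_i$ from Lemma~\ref{lem:tauai}(ii) and the product formula $\vartheta_1\vartheta_2\cdots\vartheta_i = (q;q)_i(ab;q)_i q^{-\binom{i}{2}}(1-q)^{-i}(1-ab)^{-i}$ from (\ref{eq:vthprod}), the factors $q^{-\binom{i}{2}}$ and $(ab;q)_i$ cancel, leaving
\[
\Delta = \sum_{i=0}^N \frac{(-1)^i b^{-i}\,(a^{-1}b;q)_i\,(1-q)^i\,\xi^i}{(q;q)_i}\,\psi^i , \qquad \xi = 1-ab .
\]

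Next I would expand the right-hand side of (\ref{eq:DeltaProd}). By (\ref{eq:qExp}) we have $\exp_q(a^{-1}\xi\psi) = \sum_{i=0}^N \frac{q^{\binom{i}{2}}(1-q)^i a^{-i}\xi^i}{(q;q)_i}\psi^i$, and by (\ref{eq:qExpi}) we have $\exp_{q^{-1}}(-b^{-1}\xi\psi) = \sum_{j=0}^N \frac{(-1)^j(1-q)^j b^{-j}\xi^j}{(q;q)_j}\psi^j$; both series make sense as elements of $\operatorname{End}(V)$ since $\psi$ is locally nilpotent by Lemma~\ref{lem:nilp}, and they commute, being polynomials in $\psi$. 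Multiplying them and collecting the coefficient of $\psi^n$ gives $(1-q)^n\xi^n\sum_{i=0}^n \frac{q^{\binom{i}{2}}a^{-i}(-1)^{n-i}b^{i-n}}{(q;q)_i(q;q)_{n-i}}$. Writing $(-1)^{n-i}=(-1)^n(-1)^i$ and $b^{i-n}=b^{-n}b^i$, and pulling out $(-1)^n b^{-n}/(q;q)_n$, the inner sum becomes $\frac{(-1)^n b^{-n}}{(q;q)_n}\sum_{i=0}^n \binom{n}{i}_q q^{\binom{i}{2}}(-a^{-1}b)^i$, where $\binom{n}{i}_q = (q;q)_n/\bigl((q;q)_i(q;q)_{n-i}\bigr)$. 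By the $q$-binomial theorem $(z;q)_n = \sum_{i=0}^n \binom{n}{i}_q (-1)^i q^{\binom{i}{2}} z^i$, taken at $z = a^{-1}b$, this equals $\frac{(-1)^n b^{-n}(a^{-1}b;q)_n}{(q;q)_n}$. Hence the coefficient of $\psi^n$ in the product is $\frac{(-1)^n b^{-n}(a^{-1}b;q)_n(1-q)^n\xi^n}{(q;q)_n}$, which matches the coefficient of $\psi^n$ in $\Delta$ obtained above. Since both sides agree coefficientwise, (\ref{eq:DeltaProd}) follows.

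I do not expect a genuine obstacle here: once the two simplifications (of $\eta_i(a_0)/(\vartheta_1\cdots\vartheta_i)$ on the one hand, and of the Cauchy product of the two $q$-exponential series on the other) are carried out, the identity reduces to the $q$-binomial theorem. The only point requiring care is the bookkeeping of the powers of $q$ and of the $q$-Pochhammer symbols; in particular the case $N=\infty$ needs no separate argument, since every series displayed above acts as a finite sum on each vector of $V$.
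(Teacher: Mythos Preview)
Your proposal is correct and follows essentially the same route as the paper: both compare the coefficient of $\psi^n$ on the two sides of (\ref{eq:DeltaProd}), use Lemma~\ref{lem:tauai}(ii) and (\ref{eq:vthprod}) to simplify the left side via (\ref{eq:DSum}), and reduce the resulting identity to the $q$-binomial theorem. The only cosmetic difference is that the paper phrases the key identity as the ${}_1\phi_0$ form $(zq^{-j};q)_j=\sum_{i=0}^j\frac{(q^{-j};q)_i z^i}{(q;q)_i}$ with $z=a^{-1}bq^j$, whereas you use the equivalent Gauss--Rothe form $(z;q)_n=\sum_{i=0}^n\binom{n}{i}_q(-1)^i q^{\binom{i}{2}}z^i$ at $z=a^{-1}b$; these are the same statement after applying (\ref{eq:qint}).
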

\begin{proof}For $0 \leq j \leq N$
we compare the coefficient of $\psi^j$ on each side of~(\ref{eq:DeltaProd}). For the left-hand side
these coefficients are obtained from~(\ref{eq:DSum}).
We require
\begin{gather}\label{eq:Require}
\frac{\eta_j(a_0)}{\vartheta_1 \vartheta_2 \cdots \vartheta_j}
=
\sum_{i=0}^j \frac{q^{\binom{i}{2}}a^{-i}(1-q)^i \xi^i }{(q;q)_i}
\frac{(-1)^{j-i} b^{i-j}(1-q)^{j-i} \xi^{j-i} }{(q;q)_{j-i}}.
\end{gather}
By (\ref{eq:vthprod}) and the construction,
\begin{gather*}
\vartheta_1 \vartheta_2 \cdots \vartheta_j = (q;q)_j (ab;q)_j
q^{-\binom{j}{2}} (1-q)^{-j} \xi^{-j}.
\end{gather*}
By Lemma~\ref{lem:tauai}(ii),
\begin{gather*}
\eta_j(a_0)= (-1)^j b^{-j} q^{-\binom{j}{2}}(ab;q)_j \big(a^{-1}b;q\big)_j.
\end{gather*}
Using these comments and (\ref{eq:qint}),
the equation (\ref{eq:Require}) becomes
\begin{gather}
\big(zq^{-j};q\big)_j = \sum_{i=0}^j \frac{\big(q^{-j};q\big)_i z^i}{(q;q)_i},
\label{eq:Require2}
\end{gather}
where $z= a^{-1}bq^j$.
Basic hypergeometric series are discussed in~\cite{gr,koekoek}. In~(\ref{eq:Require2}) the sum on the right is the basic hypergeometric series
\begin{gather*}
 {}_1\phi_0 \left(
 \genfrac{}{}{0pt}{}
 {q^{-j} }
 {- }
 \,\bigg\vert \, q; z \right).
\end{gather*}
This observation reveals
that (\ref{eq:Require2}) is an instance of the $q$-binomial theorem
\cite[Section~1.3]{gr}.
The result follows.
\end{proof}

Proposition~\ref{prop:DeltaExp} gives a factorization of~$\Delta$.
We now investigate the factors.

\begin{Lemma}\label{lem:factors}
For $0 \leq i \leq N$,
\begin{gather}
\exp_{q^{-1}} \big({-}a^{-1} \xi \psi \big) \eta_i=
\exp_{q^{-1}} \big({-}b^{-1} \xi \psi \big) \tau_i,\label{eq:wi}
\\
\exp_{q} \big(b^{-1} \xi \psi \big) \eta_i
=
\exp_{q} \big(a^{-1} \xi \psi \big) \tau_i,
\label{eq:wip}
\end{gather}
where $\xi = 1-ab$.
\end{Lemma}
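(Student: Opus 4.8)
The plan is to obtain both equations from the factorization $\Delta=\exp_q\big(a^{-1}\xi\psi\big)\exp_{q^{-1}}\big({-}b^{-1}\xi\psi\big)$ of Proposition~\ref{prop:DeltaExp}, together with the defining relation $\Delta\tau_i=\eta_i$ from~(\ref{def:Delta}) and the fact, recorded in~(\ref{eq:qExp}) and~(\ref{eq:qExpi}), that $\exp_q(T)$ is invertible with inverse $\exp_{q^{-1}}(-T)$. A preliminary observation: $\psi$ is locally nilpotent by Lemma~\ref{lem:nilp}, so every $q$-exponential below is defined; moreover each such operator is a polynomial in $\psi$, so any two of them commute. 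Fix $i$ with $0\le i\le N$.

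For~(\ref{eq:wi}) I would start from $\eta_i=\Delta\tau_i=\exp_q\big(a^{-1}\xi\psi\big)\exp_{q^{-1}}\big({-}b^{-1}\xi\psi\big)\tau_i$ and apply $\exp_q\big(a^{-1}\xi\psi\big)^{-1}=\exp_{q^{-1}}\big({-}a^{-1}\xi\psi\big)$ to both sides. The left-hand side becomes $\exp_{q^{-1}}\big({-}a^{-1}\xi\psi\big)\eta_i$ and the right-hand side becomes $\exp_{q^{-1}}\big({-}b^{-1}\xi\psi\big)\tau_i$, which is exactly~(\ref{eq:wi}).

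For~(\ref{eq:wip}) I would not return to $\Delta$ but instead deduce it directly from~(\ref{eq:wi}) by applying the operator $\exp_q\big(a^{-1}\xi\psi\big)\exp_q\big(b^{-1}\xi\psi\big)$ to each side of~(\ref{eq:wi}). Using that polynomials in $\psi$ commute, on the left I may move $\exp_q\big(a^{-1}\xi\psi\big)$ past $\exp_q\big(b^{-1}\xi\psi\big)$ and then cancel it against $\exp_{q^{-1}}\big({-}a^{-1}\xi\psi\big)$, leaving $\exp_q\big(b^{-1}\xi\psi\big)\eta_i$; on the right, $\exp_q\big(b^{-1}\xi\psi\big)$ cancels $\exp_{q^{-1}}\big({-}b^{-1}\xi\psi\big)$, leaving $\exp_q\big(a^{-1}\xi\psi\big)\tau_i$. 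This is~(\ref{eq:wip}).

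There is no real obstacle here: the mathematical content sits entirely in Proposition~\ref{prop:DeltaExp}, which is an instance of the $q$-binomial theorem. The only point requiring any care is the bookkeeping with the exponential operators, all of which are polynomials in the single operator $\psi$ and hence pairwise commute, so that the cancellations used above are legitimate.
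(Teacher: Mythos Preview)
Your proof is correct and essentially the same as the paper's: both derive~(\ref{eq:wi}) by left-multiplying the factorization~(\ref{eq:DeltaProd}) by $\exp_{q^{-1}}(-a^{-1}\xi\psi)$ and applying to $\tau_i$, using that $\exp_q(T)$ and $\exp_{q^{-1}}(-T)$ are inverse. The only cosmetic difference is in~(\ref{eq:wip}): the paper returns to~(\ref{eq:DeltaProd}), swaps the two commuting factors, left-multiplies by $\exp_q(b^{-1}\xi\psi)$, and applies to $\tau_i$, whereas you obtain~(\ref{eq:wip}) by hitting~(\ref{eq:wi}) with $\exp_q(a^{-1}\xi\psi)\exp_q(b^{-1}\xi\psi)$ and cancelling; these are the same manipulation packaged differently.
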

\begin{proof} By
(\ref{eq:DeltaProd}) and the comments above Lemma
\ref{lem:qexp},
\begin{gather}
\exp_{q^{-1}} \big({-}a^{-1} \xi \psi \big) \Delta =
\exp_{q^{-1}} \big({-}b^{-1} \xi \psi \big).
\label{eq:2one}
\end{gather}
To obtain~(\ref{eq:wi}),
apply each side of
(\ref{eq:2one}) to $\tau_i$ and evaluate the result using~(\ref{def:Delta}).
For the equation
(\ref{eq:DeltaProd}), the two factors on the right commute; swapping
these factors and proceeding as above,
\begin{gather}
\exp_{q} \big(b^{-1} \xi \psi \big) \Delta =\exp_{q} \big(a^{-1} \xi \psi \big).\label{eq:2oneAlt}
\end{gather}
To obtain (\ref{eq:wip}), apply each side of~(\ref{eq:2oneAlt}) to $\tau_i$ and evaluate the result using~(\ref{def:Delta}).
\end{proof}

\begin{Definition}\label{def:wi}
For $0 \leq i \leq N$ let $w_i$ (resp.~$w'_i$) denote
the common value of~(\ref{eq:wi}) (resp.~(\ref{eq:wip})).
For notational convenience define $w_{-1}=0$ and $w'_{-1}=0$.
\end{Definition}

\begin{Lemma}\label{lem:four}
For $0 \leq i \leq N$,
\begin{alignat*}{3}
& \tau_i =\exp_{q} \big(b^{-1} \xi \psi \big) w_i,
\qquad &&
w_i =
\exp_{q^{-1}} \big({-}b^{-1} \xi \psi \big) \tau_i,&\\
& \eta_i =
\exp_{q} \big(a^{-1} \xi \psi \big) w_i,
\qquad &&
w_i =
\exp_{q^{-1}} \big({-}a^{-1} \xi \psi \big) \eta_i&
\end{alignat*}
and
\begin{alignat*}{3}
& \tau_i =
\exp_{q^{-1}} \big({-}a^{-1} \xi \psi \big) w'_i,
\qquad &&
w'_i
=
\exp_{q} \big(a^{-1} \xi \psi \big) \tau_i,&\\
& \eta_i=
\exp_{q^{-1}} \big({-}b^{-1} \xi \psi \big) w'_i
\qquad &&
w'_i =
\exp_{q} \big(b^{-1} \xi \psi \big) \eta_i.&
\end{alignat*}
In the above lines $\xi = 1-ab$.
\end{Lemma}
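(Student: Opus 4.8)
The plan is to obtain all of the stated identities with essentially no computation, by combining Definition~\ref{def:wi}, Lemma~\ref{lem:factors}, and the mutual invertibility of the $q$- and $q^{-1}$-exponentials. First I would record the prerequisites: since $\psi \in \mathcal L$, Lemma~\ref{lem:nilp} shows $\psi$ is locally nilpotent, hence so are the scalar multiples $a^{-1}\xi\psi$ and $b^{-1}\xi\psi$, so all four operators $\exp_q(a^{-1}\xi\psi)$, $\exp_q(b^{-1}\xi\psi)$, $\exp_{q^{-1}}(-a^{-1}\xi\psi)$, $\exp_{q^{-1}}(-b^{-1}\xi\psi)$ are well defined via~(\ref{eq:qExp}). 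Moreover, by~(\ref{eq:qExp}) and~(\ref{eq:qExpi}), for locally nilpotent $T$ the map $\exp_q(T)$ is invertible with inverse $\exp_{q^{-1}}(-T)$; I will use this for $T = a^{-1}\xi\psi$ and $T = b^{-1}\xi\psi$.

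Next I would note that the four equations of Lemma~\ref{lem:four} whose left-hand side is $w_i$ or $w'_i$ are precisely the defining relations. By Definition~\ref{def:wi} and~(\ref{eq:wi}), $w_i$ equals both $\exp_{q^{-1}}(-b^{-1}\xi\psi)\tau_i$ and $\exp_{q^{-1}}(-a^{-1}\xi\psi)\eta_i$; by Definition~\ref{def:wi} and~(\ref{eq:wip}), $w'_i$ equals both $\exp_q(a^{-1}\xi\psi)\tau_i$ and $\exp_q(b^{-1}\xi\psi)\eta_i$. This disposes of those four identities at once.

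For the remaining four, I would apply the appropriate inverse operator to each defining relation. Applying $\exp_q(b^{-1}\xi\psi)$ to $w_i = \exp_{q^{-1}}(-b^{-1}\xi\psi)\tau_i$ gives $\tau_i = \exp_q(b^{-1}\xi\psi)w_i$; applying $\exp_q(a^{-1}\xi\psi)$ to $w_i = \exp_{q^{-1}}(-a^{-1}\xi\psi)\eta_i$ gives $\eta_i = \exp_q(a^{-1}\xi\psi)w_i$; and symmetrically, applying $\exp_{q^{-1}}(-a^{-1}\xi\psi)$ and $\exp_{q^{-1}}(-b^{-1}\xi\psi)$ to the two defining relations for $w'_i$ gives $\tau_i = \exp_{q^{-1}}(-a^{-1}\xi\psi)w'_i$ and $\eta_i = \exp_{q^{-1}}(-b^{-1}\xi\psi)w'_i$. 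This is really a bookkeeping lemma, so there is no substantive obstacle; the only thing to watch is that each $q$-exponential is paired with its correct inverse, which is why I isolate that fact at the outset.
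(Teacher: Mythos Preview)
Your proposal is correct and follows essentially the same approach as the paper: the paper's proof cites only Definition~\ref{def:wi} and the invertibility relation $\exp_q(T)^{-1}=\exp_{q^{-1}}(-T)$ (the ``comments above Lemma~\ref{lem:qexp}''), which is exactly what you unpack in detail.
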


\begin{proof} By Definition~\ref{def:wi} and the comments above Lemma~\ref{lem:qexp}.
\end{proof}

\begin{Note}\label{note:wprime}
Referring to Definition \ref{def:wi}, the polynomials
$\lbrace w'_i\rbrace_{i=0}^N$ are obtained
from the polynomials $\lbrace w_i\rbrace_{i=0}^N$ by replacing
$q \mapsto q^{-1}$,
$a\mapsto a^{-1}$,
$b\mapsto b^{-1}$.
\end{Note}

\begin{Example}
\label{eq:wsmallex}
The following {\rm (i)--(iii)} hold:
\begin{enumerate}\itemsep=0pt
\item[(i)]
$w_0 =1$;
\item[(ii)]
$w_1$ is equal to each of
\begin{gather*}
\tau_1-(1-ab)b^{-1} \tau_0,
\qquad
\eta_1-(1-ab)a^{-1} \eta_0,
\qquad
x - a^{-1} - b^{-1};
\end{gather*}
\item[(iii)] $w_2$ is equal to each of
\begin{gather*}
 \tau_2 -\big(q^{-1}+1\big) (1-abq)b^{-1} \tau_1
+ q^{-1}(1-ab)(1-abq) b^{-2} \tau_0,
\\
 \eta_2 -\big(q^{-1}+1\big) (1-abq)a^{-1} \eta_1
+ q^{-1}(1-ab)(1-abq) a^{-2} \eta_0,
\\
 \big(x-a^{-1}-b^{-1}\big)\big(x-q^{-1}a^{-1}-q^{-1}b^{-1}\big)+
\big(q^{-1}-1\big)\big(1-a^{-1}b^{-1}\big).
\end{gather*}
\end{enumerate}
To get $w'_0$, $w'_1$, $w'_2$ replace
$q \mapsto q^{-1}$,
$a\mapsto a^{-1}$,
$b\mapsto b^{-1}$ in (i)--(iii) above.
\end{Example}

\begin{Lemma}\label{lem:wibasis}
The following $(i)$--$(iii)$ hold:
\begin{enumerate}\itemsep=0pt
\item[$(i)$]
for $0 \leq i \leq N$
the polynomials $w_i$, $w'_i$ are monic with degree
$i$;
\item[$(ii)$] for $0 \leq n\leq N$, each of
$\lbrace w_i \rbrace_{i=0}^n$,
$\lbrace w'_i \rbrace_{i=0}^n$ is a basis for the vector space $V_n$;
\item[$(iii)$]
each of $\lbrace w_i \rbrace_{i=0}^N$,
 $\lbrace w'_i \rbrace_{i=0}^N$ is a basis for the vector space $V$.
\end{enumerate}
\end{Lemma}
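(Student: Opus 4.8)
The plan is to read everything off the factorizations in Lemma~\ref{lem:four} together with the degree-lowering behaviour of $\psi$. Recall from Lemmas~\ref{lem:psiVn} and~\ref{lem:nilp} that $\psi$ is locally nilpotent with $\psi V_j \subseteq V_{j-1}$ for $0 \leq j \leq N$ and $\psi^{j+1}V_j = 0$. Hence, for any scalar $c \in \mathbb F$, the maps $\exp_q(c\psi)$ and $\exp_{q^{-1}}(-c\psi)$ defined by~\eqref{eq:qExp} and~\eqref{eq:qExpi} are meaningful even when $N=\infty$ (only finitely many terms act nontrivially on a given vector), and reading off the $i=0$ term of each series we get $\exp_q(c\psi) = I + F$ and $\exp_{q^{-1}}(-c\psi) = I + F'$, where $F$, $F'$ are $\mathbb F$-linear combinations of $\psi, \psi^2, \psi^3, \ldots$. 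In particular $Fv \in V_{j-1}$ and $F'v \in V_{j-1}$ for every $v \in V_j$ and every $j$.

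For part~(i), apply this with the identity $w_i = \exp_{q^{-1}}\big({-}b^{-1}\xi\psi\big)\tau_i$ from Lemma~\ref{lem:four}. The polynomial $\tau_i$ is monic of degree $i$ by~\eqref{eq:tau}, so $\tau_i \in V_i$, and by the previous paragraph $w_i - \tau_i \in V_{i-1}$; therefore $w_i$ is again monic of degree $i$. For $w'_i$ one runs the identical argument starting from $w'_i = \exp_q\big(a^{-1}\xi\psi\big)\tau_i$, also supplied by Lemma~\ref{lem:four}, or alternatively one invokes Note~\ref{note:wprime}: the substitution $q\mapsto q^{-1}$, $a\mapsto a^{-1}$, $b\mapsto b^{-1}$ carrying $w_i$ to $w'_i$ fixes each $\tau_i$ (by Note~\ref{note:qi}) and so preserves the property of being monic of degree $i$.

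For parts~(ii) and~(iii) one uses the elementary fact that a list $p_0, p_1, \dots, p_n$ of polynomials with $p_k$ monic of degree $k$ is linearly independent—inspect the leading coefficient of a vanishing combination, working down from the highest index present—and therefore spans a subspace of dimension $n+1$. Since $\dim V_n = n+1$ and $w_k, w'_k \in V_n$ for $0 \leq k \leq n$ by part~(i), each of $\lbrace w_i\rbrace_{i=0}^n$ and $\lbrace w'_i\rbrace_{i=0}^n$ is a basis of $V_n$; this is~(ii), and~(iii) is the special case $n=N$ together with $V_N = V$. There is no genuine obstacle here; the only point deserving a moment's care is that, for $N=\infty$, the $q$-exponential series act only on polynomials and truncate because $\psi$ annihilates $V_j$ after $j+1$ steps, so no convergence issue arises.
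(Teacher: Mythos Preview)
Your proof is correct and follows essentially the same approach as the paper: both extract part~(i) from the expression of $w_i$, $w'_i$ as a $q$-exponential in $\psi$ applied to $\tau_i$ (the paper cites~\eqref{eq:Normal} and Definition~\ref{def:wi}, you cite Lemma~\ref{lem:four} and Lemma~\ref{lem:psiVn}, which amount to the same thing), and then derive~(ii),~(iii) from~(i) by the standard fact that monic polynomials of distinct degrees are independent. Your version simply spells out what the paper leaves implicit.
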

\begin{proof} (i)
By~(\ref{eq:Normal})
and Definition~\ref{def:wi}.
(ii),~(iii)~By~(i) above.
\end{proof}

\begin{Lemma}\label{lem:wjhyper} For $0 \leq j \leq N$,
\begin{align*}
w'_j &= a^{-j} (ab;q)_j \sum_{i=0}^j
\frac{
\big(q^{-j};q\big)_i
(ay;q)_i
\big(ay^{-1};q\big)_i q^i}
{
(ab;q)_i
(q;q)_i
}
\\
 &= a^{-j} (ab;q)_j\,
 {}_3\phi_2 \left(
 \genfrac{}{}{0pt}{}
 {q^{-j}, ay, ay^{-1} }
 {ab, 0}
 \,\bigg\vert \, q; q \right),
\end{align*}
where $x=y+y^{-1}$.
To get $w_j$ from $w'_j$, replace
$q \mapsto q^{-1}$,
$a\mapsto a^{-1}$,
$b\mapsto b^{-1}$.
\end{Lemma}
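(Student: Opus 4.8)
The plan is to start from the closed form $w'_j = \exp_q\big(a^{-1}\xi\psi\big)\tau_j$ with $\xi = 1-ab$, which is one of the four identities in Lemma~\ref{lem:four}, and expand everything explicitly in the $\tau$-basis. Expanding $\exp_q$ via~\eqref{eq:qExp} and iterating the normalization~\eqref{eq:Normal}, so that $\psi^k\tau_j = \vartheta_j\vartheta_{j-1}\cdots\vartheta_{j-k+1}\,\tau_{j-k}$ for $0\le k\le j$ (and $\psi^k\tau_j=0$ for $k>j$), I would write $w'_j = \sum_{i=0}^j c_{ji}\tau_i$, where, after the substitution $k=j-i$,
\[
c_{ji} = \frac{q^{\binom{j-i}{2}}(1-q)^{j-i}\big(a^{-1}\xi\big)^{j-i}}{(q;q)_{j-i}}\,\vartheta_j\vartheta_{j-1}\cdots\vartheta_{i+1}.
\]

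Next I would evaluate the $\vartheta$-product as $\vartheta_j\cdots\vartheta_{i+1} = (\vartheta_1\cdots\vartheta_j)/(\vartheta_1\cdots\vartheta_i)$ using~\eqref{eq:vthprod}. The powers of $1-q$ and of $\xi=1-ab$ then cancel against those already present in $c_{ji}$, leaving a ratio $(q;q)_j/[(q;q)_i(q;q)_{j-i}]$ together with a monomial in $q$, $a$, and $ab$. Rewriting $(q;q)_j/(q;q)_{j-i}$ in terms of $\big(q^{-j};q\big)_i$ by means of the identity~\eqref{eq:qint}, and tracking the resulting powers of $q$ carefully, I expect to obtain
\[
c_{ji} = a^{-j}(ab;q)_j\,(-1)^i a^i q^{\binom{i}{2}+i}\,\frac{\big(q^{-j};q\big)_i}{(q;q)_i(ab;q)_i}.
\]
Finally, substituting the closed form $\tau_i = (-1)^i a^{-i}q^{-\binom{i}{2}}(ay;q)_i\big(ay^{-1};q\big)_i$ from Lemma~\ref{lem:qte}(i) makes the signs and the factor $q^{\binom{i}{2}}$ cancel, so that $(-1)^i a^i q^{\binom{i}{2}+i}\tau_i = q^i(ay;q)_i\big(ay^{-1};q\big)_i$; collecting the common factor $a^{-j}(ab;q)_j$ then yields $w'_j = a^{-j}(ab;q)_j\sum_{i=0}^j (q^{-j};q)_i(ay;q)_i\big(ay^{-1};q\big)_i q^i / \big[(ab;q)_i(q;q)_i\big]$, which is the asserted ${}_3\phi_2$. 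The statement about $w_j$ then follows immediately from Note~\ref{note:wprime} (equivalently Note~\ref{note:qi}): applying $q\mapsto q^{-1}$, $a\mapsto a^{-1}$, $b\mapsto b^{-1}$ to this formula for $w'_j$ produces the corresponding formula for $w_j$.

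The hard part will be the $q$-power bookkeeping in the middle step: one must combine the exponent $\binom{j-i}{2}+\binom{i}{2}-\binom{j}{2}$ coming from~\eqref{eq:vthprod} with the exponent $ij-\binom{i}{2}$ produced when $(q;q)_j/(q;q)_{j-i}$ is converted into a $\big(q^{-j};q\big)_i$ via~\eqref{eq:qint}, and verify that everything collapses to the clean exponent $\binom{i}{2}+i$; the sign tracking ($(-1)^i$ versus $(-1)^{j-i}$) and the powers of $a$ and $b$ demand the same care. None of this is deep — it is a finite manipulation of $q$-Pochhammer symbols — but it is where an error is most likely to creep in, so I would organize the calculation by isolating the powers of $q$, the powers of $a$, the powers of $ab$, and the sign separately, and only at the end recombine them.
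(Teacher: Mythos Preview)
Your proposal is correct and follows essentially the same route as the paper: start from $w'_j=\exp_q(a^{-1}\xi\psi)\tau_j$ in Lemma~\ref{lem:four}, expand via~\eqref{eq:qExp} and~\eqref{eq:Normal}, simplify the coefficient using~\eqref{eq:vthprod} and~\eqref{eq:qint}, then substitute the closed form for $\tau_i$ from Lemma~\ref{lem:qte}(i). Your intermediate coefficient $c_{ji}$ agrees with the paper's $\alpha_i$ (writing $a^{i-j}=a^{-j}a^i$), and the final appeal to Note~\ref{note:wprime} for $w_j$ is exactly what the paper does.
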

\begin{proof}In the equation
$w'_j = \exp_q\big(a^{-1} \xi \psi \big) \tau_j$,
expand the $q$-exponential using~(\ref{eq:qExp}), and evaluate the result using
the equation on the left in~(\ref{eq:Normal}). This yields
$w'_j = \sum_{i=0}^j \alpha_i \tau_i$ where for $0 \leq i \leq j$,
\begin{gather*}
\alpha_i = \frac{q^{\binom{j-i}{2}} (1-q)^{j-i} a^{i-j} \xi^{j-i}
\vartheta_j \vartheta_{j-1} \cdots \vartheta_{i+1}
}{(q;q)_{j-i}}.
\end{gather*}
Evaluating this using
\begin{gather*}
\vartheta_j \vartheta_{j-1} \cdots \vartheta_{i+1} =
\frac{
\vartheta_1 \vartheta_{2} \cdots \vartheta_{j}}
{
\vartheta_1 \vartheta_{2} \cdots \vartheta_{i}}
\end{gather*}
and~(\ref{eq:qint}),
(\ref{eq:vthprod}) we obtain
\begin{gather*}
\alpha_i = \frac{(-1)^i a^{i-j} (ab;q)_j \big(q^{-j};q\big)_i q^{\binom{i}{2}} q^i} {(ab;q)_i (q;q)_i}.
\end{gather*}
The polynomial $\tau_i$ is given in Lemma~\ref{lem:qte}(i). The result follows.
\end{proof}

\begin{Note}\label{note:3trec}
The polynomials $\lbrace w_i \rbrace_{i=0}^N$ and
$\lbrace w'_i \rbrace_{i=0}^N$ are in the
Al-Salam/Chihara family
\cite[Section~14.8]{koekoek}
if $N=\infty$,
 and the
dual $q$-Krawtchouk family \cite[Section~14.17]{koekoek}
if $N\not=\infty$. The Al-Salam/Chihara and dual $q$-Krawtchouk
polynomials satisfy a 3-term recurrence; the details will be
given in Lemmas~\ref{prop:3trec} and \ref{prop:3trecprme} below.
\end{Note}

Going forward we focus on $\lbrace w_i \rbrace_{i=0}^N$;
similar results hold for $\lbrace w'_i \rbrace_{i=0}^N$.

\begin{Lemma}\label{lem:psiW}
We have
\begin{gather*}
\psi w_i = \vartheta_i w_{i-1}, \qquad 0 \leq i \leq N.
\end{gather*}
\end{Lemma}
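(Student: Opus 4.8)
The plan is to read off $w_i$ from Lemma~\ref{lem:four}, where $w_i = \exp_{q^{-1}}\big({-}b^{-1}\xi\psi\big)\tau_i$ with $\xi = 1-ab$, and then to exploit the fact that the operator $\exp_{q^{-1}}\big({-}b^{-1}\xi\psi\big)$ is constructed solely from powers of $\psi$ (via~\eqref{eq:qExpi}), so it commutes with $\psi$. Combined with the normalization $\psi\tau_i = \vartheta_i\tau_{i-1}$ from~\eqref{eq:Normal} in Lemma~\ref{lem:charNorm}, this lets us simply push $\psi$ through the $q$-exponential and reduce to a statement already known for the $\tau_i$.

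In detail, by Lemma~\ref{lem:nilp} the map $\psi$ is locally nilpotent, so $\exp_{q^{-1}}\big({-}b^{-1}\xi\psi\big)$ is a well-defined element of $\operatorname{End}(V)$; moreover on each $V_n$ it restricts to a polynomial in $\psi$, since $\psi^{n+1}V_n = 0$ by Lemma~\ref{lem:psiVn}. Hence $\psi$ commutes with $\exp_{q^{-1}}\big({-}b^{-1}\xi\psi\big)$ on all of $V$. Using this together with Lemma~\ref{lem:four} and~\eqref{eq:Normal} (valid for $0\le i\le N$, with $\tau_{-1}=0$ and $\vartheta_0=0$), one computes for $0\le i\le N$:
\begin{align*}
\psi w_i &= \psi\,\exp_{q^{-1}}\big({-}b^{-1}\xi\psi\big)\tau_i
= \exp_{q^{-1}}\big({-}b^{-1}\xi\psi\big)\,\psi\tau_i\\
&= \vartheta_i\,\exp_{q^{-1}}\big({-}b^{-1}\xi\psi\big)\tau_{i-1}
= \vartheta_i w_{i-1}.
\end{align*}
As a sanity check for $i=0$: here $w_0 = 1$ by Example~\ref{eq:wsmallex}(i) and $\psi 1 = 0$ by Lemma~\ref{lem:dlcomment}, while $\vartheta_0 = 0$ and $w_{-1} = 0$ by Definition~\ref{def:wi}, so both sides vanish.

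I expect no real obstacle here; the only point requiring care is that the commutation $\psi\,\exp_{q^{-1}}\big({-}b^{-1}\xi\psi\big) = \exp_{q^{-1}}\big({-}b^{-1}\xi\psi\big)\,\psi$ must be justified when $N=\infty$, which is precisely why local nilpotency of $\psi$ (Lemma~\ref{lem:nilp}) is invoked rather than mere nilpotency. As an independent verification one can run the same argument using the alternative expression $w_i = \exp_{q^{-1}}\big({-}a^{-1}\xi\psi\big)\eta_i$ from Lemma~\ref{lem:four} together with $\psi\eta_i = \vartheta_i\eta_{i-1}$, which yields the identical conclusion.
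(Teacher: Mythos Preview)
Your proof is correct and follows essentially the same approach as the paper's: the paper's one-line proof invokes Definition~\ref{def:wi} and $\psi\tau_i=\vartheta_i\tau_{i-1}$, which amounts to exactly your argument of pushing $\psi$ through the $q$-exponential (a function of $\psi$) applied to $\tau_i$. Your added care about local nilpotency and the $i=0$ sanity check are fine elaborations but not substantively different.
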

\begin{proof} By Definition~\ref{def:wi} and since $\psi \tau_i = \vartheta_i \tau_{i-1}$.
\end{proof}

Our next general goal is to describe in more detail
how the bases
$\lbrace \tau_i\rbrace_{i=0}^N$,
$\lbrace \eta_i\rbrace_{i=0}^N$,
$\lbrace w_i \rbrace_{i=0}^N$ are related.
To this end, we introduce some maps
$K, B, M \in \operatorname{End}(V)$.

\begin{Definition}\label{def:kbm}
\rm Define $K, B, M\in \operatorname{End}(V)$ such that
for $0 \leq i \leq N$,
\begin{gather*}
K \tau_i = q^{-i} \tau_i,
\qquad
B \eta_i = q^{-i} \eta_i,
\qquad
M w_i = q^{-i} w_i.
\end{gather*}
\end{Definition}
Each of $K$, $B$, $M$ is invertible.

\begin{Lemma}\label{lem:KBM}
The following $(i)$--$(iii)$ hold:
\begin{enumerate}\itemsep=0pt
\item[$(i)$]
$K\psi = q \psi K$;
\item[$(ii)$]
$B\psi = q \psi B$;
\item[$(iii)$]
$M\psi = q \psi M$.
\end{enumerate}
\end{Lemma}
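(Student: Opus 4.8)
The plan is to verify each of (i)--(iii) by evaluating both sides on a suitable basis of $V$ and comparing. Since an endomorphism of $V$ is determined by its action on any basis, it suffices to check the identities on basis vectors.

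For (i), I would use the basis $\lbrace \tau_i \rbrace_{i=0}^N$ of $V$. Since $\psi$ is normalized, Lemma~\ref{lem:charNorm} gives $\psi \tau_i = \vartheta_i \tau_{i-1}$ for $0 \leq i \leq N$ (with $\tau_{-1}=0$, and noting $\vartheta_0=0$ so the $i=0$ case reads $\psi\tau_0=0$, consistent with Lemma~\ref{lem:dlcomment}). Then on one hand $K\psi\tau_i = \vartheta_i K\tau_{i-1} = q^{-(i-1)}\vartheta_i\tau_{i-1}$ using Definition~\ref{def:kbm}, and on the other hand $q\psi K\tau_i = q\,q^{-i}\psi\tau_i = q^{-(i-1)}\vartheta_i\tau_{i-1}$. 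These agree for all $i$, so $K\psi = q\psi K$. For (ii) the argument is identical with the basis $\lbrace \eta_i \rbrace_{i=0}^N$ of $V$, using $\psi \eta_i = \vartheta_i \eta_{i-1}$ from Lemma~\ref{lem:charNorm} and $B\eta_i = q^{-i}\eta_i$ from Definition~\ref{def:kbm}. For (iii) I would use the basis $\lbrace w_i \rbrace_{i=0}^N$ of $V$ provided by Lemma~\ref{lem:wibasis}(iii), together with $\psi w_i = \vartheta_i w_{i-1}$ from Lemma~\ref{lem:psiW} and $M w_i = q^{-i} w_i$ from Definition~\ref{def:kbm}; the same two-line comparison then yields $M\psi = q\psi M$.

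There is no real obstacle here: the only point requiring a moment's care is the behavior at $i=0$, which is harmless because $\vartheta_0 = 0$ forces both sides to annihilate $\tau_0$, $\eta_0$, and $w_0$. The proof is therefore a direct computation, and I would present it in roughly the compressed form: "By Definition~\ref{def:kbm}, Lemma~\ref{lem:charNorm}, and Lemma~\ref{lem:psiW}, compare the action of each side on the appropriate basis of $V$."
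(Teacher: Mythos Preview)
Your proposal is correct and matches the paper's own proof essentially verbatim: the paper verifies (i) by applying both sides to $\tau_i$ exactly as you describe, and then handles (ii) and (iii) by the phrase ``Similar to the proof of (i) above,'' which amounts to your use of the bases $\lbrace \eta_i\rbrace_{i=0}^N$ and $\lbrace w_i\rbrace_{i=0}^N$ together with Lemma~\ref{lem:psiW}.
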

\begin{proof} (i)~The vectors
$\lbrace \tau_i\rbrace_{i=0}^N$ form a basis for $V$.
For $0 \leq i \leq N$,
\begin{gather*}
K \psi \tau_i =\vartheta_i K \tau_{i-1} = \vartheta_i q^{1-i} \tau_{i-1},
 \qquad
q \psi K \tau_i = q^{1-i} \psi \tau_i = \vartheta_i q^{1-i} \tau_{i-1}.
\end{gather*}
Therefore $K\psi = q \psi K$.
(ii),~(iii)~Similar to the proof of~(i) above.
\end{proof}

\begin{Lemma}\label{lem:BDDK}
The following $(i)$--$(iii)$ hold:
\begin{enumerate}\itemsep=0pt
\item[$(i)$]
$B \Delta = \Delta K$;
\item[$(ii)$]
$K \exp_q \big(b^{-1}\xi \psi\big) =
 \exp_q \big(b^{-1}\xi \psi\big) M$;
\item[$(iii)$]
$B\exp_q \big(a^{-1}\xi \psi\big) =
 \exp_q \big(a^{-1}\xi \psi\big) M$.
\end{enumerate}
\end{Lemma}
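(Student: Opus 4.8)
All three identities are equalities of elements of $\operatorname{End}(V)$, so it suffices in each case to check that the two sides agree on a convenient basis of $V$. The plan is to use the basis $\lbrace \tau_i \rbrace_{i=0}^N$ for part~(i) and the basis $\lbrace w_i \rbrace_{i=0}^N$ (Lemma~\ref{lem:wibasis}) for parts~(ii) and~(iii), together with the eigenvector descriptions of $K$, $B$, $M$ from Definition~\ref{def:kbm}.

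For part~(i), fix $0 \le i \le N$ and apply each side to $\tau_i$. On the left, $\Delta\tau_i = \eta_i$ by~(\ref{def:Delta}), so $B\Delta\tau_i = B\eta_i = q^{-i}\eta_i$. On the right, $K\tau_i = q^{-i}\tau_i$, so $\Delta K\tau_i = q^{-i}\Delta\tau_i = q^{-i}\eta_i$. The two sides agree on a basis, hence $B\Delta = \Delta K$.

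For part~(ii), I would use Lemma~\ref{lem:four}, which gives $\tau_i = \exp_q\big(b^{-1}\xi\psi\big) w_i$ for $0 \le i \le N$. Apply each side of the claimed identity to $w_i$: the left side gives $K\exp_q\big(b^{-1}\xi\psi\big) w_i = K\tau_i = q^{-i}\tau_i$, while the right side gives $\exp_q\big(b^{-1}\xi\psi\big) M w_i = q^{-i}\exp_q\big(b^{-1}\xi\psi\big) w_i = q^{-i}\tau_i$. Since $\lbrace w_i\rbrace_{i=0}^N$ is a basis of $V$, the identity follows. Part~(iii) is handled the same way, now using the relation $\eta_i = \exp_q\big(a^{-1}\xi\psi\big) w_i$ from Lemma~\ref{lem:four} and $B\eta_i = q^{-i}\eta_i$.

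There is no real obstacle here: each claim reduces to a one-line eigenvalue comparison once the correct basis is chosen, and the content is entirely carried by Lemma~\ref{lem:four} and Definition~\ref{def:kbm}. As an alternative route, one could instead deduce (ii) and (iii) from the $q$-commutation relations $K\psi = q\psi K$, $M\psi = q\psi M$ in Lemma~\ref{lem:KBM} combined with the intertwining relations~(\ref{eq:2one}), (\ref{eq:2oneAlt}), but the basis computation above is the shortest and I would present that.
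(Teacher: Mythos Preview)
Your proof is correct and follows exactly the same approach as the paper: apply each side to $\tau_i$ for part~(i) and to $w_i$ for parts~(ii) and~(iii), then use Definition~\ref{def:kbm} and Lemma~\ref{lem:four} to reduce everything to an eigenvalue comparison. There is nothing to add.
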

\begin{proof}(i) For $0 \leq i \leq N$,
apply each side to $\tau_i$ and use~(\ref{def:Delta}) along with Definition~\ref{def:kbm}.

(ii), (iii) For $0 \leq i \leq N$,
apply each side to $w_i$
and use Lemma~\ref{lem:four} along with Definition~\ref{def:kbm}.
\end{proof}

\begin{Proposition}\label{prop:MKB}
The following $(i)$--$(iv)$ hold:
\begin{enumerate}\itemsep=0pt
\item[$(i)$] $KM^{-1} = I + (q-1)\big(a-b^{-1}\big) \psi$;
\item[$(ii)$] $M^{-1}K = I + (q^{-1}-1)\big(b^{-1}-a\big) \psi$;
\item[$(iii)$] $BM^{-1} = I + (q-1)\big(b-a^{-1}\big) \psi$;
\item[$(iv)$] $M^{-1}B = I + \big(q^{-1}-1\big)\big(a^{-1}-b\big) \psi$.
\end{enumerate}
\end{Proposition}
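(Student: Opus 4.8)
The plan is to derive all four identities from the same conjugation computation, using the intertwining relations of Lemma~\ref{lem:BDDK}, the commutation rules of Lemma~\ref{lem:KBM}, and Lemma~\ref{lem:qexp}. I will carry out part~(i) in detail; the others follow by the identical manipulation. Write $E = \exp_q\big(b^{-1}\xi\psi\big)$, which is invertible with $E^{-1} = \exp_{q^{-1}}\big({-}b^{-1}\xi\psi\big)$ by~(\ref{eq:qExpi}). By Lemma~\ref{lem:BDDK}(ii) we have $M = E^{-1}KE$, hence $M^{-1} = E^{-1}K^{-1}E$ and $KM^{-1} = KE^{-1}K^{-1}E$.

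Next I would push the leftmost $K$ through $E^{-1}$. From Lemma~\ref{lem:KBM}(i), $K\psi = q\psi K$, so $K\psi^j = q^j\psi^j K$ for all $j$; applying this term by term to the series for $E^{-1}$ gives $KE^{-1} = \exp_{q^{-1}}\big({-}qb^{-1}\xi\psi\big)K$. Substituting, the two copies of $K^{\pm 1}$ cancel, so $KM^{-1} = \exp_{q^{-1}}\big({-}qb^{-1}\xi\psi\big)\exp_q\big(b^{-1}\xi\psi\big)$. To finish, invert the identity of Lemma~\ref{lem:qexp} to obtain $\exp_{q^{-1}}(-qT) = \exp_{q^{-1}}(-T)\big(1-(q-1)T\big)$, take $T = b^{-1}\xi\psi$, and use that everything in sight is a polynomial in $\psi$ (hence all factors commute): then $\exp_{q^{-1}}\big({-}b^{-1}\xi\psi\big)$ and $\exp_q\big(b^{-1}\xi\psi\big)$ cancel, leaving $KM^{-1} = I - (q-1)b^{-1}\xi\psi$. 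Since $\xi = 1-ab$ we have $b^{-1}\xi = b^{-1}-a$, which is~(i).

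For~(ii), conjugating in the opposite order and using instead $K^{-1}\psi = q^{-1}\psi K^{-1}$ yields $M^{-1}K = \exp_{q^{-1}}\big({-}b^{-1}\xi\psi\big)\exp_q\big(q^{-1}b^{-1}\xi\psi\big)$; feeding $T = q^{-1}b^{-1}\xi\psi$ into Lemma~\ref{lem:qexp} directly produces $M^{-1}K = I - (q-1)q^{-1}b^{-1}\xi\psi = I + (q^{-1}-1)(b^{-1}-a)\psi$. Parts~(iii) and~(iv) are word-for-word the same computations with $K$ and $b^{-1}$ replaced by $B$ and $a^{-1}$, drawing on Lemma~\ref{lem:BDDK}(iii) and Lemma~\ref{lem:KBM}(ii) in place of their (ii) and (i) versions, and using $a^{-1}\xi = a^{-1}-b$. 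The only delicate point — the step I expect to require the most care — is the bookkeeping: one must correctly track that commuting $K$ (resp.\ $K^{-1}$) past a $q$-exponential in $\psi$ rescales its argument by $q$ (resp.\ $q^{-1}$), and then apply Lemma~\ref{lem:qexp} with exactly the right argument so that the surviving linear factor emerges with the stated coefficient; no new constructions or estimates are needed.
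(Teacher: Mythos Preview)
Your proof is correct and uses essentially the same ingredients as the paper's---Lemma~\ref{lem:BDDK}, Lemma~\ref{lem:KBM}, and Lemma~\ref{lem:qexp}---with only cosmetic reordering: the paper starts from $\exp_q(T)=(I-(q-1)T)\exp_q(qT)$ and recognizes $\exp_q(qT)=K\exp_q(T)K^{-1}=\exp_q(T)MK^{-1}$ to read off $(I-(q-1)T)MK^{-1}=I$, whereas you compute $KM^{-1}=KE^{-1}K^{-1}E$ directly and then collapse it via the inverted form of Lemma~\ref{lem:qexp}. The one place the paper is slightly more economical is part~(ii): rather than repeating the conjugation argument, it simply conjugates~(i) by $M$ and uses $M\psi=q\psi M$ to turn $(q-1)(a-b^{-1})\psi$ into $(q^{-1}-1)(b^{-1}-a)\psi$.
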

\begin{proof} (i)
 The map $T= b^{-1} \xi \psi$ is locally nilpotent.
We have
 $K T K^{-1} = qT$ by
 Lemma~\ref{lem:KBM}(i),
and $K \exp_q(T) = \exp_q(T) M$ by
Lemma~\ref{lem:BDDK}(ii).
By these comments and
 Lemma~\ref{lem:qexp},
\begin{align*}
\exp_q( T)&= \big(I - (q-1)T\big) \exp_q(qT)
= \big(I - (q-1)T\big) \exp_q\big(KTK^{-1}\big)
\\
&= \big(I - (q-1)T\big) K \exp_q(T)K^{-1}
= \bigl(I - (q-1)T\bigr) \exp_q(T)M K^{-1}
\\
&= \exp_q(T) \big(I - (q-1)T\big) M K^{-1}.
\end{align*}
By this and since $\exp_q( T)$ is invertible,
\begin{gather*}
I= \bigl(I - (q-1)T\bigr) M K^{-1}.
\end{gather*}
The result follows from this and $\xi = 1-ab$.

(ii) By (i) above and $ M \psi = q \psi M$.
(iii),~(iv)~Similar to the proof of~(i),~(ii) above.
\end{proof}

\begin{Corollary} We have
\begin{gather}
\psi = \frac{bq}{1-ab} \frac{M^{-1}K-KM^{-1}}{(q-1)^2},\label{eq:psiv1}\\
\psi = \frac{aq}{1-ab} \frac{M^{-1}B-BM^{-1}}{(q-1)^2}.\label{eq:psiv2}
\end{gather}
\end{Corollary}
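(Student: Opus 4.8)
The plan is to read off both identities directly from Proposition~\ref{prop:MKB}, by forming the commutator-type differences $M^{-1}K-KM^{-1}$ and $M^{-1}B-BM^{-1}$. No new idea is needed: the work is purely a matter of combining the four displayed formulas in that proposition and simplifying.

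For the first identity, subtract part~(i) of Proposition~\ref{prop:MKB} from part~(ii). Since $a-b^{-1}=-(b^{-1}-a)$, the constant terms $I$ cancel and the coefficients of $\psi$ add rather than subtract, giving
\begin{gather*}
M^{-1}K-KM^{-1}=\bigl((q^{-1}-1)+(q-1)\bigr)\big(b^{-1}-a\big)\psi=\big(q+q^{-1}-2\big)\big(b^{-1}-a\big)\psi.
\end{gather*}
Now use the elementary identity $q+q^{-1}-2=(q-1)^2q^{-1}$ to rewrite this as $M^{-1}K-KM^{-1}=(q-1)^2q^{-1}\big(b^{-1}-a\big)\psi$. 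Dividing by $(q-1)^2$, multiplying by $bq(1-ab)^{-1}$, and using $b\big(b^{-1}-a\big)=1-ab$ yields~(\ref{eq:psiv1}). Here the division by $1-ab$ is legitimate, since $\xi=1-ab\not=0$: indeed $abq^{i-1}\not=1$ for $1\le i\le N$, and we take $i=1$.

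For the second identity, subtract part~(iii) of Proposition~\ref{prop:MKB} from part~(iv) in exactly the same way, using $b-a^{-1}=-(a^{-1}-b)$ to obtain $M^{-1}B-BM^{-1}=(q-1)^2q^{-1}\big(a^{-1}-b\big)\psi$, and then divide by $(q-1)^2$, multiply by $aq(1-ab)^{-1}$, and use $a\big(a^{-1}-b\big)=1-ab$, which gives~(\ref{eq:psiv2}). The only point requiring any care is the sign bookkeeping when combining the two formulas within each pair; once that is handled correctly there is no real obstacle. Equivalently, one may simply expand the right-hand sides of~(\ref{eq:psiv1}) and~(\ref{eq:psiv2}) via Proposition~\ref{prop:MKB} and verify that each collapses to $\psi$.
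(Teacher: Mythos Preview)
Your proof is correct and follows exactly the approach indicated in the paper: the paper's proof simply says to use Proposition~\ref{prop:MKB}(i),(ii) for~(\ref{eq:psiv1}) and Proposition~\ref{prop:MKB}(iii),(iv) for~(\ref{eq:psiv2}), and you have carried out precisely those subtractions with the algebra spelled out in full.
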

\begin{proof} To get~(\ref{eq:psiv1}) use
Proposition~\ref{prop:MKB}(i),~(ii).

To get~(\ref{eq:psiv2}) use Proposition~\ref{prop:MKB}(iii),~(iv).
\end{proof}

\begin{Proposition}\label{prop:KBgivesM}
We have
\begin{gather*}
M = \frac{bK-aB}{b-a}.
\end{gather*}
\end{Proposition}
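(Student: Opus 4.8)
The plan is to derive the identity directly from Proposition~\ref{prop:MKB}, using only parts $(i)$ and $(iii)$ together with the fact that $a\not=b$.

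First I would rewrite Proposition~\ref{prop:MKB}$(i)$ by right-multiplying both sides by $M$, obtaining
\begin{gather*}
K = M + (q-1)\big(a-b^{-1}\big)\psi M.
\end{gather*}
Similarly, right-multiplying Proposition~\ref{prop:MKB}$(iii)$ by $M$ gives
\begin{gather*}
B = M + (q-1)\big(b-a^{-1}\big)\psi M.
\end{gather*}

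Next I would form the combination $bK - aB$ using these two expressions. The terms proportional to $M$ contribute $(b-a)M$, while the terms proportional to $\psi M$ contribute $(q-1)\psi M$ times the scalar
\begin{gather*}
b\big(a-b^{-1}\big) - a\big(b-a^{-1}\big) = (ab-1) - (ab-1) = 0.
\end{gather*}
Hence $bK - aB = (b-a)M$. Since the nondegeneracy hypothesis~(\ref{eq:nondeg}) forces $a\not=b$ (as noted just after~(\ref{eq:aibi})), we may divide by $b-a$ to conclude $M = \frac{bK-aB}{b-a}$.

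There is essentially no obstacle here: once Proposition~\ref{prop:MKB} is available, the argument is a short algebraic manipulation in which the only nontrivial point is the cancellation of the $\psi M$ coefficient, which follows from $b(a-b^{-1}) = a(b-a^{-1})$. One could alternatively verify the identity by applying both sides to the basis $\lbrace w_i\rbrace_{i=0}^N$ using Lemma~\ref{lem:four} and Definition~\ref{def:kbm}, but the route through Proposition~\ref{prop:MKB} is cleaner and avoids any computation with $q$-exponentials.
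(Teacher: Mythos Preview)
Your proof is correct and essentially identical to the paper's own argument: the paper also computes $b$ times Proposition~\ref{prop:MKB}(i) minus $a$ times Proposition~\ref{prop:MKB}(iii), relying on the same cancellation $b(a-b^{-1})=a(b-a^{-1})$. The only cosmetic difference is that you right-multiply by $M$ before taking the linear combination, whereas the paper effectively does so afterward.
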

\begin{proof}
Compute $b$ times
Proposition~\ref{prop:MKB}(i)
minus $a$ times
Proposition~\ref{prop:MKB}(iii).
\end{proof}

\begin{Lemma}\label{lem:denom}
Each of the following is invertible:
\begin{gather*}
aI - bB^{-1}K, \qquad
bI - aK^{-1}B, \qquad
a^{-1}I - b^{-1}BK^{-1}, \qquad
b^{-1}I - a^{-1}KB^{-1}.
\end{gather*}
\end{Lemma}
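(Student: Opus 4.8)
The plan is to reduce all four statements to a single fact: that the map $bK-aB$ is invertible. The four given maps each differ from $bK-aB$ (up to a nonzero scalar) by left- or right-multiplication by one of the invertible maps $K$, $B$. Concretely, I would argue as follows. Left-multiply $aI-bB^{-1}K$ by $B$ to get $aB-bK$; left-multiply $bI-aK^{-1}B$ by $K$ to get $bK-aB$; right-multiply $a^{-1}I-b^{-1}BK^{-1}$ by $K$ to get $a^{-1}K-b^{-1}B$; and right-multiply $b^{-1}I-a^{-1}KB^{-1}$ by $B$ to get $b^{-1}B-a^{-1}K$. Since $K$ and $B$ are invertible, each original map is invertible if and only if the corresponding map just listed is invertible, and each of those maps is a nonzero scalar multiple of $bK-aB$ (for the third, multiply by $ab$ to get $bK-aB$; for the fourth, multiply by $-ab$ to get $bK-aB$; the first is $-(bK-aB)$).

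\textbf{Key step.} It then remains to show $bK-aB$ is invertible. This is immediate from Proposition~\ref{prop:KBgivesM}, which gives $M=\frac{bK-aB}{b-a}$, hence $bK-aB=(b-a)M$. The map $M$ is invertible (as recorded just after Definition~\ref{def:kbm}), and $a\neq b$ (noted just after~\eqref{eq:aibi}), so $(b-a)M$ is invertible. Thus $bK-aB$ is invertible, and tracing back through the reductions above, all four maps in the statement are invertible. For instance, $aI-bB^{-1}K=B^{-1}(aB-bK)=(a-b)B^{-1}M$, a nonzero scalar times a product of invertible maps.

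\textbf{Main obstacle.} There is essentially no deep obstacle here; the only thing to be careful about is the bookkeeping of which side to multiply on and which nonzero scalar factor is introduced, so that each reduction genuinely preserves invertibility. One might momentarily worry that the expression $aB-bK$ (with the ``wrong'' sign relative to $M$) is not controlled by Proposition~\ref{prop:KBgivesM}, but since invertibility is insensitive to nonzero scalar multiples this is a non-issue. So the proof is short: reduce to $bK-aB$, then invoke $bK-aB=(b-a)M$ with $M$ invertible and $a\neq b$.
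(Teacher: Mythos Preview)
Your proof is correct and follows essentially the same approach as the paper: the paper's one-line proof cites Proposition~\ref{prop:KBgivesM} and the invertibility of $M$, and your argument simply makes explicit the routine reductions (multiplying on the left or right by $K$ or $B$ and by nonzero scalars) that connect each of the four expressions to $bK-aB=(b-a)M$.
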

\begin{proof} By
Proposition~\ref{prop:KBgivesM}
and since~$M$ is invertible.
\end{proof}

Our next goal is to describe how $K$, $B$ are related.
We will use the following result.

\begin{Lemma}\label{prop:pM}
We have
\begin{gather}
\psi M = \frac{1}{1-q} \frac{ab}{1-ab} \frac{K-B}{a-b},\label{eq:pM}\\
M \psi = \frac{q}{1-q} \frac{ab}{1-ab} \frac{K-B}{a-b}.\label{eq:Mp}
\end{gather}
\end{Lemma}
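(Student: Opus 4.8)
The plan is to eliminate $M^{-1}$ from the four identities in Proposition~\ref{prop:MKB} and then read off $\psi M$ directly, afterwards passing to $M\psi$ via the commutation relation $M\psi = q\psi M$ of Lemma~\ref{lem:KBM}(iii). Concretely, I would first take Proposition~\ref{prop:MKB}(i), namely $KM^{-1} = I + (q-1)\big(a-b^{-1}\big)\psi$, and multiply on the right by $M$ to obtain $K = M + (q-1)\big(a-b^{-1}\big)\psi M$. Applying the same step to Proposition~\ref{prop:MKB}(iii) gives $B = M + (q-1)\big(b-a^{-1}\big)\psi M$. Subtracting the second equation from the first cancels the common term $M$ and yields
\begin{gather*}
K - B = (q-1)\bigl(\big(a-b^{-1}\big) - \big(b-a^{-1}\big)\bigr)\,\psi M.
\end{gather*}

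Next I would simplify the scalar coefficient. One checks the elementary identity
\begin{gather*}
\big(a-b^{-1}\big) - \big(b-a^{-1}\big) = (a-b) + \big(a^{-1}-b^{-1}\big) = (a-b)\Bigl(1-\frac{1}{ab}\Bigr) = -\,\frac{(a-b)(1-ab)}{ab}.
\end{gather*}
Substituting this into the displayed equation and solving for $\psi M$, using $1-q = -(q-1)$, gives
\begin{gather*}
\psi M = \frac{ab}{(1-q)(1-ab)}\,\frac{K-B}{a-b},
\end{gather*}
which is exactly~(\ref{eq:pM}). Then (\ref{eq:Mp}) follows at once: by Lemma~\ref{lem:KBM}(iii) we have $M\psi = q\,\psi M$, so $M\psi = \dfrac{q}{1-q}\,\dfrac{ab}{1-ab}\,\dfrac{K-B}{a-b}$.

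There is essentially no obstacle here; the only computation of any substance is the one-line scalar identity above, and the whole argument is a short manipulation once Proposition~\ref{prop:MKB}, Proposition~\ref{prop:KBgivesM} (implicitly, for invertibility of $M$), and Lemma~\ref{lem:KBM}(iii) are available. As an alternative route one could instead multiply Proposition~\ref{prop:MKB}(ii),(iv) on the left by $M$, subtract to get $M\psi$ first (using $q^{-1}-1 = (1-q)/q$), and then deduce $\psi M$ from $M\psi = q\psi M$; both routes produce the stated formulas.
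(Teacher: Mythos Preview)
Your proof is correct and follows essentially the same route as the paper: subtract Proposition~\ref{prop:MKB}(iii) from Proposition~\ref{prop:MKB}(i) (the paper subtracts first and then right-multiplies by $M$, you right-multiply by $M$ first and then subtract, which is of course equivalent), simplify the scalar, and then derive~(\ref{eq:Mp}) from~(\ref{eq:pM}) via $M\psi = q\psi M$.
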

\begin{proof} To get~(\ref{eq:pM}), subtract
Proposition~\ref{prop:MKB}(iii) from
Proposition~\ref{prop:MKB}(i).
To get~(\ref{eq:Mp}) from~(\ref{eq:pM}), use $M\psi = q \psi M$.
\end{proof}

\begin{Proposition}\label{thm:BKBK}
We have
\begin{gather*}
(bK-aB)(K-B) = q(K-B)(bK-aB).
\end{gather*}
\end{Proposition}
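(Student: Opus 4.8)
The plan is to reduce everything to the two identities already available for the pair $(K,B)$ in terms of $M$ and $\psi$, namely Proposition~\ref{prop:KBgivesM} together with the two formulas in Lemma~\ref{prop:pM}. The point is that $bK-aB$ is, up to a nonzero scalar, just $M$, while $K-B$ has \emph{two} expressions up to scalars: a ``left'' one involving $\psi M$ via~\eqref{eq:pM} and a ``right'' one involving $M\psi$ via~\eqref{eq:Mp}. Substituting the left form when $bK-aB$ sits on the left of $K-B$, and the right form when $bK-aB$ sits on the right, will turn both sides of the claimed identity into the same multiple of $M\psi M$.

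In detail, first I would record from Proposition~\ref{prop:KBgivesM} that $bK-aB=(b-a)M$. Next, set $\nu=\dfrac{(1-q)(1-ab)(a-b)}{ab}$; this scalar is nonzero because $q\neq 1$, $ab\neq 1$ and $a\neq b$ by the standing assumptions below~\eqref{eq:aibi}, so it is legitimate to rearrange~\eqref{eq:pM} and~\eqref{eq:Mp} into $K-B=\nu\,\psi M$ and $q(K-B)=\nu\,M\psi$ respectively. Then for the left-hand side
\begin{gather*}
(bK-aB)(K-B)=(b-a)M(K-B)=(b-a)M\cdot\nu\,\psi M=\nu(b-a)\,M\psi M,
\end{gather*}
and for the right-hand side
\begin{gather*}
q(K-B)(bK-aB)=\big(q(K-B)\big)(b-a)M=\nu\,M\psi\cdot(b-a)M=\nu(b-a)\,M\psi M.
\end{gather*}
Comparing the two displays gives $(bK-aB)(K-B)=q(K-B)(bK-aB)$, as desired.

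There is no real obstacle here: the only thing to be careful about is the bookkeeping of scalar factors, and in particular that the factor $q$ on the right-hand side of the claimed identity is exactly absorbed by the factor $q$ relating~\eqref{eq:pM} to~\eqref{eq:Mp} (equivalently, by $M\psi=q\psi M$ from Lemma~\ref{lem:KBM}(iii)). One should also double-check that the product $M\psi M$ obtained from each side is genuinely the same unparenthesized word, which it is since associativity in $\operatorname{End}(V)$ makes $M\psi M$ unambiguous.
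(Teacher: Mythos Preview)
Your proof is correct and follows essentially the same route as the paper: the paper starts from $M\psi=q\psi M$, multiplies on the right by $M$ to get $M(\psi M)=q(\psi M)M$, and then substitutes $bK-aB=(b-a)M$ and $\psi M\propto K-B$ via Proposition~\ref{prop:KBgivesM} and~\eqref{eq:pM}. Your version simply runs the same computation in the other direction, using both~\eqref{eq:pM} and~\eqref{eq:Mp} (the latter being~\eqref{eq:pM} combined with $M\psi=q\psi M$) to evaluate each side as $\nu(b-a)M\psi M$.
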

\begin{proof} We have $M\psi = q \psi M$ so
$M (\psi M) = q (\psi M) M$.
Evaluate this using Proposition~\ref{prop:KBgivesM} and~(\ref{eq:pM}).
\end{proof}

We mention some reformulations of Proposition~\ref{thm:BKBK}.

\begin{Corollary}\label{lem:altrel}
We have
\begin{gather*}
0 = aB^2-\frac{bq-a}{q-1} BK - \frac{aq-b}{q-1}KB + bK^2
\end{gather*}
and
\begin{gather*}
\big(bI-aK^{-1}B\big)\big(I-KB^{-1}\big) = q \big(I-K^{-1}B\big)\big(aI-bKB^{-1}\big),
\\
\big(aI-bB^{-1}K\big)\big(I-KB^{-1}\big) = q \big(I-B^{-1}K\big)\big(aI-bKB^{-1}\big),
\\
\big(aI-bB^{-1}K\big)\big(I-BK^{-1}\big) = q \big(I-B^{-1}K\big)\big(bI-aBK^{-1}\big),
\\
\big(bI-aK^{-1}B\big)\big(I-BK^{-1}\big) = q \big(I-K^{-1}B\big)\big(bI-aBK^{-1}\big).
\end{gather*}
\end{Corollary}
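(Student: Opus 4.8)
The plan is to derive every displayed identity directly from Proposition~\ref{thm:BKBK}, namely $(bK-aB)(K-B) = q(K-B)(bK-aB)$, using only elementary algebra in $\operatorname{End}(V)$ together with the invertibility of $K$ and $B$ noted after Definition~\ref{def:kbm}. No new structural input is needed; everything is a rearrangement of that one relation.

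First I would treat the quadratic relation. Expanding both products in Proposition~\ref{thm:BKBK} and transferring everything to one side gives $b(1-q)K^2+(qa-b)KB+(qb-a)BK+a(1-q)B^2=0$. Since $N\geq 2$, the condition $q^i\neq 1$ with $i=1$ forces $q\neq 1$, so I may divide by $q-1$; multiplying through by $-1$ then yields $aB^2-\frac{bq-a}{q-1}BK-\frac{aq-b}{q-1}KB+bK^2=0$, which is the first displayed equation.

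For the four remaining identities I would pre- and post-multiply Proposition~\ref{thm:BKBK} by appropriate inverses and then normalize signs by factoring $-1$ out of each binomial factor. Explicitly: multiplying on the left by $K^{-1}$ and on the right by $B^{-1}$ and using $K^{-1}(bK-aB)=bI-aK^{-1}B$ and $(K-B)B^{-1}=-(I-KB^{-1})$ (and similarly on the right) gives the first of the four relations; multiplying by $B^{-1}$ on both sides gives the second; multiplying by $B^{-1}$ on the left and $K^{-1}$ on the right gives the third; and multiplying by $K^{-1}$ on both sides gives the fourth. In each case the two stray minus signs, one from each side, cancel. There is no genuine obstacle here; the only thing to watch is the sign bookkeeping, both when dividing by $q-1$ and when extracting the $-1$'s so that each factor appears in the normalized form $I-(\,\cdot\,)$ rather than $(\,\cdot\,)-I$. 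I would present the four conjugations together, since they are structurally identical.
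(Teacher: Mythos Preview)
Your proposal is correct and matches the paper, which offers no proof beyond labeling the corollary ``some reformulations of Proposition~\ref{thm:BKBK}''; your expansion for the quadratic identity and your left/right multiplications by $K^{-1}$, $B^{-1}$ for the remaining four are precisely the intended manipulations. One cosmetic remark: your sign count (``two stray minus signs, one from each side'') is literally accurate only for the first and third of the four identities---the second picks up two signs on each side and the fourth none---but in every case the parity agrees on both sides, so the cancellation claim stands.
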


\begin{Proposition}\label{prop:KBi}
We have
\begin{gather}
KB^{-1} = \frac{I + (q-1)\big(a-b^{-1}\big)\psi}{I + (q-1)\big(b-a^{-1}\big)\psi},
\label{eq:KB1}
\\
BK^{-1} = \frac
{I + (q-1)\big(b-a^{-1}\big)\psi}
{I + (q-1)\big(a-b^{-1}\big)\psi},
\label{eq:KB2}
\\
K^{-1}B = \frac{I + \big(q^{-1}-1\big)\big(a^{-1}-b\big)\psi}{I + \big(q^{-1}-1\big)\big(b^{-1}-a\big)\psi},
\label{eq:KB3}
\\
B^{-1}K = \frac
{I + \big(q^{-1}-1\big)\big(b^{-1}-a\big)\psi}
{I + \big(q^{-1}-1\big)\big(a^{-1}-b\big)\psi}.
\label{eq:KB4}
\end{gather}
In the above fractions the denominator is invertible since
$\psi$ is locally nilpotent.
\end{Proposition}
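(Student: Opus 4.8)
The plan is to factor each of the four products through the map $M$ and then substitute the four expansions from Proposition~\ref{prop:MKB}. The first observation is that each of $KM^{-1}$, $M^{-1}K$, $BM^{-1}$, $M^{-1}B$ is, by Proposition~\ref{prop:MKB}, a polynomial in $\psi$ of degree at most one. Consequently any two of these maps commute, so products and quotients of them are unambiguous. Moreover, for every $c\in\mathbb F$ the map $c\psi$ is locally nilpotent (since $\psi$ is, by Lemma~\ref{lem:nilp}), so $I+c\psi$ is invertible by Lemma~\ref{lem:gs}; in particular each of the four maps above is invertible, which is what makes the denominators in \eqref{eq:KB1}--\eqref{eq:KB4} legitimate.

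Granting this, the four identities follow from short manipulations. For \eqref{eq:KB1}, write
\begin{gather*}
KB^{-1} = \bigl(KM^{-1}\bigr)\bigl(MB^{-1}\bigr) = \bigl(KM^{-1}\bigr)\bigl(BM^{-1}\bigr)^{-1},
\end{gather*}
and insert Proposition~\ref{prop:MKB}(i),(iii). For \eqref{eq:KB2}, use $BK^{-1} = \bigl(BM^{-1}\bigr)\bigl(KM^{-1}\bigr)^{-1}$ together with Proposition~\ref{prop:MKB}(iii),(i); equivalently \eqref{eq:KB2} is the inverse of \eqref{eq:KB1}. For \eqref{eq:KB3}, use $K^{-1}B = \bigl(K^{-1}M\bigr)\bigl(M^{-1}B\bigr) = \bigl(M^{-1}K\bigr)^{-1}\bigl(M^{-1}B\bigr)$ with Proposition~\ref{prop:MKB}(ii),(iv). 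For \eqref{eq:KB4}, use $B^{-1}K = \bigl(M^{-1}B\bigr)^{-1}\bigl(M^{-1}K\bigr)$ with Proposition~\ref{prop:MKB}(iv),(ii). In each case the commutativity noted above lets us write the result as the indicated fraction.

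There is no genuinely hard step here: the content is entirely in Proposition~\ref{prop:MKB}, and the only thing to be careful about is the bookkeeping of invertibility and commutativity, which is dispatched by the single remark that every operator appearing is a polynomial in the locally nilpotent map $\psi$. One should, however, double-check the sign and parameter conventions when matching $KM^{-1}=I+(q-1)(a-b^{-1})\psi$ and its companions against the numerators and denominators displayed in \eqref{eq:KB1}--\eqref{eq:KB4}.
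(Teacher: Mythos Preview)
Your proof is correct and follows essentially the same route as the paper: the paper phrases it as ``equate the two expressions for $M^{-1}$ obtained from Proposition~\ref{prop:MKB}(i) and (iii)'' (and likewise (ii) and (iv)), which amounts precisely to your factorization $KB^{-1}=(KM^{-1})(BM^{-1})^{-1}$, and it derives \eqref{eq:KB2}, \eqref{eq:KB4} by inverting \eqref{eq:KB1}, \eqref{eq:KB3} just as you do. Your explicit remark that all four factors are degree-one polynomials in $\psi$ (hence commute and are invertible) is a welcome clarification that the paper leaves implicit.
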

\begin{proof} To get~(\ref{eq:KB1}),
equate the two expressions for $M^{-1}$ obtained from
Proposition~\ref{prop:MKB}(i) and~(iii).
To get~(\ref{eq:KB2}) from~(\ref{eq:KB1}), compute the inverse of each side.
To get~(\ref{eq:KB3}),
equate the two expressions for~$M^{-1}$
obtained from Proposition~\ref{prop:MKB}(ii) and~(iv).
To get~(\ref{eq:KB4}) from~(\ref{eq:KB3}), compute the inverse of
each side.
\end{proof}

\begin{Lemma}\label{lem:BKcom}
The following mutually commute:
\begin{gather*}
\psi,\qquad
KB^{-1}, \qquad
BK^{-1}, \qquad
K^{-1}B, \qquad
B^{-1}K.
\end{gather*}
\end{Lemma}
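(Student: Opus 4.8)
The plan is to observe that, by Proposition~\ref{prop:KBi}, each of $KB^{-1}$, $BK^{-1}$, $K^{-1}B$, $B^{-1}K$ is a ``rational function of $\psi$'', i.e.\ lies in the commutative subalgebra of $\operatorname{End}(V)$ generated by $\psi$ and the inverses $(I+c\psi)^{-1}$ $(c\in\mathbb F)$; the mutual commutativity will then be immediate. So first I would recall from Lemma~\ref{lem:nilp} that $\psi$ is locally nilpotent, and apply Lemma~\ref{lem:gs} with $T=-c\psi$ to conclude that $I+c\psi$ is invertible with $(I+c\psi)^{-1}=\sum_{i=0}^N(-c)^i\psi^i$ for every $c\in\mathbb F$. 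In particular each $(I+c\psi)^{-1}$ is a (pointwise finite, since $\psi$ is locally nilpotent) power series in $\psi$.

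Next I would introduce the set $\mathcal A$ of all maps of the form $\sum_{i=0}^N c_i\psi^i$ with $c_i\in\mathbb F$: because $\psi$ is locally nilpotent, every such sum defines an element of $\operatorname{End}(V)$, and $\mathcal A$ is a commutative subalgebra of $\operatorname{End}(V)$ containing $\psi$ and, by the previous step, containing $(I+c\psi)^{-1}$ for all $c\in\mathbb F$. Then, reading off from~\eqref{eq:KB1}--\eqref{eq:KB4}, each of $KB^{-1}$, $BK^{-1}$, $K^{-1}B$, $B^{-1}K$ has the form $(I+c_1\psi)(I+c_2\psi)^{-1}$ for suitable $c_1,c_2\in\mathbb F$, hence is a product of two elements of $\mathcal A$ and therefore lies in $\mathcal A$. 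Since $\psi\in\mathcal A$ as well, all five listed maps lie in the commutative algebra $\mathcal A$, and so they mutually commute.

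I do not expect a genuine obstacle here: the lemma is essentially a formal consequence of Proposition~\ref{prop:KBi} together with local nilpotence of $\psi$. The only point deserving a little care is the case $N=\infty$, where the elements of $\mathcal A$ are honest power series in $\psi$ rather than polynomials; but Lemma~\ref{lem:nilp} (local nilpotence) and Lemma~\ref{lem:gs} already guarantee that $\mathcal A$ is a well-defined commutative subalgebra of $\operatorname{End}(V)$ in that case as well, so the same argument goes through verbatim.
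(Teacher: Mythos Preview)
Your argument is correct and is essentially the same as the paper's, which simply writes ``By Proposition~\ref{prop:KBi}.'' You have just spelled out in detail the implicit step: that each of the four operators is a rational function of the locally nilpotent map $\psi$ and hence lies in the commutative power-series subalgebra $\mathcal A$ you describe.
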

\begin{proof} By Proposition~\ref{prop:KBi}.
\end{proof}

\begin{Proposition}\label{prop:psiBK}
We have
\begin{gather*}
\psi= \frac{1}{q-1} \frac{1}{1-ab}
\frac{I-KB^{-1}}
{b^{-1}I - a^{-1} KB^{-1}},
\\
\psi = \frac{1}{q-1}
\frac{1}{1-ab}
\frac{I-BK^{-1}}
{a^{-1}I - b^{-1} BK^{-1}},
\\
\psi = \frac{1}{q^{-1}-1}
\frac{1}{1-a^{-1}b^{-1}}
\frac{I-K^{-1}B}
{bI - a K^{-1}B},
\\
\psi = \frac{1}{q^{-1}-1}
\frac{1}{1-a^{-1}b^{-1}}
\frac{I-B^{-1}K}
{aI - b B^{-1}K}.
\end{gather*}
In the above fractions the denominator is invertible by Lemma~{\rm \ref{lem:denom}}.
\end{Proposition}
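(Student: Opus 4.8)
The plan is to solve for $\psi$ in each of the four identities of Proposition~\ref{prop:KBi}; I describe the argument for the first displayed identity~\eqref{eq:KB1}, and the remaining three are entirely analogous.

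First I would clear the denominator in~\eqref{eq:KB1}: multiplying on the right by $I+(q-1)(b-a^{-1})\psi$ and collecting terms gives
\[
KB^{-1}-I = (q-1)\bigl[(a-b^{-1})I-(b-a^{-1})KB^{-1}\bigr]\psi ,
\]
a purely formal rearrangement (all operators occurring here are polynomials in $\psi$, so they commute; cf.\ Lemma~\ref{lem:BKcom}). Next I would factor $1-ab$ out of the bracket. Using $a-b^{-1}=-b^{-1}(1-ab)$ and $b-a^{-1}=-a^{-1}(1-ab)$ one gets
\[
(a-b^{-1})I-(b-a^{-1})KB^{-1} = -(1-ab)\bigl(b^{-1}I-a^{-1}KB^{-1}\bigr),
\]
hence
\[
I-KB^{-1} = (q-1)(1-ab)\bigl(b^{-1}I-a^{-1}KB^{-1}\bigr)\psi .
\]
Since $q\ne 1$ and $ab\ne 1$ (the latter being the $i=1$ case of $abq^{i-1}\ne1$), and since $b^{-1}I-a^{-1}KB^{-1}$ is invertible by Lemma~\ref{lem:denom}, I may divide through to obtain the first formula for $\psi$.

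The second formula comes out the same way from~\eqref{eq:KB2}, using the analogous factorization with $K$ and $B$ interchanged. For the third and fourth I would start from~\eqref{eq:KB3} and~\eqref{eq:KB4} and instead pull out a factor of $1-a^{-1}b^{-1}$, via $a^{-1}-b=a^{-1}(1-ab)$, $b^{-1}-a=b^{-1}(1-ab)$ together with the identity $\tfrac{1-ab}{ab}=-(1-a^{-1}b^{-1})$; this rewrites the relevant bracket as $-(1-a^{-1}b^{-1})(bI-aK^{-1}B)$ (resp. $-(1-a^{-1}b^{-1})(aI-bB^{-1}K)$), and Lemma~\ref{lem:denom} once more supplies invertibility of the denominator. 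The only delicate point is this scalar bookkeeping — extracting the correct factor of $1-ab$ or $1-a^{-1}b^{-1}$ with the right sign — and there is no structural obstacle, since the shape of the answer is already forced by Proposition~\ref{prop:KBi} and Lemma~\ref{lem:denom}.
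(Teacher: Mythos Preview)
Your proposal is correct and is exactly the approach the paper takes: its entire proof reads ``In Proposition~\ref{prop:KBi} solve for $\psi$,'' and you have supplied precisely those algebraic details, including the commutativity justification via Lemma~\ref{lem:BKcom} and the invertibility via Lemma~\ref{lem:denom}.
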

\begin{proof} In Proposition~\ref{prop:KBi} solve for~$\psi$.
\end{proof}

\begin{Proposition}\label{thm:qWeyl}
We have
\begin{gather}
\frac{qM^{-1}K-KM^{-1}}{q-1} = I,\label{eq:wq1}\\
\frac{qM^{-1}B-BM^{-1}}{q-1} = I.\label{eq:wq2}
\end{gather}
\end{Proposition}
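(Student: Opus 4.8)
The plan is to prove the two identities \eqref{eq:wq1}, \eqref{eq:wq2} by reducing them to facts already established about the commutation of $M^{\pm 1}$, $K$, $B$ with $\psi$. The cleanest route is to express everything in terms of $\psi$. First I would recall from Proposition~\ref{prop:MKB}(i), (ii) that
\begin{gather*}
KM^{-1} = I + (q-1)\big(a-b^{-1}\big)\psi,
\qquad
M^{-1}K = I + \big(q^{-1}-1\big)\big(b^{-1}-a\big)\psi.
\end{gather*}
Note that $(q-1)\big(a-b^{-1}\big) = -q\big(q^{-1}-1\big)\big(a-b^{-1}\big) = q\big(q^{-1}-1\big)\big(b^{-1}-a\big)$, so $KM^{-1}-I = q\big(M^{-1}K-I\big)$, i.e.\ $KM^{-1} = qM^{-1}K - (q-1)I$. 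Rearranging gives exactly $qM^{-1}K - KM^{-1} = (q-1)I$, which is \eqref{eq:wq1}. So the entire content of \eqref{eq:wq1} is this elementary scalar identity relating the two coefficients in Proposition~\ref{prop:MKB}(i), (ii).

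For \eqref{eq:wq2} I would argue identically using Proposition~\ref{prop:MKB}(iii), (iv): there $BM^{-1}-I = (q-1)\big(b-a^{-1}\big)\psi$ and $M^{-1}B - I = \big(q^{-1}-1\big)\big(a^{-1}-b\big)\psi$, and since $(q-1)\big(b-a^{-1}\big) = q\big(q^{-1}-1\big)\big(a^{-1}-b\big)$ we get $BM^{-1} = qM^{-1}B - (q-1)I$, hence $qM^{-1}B - BM^{-1} = (q-1)I$. Alternatively, one may observe that the map $\psi \mapsto \psi$, $K \mapsto B$ (fixing $M$) behaves symmetrically in Proposition~\ref{prop:MKB} — compare (i), (ii) with (iii), (iv) — so \eqref{eq:wq2} follows from \eqref{eq:wq1} by this substitution; but writing it out directly is just as short.

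There is essentially no obstacle here: the only thing to verify is the scalar identity $(q-1)\big(a-b^{-1}\big) = q\big(q^{-1}-1\big)\big(b^{-1}-a\big)$ (and its analogue with $a,b$ swapped and inverted), which is immediate since $(q-1) = -q\big(q^{-1}-1\big)$. One should just be mildly careful that $q \neq 1$ is available so that dividing by $q-1$ on the right-hand sides is legitimate — and it is, by the nondegeneracy assumption following \eqref{eq:aibi}, which forces $q^i \neq 1$ for $1 \le i \le N$ with $N \ge 1$, hence $q \neq 1$. So the proof is a two-line computation invoking Proposition~\ref{prop:MKB}.

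\begin{proof}
By Proposition~\ref{prop:MKB}(i), (ii) and since $(q-1)\big(a-b^{-1}\big) = q\big(q^{-1}-1\big)\big(b^{-1}-a\big)$, we have $KM^{-1} - I = q\big(M^{-1}K - I\big)$, that is $KM^{-1} = qM^{-1}K - (q-1)I$. This gives \eqref{eq:wq1}. Similarly, by Proposition~\ref{prop:MKB}(iii), (iv) and since $(q-1)\big(b-a^{-1}\big) = q\big(q^{-1}-1\big)\big(a^{-1}-b\big)$, we have $BM^{-1} = qM^{-1}B - (q-1)I$, which gives \eqref{eq:wq2}.
\end{proof}
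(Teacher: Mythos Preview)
Your proof is correct and follows exactly the same approach as the paper, which simply says ``use Proposition~\ref{prop:MKB}(i) and (ii)'' for \eqref{eq:wq1} and ``use Proposition~\ref{prop:MKB}(iii) and (iv)'' for \eqref{eq:wq2}. You have merely spelled out the one-line algebraic identity the paper leaves implicit.
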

\begin{proof} To get~(\ref{eq:wq1}) use Proposition~\ref{prop:MKB}(i) and~(ii).
To get~(\ref{eq:wq2}) use Proposition~\ref{prop:MKB}(iii) and~(iv).
\end{proof}

We have a comment.
\begin{Lemma}The relations in
Proposition~{\rm \ref{thm:BKBK}}
and Corollary~{\rm \ref{lem:altrel}}
still hold if we replace
\begin{gather*}
q\mapsto q^{-1}, \qquad
a\mapsto a^{-1}, \qquad
b\mapsto b^{-1}, \qquad
K\mapsto K^{-1}, \qquad
B\mapsto B^{-1}.
\end{gather*}
\end{Lemma}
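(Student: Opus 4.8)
The plan is to deduce the lemma from the invariance already recorded in Note~\ref{note:qi}, with essentially no further computation. The first step is to notice that every object occurring in Proposition~\ref{thm:BKBK} and Corollary~\ref{lem:altrel} is canonically attached to the data $\{a_i\}_{i=0}^{N-1}$, $\{b_i\}_{i=0}^{N-1}$ together with the parameters $a,b,q$ of~\eqref{eq:aibi}: the space $V$ and the bases $\{\tau_i\}_{i=0}^N$, $\{\eta_i\}_{i=0}^N$ depend only on the data; the double lowering space $\mathcal L$, and hence its normalized element $\psi$, depend only on the data (Corollary~\ref{cor:L01}, Lemma~\ref{lem:norm}); and then $K$, $B$ are defined from $\{\tau_i\}$, $\{\eta_i\}$ and $q$ via Definition~\ref{def:kbm}. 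In particular, the identities in Proposition~\ref{thm:BKBK} and Corollary~\ref{lem:altrel} hold for \emph{every} choice of nonzero $a,b,q$ subject to~\eqref{eq:aibi} and the nondegeneracy condition $q^i\neq 1$, $abq^{i-1}\neq1$.

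The second step is to apply the substitution $q\mapsto q^{-1}$, $a\mapsto a^{-1}$, $b\mapsto b^{-1}$. By Note~\ref{note:qi} this leaves the data $\{a_i\}$, $\{b_i\}$ unchanged, so it leaves $V$, $\{\tau_i\}$, $\{\eta_i\}$, $\mathcal L$ and $\psi$ unchanged. In Definition~\ref{def:kbm} the eigenvalue $q^{-i}$ of $K$ on $\tau_i$ becomes $q^{i}$; since the $\tau_i$ are unchanged, the operator produced by the substituted definition scales $\tau_i$ by $q^{i}$, which is exactly $K^{-1}$, and likewise the substituted version of $B$ is $B^{-1}$. Thus the substitution carries the present setup to another legitimate instance of the same setup, under which $K\mapsto K^{-1}$ and $B\mapsto B^{-1}$. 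Applying the (parameter-free) identities of Proposition~\ref{thm:BKBK} and Corollary~\ref{lem:altrel} to this new instance and reading the result back in terms of the original $K$, $B$ yields precisely the asserted relations, which finishes the proof. For completeness one may note a purely computational alternative: rewriting the relations in terms of the mutually commuting quantities $KB^{-1}$, $BK^{-1}$, $K^{-1}B$, $B^{-1}K$ of Lemma~\ref{lem:BKcom}, under which the substitution acts by $K^{-1}B\leftrightarrow KB^{-1}$ and $B^{-1}K\leftrightarrow BK^{-1}$, and checking the reorganized identities by hand.

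I expect the main obstacle to be bookkeeping rather than algebra: one must verify that nothing in the chain Definition~\ref{def:kbm} $\to$ Lemma~\ref{lem:KBM} $\to$ Lemma~\ref{lem:BDDK} $\to$ Proposition~\ref{prop:MKB} $\to$ Proposition~\ref{thm:BKBK} secretly breaks the $q\mapsto q^{-1}$ symmetry — in particular, that $\psi$ and $M$ are built only from the data and $q$, so that after the substitution they remain the same $\psi$ and become the operator $w_i\mapsto q^i w_i$ on the basis $\{w_i\}$ replaced by $\{w_i'\}$ (Note~\ref{note:wprime}), and hence that the substituted instance is genuinely an instance of the setup of Section~\ref{section13}. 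Once the invariance of the data (Note~\ref{note:qi}) and the identifications $K\mapsto K^{-1}$, $B\mapsto B^{-1}$ are in hand, the conclusion is immediate, and I would present the symmetry argument as the proof.
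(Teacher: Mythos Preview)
Your main argument is correct and actually cleaner than the paper's. The paper proves the lemma by direct algebra: it cites Proposition~\ref{thm:BKBK} and Lemma~\ref{lem:BKcom}, meaning that one writes out the substituted relation, uses the mutual commutativity of $KB^{-1}$, $BK^{-1}$, $K^{-1}B$, $B^{-1}K$ to reorder the factors, and observes that the result is the original relation (up to a harmless overall scalar $qab$). This is precisely the ``computational alternative'' you sketch at the end of your second paragraph.

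Your primary route---applying Note~\ref{note:qi} to reparametrize by $(a^{-1},b^{-1},q^{-1})$, noting that this leaves the data and hence $\{\tau_i\}$, $\{\eta_i\}$ fixed, and reading off from Definition~\ref{def:kbm} that the operators attached to the new parameters are $K^{-1}$, $B^{-1}$---is a genuine symmetry argument that yields the lemma without any manipulation of the relations themselves. Its advantage is generality: the same reasoning would transport \emph{any} identity in $K$, $B$, $a$, $b$, $q$ proved in Section~\ref{section13}. The paper's approach, by contrast, is self-contained at the level of the two relations and does not require checking that the reparametrized triple satisfies the standing nondegeneracy hypotheses (it does, trivially, since the data are the same). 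One small point: your last paragraph worries about $\psi$ and $M$ and the chain of lemmas leading to Proposition~\ref{thm:BKBK}, but this is unnecessary---the relations in question involve only $K$, $B$, $a$, $b$, $q$, and you are invoking Proposition~\ref{thm:BKBK} as an established theorem valid for any admissible parameter triple, not retracing its proof.
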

\begin{proof} Use Proposition~\ref{thm:BKBK} and Lemma~\ref{lem:BKcom}.
\end{proof}

We recall some notation.
Let ${\rm Mat}_{N+1}(\mathbb F)$ denote the set of
 $N+1$ by $N+1$ matrices that have all entries in~$\mathbb F$.
We index the rows and columns by $0,1,\ldots, N$.
Let $\lbrace v_i \rbrace_{i=0}^N$ denote a basis for~$V$.
For $M \in
{\rm Mat}_{N+1}(\mathbb F)$ and
$T \in \operatorname{End}(V)$, we say that {\it $M$ represents $T$ with
respect to
$\lbrace v_i \rbrace_{i=0}^N$} whenever
$Tv_j = \sum_{i=0}^N M_{i,j} v_i$ for $0 \leq j \leq N$.

Our next goal is to display the matrices that represent
$\psi$, $K^{\pm 1}$, $M^{\pm 1}$, $B^{\pm 1}$ with respect to
the bases $\lbrace \tau_i\rbrace_{i=0}^N$,
$\lbrace w_i\rbrace_{i=0}^N$,
 $\lbrace \eta_i\rbrace_{i=0}^N$ of $V$.

\begin{Definition}\label{def:psimat}
Let $\widehat \psi$ denote the matrix in
${\rm Mat}_{N+1}(\mathbb F)$
that has $(i-1,i)$-entry $\vartheta_i$
for $1 \leq i \leq N$,
and all other entries 0. Thus
\begin{gather*}
 {\widehat \psi} =
	 \left(
	 \begin{matrix}
	 0 & \vartheta_1 & & & & {\bf 0} \\
	 & 0 & \vartheta_2 & & & \\
		 & & 0 & \cdot & &
		 \\
		 && & \cdot & \cdot&
		 \\
		 & & & & \cdot & \vartheta_N \\
		 {\bf 0} & & & & & 0
		 \end{matrix}
		 \right).
\end{gather*}
\end{Definition}

\begin{Lemma}\label{Psirep}
The matrix $\widehat \psi$
represents $\psi$ with respect to
$\lbrace \tau_i \rbrace_{i=0}^N$ and
$\lbrace w_i \rbrace_{i=0}^N$ and
$\lbrace \eta_i \rbrace_{i=0}^N$.
\end{Lemma}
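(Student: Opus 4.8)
The plan is to verify directly, for each of the three bases separately, that the action of $\psi$ on the basis vectors matches the entries of $\widehat\psi$. Recall that, per the definition of ``represents'' given just before Definition~\ref{def:psimat} together with the description of $\widehat\psi$ in that definition, the assertion ``$\widehat\psi$ represents $\psi$ with respect to a basis $\lbrace v_i\rbrace_{i=0}^N$'' is equivalent to the statement that $\psi v_i = \vartheta_i v_{i-1}$ for $1 \leq i \leq N$ and $\psi v_0 = 0$; using the conventions $v_{-1}=0$ and $\vartheta_0=0$, this is just $\psi v_i = \vartheta_i v_{i-1}$ for $0 \leq i \leq N$. So the whole lemma reduces to checking this one family of relations for $v_i = \tau_i$, $v_i = w_i$, and $v_i = \eta_i$.

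First I would handle $\lbrace \tau_i\rbrace_{i=0}^N$ and $\lbrace \eta_i\rbrace_{i=0}^N$. Since the data is double lowering and $\psi \in \mathcal L$ has been fixed to be normalized for the rest of the paper, Lemma~\ref{lem:charNorm} applies and yields $\psi \tau_i = \vartheta_i \tau_{i-1}$ and $\psi \eta_i = \vartheta_i \eta_{i-1}$ for $0 \leq i \leq N$; see \eqref{eq:Normal}. These are exactly the relations identified above, so $\widehat\psi$ represents $\psi$ with respect to each of $\lbrace \tau_i\rbrace_{i=0}^N$ and $\lbrace \eta_i\rbrace_{i=0}^N$. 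For $\lbrace w_i\rbrace_{i=0}^N$, Lemma~\ref{lem:psiW} gives $\psi w_i = \vartheta_i w_{i-1}$ for $0 \leq i \leq N$, which is again precisely the required relation. Comparing with Definition~\ref{def:psimat} finishes the argument.

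There is essentially no obstacle here: the statement is an immediate consequence of \eqref{eq:Normal} and Lemma~\ref{lem:psiW}, and the only point requiring care is matching the index conventions, namely that the $(i-1,i)$-entry of $\widehat\psi$ records the coefficient of the $(i-1)$-st basis vector in the image of the $i$-th basis vector, with the boundary case $i=0$ covered by $\vartheta_0 = 0$ and the convention $v_{-1}=0$.
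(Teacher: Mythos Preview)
Your proof is correct and follows essentially the same approach as the paper, which simply cites \eqref{eq:Normal} and Lemma~\ref{lem:psiW}. You have merely spelled out in more detail the translation between the action of $\psi$ on each basis and the entries of $\widehat\psi$.
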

\begin{proof} By~(\ref{eq:Normal})
and Lemma~\ref{lem:psiW}.
\end{proof}

\begin{Lemma}\label{Diag}
The matrix ${\rm diag}\big(1,q^{-1},q^{-2},\ldots, q^{-N}\big)$
represents $K$ $($resp.~$M)$ $($resp.~$B)$ with respect to
$\lbrace \tau_i \rbrace_{i=0}^N$
$($resp.~$\lbrace w_i \rbrace_{i=0}^N)$
$($resp.~$\lbrace \eta_i \rbrace_{i=0}^N)$.
\end{Lemma}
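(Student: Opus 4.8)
The plan is to read the result off directly from Definition~\ref{def:kbm}. Recall from the paragraph preceding the lemma that a matrix $C \in {\rm Mat}_{N+1}(\mathbb F)$ represents $T \in \operatorname{End}(V)$ with respect to a basis $\lbrace v_i \rbrace_{i=0}^N$ precisely when $Tv_j = \sum_{i=0}^N C_{i,j} v_i$ for $0 \leq j \leq N$. I would take $T = K$ and $\lbrace v_i \rbrace_{i=0}^N = \lbrace \tau_i \rbrace_{i=0}^N$, which is a basis for $V$. By Definition~\ref{def:kbm} we have $K\tau_j = q^{-j}\tau_j$ for $0 \leq j \leq N$, so in the expansion of $K\tau_j$ in the basis $\lbrace \tau_i \rbrace_{i=0}^N$ the coefficient of $\tau_j$ is $q^{-j}$ and the coefficient of $\tau_i$ is $0$ for $i \neq j$. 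Hence the matrix representing $K$ with respect to $\lbrace \tau_i \rbrace_{i=0}^N$ is diagonal with $(j,j)$-entry $q^{-j}$ for $0 \leq j \leq N$; since the $0$th diagonal entry is $q^0 = 1$, this matrix is exactly ${\rm diag}\big(1, q^{-1}, q^{-2}, \ldots, q^{-N}\big)$.

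For $M$ and the basis $\lbrace w_i \rbrace_{i=0}^N$ (which is a basis for $V$ by Lemma~\ref{lem:wibasis}), the identical argument applies using $M w_j = q^{-j} w_j$ from Definition~\ref{def:kbm}; likewise for $B$ and the basis $\lbrace \eta_i \rbrace_{i=0}^N$ using $B\eta_j = q^{-j}\eta_j$. In each case the representing matrix is ${\rm diag}\big(1, q^{-1}, q^{-2}, \ldots, q^{-N}\big)$, which gives the claim.

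There is no real obstacle here: the lemma is simply the matrix-form restatement of the defining eigenvalue equations for $K$, $B$, $M$. As a consistency check, the matrix ${\rm diag}\big(1, q^{-1}, \ldots, q^{-N}\big)$ is invertible because $q \neq 0$, in agreement with the remark after Definition~\ref{def:kbm} that each of $K$, $B$, $M$ is invertible.
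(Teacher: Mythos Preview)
Your proof is correct and follows the same approach as the paper, which simply cites Definition~\ref{def:kbm}. You have merely spelled out in detail what the paper leaves implicit.
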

\begin{proof} By Definition~\ref{def:kbm}.
\end{proof}

\begin{Lemma}\label{Kwj}
We give the matrix in ${\rm Mat}_{N+1}(\mathbb F)$ that represents~$K$ with respect to
$\lbrace w_i \rbrace_{i=0}^N$. The $(i,i)$-entry is $q^{-i}$
for $0 \leq i \leq N$. The $(i-1,i)$-entry is
$\big(1-q^{-i}\big)\big(a-b^{-1}q^{1-i}\big)$ for $1 \leq i \leq N$.
All other entries are~$0$.
\end{Lemma}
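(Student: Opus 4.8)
The plan is to compute $K w_j$ directly and then read off the matrix entries. First I would invoke Proposition~\ref{prop:MKB}(i), which states $K M^{-1} = I + (q-1)\bigl(a-b^{-1}\bigr)\psi$, so that $K = \bigl(I + (q-1)(a-b^{-1})\psi\bigr)M$. Applying both sides to $w_j$ and using $M w_j = q^{-j} w_j$ from Definition~\ref{def:kbm} together with $\psi w_j = \vartheta_j w_{j-1}$ from Lemma~\ref{lem:psiW}, I obtain
\begin{gather*}
K w_j = q^{-j} w_j + q^{-j} (q-1)\bigl(a - b^{-1}\bigr)\vartheta_j\, w_{j-1}, \qquad 0 \leq j \leq N .
\end{gather*}
In particular the matrix representing $K$ with respect to $\lbrace w_i \rbrace_{i=0}^N$ has $(j,j)$-entry $q^{-j}$, has $(j-1,j)$-entry $q^{-j}(q-1)(a-b^{-1})\vartheta_j$, and has all other entries zero, which is the shape claimed.

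It then remains to show $q^{-j}(q-1)(a-b^{-1})\vartheta_j = (1-q^{-j})\bigl(a - b^{-1}q^{1-j}\bigr)$ for $1 \leq j \leq N$. For this I would insert the closed form $\vartheta_j = \frac{1-q^j}{1-q}\,\frac{1-abq^{j-1}}{1-ab}\,q^{1-j}$ from Lemma~\ref{lem:vartheta}(i). Since $(q-1)/(1-q) = -1$ and $-q^{1-2j}(1-q^j) = q^{1-j}(1-q^{-j})$, the left-hand side simplifies to $(1-q^{-j})\,(a-b^{-1})\,q^{1-j}\,\frac{1-abq^{j-1}}{1-ab}$, so the claim reduces to the polynomial identity $(a-b^{-1})q^{1-j}\bigl(1-abq^{j-1}\bigr) = \bigl(a - b^{-1}q^{1-j}\bigr)(1-ab)$; expanding both sides, each equals $a + aq^{1-j} - a^2 b - b^{-1}q^{1-j}$, so they agree.

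The argument is thus a one-line operator identity together with a short bookkeeping of $q$-powers; the only step requiring any care is that final simplification of $\vartheta_j$, and it is purely mechanical, so I do not expect a genuine obstacle. As an alternative one could multiply the matrix of $\psi$ from Lemma~\ref{Psirep} by the diagonal matrix of $M$ from Lemma~\ref{Diag} after writing $K = (I + (q-1)(a-b^{-1})\psi)M$, but this leads to exactly the same computation.
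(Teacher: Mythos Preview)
Your proof is correct and follows essentially the same route as the paper's, which likewise uses $KM^{-1} = I + (q-1)\bigl(a-b^{-1}\bigr)\psi$ together with the representations of $M$ and $\psi$ in the basis $\{w_i\}_{i=0}^N$. The only cosmetic difference is that the paper invokes Lemma~\ref{lem:vartheta}(ii) rather than~(i); since $a-b^{-1}=a\bigl(1-a^{-1}b^{-1}\bigr)$ cancels the denominator in~(ii) immediately, that choice makes the final simplification a one-liner, but your computation via~(i) is equally valid.
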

\begin{proof} Use $KM^{-1} = I+ (q-1)\big(a-b^{-1}\big)\psi$ and
Lemmas~\ref{lem:vartheta}(ii),~\ref{Psirep},~\ref{Diag}.
\end{proof}

\begin{Lemma}\label{Bwj}
We give the matrix in
${\rm Mat}_{N+1}(\mathbb F)$ that represents $B$ with respect to
$\lbrace w_i \rbrace_{i=0}^N$. The $(i,i)$-entry is $q^{-i}$
for $0 \leq i \leq N$. The $(i-1,i)$-entry is
$\big(1-q^{-i}\big)\big(b-a^{-1}q^{1-i}\big)$ for $1 \leq i \leq N$.
All other entries are~$0$.
\end{Lemma}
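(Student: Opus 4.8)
The plan is to mimic the proof of Lemma~\ref{Kwj}, replacing the role of Proposition~\ref{prop:MKB}(i) by its companion Proposition~\ref{prop:MKB}(iii). From $BM^{-1} = I + (q-1)\bigl(b-a^{-1}\bigr)\psi$ we get $B = \bigl(I + (q-1)(b-a^{-1})\psi\bigr)M$, so I would obtain the matrix of $B$ with respect to $\lbrace w_i\rbrace_{i=0}^N$ as a product of matrices already identified in the paper: by Lemma~\ref{Psirep} the map $\psi$ is represented by $\widehat\psi$ (with $(i-1,i)$-entry $\vartheta_i$), and by Lemma~\ref{Diag} the map $M$ is represented by ${\rm diag}\bigl(1,q^{-1},\ldots,q^{-N}\bigr)$.

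Carrying out the multiplication, the matrix $I + (q-1)(b-a^{-1})\widehat\psi$ has $1$ on the diagonal and $(q-1)(b-a^{-1})\vartheta_i$ in position $(i-1,i)$; multiplying on the right by ${\rm diag}\bigl(1,q^{-1},\ldots,q^{-N}\bigr)$ scales column $i$ by $q^{-i}$, giving $(i,i)$-entry $q^{-i}$, $(i-1,i)$-entry $(q-1)(b-a^{-1})\vartheta_i q^{-i}$, and all other entries $0$. It then remains only to verify the scalar identity
\[
(q-1)\bigl(b-a^{-1}\bigr)\,\vartheta_i\, q^{-i} = \bigl(1-q^{-i}\bigr)\bigl(b-a^{-1}q^{1-i}\bigr),\qquad 1\le i\le N .
\]
Substituting the formula for $\vartheta_i$ from Lemma~\ref{lem:vartheta}(ii) and using $(q-1)/(1-q^{-1})=q$, the left side becomes $\bigl(1-q^{-i}\bigr)(b-a^{-1})\,\dfrac{1-a^{-1}b^{-1}q^{1-i}}{1-a^{-1}b^{-1}}$, so the claim reduces to $(b-a^{-1})\bigl(1-a^{-1}b^{-1}q^{1-i}\bigr) = \bigl(1-a^{-1}b^{-1}\bigr)\bigl(b-a^{-1}q^{1-i}\bigr)$, and both sides expand to $b-a^{-1}-a^{-1}q^{1-i}+a^{-2}b^{-1}q^{1-i}$.

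I do not expect any genuine obstacle: the argument is line-for-line parallel to Lemma~\ref{Kwj}, the only fresh ingredient being the elementary identity displayed above. In fact the computation can be avoided entirely: the data $\lbrace a_i\rbrace,\lbrace b_i\rbrace$, the scalars $\vartheta_i$, the map $\psi$ (via~\eqref{eq:Normal}), the constant $\xi=1-ab$, the map $M$, and the polynomials $w_i$ (via Lemma~\ref{lem:four}) are all invariant under the exchange $a\leftrightarrow b$, $\tau_i\leftrightarrow\eta_i$, whereas $K\leftrightarrow B$ under this exchange; hence Lemma~\ref{Bwj} follows at once from Lemma~\ref{Kwj}.
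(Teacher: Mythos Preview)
Your proof is correct and follows exactly the paper's approach: the paper's one-line proof reads ``Use $BM^{-1} = I+ (q-1)\big(b-a^{-1}\big)\psi$ and Lemmas~\ref{lem:vartheta}(ii),~\ref{Psirep},~\ref{Diag},'' which is precisely the computation you carry out in detail. Your closing symmetry remark (that the $a\leftrightarrow b$, $\tau_i\leftrightarrow\eta_i$ swap fixes $\vartheta_i$, $\psi$, $\xi$, $w_i$, $M$ while interchanging $K$ and $B$) is a valid alternative shortcut not made explicit in the paper.
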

\begin{proof} Use $BM^{-1} = I+ (q-1)\big(b-a^{-1}\big)\psi$ and
Lemmas~\ref{lem:vartheta}(ii),~\ref{Psirep},~\ref{Diag}.
\end{proof}

\begin{Lemma}\label{Mitauj}
We give the matrix in ${\rm Mat}_{N+1}(\mathbb F)$ that represents $M^{-1}$ with respect to
$\lbrace \tau_i \rbrace_{i=0}^N$. The $(i,i)$-entry is~$q^{i}$
for $0 \leq i \leq N$. The $(i-1,i)$-entry is
$\big(q^i-1\big)\big(aq^{i-1}-b^{-1}\big)$ for $1 \leq i \leq N$.
All other entries are~$0$.
\end{Lemma}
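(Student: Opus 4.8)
The plan is to realize $M^{-1}$ as a composition of linear maps whose matrices with respect to $\lbrace\tau_i\rbrace_{i=0}^N$ are already in hand, and then simply multiply those matrices. The key input is Proposition~\ref{prop:MKB}(ii), which asserts $M^{-1}K = I + (q^{-1}-1)(b^{-1}-a)\psi$; since $K$ is invertible this rearranges to
\begin{gather*}
M^{-1} = \big(I + \big(q^{-1}-1\big)\big(b^{-1}-a\big)\psi\big)K^{-1}.
\end{gather*}

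Next I would assemble the matrices of the two factors on the right-hand side with respect to $\lbrace\tau_i\rbrace_{i=0}^N$. By Lemma~\ref{Diag} the matrix ${\rm diag}\big(1,q^{-1},\ldots,q^{-N}\big)$ represents $K$, hence ${\rm diag}\big(1,q,\ldots,q^N\big)$ represents $K^{-1}$; by Lemma~\ref{Psirep} the matrix $\widehat\psi$ of Definition~\ref{def:psimat} represents $\psi$. So $M^{-1}$ is represented by the product $\big(I + \big(q^{-1}-1\big)\big(b^{-1}-a\big)\widehat\psi\big){\rm diag}\big(1,q,\ldots,q^N\big)$. The first factor is bidiagonal with diagonal entries $1$ and $(i-1,i)$-entries $\big(q^{-1}-1\big)\big(b^{-1}-a\big)\vartheta_i$; right-multiplying by the diagonal matrix rescales the $i$th column by $q^i$. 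Thus the product has $(i,i)$-entry $q^i$, has $(i-1,i)$-entry $q^i\big(q^{-1}-1\big)\big(b^{-1}-a\big)\vartheta_i$, and has zeros elsewhere, matching the asserted shape.

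The remaining step --- the only one involving any computation --- is to check that the superdiagonal entry equals $\big(q^i-1\big)\big(aq^{i-1}-b^{-1}\big)$. Here I would substitute the closed form $\vartheta_i = \frac{1-q^i}{1-q}\frac{1-abq^{i-1}}{1-ab}q^{1-i}$ from Lemma~\ref{lem:vartheta}(i): the product $q^i\big(q^{-1}-1\big)q^{1-i}$ collapses to $1-q$, cancelling the denominator $1-q$ of $\vartheta_i$, while $b^{-1}-a = b^{-1}(1-ab)$ cancels the denominator $1-ab$; what is left is $\big(1-q^i\big)b^{-1}\big(1-abq^{i-1}\big) = \big(q^i-1\big)\big(aq^{i-1}-b^{-1}\big)$. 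This is exactly the style of bookkeeping used to prove Lemmas~\ref{Kwj} and~\ref{Bwj}, and it is the main (though entirely routine) obstacle; no further ingredients are needed.
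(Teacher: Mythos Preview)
Your proposal is correct and follows essentially the same approach as the paper: the paper's proof simply cites $M^{-1}K = I + \big(q^{-1}-1\big)\big(b^{-1}-a\big)\psi$ together with Lemmas~\ref{lem:vartheta}(i), \ref{Psirep}, and~\ref{Diag}, and you have spelled out exactly those steps in detail.
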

\begin{proof} Use $M^{-1}K = I+ \big(q^{-1}-1\big)\big(b^{-1}-a\big)\psi$ and
Lemmas~\ref{lem:vartheta}(i), \ref{Psirep},~\ref{Diag}.
\end{proof}

\begin{Lemma}\label{Mietaj}
We give the matrix in ${\rm Mat}_{N+1}(\mathbb F)$ that represents $M^{-1}$ with respect to
$\lbrace \eta_i \rbrace_{i=0}^N$. The $(i,i)$-entry is $q^{i}$
for $0 \leq i \leq N$. The $(i-1,i)$-entry is
$\big(q^i-1\big)\big(bq^{i-1}-a^{-1}\big)$ for $1 \leq i \leq N$.
All other entries are~$0$.
\end{Lemma}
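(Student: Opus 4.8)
The plan is to imitate the proof of Lemma~\ref{Mitauj}, but on the $\eta$-side rather than the $\tau$-side. First I would rewrite Proposition~\ref{prop:MKB}(iv) in the form $M^{-1} = \bigl(I + (q^{-1}-1)(a^{-1}-b)\psi\bigr)B^{-1}$, which is legitimate since $B$ is invertible. The advantage of this form is that both factors on the right are immediately described with respect to the basis $\lbrace \eta_i \rbrace_{i=0}^N$: by Lemma~\ref{Diag} the map $B$ is represented by $\operatorname{diag}\bigl(1,q^{-1},\ldots,q^{-N}\bigr)$, hence $B^{-1}$ by $\operatorname{diag}\bigl(1,q,\ldots,q^{N}\bigr)$, and by Lemma~\ref{Psirep} the map $\psi$ is represented by $\widehat\psi$, whose only nonzero entries are the $\vartheta_i$ in position $(i-1,i)$.

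Next I would multiply these two matrices. The product is again upper bidiagonal: its $(i,i)$-entry is $1\cdot q^i = q^i$, and its $(i-1,i)$-entry is $(q^{-1}-1)(a^{-1}-b)\,\vartheta_i\, q^i$; all other entries vanish. So it remains to verify the scalar identity
\begin{gather*}
(q^{-1}-1)(a^{-1}-b)\,\vartheta_i\, q^i = (q^i-1)\bigl(bq^{i-1}-a^{-1}\bigr), \qquad 1 \leq i \leq N.
\end{gather*}
For this I would substitute the closed form for $\vartheta_i$ from Lemma~\ref{lem:vartheta}(i), use $(q^{-1}-1)q = 1-q$ to cancel the factor $(1-q)^{-1}$, and use $a^{-1}-b = a^{-1}(1-ab)$ to cancel the factor $(1-ab)^{-1}$; what survives is $a^{-1}(1-q^i)(1-abq^{i-1})$, and expanding the second factor turns this into $(1-q^i)(a^{-1}-bq^{i-1}) = (q^i-1)(bq^{i-1}-a^{-1})$, as required. (One could equally use Lemma~\ref{lem:vartheta}(ii), via $(a^{-1}-b)/(1-a^{-1}b^{-1}) = -b$.)

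I do not anticipate any real obstacle: this is the verbatim $a\leftrightarrow b$, $\tau\leftrightarrow\eta$, $K\leftrightarrow B$ mirror of Lemma~\ref{Mitauj}. The only points requiring a little care are the bookkeeping with $q$ versus $q^{-1}$ in the exponents --- in particular remembering that $B^{-1}$, not $B$, appears, so the diagonal contributes $q^{+i}$ --- and choosing the form of $\vartheta_i$ that leaves the surviving linear factor as $bq^{i-1}-a^{-1}$ rather than its $a\leftrightarrow b$ swap.
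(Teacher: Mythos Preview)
Your proposal is correct and follows essentially the same approach as the paper: the paper's proof cites exactly $M^{-1}B = I + (q^{-1}-1)(a^{-1}-b)\psi$ together with Lemmas~\ref{lem:vartheta}(i), \ref{Psirep}, and~\ref{Diag}, which is precisely the ingredient list you use. Your explicit matrix multiplication and scalar simplification are accurate.
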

\begin{proof} Use $M^{-1}B = I+ \big(q^{-1}-1\big)\big(a^{-1}-b\big)\psi$ and
Lemmas~\ref{lem:vartheta}(i), \ref{Psirep},~\ref{Diag}.
\end{proof}

We give a variation on Proposition~\ref{prop:MKB}.

\begin{Lemma} We have
\begin{gather}
MK^{-1} = \sum_{i=0}^N (-1)^i (1-q)^i (1-ab)^i b^{-i} \psi^i,
\label{eq:MKinv}
\\
K^{-1}M = \sum_{i=0}^N (-1)^i (1-q)^i q^{-i} (1-ab)^i b^{-i} \psi^i,
\label{eq:KinvM}
\\
MB^{-1} = \sum_{i=0}^N (-1)^i (1-q)^i (1-ab)^i a^{-i} \psi^i,
\label{eq:MBinv}
\\
B^{-1}M = \sum_{i=0}^N (-1)^i (1-q)^i q^{-i} (1-ab)^i a^{-i} \psi^i.
\label{eq:BinvM}
\end{gather}
\end{Lemma}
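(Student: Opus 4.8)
The plan is to obtain each of \eqref{eq:MKinv}--\eqref{eq:BinvM} by inverting the corresponding identity in Proposition~\ref{prop:MKB} and expanding the inverse as a geometric series via Lemma~\ref{lem:gs}. Two preliminary observations make this work. First, $MK^{-1}$ is the inverse of $KM^{-1}$, and $K^{-1}M$, $MB^{-1}$, $B^{-1}M$ are the inverses of $M^{-1}K$, $BM^{-1}$, $M^{-1}B$ respectively; each of $K$, $B$, $M$ is invertible. Second, any scalar multiple of the locally nilpotent map $\psi$ is again locally nilpotent, so Lemma~\ref{lem:gs} applies to $I+c\psi$ for any $c\in\mathbb F$ and yields $(I+c\psi)^{-1}=\sum_{i=0}^{N}(-1)^i c^i\psi^i$.

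First I would treat \eqref{eq:MKinv}. By Proposition~\ref{prop:MKB}(i), $KM^{-1}=I+(q-1)\big(a-b^{-1}\big)\psi$. The elementary identity $(q-1)\big(a-b^{-1}\big)=(1-q)(1-ab)b^{-1}$ rewrites this as $KM^{-1}=I+c\psi$ with $c=(1-q)(1-ab)b^{-1}$, and taking inverses gives $MK^{-1}=\sum_{i=0}^{N}(-1)^i c^i\psi^i$, which is precisely \eqref{eq:MKinv}. Identity \eqref{eq:MBinv} follows in the same way from Proposition~\ref{prop:MKB}(iii) together with $(q-1)\big(b-a^{-1}\big)=(1-q)(1-ab)a^{-1}$.

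For \eqref{eq:KinvM} and \eqref{eq:BinvM} there are two equally short routes. One is to repeat the argument: Proposition~\ref{prop:MKB}(ii) gives $M^{-1}K=I+\big(q^{-1}-1\big)\big(b^{-1}-a\big)\psi$, and since $q^{-1}-1=q^{-1}(1-q)$ and $b^{-1}-a=(1-ab)b^{-1}$ this equals $I+q^{-1}(1-q)(1-ab)b^{-1}\psi$; inverting gives \eqref{eq:KinvM}, and \eqref{eq:BinvM} comes from Proposition~\ref{prop:MKB}(iv) identically. Alternatively, conjugate \eqref{eq:MKinv} by $K$: from Lemma~\ref{lem:KBM}(i) one has $K^{-1}\psi^i K=q^{-i}\psi^i$, hence $K^{-1}M=K^{-1}\big(MK^{-1}\big)K=\sum_{i=0}^{N}(-1)^i c^i q^{-i}\psi^i$, and similarly for \eqref{eq:BinvM} using Lemma~\ref{lem:KBM}(ii). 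Either way the only step needing care is the verification of the scalar identities such as $(q-1)\big(a-b^{-1}\big)=(1-q)(1-ab)b^{-1}$, each a one-line computation, so I do not anticipate any genuine obstacle.
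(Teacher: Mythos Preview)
Your proof is correct and follows exactly the paper's approach: invert each identity of Proposition~\ref{prop:MKB} and expand via the geometric series of Lemma~\ref{lem:gs}. The paper's proof is a single sentence to this effect; your write-up merely spells out the scalar simplifications and offers an optional conjugation alternative for \eqref{eq:KinvM} and \eqref{eq:BinvM}.
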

\begin{proof} For each equation in Proposition~\ref{prop:MKB},
take the inverse of each side and evaluate the result using
Lemma~\ref{lem:gs}.
\end{proof}

\begin{Lemma}\label{Mtauj}
We give the matrix in
${\rm Mat}_{N+1}(\mathbb F)$ that represents $M$ with respect to
$\lbrace \tau_i \rbrace_{i=0}^N$.
The $(i,j)$-entry is
\begin{gather*}
\frac{(-1)^{j-i} b^{i-j} (ab;q)_j (q;q)_j q^{\binom{i}{2}-\binom{j}{2}-j}
}{
(ab;q)_i (q;q)_i}
\end{gather*}
for $0 \leq i\leq j\leq N$.
All other entries are~$0$.
\end{Lemma}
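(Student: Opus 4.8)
The plan is to read off the matrix of $M$ in the $\tau$-basis by factoring $M=(MK^{-1})K$ and using that each factor on the right already has a known effect on $\tau_j$. Indeed, \eqref{eq:MKinv} expresses $MK^{-1}$ as an explicit polynomial in $\psi$, while $K\tau_j=q^{-j}\tau_j$ by Definition~\ref{def:kbm} and $\psi\tau_m=\vartheta_m\tau_{m-1}$ by \eqref{eq:Normal}; combining these computes $M\tau_j$ directly in the $\tau$-basis.

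First I would write, for $0\le j\le N$,
\begin{gather*}
M\tau_j=(MK^{-1})K\tau_j
=q^{-j}\sum_{k=0}^{N}(-1)^k(1-q)^k(1-ab)^kb^{-k}\,\psi^k\tau_j\\
=q^{-j}\sum_{k=0}^{j}(-1)^k(1-q)^k(1-ab)^kb^{-k}\,\vartheta_j\vartheta_{j-1}\cdots\vartheta_{j-k+1}\,\tau_{j-k},
\end{gather*}
where the terms with $k>j$ drop out since $\psi^{j+1}\tau_j=0$ by Lemma~\ref{lem:psiVn}. Reindexing by $i=j-k$, this shows the $(i,j)$-entry vanishes unless $0\le i\le j$, and that for $0\le i\le j\le N$ it equals $q^{-j}(-1)^{j-i}(1-q)^{j-i}(1-ab)^{j-i}b^{i-j}\,\vartheta_j\vartheta_{j-1}\cdots\vartheta_{i+1}$.

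Next I would rewrite $\vartheta_j\vartheta_{j-1}\cdots\vartheta_{i+1}=(\vartheta_1\vartheta_2\cdots\vartheta_j)/(\vartheta_1\vartheta_2\cdots\vartheta_i)$ and evaluate both products with \eqref{eq:vthprod}. Substituting this in, the factors $(1-q)^{j-i}$ and $(1-ab)^{j-i}$ cancel against their reciprocals, the powers of $q$ combine to $q^{\binom{i}{2}-\binom{j}{2}-j}$, and one is left with exactly the claimed $(i,j)$-entry $\frac{(-1)^{j-i}b^{i-j}(ab;q)_j(q;q)_jq^{\binom{i}{2}-\binom{j}{2}-j}}{(ab;q)_i(q;q)_i}$.

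I do not expect any genuine obstacle here; the only care needed is the bookkeeping of the sign, the $(1-q)$ and $(1-ab)$ factors, and the two binomial exponents of $q$, all of which is routine once \eqref{eq:vthprod} is invoked. One could equally start from $M=(I+(q-1)(a-b^{-1})\psi)^{-1}K$, which follows from Proposition~\ref{prop:MKB}(i) together with Lemma~\ref{lem:gs}, and expand as above, but the route through \eqref{eq:MKinv} is the most direct since that expansion is already recorded just above.
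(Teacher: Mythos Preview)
Your proof is correct and follows essentially the same route as the paper: the paper's one-line proof cites \eqref{eq:vthprod}, \eqref{eq:MKinv}, and Lemmas~\ref{Psirep}, \ref{Diag}, which amounts to exactly your computation of $M=(MK^{-1})K$ applied to $\tau_j$. The only cosmetic difference is that the paper phrases the action of $\psi$ and $K$ on the $\tau$-basis via the matrix Lemmas~\ref{Psirep} and \ref{Diag}, whereas you invoke \eqref{eq:Normal} and Definition~\ref{def:kbm} directly; these are the same facts.
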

\begin{proof} Use~(\ref{eq:vthprod}), (\ref{eq:MKinv})
and Lemmas~\ref{Psirep},~\ref{Diag}.
\end{proof}

\begin{Lemma}\label{Kiwj}
We give the matrix in ${\rm Mat}_{N+1}(\mathbb F)$ that represents $K^{-1}$ with respect to
$\lbrace w_i \rbrace_{i=0}^N$.
The $(i,j)$-entry is
\begin{gather*}
\frac{(-1)^{j-i} b^{i-j} (ab;q)_j (q;q)_j q^{\binom{i}{2}-\binom{j}{2}+i}
}{
(ab;q)_i (q;q)_i}
\end{gather*}
for $0 \leq i\leq j\leq N$. All other entries are~$0$.
\end{Lemma}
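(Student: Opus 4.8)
The plan is to follow the same scheme as in the proofs of Lemmas~\ref{Mtauj} and~\ref{Mitauj}: write $K^{-1}$ as a polynomial in $\psi$ times a power of $M$, and then read off the matrix from the actions of $\psi$ and $M$ on $\lbrace w_i\rbrace_{i=0}^N$, both of which are already known (Lemmas~\ref{Psirep} and~\ref{Diag}). Concretely, I would start from equation~(\ref{eq:MKinv}), which gives $MK^{-1}=\sum_{i=0}^N (-1)^i(1-q)^i(1-ab)^i b^{-i}\psi^i$, and left-multiply by $M^{-1}$ to obtain $K^{-1}=M^{-1}\sum_{i=0}^N (-1)^i(1-q)^i(1-ab)^i b^{-i}\psi^i$.

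Next I would apply this operator to the basis vector $w_j$. By Lemma~\ref{lem:psiW} (iterated) one has $\psi^i w_j=\vartheta_j\vartheta_{j-1}\cdots\vartheta_{j-i+1}\,w_{j-i}$, and by Definition~\ref{def:kbm} one has $M^{-1}w_{j-i}=q^{j-i}w_{j-i}$, so that $K^{-1}w_j=\sum_{i=0}^{j}(-1)^i(1-q)^i(1-ab)^i b^{-i}q^{j-i}\,\vartheta_j\vartheta_{j-1}\cdots\vartheta_{j-i+1}\,w_{j-i}$. Reindexing the sum so that the summation index labels the target basis vector $w_i$ (i.e.\ substituting $i\mapsto j-i$ in the coefficient), the $(i,j)$-entry of the matrix representing $K^{-1}$ on $\lbrace w_i\rbrace$ becomes $(-1)^{j-i}(1-q)^{j-i}(1-ab)^{j-i}b^{i-j}q^{i}$ times $\vartheta_j\vartheta_{j-1}\cdots\vartheta_{i+1}=(\vartheta_1\vartheta_2\cdots\vartheta_j)/(\vartheta_1\vartheta_2\cdots\vartheta_i)$, for $0\le i\le j\le N$, and $0$ otherwise.

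Finally I would substitute the closed form~(\ref{eq:vthprod}) for $\vartheta_1\vartheta_2\cdots\vartheta_i$ and for $\vartheta_1\vartheta_2\cdots\vartheta_j$. The powers of $1-q$ and of $1-ab$ cancel completely, and the powers of $q$ combine to $q^{\binom{i}{2}-\binom{j}{2}+i}$, leaving exactly $\dfrac{(-1)^{j-i}b^{i-j}(ab;q)_j(q;q)_j\,q^{\binom{i}{2}-\binom{j}{2}+i}}{(ab;q)_i(q;q)_i}$, which is the asserted entry. I do not expect a genuine obstacle here; the argument is a bookkeeping computation entirely parallel to Lemmas~\ref{Mtauj} and~\ref{Mitauj}. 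The only point that needs care is keeping $M^{-1}$ on the correct side of the polynomial in $\psi$ before evaluating (since $M\psi=q\psi M$), and getting the direction of the reindexing—hence the placement of the power of $q$—right.
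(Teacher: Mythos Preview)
Your proposal is correct and follows essentially the same scheme as the paper, which cites~(\ref{eq:vthprod}), (\ref{eq:KinvM}) and Lemmas~\ref{Psirep},~\ref{Diag}. The only cosmetic difference is that the paper starts from $K^{-1}M$ in~(\ref{eq:KinvM}) and right-multiplies by $M^{-1}$, whereas you start from $MK^{-1}$ in~(\ref{eq:MKinv}) and left-multiply by $M^{-1}$; as you note yourself, the relation $M\psi=q\psi M$ makes these two routes give the same coefficient of $w_{j-i}$, and your bookkeeping is correct.
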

\begin{proof} Use (\ref{eq:vthprod}), (\ref{eq:KinvM})
and Lemmas~\ref{Psirep},~\ref{Diag}.
\end{proof}

\begin{Lemma}\label{Metaj}
We give the matrix in
${\rm Mat}_{N+1}(\mathbb F)$ that represents $M$ with respect to
$\lbrace \eta_i \rbrace_{i=0}^N$.
The $(i,j)$-entry is
\begin{gather*}
\frac{(-1)^{j-i} a^{i-j} (ab;q)_j (q;q)_j q^{\binom{i}{2}-\binom{j}{2}-j}
}{
(ab;q)_i (q;q)_i}
\end{gather*}
for $0 \leq i\leq j\leq N$.
All other entries are~$0$.
\end{Lemma}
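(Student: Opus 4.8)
The plan is to mirror the proof of Lemma~\ref{Mtauj}, with $\tau$ replaced by $\eta$, with $K$ replaced by $B$, and with $b$ replaced by $a$ throughout. First I would write $M = (MB^{-1})B$ and compute the action on the basis $\lbrace \eta_i\rbrace_{i=0}^N$. By Lemma~\ref{Diag} the map $B$ is diagonal on this basis, with $B\eta_j = q^{-j}\eta_j$, and by Lemma~\ref{Psirep} the matrix $\widehat\psi$ represents $\psi$ with respect to $\lbrace \eta_i\rbrace_{i=0}^N$, so that $\psi^i \eta_j = \vartheta_j \vartheta_{j-1}\cdots \vartheta_{j-i+1}\,\eta_{j-i}$ for $0\le i\le j\le N$ and $\psi^i\eta_j=0$ for $i>j$ (the latter by Lemma~\ref{lem:psiVn}). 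Inserting the expansion~(\ref{eq:MBinv}) of $MB^{-1}$ as a polynomial in $\psi$ then yields
\begin{gather*}
M\eta_j = q^{-j}\sum_{i=0}^{j} (-1)^i (1-q)^i (1-ab)^i a^{-i}\, \vartheta_j \vartheta_{j-1}\cdots \vartheta_{j-i+1}\,\eta_{j-i},
\end{gather*}
so the $(j-i,j)$-entry of the representing matrix is the coefficient appearing in this sum, and all entries with row index exceeding the column index vanish.

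The remaining work is bookkeeping with $q$-shifted factorials. I would write $\vartheta_j \vartheta_{j-1}\cdots \vartheta_{j-i+1} = (\vartheta_1\cdots\vartheta_j)/(\vartheta_1\cdots\vartheta_{j-i})$ and substitute the closed form~(\ref{eq:vthprod}) for each of the two products. The powers of $1-q$ and of $1-ab$ produced by~(\ref{eq:vthprod}) cancel exactly against those in~(\ref{eq:MBinv}); the factor $q^{-j}$ together with the ratio of the two $q^{\binom{\cdot}{2}}$ terms collapses to a single power of $q$, which after renaming the row index $j-i$ as $i$ is seen to be $q^{\binom{i}{2}-\binom{j}{2}-j}$; and one is left with
\begin{gather*}
\frac{(-1)^{j-i} a^{i-j} (ab;q)_j (q;q)_j q^{\binom{i}{2}-\binom{j}{2}-j}}{(ab;q)_i (q;q)_i}, \qquad 0\le i\le j\le N,
\end{gather*}
which is the claimed entry.

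An even shorter route is available by symmetry: interchanging the roles of $\lbrace a_i\rbrace_{i=0}^{N-1}$ and $\lbrace b_i\rbrace_{i=0}^{N-1}$ in~(\ref{eq:aibi}) amounts to swapping $a$ with $b$, and this swap interchanges $\tau_i$ with $\eta_i$ and $K$ with $B$ while fixing $\psi$, each $\vartheta_i$, and hence (via Lemma~\ref{lem:four}) also $M$. Applying this substitution to Lemma~\ref{Mtauj} replaces the factor $b^{i-j}$ by $a^{i-j}$ and changes nothing else, which is precisely the assertion. I do not anticipate a genuine obstacle here; the only point demanding care is the exponent arithmetic on the $q^{\binom{\cdot}{2}}$ factors, which is entirely routine.
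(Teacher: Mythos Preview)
Your primary argument is exactly the paper's: it too cites (\ref{eq:vthprod}), (\ref{eq:MBinv}), and Lemmas~\ref{Psirep} and~\ref{Diag}, and your write-up simply spells out the bookkeeping that the paper leaves implicit. Your alternative symmetry argument (swapping $a\leftrightarrow b$ to pass from Lemma~\ref{Mtauj} to this one) is correct as well and is a pleasant shortcut the paper does not mention.
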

\begin{proof} Use (\ref{eq:vthprod}), (\ref{eq:MBinv}) and Lemmas~\ref{Psirep},~\ref{Diag}.
\end{proof}

\begin{Lemma}\label{Biwj}
We give the matrix in
${\rm Mat}_{N+1}(\mathbb F)$ that represents $B^{-1}$ with respect to
$\lbrace w_i \rbrace_{i=0}^N$.
The $(i,j)$-entry is
\begin{gather*}
\frac{(-1)^{j-i} a^{i-j} (ab;q)_j (q;q)_j q^{\binom{i}{2}-\binom{j}{2}+i}
}{(ab;q)_i (q;q)_i}
\end{gather*}
for $0 \leq i\leq j\leq N$.
All other entries are~$0$.
\end{Lemma}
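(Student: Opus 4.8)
The plan is to follow the pattern of Lemmas~\ref{Mtauj}, \ref{Kiwj}, and~\ref{Metaj}: factor $B^{-1}$ as a power series in $\psi$ times a diagonal map, compute each factor on the basis $\lbrace w_i\rbrace_{i=0}^N$, and multiply the resulting matrices.

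Concretely, I would write $B^{-1} = \big(B^{-1}M\big)M^{-1}$. By Lemma~\ref{Diag} together with $Mw_j = q^{-j}w_j$, the map $M^{-1}$ is represented on $\lbrace w_i\rbrace_{i=0}^N$ by $\mathrm{diag}\big(1,q,q^2,\ldots,q^N\big)$, so postmultiplying by this matrix just scales column $j$ by $q^j$. By Lemma~\ref{Psirep} the map $\psi$ is represented on $\lbrace w_i\rbrace_{i=0}^N$ by $\widehat\psi$ from Definition~\ref{def:psimat}, so $\psi^k$ is represented by $\widehat\psi^{\,k}$, whose $(i,j)$-entry equals $\vartheta_j\vartheta_{j-1}\cdots\vartheta_{i+1}$ if $j-i=k$ and $0$ otherwise. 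Feeding these facts into~(\ref{eq:BinvM}), I find that $B^{-1}M$ is represented on $\lbrace w_i\rbrace_{i=0}^N$ by an upper triangular matrix whose entries with $i>j$ vanish (because $\widehat\psi$ is strictly upper triangular) and whose $(i,j)$-entry for $0\le i\le j\le N$ is
\[
(-1)^{j-i}(1-q)^{j-i}q^{i-j}(1-ab)^{j-i}a^{i-j}\,\vartheta_j\vartheta_{j-1}\cdots\vartheta_{i+1}.
\]

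Next I would substitute $\vartheta_j\vartheta_{j-1}\cdots\vartheta_{i+1} = (\vartheta_1\vartheta_2\cdots\vartheta_j)/(\vartheta_1\vartheta_2\cdots\vartheta_i)$ and evaluate both products with~(\ref{eq:vthprod}). The powers $(1-q)^{\pm(j-i)}$ and $(1-ab)^{\pm(j-i)}$ cancel against the matching factors in the coefficient above, and the $q$-powers combine as $q^{i-j}\cdot q^{-\binom{j}{2}}\cdot q^{\binom{i}{2}} = q^{\binom{i}{2}-\binom{j}{2}+i-j}$; what remains for the $(i,j)$-entry of $B^{-1}M$ is $(-1)^{j-i}a^{i-j}(q;q)_j(ab;q)_j q^{\binom{i}{2}-\binom{j}{2}+i-j}\big/\big((q;q)_i(ab;q)_i\big)$. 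Finally, multiplying column $j$ by $q^j$ to pass from $B^{-1}M$ to $B^{-1}$ raises the $q$-exponent by $j$, turning it into $\binom{i}{2}-\binom{j}{2}+i$ and yielding exactly the claimed $(i,j)$-entry.

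There is no conceptual difficulty: the proof is the same calculation as for Lemma~\ref{Kiwj}, with the coefficient $b^{-i}$ of~(\ref{eq:KinvM}) replaced by the coefficient $a^{-i}$ of~(\ref{eq:BinvM}), which is precisely why the final matrix differs from the one in Lemma~\ref{Kiwj} only in the replacement $b^{i-j}\mapsto a^{i-j}$. The sole point requiring care is the bookkeeping of the exponents of $q$ and the cancellation of the $(1-q)$ and $(1-ab)$ factors when the $\vartheta$-products are expanded via~(\ref{eq:vthprod}).
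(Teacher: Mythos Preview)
Your proposal is correct and follows exactly the approach indicated in the paper's one-line proof, which simply cites~(\ref{eq:vthprod}), (\ref{eq:BinvM}), and Lemmas~\ref{Psirep},~\ref{Diag}. You have merely unpacked that citation into the explicit matrix computation, and your bookkeeping of the $q$-exponents and the cancellation of the $(1-q)$ and $(1-ab)$ factors is accurate.
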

\begin{proof} Use
(\ref{eq:vthprod}),
(\ref{eq:BinvM})
and Lemmas~\ref{Psirep},~\ref{Diag}.
\end{proof}

We give a variation on Proposition~\ref{prop:KBi}.

\begin{Lemma} We have
\begin{gather}
KB^{-1} = I + \frac{b-a}{b} \sum_{i=1}^N
(-1)^i (1-q)^i (1-ab)^i a^{-i} \psi^i,
\label{eq:KBinv}\\
BK^{-1} = I + \frac{a-b}{a} \sum_{i=1}^N
(-1)^i (1-q)^i (1-ab)^i b^{-i} \psi^i,
\label{eq:BKinv}\\
K^{-1}B = I + \frac{a-b}{a} \sum_{i=1}^N
(-1)^i (1-q)^i q^{-i} (1-ab)^i b^{-i} \psi^i,
\label{eq:KinvB}\\
B^{-1}K = I + \frac{b-a}{b} \sum_{i=1}^N
(-1)^i (1-q)^i q^{-i} (1-ab)^i a^{-i} \psi^i.
\label{eq:BinvK}
\end{gather}
\end{Lemma}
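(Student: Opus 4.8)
The plan is to obtain each of the four identities by factoring through $M$ and combining the relevant part of Proposition~\ref{prop:MKB} with one of the series expansions \eqref{eq:MKinv}--\eqref{eq:BinvM}. I treat \eqref{eq:KBinv} in detail; the other three go the same way.

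First I would write $KB^{-1}=(KM^{-1})(MB^{-1})$. By Proposition~\ref{prop:MKB}(i), $KM^{-1}=I+(q-1)\bigl(a-b^{-1}\bigr)\psi$, and by \eqref{eq:MBinv}, $MB^{-1}=\sum_{i=0}^{N}(-1)^{i}(1-q)^{i}(1-ab)^{i}a^{-i}\psi^{i}$. Both factors are expressions in $\psi$, so they commute and the product may be expanded freely (no appeal to Lemma~\ref{lem:BKcom} is needed). Writing $u=(q-1)(1-ab)a^{-1}$, so that $u^{i}=(-1)^{i}(1-q)^{i}(1-ab)^{i}a^{-i}$, and $v=(q-1)\bigl(a-b^{-1}\bigr)$, the product equals $\sum_{i=0}^{N}u^{i}\psi^{i}+v\sum_{i=0}^{N}u^{i}\psi^{i+1}$. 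In the second sum I reindex $i\mapsto i-1$ and discard the term $\psi^{N+1}$, which is $0$ when $N\neq\infty$ and may be dropped termwise when $N=\infty$ since $\psi$ is locally nilpotent. Then the coefficient of $\psi^{0}$ is $1$, and for $1\le i\le N$ the coefficient of $\psi^{i}$ is $u^{i}+vu^{i-1}=u^{i-1}(u+v)$.

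It remains to simplify $u+v$. Using $a-b^{-1}=-(1-ab)b^{-1}$ one has $v=-(q-1)(1-ab)b^{-1}$, whence $u+v=(q-1)(1-ab)(b-a)(ab)^{-1}$ and therefore $(u+v)/u=(b-a)b^{-1}$. Hence the coefficient of $\psi^{i}$ equals $\tfrac{b-a}{b}\,u^{i}$, which is precisely \eqref{eq:KBinv}. For \eqref{eq:BKinv} I would repeat the argument with the roles of $a$ and $b$ interchanged, starting from $BK^{-1}=(BM^{-1})(MK^{-1})$ and using Proposition~\ref{prop:MKB}(iii) together with \eqref{eq:MKinv}.

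For \eqref{eq:KinvB} and \eqref{eq:BinvK} there are two options. One is to run the same computation once more: $K^{-1}B=(K^{-1}M)(M^{-1}B)$ with Proposition~\ref{prop:MKB}(iv) and \eqref{eq:KinvM} (the extra factor $q^{-i}$ in \eqref{eq:KinvB} is exactly the one already present in the series \eqref{eq:KinvM}), and similarly $B^{-1}K=(B^{-1}M)(M^{-1}K)$ with Proposition~\ref{prop:MKB}(ii) and \eqref{eq:BinvM}. The quicker option is to apply to \eqref{eq:KBinv} and \eqref{eq:BKinv} the substitution $q\mapsto q^{-1}$, $a\mapsto a^{-1}$, $b\mapsto b^{-1}$, $K\mapsto K^{-1}$, $B\mapsto B^{-1}$: this fixes the data \eqref{eq:data} (Note~\ref{note:qi}), hence fixes the normalized $\psi$, sends $KB^{-1}\mapsto K^{-1}B$ and $BK^{-1}\mapsto B^{-1}K$, and a short check using $1-q^{-1}=-q^{-1}(1-q)$ and $1-a^{-1}b^{-1}=-(ab)^{-1}(1-ab)$ converts the right-hand side of \eqref{eq:KBinv} into that of \eqref{eq:KinvB} (and \eqref{eq:BKinv} into \eqref{eq:BinvK}). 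In either approach the only real work is the elementary bookkeeping in the step $u^{i}+vu^{i-1}=u^{i-1}(u+v)$ and the simplification of $u+v$; there is no genuine obstacle here.
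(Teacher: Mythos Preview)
Your proof is correct and amounts to the same computation as the paper's: both obtain $KB^{-1}$ as $(KM^{-1})(BM^{-1})^{-1}$ with the second factor expanded as a geometric series, the paper routing this through Proposition~\ref{prop:KBi} and Lemma~\ref{lem:gs} while you cite Proposition~\ref{prop:MKB} and the already-expanded series \eqref{eq:MBinv} directly. The alternative you offer for \eqref{eq:KinvB}, \eqref{eq:BinvK} via the substitution of Note~\ref{note:qi} is a nice shortcut not used in the paper, but the underlying argument is the same.
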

\begin{proof} Evaluate each equation in Proposition~\ref{prop:KBi} using Lemma~\ref{lem:gs}.
\end{proof}

\begin{Lemma}\label{Ketaj}
We give the matrix in
${\rm Mat}_{N+1}(\mathbb F)$ that represents $K$ with respect to
$\lbrace \eta_i \rbrace_{i=0}^N$.
The $(i,i)$-entry is $q^{-i}$ for $0 \leq i \leq N$.
The $(i,j)$-entry is
\begin{gather*}
\frac{b-a}{b}
\frac{(-1)^{j-i} a^{i-j} (ab;q)_j (q;q)_j q^{\binom{i}{2}-\binom{j}{2}-j}
}{(ab;q)_i (q;q)_i}
\end{gather*}
for $0 \leq i< j\leq N$.
All other entries are~$0$.
\end{Lemma}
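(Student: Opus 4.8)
The plan is to obtain the matrix of $K$ on the basis $\lbrace \eta_i\rbrace_{i=0}^N$ by factoring $K = (KB^{-1})B$ and exploiting that $KB^{-1}$ is already written as a power series in $\psi$ in~\eqref{eq:KBinv}, while $B$ acts diagonally on this basis.

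First I would note that $B\eta_j = q^{-j}\eta_j$ by Definition~\ref{def:kbm}, so that $K\eta_j = q^{-j}(KB^{-1})\eta_j$ for $0 \leq j \leq N$. Applying~\eqref{eq:KBinv} gives
\begin{gather*}
K\eta_j = q^{-j}\eta_j + q^{-j}\,\frac{b-a}{b}\sum_{i=1}^N(-1)^i(1-q)^i(1-ab)^i a^{-i}\,\psi^i\eta_j .
\end{gather*}
Next I would use $\psi\eta_i = \vartheta_i\eta_{i-1}$ (the second equation in~\eqref{eq:Normal}, equivalently Lemma~\ref{Psirep}) to get $\psi^i\eta_j = \vartheta_j\vartheta_{j-1}\cdots\vartheta_{j-i+1}\,\eta_{j-i}$, which vanishes once $i>j$. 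Thus $K\eta_j$ is a linear combination of $\eta_0,\dots,\eta_j$, so every entry with $i>j$ is $0$, the $(j,j)$-entry is $q^{-j}$, and it remains only to identify the $(i,j)$-entry for $i<j$. Reindexing the sum by $i\mapsto j-i$, that entry equals
\begin{gather*}
q^{-j}\,\frac{b-a}{b}\,(-1)^{j-i}(1-q)^{j-i}(1-ab)^{j-i}a^{i-j}\,\vartheta_j\vartheta_{j-1}\cdots\vartheta_{i+1}.
\end{gather*}
I would then substitute $\vartheta_j\cdots\vartheta_{i+1} = (\vartheta_1\cdots\vartheta_j)/(\vartheta_1\cdots\vartheta_i)$ and evaluate each product via~\eqref{eq:vthprod} of Lemma~\ref{lem:usefulvt}; the factors $(1-q)^{j-i}$ and $(1-ab)^{j-i}$ cancel exactly against the denominators, leaving $\frac{(q;q)_j(ab;q)_j}{(q;q)_i(ab;q)_i}$ together with the net power $q^{\binom{i}{2}-\binom{j}{2}-j}$, which is the asserted $(i,j)$-entry.

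The calculation is essentially bookkeeping with $q$-Pochhammer symbols, so there is no serious conceptual obstacle; the one place to be careful is the order of composition, since $\psi$ does not commute with $B$. One must keep track of which factor acts first (here $B$, then $KB^{-1}$), so that the diagonal action of $B$ contributes the single uniform factor $q^{-j}$ to every off-diagonal term; using instead $K=B(B^{-1}K)$ with~\eqref{eq:BinvK} works equally well, but then the extra $q^{-i}$ in that expansion must be combined with the $q^{i-j}$ produced by applying $B$ to $\eta_{j-i}$, again giving $q^{-j}$. It is also worth double-checking that the exponent $-\binom{j}{2}+\binom{i}{2}$ coming from~\eqref{eq:vthprod} combines with the $q^{-j}$ prefactor to produce exactly $q^{\binom{i}{2}-\binom{j}{2}-j}$, with no stray power of $q$ surviving the cancellation of the $(1-q)$ and $(1-ab)$ terms.
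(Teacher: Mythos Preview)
Your proof is correct and follows exactly the route the paper indicates: it uses the expansion~\eqref{eq:KBinv} for $KB^{-1}$, the diagonal action $B\eta_j=q^{-j}\eta_j$ from Definition~\ref{def:kbm} (Lemma~\ref{Diag}), the action of $\psi$ on the $\eta$-basis from Lemma~\ref{Psirep}, and the product formula~\eqref{eq:vthprod} for the $\vartheta_i$. Your bookkeeping is accurate, including the final exponent of $q$.
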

\begin{proof} Use (\ref{eq:vthprod}), (\ref{eq:KBinv}) and Lemmas~\ref{Psirep},~\ref{Diag}.
\end{proof}

\begin{Lemma}\label{Btauj}
We give the matrix in ${\rm Mat}_{N+1}(\mathbb F)$ that represents~$B$ with respect to
$\lbrace \tau_i \rbrace_{i=0}^N$. The $(i,i)$-entry is $q^{-i}$ for $0 \leq i \leq N$.
The $(i,j)$-entry is
\begin{align*}
\frac{a-b}{a}
\frac{(-1)^{j-i} b^{i-j} (ab;q)_j (q;q)_j q^{\binom{i}{2}-\binom{j}{2}-j}
}{(ab;q)_i (q;q)_i}
\end{align*}
for $0 \leq i< j\leq N$. All other entries are~$0$.
\end{Lemma}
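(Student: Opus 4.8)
The plan is to factor $B = (BK^{-1})K$ and represent each factor with respect to the basis $\lbrace \tau_i \rbrace_{i=0}^N$, paralleling the proof of Lemma~\ref{Ketaj} but with the roles of $\tau$ and $\eta$ (and of $(\ref{eq:KBinv})$ and $(\ref{eq:BKinv})$) interchanged. By $(\ref{eq:BKinv})$ the map $BK^{-1}$ is the polynomial $I + \frac{a-b}{a}\sum_{k=1}^N (-1)^k (1-q)^k (1-ab)^k b^{-k} \psi^k$ in $\psi$, and by Lemma~\ref{Diag} the map $K$ is represented by ${\rm diag}\big(1,q^{-1},\ldots,q^{-N}\big)$ with respect to $\lbrace \tau_i \rbrace_{i=0}^N$. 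So it remains to understand how the powers $\psi^k$ act on the $\tau_i$ and then to multiply.

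Iterating the left-hand equation of $(\ref{eq:Normal})$ gives $\psi^k \tau_j = \vartheta_j \vartheta_{j-1} \cdots \vartheta_{j-k+1}\, \tau_{j-k}$ for $0 \leq k \leq j \leq N$, and $\psi^k \tau_j = 0$ for $k>j$. Combining this with $K\tau_j = q^{-j}\tau_j$ from Lemma~\ref{Diag}, for $0 \leq j \leq N$ we obtain
\[
B\tau_j = q^{-j}\tau_j + q^{-j}\,\frac{a-b}{a}\sum_{k=1}^{j} (-1)^k (1-q)^k (1-ab)^k b^{-k}\, \vartheta_j \vartheta_{j-1}\cdots \vartheta_{j-k+1}\, \tau_{j-k}.
\]
This already exhibits the diagonal entries $q^{-j}$ and shows that every entry strictly below the diagonal vanishes; writing $k = j-i$, it reduces the off-diagonal claim to the scalar identity
\[
q^{-j}\,\frac{a-b}{a}\,(-1)^{j-i}(1-q)^{j-i}(1-ab)^{j-i} b^{i-j}\, \vartheta_j \vartheta_{j-1}\cdots \vartheta_{i+1}
= \frac{a-b}{a}\,\frac{(-1)^{j-i} b^{i-j} (ab;q)_j (q;q)_j\, q^{\binom{i}{2}-\binom{j}{2}-j}}{(ab;q)_i (q;q)_i}
\]
for $0 \leq i < j \leq N$.

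To finish I would write $\vartheta_j \vartheta_{j-1}\cdots \vartheta_{i+1} = (\vartheta_1\vartheta_2\cdots\vartheta_j)/(\vartheta_1\vartheta_2\cdots\vartheta_i)$ and substitute the closed form $(\ref{eq:vthprod})$; the powers $(1-q)^{j-i}$ and $(1-ab)^{j-i}$ then cancel, the $q$-Pochhammer symbols reorganize into $(q;q)_j(ab;q)_j/\big((q;q)_i(ab;q)_i\big)$, and the leftover $q$-powers collapse to $q^{\binom{i}{2}-\binom{j}{2}-j}$, yielding the right-hand side. The only computation is this cancellation, which is purely mechanical, so I do not expect a genuine obstacle: the content of the lemma is really the structural observation that $B = (BK^{-1})K$ with $BK^{-1}$ a known polynomial in $\psi$ and $K$ diagonal on $\lbrace \tau_i \rbrace_{i=0}^N$, from which the matrix of $B$ on this basis can simply be read off.
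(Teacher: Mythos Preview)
Your proof is correct and follows essentially the same approach as the paper: the paper's proof cites precisely (\ref{eq:vthprod}), (\ref{eq:BKinv}), and Lemmas~\ref{Psirep} and~\ref{Diag}, which is exactly the factorization $B = (BK^{-1})K$ you describe, with $K$ diagonal on $\lbrace \tau_i\rbrace$, $BK^{-1}$ the polynomial in $\psi$ from (\ref{eq:BKinv}), $\psi$ acting via $\widehat\psi$ (equivalently your iterated~(\ref{eq:Normal})), and (\ref{eq:vthprod}) handling the final simplification.
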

\begin{proof} Use (\ref{eq:vthprod}), (\ref{eq:BKinv}) and Lemmas~\ref{Psirep},~\ref{Diag}.
\end{proof}

\begin{Lemma}\label{Kietaj}
We give the matrix in
${\rm Mat}_{N+1}(\mathbb F)$ that represents $K^{-1}$ with respect to
$\lbrace \eta_i \rbrace_{i=0}^N$.
The $(i,i)$-entry is $q^{i}$ for $0 \leq i \leq N$.
The $(i,j)$-entry is
\begin{gather*}
\frac{a-b}{a}
\frac{(-1)^{j-i} b^{i-j} (ab;q)_j (q;q)_j q^{\binom{i}{2}-\binom{j}{2}+i}
}{
(ab;q)_i (q;q)_i}
\end{gather*}
for $0 \leq i< j\leq N$.
All other entries are~$0$.
\end{Lemma}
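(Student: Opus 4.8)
The plan is to follow the pattern of Lemmas~\ref{Kiwj}, \ref{Ketaj}, and~\ref{Btauj}: express $K^{-1}$ as a polynomial in $\psi$ times a power of $B$, and then read off the matrix with respect to $\lbrace \eta_i \rbrace_{i=0}^N$, the basis in which $\psi$ and $B$ act by the elementary matrices of Lemmas~\ref{Psirep} and~\ref{Diag}. Concretely, I would start from~(\ref{eq:KinvB}), rewritten as $K^{-1} = \bigl(I + \frac{a-b}{a}\sum_{r=1}^N (-1)^r (1-q)^r q^{-r} (1-ab)^r b^{-r}\psi^r\bigr)B^{-1}$.

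Since $B^{-1}\eta_j = q^j\eta_j$ by Lemma~\ref{Diag}, and $\psi^r\eta_j = \vartheta_j\vartheta_{j-1}\cdots\vartheta_{j-r+1}\,\eta_{j-r}$ for $r\le j$ (and $0$ for $r>j$) by Lemma~\ref{Psirep}, applying $K^{-1}$ to $\eta_j$ yields $q^j\eta_j$ plus a linear combination of $\eta_{j-1},\ldots,\eta_0$; this already gives the diagonal entry $q^i$ and shows that all subdiagonal entries vanish. The coefficient of $\eta_{j-r}$ works out to $q^j\,\frac{a-b}{a}(-1)^r(1-q)^r q^{-r}(1-ab)^r b^{-r}\,\vartheta_j\vartheta_{j-1}\cdots\vartheta_{j-r+1}$, so, setting $r=j-i$, it remains only to put this $\vartheta$-product into closed form.

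For that, I would use~(\ref{eq:vthprod}): writing $\vartheta_j\cdots\vartheta_{i+1} = (\vartheta_1\cdots\vartheta_j)/(\vartheta_1\cdots\vartheta_i)$, that identity gives $\vartheta_j\cdots\vartheta_{i+1} = \frac{(q;q)_j(ab;q)_j\,q^{\binom{i}{2}-\binom{j}{2}}}{(q;q)_i(ab;q)_i(1-q)^{j-i}(1-ab)^{j-i}}$. Substituting and cancelling the factors $(1-q)^{j-i}$ and $(1-ab)^{j-i}$, then collecting the powers of $q$ (whose exponent becomes $\binom{i}{2}-\binom{j}{2}+i$), produces exactly the asserted $(i,j)$-entry $\frac{a-b}{a}\cdot\frac{(-1)^{j-i}b^{i-j}(ab;q)_j(q;q)_j\,q^{\binom{i}{2}-\binom{j}{2}+i}}{(ab;q)_i(q;q)_i}$.

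The only genuine work is this last manipulation — tracking the exponent $\binom{i}{2}-\binom{j}{2}+i$ and checking that the spurious $(1-q)$ and $(1-ab)$ powers disappear — but it is routine and identical in spirit to the computation already carried out for Lemma~\ref{Kiwj}; indeed the subdiagonal entries here are simply $\frac{a-b}{a}$ times those of Lemma~\ref{Kiwj}, with the diagonal restored to $q^i$, reflecting the passage from $\lbrace w_i \rbrace_{i=0}^N$ to $\lbrace \eta_i \rbrace_{i=0}^N$.
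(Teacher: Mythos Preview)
Your proposal is correct and follows essentially the same route as the paper's proof, which also invokes (\ref{eq:KinvB}), (\ref{eq:vthprod}), and Lemmas~\ref{Psirep},~\ref{Diag}. The only cosmetic difference is that you write $K^{-1}=(K^{-1}B)B^{-1}$ and apply $B^{-1}$ first, whereas the paper's terse pointer leaves that step implicit; the computation of the $(i,j)$-entry is otherwise identical.
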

\begin{proof} Use (\ref{eq:vthprod}), (\ref{eq:KinvB}) and Lemmas~\ref{Psirep},~\ref{Diag}.
\end{proof}

\begin{Lemma}\label{Bitauj}
We give the matrix in
${\rm Mat}_{N+1}(\mathbb F)$ that represents $B^{-1}$ with respect to
$\lbrace \tau_i \rbrace_{i=0}^N$.
The $(i,i)$-entry is $q^{i}$ for $0 \leq i \leq N$.
The $(i,j)$-entry is
\begin{gather*}
\frac{b-a}{b}
\frac{(-1)^{j-i} a^{i-j} (ab;q)_j (q;q)_j q^{\binom{i}{2}-\binom{j}{2}+i}
}{(ab;q)_i (q;q)_i}
\end{gather*}
for $0 \leq i< j\leq N$.
All other entries are~$0$.
\end{Lemma}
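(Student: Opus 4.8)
The plan is to follow the template of Lemmas~\ref{Metaj}, \ref{Btauj}, and \ref{Kietaj}: express $B^{-1}$ as the composition of a polynomial in $\psi$ with the diagonal map $K^{-1}$, and then read off the matrix entries from the known action of $\psi$ and $K$ on the basis $\lbrace \tau_i\rbrace_{i=0}^N$.

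First I would invoke~(\ref{eq:BinvK}), which asserts $B^{-1}K = I + \frac{b-a}{b}\sum_{i=1}^N(-1)^i(1-q)^i q^{-i}(1-ab)^i a^{-i}\psi^i$. Since $K\tau_j = q^{-j}\tau_j$ by Definition~\ref{def:kbm} (equivalently by Lemma~\ref{Diag}), this gives $B^{-1}\tau_j = q^j(B^{-1}K)\tau_j$. Applying the right-hand side of~(\ref{eq:BinvK}) to $\tau_j$ and using $\psi^k\tau_j = \vartheta_j\vartheta_{j-1}\cdots\vartheta_{j-k+1}\,\tau_{j-k}$ (Lemma~\ref{Psirep}, or~(\ref{eq:Normal})), one finds that for $0\le i<j$ the coefficient of $\tau_i$ in $B^{-1}\tau_j$ equals $q^j\cdot\frac{b-a}{b}(-1)^{j-i}(1-q)^{j-i}q^{-(j-i)}(1-ab)^{j-i}a^{-(j-i)}\cdot\frac{\vartheta_1\cdots\vartheta_j}{\vartheta_1\cdots\vartheta_i}$, while the coefficient of $\tau_j$ is $q^j$ and the coefficient of $\tau_i$ for $i>j$ is $0$ because $\psi$ strictly lowers degree. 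I would then evaluate the quotient $\vartheta_1\cdots\vartheta_j/(\vartheta_1\cdots\vartheta_i)$ via~(\ref{eq:vthprod}): the factors $(1-q)^{j-i}$ and $(1-ab)^{j-i}$ cancel against those already present, leaving $\frac{(q;q)_j(ab;q)_j}{(q;q)_i(ab;q)_i}\,q^{\binom{i}{2}-\binom{j}{2}}$, and collecting the $a$-powers $a^{-(j-i)}=a^{i-j}$ yields the asserted closed form.

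The only point requiring care is the bookkeeping of the powers of $q$: one must combine the leading $q^j$, the $q^{-(j-i)}$ coming from~(\ref{eq:BinvK}), and the $q^{\binom{i}{2}-\binom{j}{2}}$ produced by~(\ref{eq:vthprod}), and check that their product is $q^{\binom{i}{2}-\binom{j}{2}+i}$; this is immediate since $j-(j-i)=i$. The diagonal case $i=j$ gives the $(i,i)$-entry $q^i$, consistent with the statement. As a sanity check I would observe that the substitution $q\mapsto q^{-1}$, $a\mapsto a^{-1}$, $b\mapsto b^{-1}$, $K\mapsto K^{-1}$, $B\mapsto B^{-1}$ carries the matrix of Lemma~\ref{Kietaj} to the matrix claimed here, which validates the signs and exponents.
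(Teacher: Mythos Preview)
Your argument is correct and is exactly the paper's approach: the paper's proof simply cites (\ref{eq:vthprod}), (\ref{eq:BinvK}) and Lemmas~\ref{Psirep},~\ref{Diag}, and you have written out precisely the computation those citations encode. One minor quibble: your closing ``sanity check'' does not work as stated, since the involution $q\mapsto q^{-1}$, $a\mapsto a^{-1}$, $b\mapsto b^{-1}$ does not send the matrix of Lemma~\ref{Kietaj} to this one; the symmetry that actually relates Lemma~\ref{Kietaj} ($K^{-1}$ in the $\eta$-basis) to the present lemma ($B^{-1}$ in the $\tau$-basis) is the swap $a\leftrightarrow b$, which interchanges $\tau_i\leftrightarrow\eta_i$ and $K\leftrightarrow B$.
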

\begin{proof} Use (\ref{eq:vthprod}),
(\ref{eq:BinvK}) and Lemmas~\ref{Psirep},~\ref{Diag}.
\end{proof}

We recall some notation. Let $\lbrace u_i\rbrace_{i=0}^N$
and $\lbrace v_i\rbrace_{i=0}^N$ denote bases for $V$.
By the {\it transition matrix from
 $\lbrace u_i\rbrace_{i=0}^N$
 to $\lbrace v_i\rbrace_{i=0}^N$} we mean the matrix
$T \in {\rm Mat}_{N+1}(\mathbb F)$ such that
$v_j = \sum_{i=0}^N T_{i,j} u_i$ for $0 \leq j \leq N$.

Our next goal is to display the transition matrices
between the bases
$\lbrace \tau_i\rbrace_{i=0}^N$,
$\lbrace w_i\rbrace_{i=0}^N$,
 $\lbrace \eta_i\rbrace_{i=0}^N$.
Recall the notation $\xi = 1-ab$.

Consider the following matrices:
\begin{gather}
\exp_q \big( a^{-1} \xi \widehat \psi \big),
\qquad
\exp_q \big( b^{-1} \xi \widehat \psi \big).
\label{eq:expa}
\end{gather}
Their inverses are
\begin{gather}
\exp_{q^{-1}} \big({-}a^{-1} \xi \widehat \psi \big),
\qquad
\exp_{q^{-1}} \big({-}b^{-1} \xi \widehat \psi \big).
\label{eq:expai}
\end{gather}
The matrices (\ref{eq:expa}), (\ref{eq:expai}) are upper triangular. We now give their
entries.

\begin{Lemma}\label{lem:expEntries}
For $0 \not= z \in \mathbb F$ the matrix
$\exp_q \big( z \xi \widehat \psi \big)$ is upper
triangular. Its $(i,j)$-entry is
\begin{gather*}
\frac{(-1)^i z^{j-i}
(ab;q)_j
(q^{-j};q)_i
q^{i+\binom{i}{2}}}
{(ab;q)_i (q;q)_i}
\end{gather*}
for $0 \leq i \leq j \leq N$.
The matrix
$\exp_{q^{-1}} \big( {-}z \xi \widehat \psi \big)$ is upper
triangular. Its $(i,j)$-entry is
\begin{gather*}
\frac{(-1)^j z^{j-i}
(ab;q)_j
(q^{-j};q)_i
q^{ij-\binom{j}{2}}}
{(ab;q)_i (q;q)_i}
\end{gather*}
for $0 \leq i \leq j \leq N$.
\end{Lemma}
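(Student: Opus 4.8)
The plan is to read off both matrices entrywise directly from the definitions (\ref{eq:qExp}), (\ref{eq:qExpi}) of the $q$-exponential, using the explicit shape of $\widehat\psi$ from Definition~\ref{def:psimat} together with the product formula (\ref{eq:vthprod}). The first observation is that $\widehat\psi$ is a weighted shift: its only nonzero entries lie on the first superdiagonal, with $(i-1,i)$-entry $\vartheta_i$. Hence for $k\geq 0$ the matrix $\widehat\psi^{\,k}$ has its nonzero entries only on the $k$th superdiagonal, and a routine induction gives $(\widehat\psi^{\,k})_{i,i+k}=\vartheta_{i+1}\vartheta_{i+2}\cdots\vartheta_{i+k}$. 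In particular $\widehat\psi^{\,k}=0$ for $k>N$, so the sums in (\ref{eq:qExp}), (\ref{eq:qExpi}) are finite and both matrices in the statement are upper triangular.

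Next I would substitute into (\ref{eq:qExp}). Writing $k=j-i$, the $(i,j)$-entry of $\exp_q\big(z\xi\widehat\psi\big)$ equals
\begin{gather*}
\frac{q^{\binom{j-i}{2}}(1-q)^{j-i}(z\xi)^{j-i}}{(q;q)_{j-i}}\,\vartheta_{i+1}\vartheta_{i+2}\cdots\vartheta_j.
\end{gather*}
I would then write $\vartheta_{i+1}\cdots\vartheta_j=(\vartheta_1\cdots\vartheta_j)/(\vartheta_1\cdots\vartheta_i)$ and evaluate both factors with (\ref{eq:vthprod}). Since $\xi=1-ab$, all powers of $1-q$ and of $1-ab$ cancel, leaving
\begin{gather*}
\frac{q^{\binom{j-i}{2}}\,z^{j-i}\,(q;q)_j\,(ab;q)_j\,q^{\binom{i}{2}-\binom{j}{2}}}{(q;q)_{j-i}(q;q)_i(ab;q)_i}.
\end{gather*}

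Finally I would apply (\ref{eq:qint}) in the form $(q^{-j};q)_i=(-1)^i(q;q)_j\,q^{\binom{i}{2}-ij}/(q;q)_{j-i}$ to replace the ratio $(q;q)_j/(q;q)_{j-i}$, and collect the powers of $q$ using the elementary identities $\binom{j-i}{2}+\binom{i}{2}-\binom{j}{2}=i(i-j)$ and $2\binom{i}{2}+i=i^2$; after this substitution the expression becomes exactly the claimed $(i,j)$-entry. The second matrix is handled in the same way starting from (\ref{eq:qExpi}); the only difference is that the factor $(-1)^{j-i}$ supplied by (\ref{eq:qExpi}) combines with the $(-1)^i$ coming from (\ref{eq:qint}) to produce the $(-1)^j$ in the statement, while the change of $q$-power exponent ($\binom{i}{2}-\binom{j}{2}$ becomes $ij-\binom{j}{2}$) is forced by the same bookkeeping.

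The only genuine work is this bookkeeping of $q$-powers and signs; once the two identities $\binom{j-i}{2}+\binom{i}{2}-\binom{j}{2}=i(i-j)$ and $2\binom{i}{2}+i=i^2$ are recorded, every cancellation is automatic. As a sanity check one may set $j=i$: the claimed first entry reduces to $(-1)^i(q^{-i};q)_i\,q^{i+\binom{i}{2}}/(q;q)_i$, which equals $1$ by another use of (\ref{eq:qint}), consistent with the matrix being unit upper triangular.
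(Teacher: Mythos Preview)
Your proof is correct and follows exactly the route the paper intends: the paper's own proof simply cites (\ref{eq:qint}), (\ref{eq:vthprod}), (\ref{eq:qExp}), (\ref{eq:qExpi}) and Definition~\ref{def:psimat}, and you have carried out precisely that computation in detail, including the correct bookkeeping of the $q$-exponents and signs.
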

\begin{proof}
Use
(\ref{eq:qint}),
(\ref{eq:vthprod}),
(\ref{eq:qExp}),
(\ref{eq:qExpi}) and Definition~\ref{def:psimat}.
\end{proof}

\begin{Lemma}\label{lem:4trans}
The transition matrices between the basis
$\lbrace w_i \rbrace_{i=0}^N$ and the bases
$\lbrace \tau_i \rbrace_{i=0}^N$,
$\lbrace \eta_i \rbrace_{i=0}^N$ are given in the table below:
\begin{center}
\begin{tabular}[t]{ccc}
 {\rm from} & {\rm to}
 &{\rm transition matrix}
\\
\hline
$\lbrace \tau_i\rbrace_{i=0}^N$ &
$\lbrace w_i\rbrace_{i=0}^N$
&$\exp_{q^{-1}} \big({-}b^{-1} \xi \widehat \psi \big)$
\\
$\lbrace w_i\rbrace_{i=0}^N$ &
$\lbrace \tau_i\rbrace_{i=0}^N$
&$\exp_{q} \big(b^{-1} \xi \widehat \psi \big)$
\\
$\lbrace \eta_i\rbrace_{i=0}^N$ &
$\lbrace w_i\rbrace_{i=0}^N$
&$\exp_{q^{-1}} \big(-a^{-1} \xi \widehat \psi \big)$
\\
$\lbrace w_i\rbrace_{i=0}^N$ &
$\lbrace \eta_i\rbrace_{i=0}^N$
&
$\exp_{q} \big(a^{-1} \xi \widehat \psi \big)$
\\
\end{tabular}
\end{center}
\end{Lemma}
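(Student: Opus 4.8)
The plan is to recognize each transition matrix as the matrix representing a suitable change-of-basis operator, and then to read off that operator from Lemma~\ref{lem:four}. Recall from the discussion preceding the lemma that the transition matrix from $\lbrace u_i\rbrace_{i=0}^N$ to $\lbrace v_i\rbrace_{i=0}^N$ is the matrix $T$ with $v_j = \sum_{i=0}^N T_{i,j} u_i$; thus if $S \in \operatorname{End}(V)$ satisfies $S u_j = v_j$ for $0 \le j \le N$, then the transition matrix in question is exactly the matrix that represents $S$ with respect to $\lbrace u_i\rbrace_{i=0}^N$.

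By Lemma~\ref{lem:four}, the operator $\exp_{q^{-1}}\big({-}b^{-1}\xi\psi\big)$ sends $\tau_i \mapsto w_i$, the operator $\exp_{q}\big(b^{-1}\xi\psi\big)$ sends $w_i\mapsto \tau_i$, the operator $\exp_{q^{-1}}\big({-}a^{-1}\xi\psi\big)$ sends $\eta_i\mapsto w_i$, and the operator $\exp_{q}\big(a^{-1}\xi\psi\big)$ sends $w_i\mapsto\eta_i$, for $0\le i\le N$. So by the previous paragraph the four transition matrices in the table are, respectively, the matrices representing these four operators with respect to $\lbrace\tau_i\rbrace_{i=0}^N$, $\lbrace w_i\rbrace_{i=0}^N$, $\lbrace\eta_i\rbrace_{i=0}^N$, $\lbrace w_i\rbrace_{i=0}^N$.

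To identify these representing matrices, I would use that, for a fixed basis of $V$, the assignment sending $T \in \operatorname{End}(V)$ to the matrix representing $T$ is an algebra homomorphism $\operatorname{End}(V)\to {\rm Mat}_{N+1}(\mathbb F)$; hence it carries any polynomial in $\psi$ to the same polynomial in the matrix representing $\psi$. Since each $q$-exponential $\exp_q(z\xi\psi)$ and $\exp_{q^{-1}}({-}z\xi\psi)$ is, by~(\ref{eq:qExp}) and~(\ref{eq:qExpi}), a polynomial in $\psi$ (a finite sum because $\psi$ is locally nilpotent), and since by Lemma~\ref{Psirep} the matrix $\widehat\psi$ represents $\psi$ with respect to each of $\lbrace\tau_i\rbrace_{i=0}^N$, $\lbrace w_i\rbrace_{i=0}^N$, $\lbrace\eta_i\rbrace_{i=0}^N$, the matrix representing $\exp_{q^{-1}}({-}b^{-1}\xi\psi)$ with respect to $\lbrace\tau_i\rbrace_{i=0}^N$ is $\exp_{q^{-1}}({-}b^{-1}\xi\widehat\psi)$, and similarly in the other three rows. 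Matching this with the previous paragraph yields the table.

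The argument has essentially no analytic or combinatorial content; the only thing requiring care is the bookkeeping of conventions --- which basis each representing matrix is taken with respect to, and the direction (from/to) of each transition. So the main ``obstacle'' is simply to keep the four cases straight and to invoke Lemma~\ref{lem:four} with the correct variable ($a$ versus $b$) and the correct branch of the $q$-exponential in each row.
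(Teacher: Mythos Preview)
Your proof is correct and follows essentially the same route as the paper, which simply cites Lemmas~\ref{lem:four} and~\ref{Psirep}; you have just unpacked what those two citations amount to. One minor quibble: when $N=\infty$ the $q$-exponential is not literally a polynomial in $\psi$, but since $\psi$ is locally nilpotent each column of the representing matrix involves only finitely many terms, so the passage from $\psi$ to $\widehat\psi$ still goes through.
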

\begin{proof} By Lemmas~\ref{lem:four},~\ref{Psirep}.
\end{proof}

Next we consider the product
\begin{gather}\label{eq:prodOne}
\exp_{q} \big(a^{-1} \xi \widehat \psi \big)
\exp_{q^{-1}} \big(-b^{-1} \xi \widehat \psi \big).
\end{gather}
The inverse of~(\ref{eq:prodOne}) is
\begin{gather}
\label{eq:prodInv}
\exp_{q} \bigl(b^{-1} \xi \widehat \psi \bigr)
\exp_{q^{-1}} \bigl(-a^{-1} \xi \widehat \psi \bigr).
\end{gather}
The matrices
(\ref{eq:prodOne}),
(\ref{eq:prodInv}) are upper triangular.
Shortly we will give their entries.

\begin{Lemma}\label{lem:doubletrans}
The matrix~\eqref{eq:prodOne} is the transition matrix from
the basis
$\lbrace \tau_i\rbrace_{i=0}^N$ to the basis
$\lbrace \eta_i\rbrace_{i=0}^N$.
The matrix~\eqref{eq:prodInv} is the transition matrix from
the basis
$\lbrace \eta_i\rbrace_{i=0}^N$ to the basis
$\lbrace \tau_i\rbrace_{i=0}^N$.
\end{Lemma}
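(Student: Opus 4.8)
The plan is to derive both assertions from Lemma~\ref{lem:4trans} by composing transition matrices, together with the observation that the two $q$-exponential matrices appearing in \eqref{eq:prodOne}, \eqref{eq:prodInv} commute (each being a sum of powers of $\widehat\psi$). Recall the composition rule for transition matrices: if $T$ is the transition matrix from a basis $\lbrace u_i\rbrace_{i=0}^N$ to a basis $\lbrace v_i\rbrace_{i=0}^N$, and $S$ is the transition matrix from $\lbrace v_i\rbrace_{i=0}^N$ to a basis $\lbrace r_i\rbrace_{i=0}^N$, then $TS$ is the transition matrix from $\lbrace u_i\rbrace_{i=0}^N$ to $\lbrace r_i\rbrace_{i=0}^N$; this is immediate from substituting $v_j=\sum_i T_{i,j}u_i$ into $r_k=\sum_j S_{j,k}v_j$.

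First I would route through the basis $\lbrace w_i\rbrace_{i=0}^N$. By Lemma~\ref{lem:4trans}, the transition matrix from $\lbrace\tau_i\rbrace_{i=0}^N$ to $\lbrace w_i\rbrace_{i=0}^N$ is $\exp_{q^{-1}}\big({-}b^{-1}\xi\widehat\psi\big)$, and the transition matrix from $\lbrace w_i\rbrace_{i=0}^N$ to $\lbrace\eta_i\rbrace_{i=0}^N$ is $\exp_q\big(a^{-1}\xi\widehat\psi\big)$. By the composition rule, the transition matrix from $\lbrace\tau_i\rbrace_{i=0}^N$ to $\lbrace\eta_i\rbrace_{i=0}^N$ is the product $\exp_{q^{-1}}\big({-}b^{-1}\xi\widehat\psi\big)\exp_q\big(a^{-1}\xi\widehat\psi\big)$; commuting the two factors puts this in the form \eqref{eq:prodOne}. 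Symmetrically, the transition matrix from $\lbrace\eta_i\rbrace_{i=0}^N$ to $\lbrace w_i\rbrace_{i=0}^N$ is $\exp_{q^{-1}}\big({-}a^{-1}\xi\widehat\psi\big)$ and that from $\lbrace w_i\rbrace_{i=0}^N$ to $\lbrace\tau_i\rbrace_{i=0}^N$ is $\exp_q\big(b^{-1}\xi\widehat\psi\big)$, so the transition matrix from $\lbrace\eta_i\rbrace_{i=0}^N$ to $\lbrace\tau_i\rbrace_{i=0}^N$ is $\exp_{q^{-1}}\big({-}a^{-1}\xi\widehat\psi\big)\exp_q\big(b^{-1}\xi\widehat\psi\big)$, which after commuting is \eqref{eq:prodInv}. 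That these two matrices come out mutually inverse, as already noted just above the lemma, is a built-in consistency check.

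Alternatively one can bypass $\lbrace w_i\rbrace_{i=0}^N$ and argue through $\Delta$ directly. Since $\Delta\tau_j=\eta_j$ for $0\leq j\leq N$, the matrix representing $\Delta$ with respect to $\lbrace\tau_i\rbrace_{i=0}^N$ is exactly the transition matrix from $\lbrace\tau_i\rbrace_{i=0}^N$ to $\lbrace\eta_i\rbrace_{i=0}^N$; and by Lemma~\ref{Psirep} the matrix $\widehat\psi$ represents $\psi$ with respect to $\lbrace\tau_i\rbrace_{i=0}^N$, so substituting $\widehat\psi$ for $\psi$ in the factorization of Proposition~\ref{prop:DeltaExp} gives that this matrix is \eqref{eq:prodOne}. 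The second assertion follows in the same way from $\Delta^{-1}\eta_j=\tau_j$, using that $\widehat\psi$ also represents $\psi$ with respect to $\lbrace\eta_i\rbrace_{i=0}^N$ and that $\Delta^{-1}=\exp_q\big(b^{-1}\xi\psi\big)\exp_{q^{-1}}\big({-}a^{-1}\xi\psi\big)$, obtained by inverting the factorization in Proposition~\ref{prop:DeltaExp} and recalling that $\exp_q(\cdot)$ and $\exp_{q^{-1}}(-\cdot)$ are mutually inverse. The only spot where care is needed --- the ``hard part'', such as it is --- is keeping the order of matrix multiplication straight in the composition rule and invoking the commutativity of the two $q$-exponential factors to present the product in the exact order displayed in \eqref{eq:prodOne} and \eqref{eq:prodInv}; beyond that the lemma is pure bookkeeping resting on Lemmas~\ref{lem:four},~\ref{Psirep}, and~\ref{lem:4trans}.
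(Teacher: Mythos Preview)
Your proof is correct and follows the paper's approach exactly: the paper's proof is the single line ``By Lemma~\ref{lem:4trans}'', and you have simply spelled out the composition of transition matrices through $\lbrace w_i\rbrace_{i=0}^N$ that this reference encodes, together with the commutativity of the two $q$-exponential factors. Your alternative argument via $\Delta$ and Proposition~\ref{prop:DeltaExp} is also valid and is essentially what the paper records in the lemma immediately following this one.
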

\begin{proof}
By Lemma~\ref{lem:4trans}.
\end{proof}

\begin{Lemma} The matrix~\eqref{eq:prodOne} represents $\Delta$
with respect to
$\lbrace \tau_i \rbrace_{i=0}^N$ and
$\lbrace w_i \rbrace_{i=0}^N$ and
$\lbrace \eta_i \rbrace_{i=0}^N$.
The matrix~\eqref{eq:prodInv} represents $\Delta^{-1}$
with respect to
$\lbrace \tau_i \rbrace_{i=0}^N$ and
$\lbrace w_i \rbrace_{i=0}^N$ and
$\lbrace \eta_i \rbrace_{i=0}^N$.
\end{Lemma}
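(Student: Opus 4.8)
The plan is to read off this lemma from Proposition~\ref{prop:DeltaExp} together with Lemma~\ref{Psirep}. The key point is that for any fixed basis $\lbrace v_i \rbrace_{i=0}^N$ of $V$, the map sending $T \in \operatorname{End}(V)$ to the matrix in $\operatorname{Mat}_{N+1}(\mathbb F)$ representing $T$ with respect to $\lbrace v_i \rbrace_{i=0}^N$ is an $\mathbb F$-algebra homomorphism, and it is compatible with the locally nilpotent functional calculus: if $T$ is locally nilpotent with representing matrix $\widehat T$, then $\exp_q(T)$ (resp.\ $\exp_{q^{-1}}(-T)$) is represented by $\exp_q(\widehat T)$ (resp.\ $\exp_{q^{-1}}(-\widehat T)$), since by~\eqref{eq:qExp}, \eqref{eq:qExpi} these series truncate on each $V_n$ and the homomorphism property can be applied term by term. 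By Lemma~\ref{Psirep} the single matrix $\widehat\psi$ represents $\psi$ with respect to each of $\lbrace \tau_i \rbrace_{i=0}^N$, $\lbrace w_i \rbrace_{i=0}^N$, $\lbrace \eta_i \rbrace_{i=0}^N$, so whatever power series in $\psi$ we apply the homomorphism to, we get the same matrix relative to all three bases.

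First I would invoke Proposition~\ref{prop:DeltaExp} to write $\Delta = \exp_q\big(a^{-1}\xi\psi\big)\exp_{q^{-1}}\big({-}b^{-1}\xi\psi\big)$ with $\xi = 1-ab$. The map $\psi$ is locally nilpotent by Lemma~\ref{lem:nilp}, and $\widehat\psi$ is strictly upper triangular by Definition~\ref{def:psimat}, so the matrices $\exp_q\big(a^{-1}\xi\widehat\psi\big)$ and $\exp_{q^{-1}}\big({-}b^{-1}\xi\widehat\psi\big)$ are well defined (this is already noted preceding Lemma~\ref{lem:expEntries}). Applying the representation homomorphism with respect to $\lbrace \tau_i \rbrace_{i=0}^N$ and using Lemma~\ref{Psirep}, the product $\exp_q\big(a^{-1}\xi\widehat\psi\big)\exp_{q^{-1}}\big({-}b^{-1}\xi\widehat\psi\big)$, namely the matrix~\eqref{eq:prodOne}, represents $\Delta$ with respect to $\lbrace \tau_i \rbrace_{i=0}^N$. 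Since Lemma~\ref{Psirep} furnishes the same matrix $\widehat\psi$ for $\psi$ with respect to $\lbrace w_i \rbrace_{i=0}^N$ and with respect to $\lbrace \eta_i \rbrace_{i=0}^N$, the identical computation shows that~\eqref{eq:prodOne} also represents $\Delta$ with respect to those two bases.

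For $\Delta^{-1}$ I would use that $\exp_{q^{-1}}(-T)$ is the inverse of $\exp_q(T)$ (the remark following~\eqref{eq:qExp}), so that $\Delta^{-1} = \exp_q\big(b^{-1}\xi\psi\big)\exp_{q^{-1}}\big({-}a^{-1}\xi\psi\big)$, the two factors commuting as each is a power series in $\psi$; equivalently this is~\eqref{eq:DiSum} recast via the $q$-binomial theorem exactly as in the proof of Proposition~\ref{prop:DeltaExp}. Feeding this through the representation homomorphism with respect to each of $\lbrace \tau_i \rbrace_{i=0}^N$, $\lbrace w_i \rbrace_{i=0}^N$, $\lbrace \eta_i \rbrace_{i=0}^N$ and again citing Lemma~\ref{Psirep} gives that~\eqref{eq:prodInv} represents $\Delta^{-1}$ with respect to each of these bases. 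There is no genuine obstacle; the only point that needs a word of care is the case $N = \infty$, where one must observe that all the series in $\psi$ and in $\widehat\psi$ truncate on each $V_n$ so that the algebra-homomorphism property is applied coefficient by coefficient, but this is already built into~\eqref{eq:qExp} and Lemma~\ref{lem:expEntries}.
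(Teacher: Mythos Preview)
Your proposal is correct and follows exactly the paper's approach: the paper's proof consists of the single line ``By Proposition~\ref{prop:DeltaExp} and Lemma~\ref{Psirep},'' which is precisely the argument you have spelled out in detail.
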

\begin{proof} By
Proposition~\ref{prop:DeltaExp}
and Lemma~\ref{Psirep}.
\end{proof}

\begin{Lemma} The matrix~\eqref{eq:prodOne} is upper triangular, with $(i,j)$-entry
\begin{gather}
 \eta_{j-i} (a_0)
\left[\begin{matrix} j \\
i \end{matrix} \right]_\vartheta
\label{eq:96entries}
\end{gather}
for $0 \leq i\leq j\leq N$.
The matrix~\eqref{eq:prodInv} is upper triangular, with $(i,j)$-entry
\begin{gather}\label{eq:97entries}
 \tau_{j-i} (b_0)
\left[\begin{matrix} j \\
i \end{matrix} \right]_\vartheta
\end{gather}
for $0 \leq i\leq j\leq N$.
To express~\eqref{eq:96entries},
\eqref{eq:97entries} in terms of
$a$, $b$, $q$ use
Lemmas~{\rm \ref{lem:ijN}}, {\rm \ref{lem:tauai}}.
\end{Lemma}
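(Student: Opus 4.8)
The plan is to read off the entries of the matrices \eqref{eq:prodOne}, \eqref{eq:prodInv} directly from results already established, rather than multiplying the two $q$-exponential matrices of Lemma~\ref{lem:expEntries} entrywise. The key input is that \eqref{eq:prodOne} and \eqref{eq:prodInv} have already been identified as the transition matrices between $\lbrace \tau_i\rbrace_{i=0}^N$ and $\lbrace \eta_i\rbrace_{i=0}^N$ (Lemma~\ref{lem:doubletrans}), and that the expansions of $\eta_j$ in the basis $\lbrace\tau_i\rbrace$ and of $\tau_j$ in the basis $\lbrace\eta_i\rbrace$ are known explicitly from Proposition~\ref{prop:tfae4}.

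First I would write $P$ for the matrix \eqref{eq:prodOne}. By Lemma~\ref{lem:doubletrans} and the definition of transition matrix, $\eta_j=\sum_{i=0}^N P_{i,j}\tau_i$ for $0\leq j\leq N$. Our data is double lowering, so $\mathcal L\not=0$, and hence Proposition~\ref{prop:tfae4}(ii) applies and gives $\eta_j=\sum_{i=0}^j \eta_{j-i}(a_0)\left[\begin{matrix} j\\ i\end{matrix}\right]_\vartheta\tau_i$. Since $\lbrace\tau_i\rbrace_{i=0}^N$ is a basis for $V$ (Lemma~\ref{lem:Vnbasis}), the expansion of $\eta_j$ in this basis is unique; comparing the two expansions forces $P_{i,j}=\eta_{j-i}(a_0)\left[\begin{matrix} j\\ i\end{matrix}\right]_\vartheta$ for $0\leq i\leq j\leq N$ and $P_{i,j}=0$ when $j<i$, so in particular $P$ is upper triangular. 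This is exactly the asserted description of \eqref{eq:prodOne}.

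For \eqref{eq:prodInv} I would run the identical argument with the roles of $\tau$ and $\eta$ (equivalently, of $\lbrace a_i\rbrace$ and $\lbrace b_i\rbrace$) interchanged. Writing $Q$ for \eqref{eq:prodInv}, Lemma~\ref{lem:doubletrans} gives $\tau_j=\sum_{i=0}^N Q_{i,j}\eta_i$, while Proposition~\ref{prop:tfae4}(iii) gives $\tau_j=\sum_{i=0}^j \tau_{j-i}(b_0)\left[\begin{matrix} j\\ i\end{matrix}\right]_\vartheta\eta_i$; uniqueness of coordinates in the basis $\lbrace\eta_i\rbrace_{i=0}^N$ then yields $Q_{i,j}=\tau_{j-i}(b_0)\left[\begin{matrix} j\\ i\end{matrix}\right]_\vartheta$ for $0\leq i\leq j\leq N$ and $0$ otherwise. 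The closing remark of the statement is then immediate: substituting the closed forms from Lemma~\ref{lem:ijN} for $\left[\begin{matrix} j\\ i\end{matrix}\right]_\vartheta$ and from Lemma~\ref{lem:tauai} for $\eta_{j-i}(a_0)$, $\tau_{j-i}(b_0)$ rewrites \eqref{eq:96entries} and \eqref{eq:97entries} in terms of $a$, $b$, $q$.

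I do not expect a real obstacle here; the lemma is essentially a repackaging of Proposition~\ref{prop:tfae4} once the matrices are known to be the relevant transition matrices. The only point requiring care is bookkeeping: one must compare coefficients in the correct basis ($\lbrace\tau_i\rbrace$ for \eqref{eq:prodOne}, $\lbrace\eta_i\rbrace$ for \eqref{eq:prodInv}), since using the wrong direction of the transition matrix would force comparing $\Delta^{-1}\tau_j$ to an expansion we have not recorded. The alternative of proving the claim by multiplying out the matrices of Lemma~\ref{lem:expEntries} would instead reduce to the $q$-binomial identity already exploited in Proposition~\ref{prop:DeltaExp}, which is why the transition-matrix route is the cleaner one.
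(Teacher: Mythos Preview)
Your proposal is correct and follows essentially the same approach as the paper: identify \eqref{eq:prodOne} and \eqref{eq:prodInv} as the transition matrices via Lemma~\ref{lem:doubletrans}, then read off the entries from Proposition~\ref{prop:tfae4}(ii),~(iii). The paper's proof is just a terser version of exactly this argument.
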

\begin{proof}
By Lemma~\ref{lem:doubletrans}, the matrix~(\ref{eq:prodOne}) is the transition matrix from
the basis
$\lbrace \tau_i\rbrace_{i=0}^N$ to the basis
$\lbrace \eta_i\rbrace_{i=0}^N$.
The entries of this matrix are obtained
from Proposition~\ref{prop:tfae4}(ii).
The entries of the matrix~(\ref{eq:prodInv}) are similarly obtained.
\end{proof}

Our next goal is to show how the map $A$
from Definition~\ref{def:A} is related to~$\psi$,
$K$,
$B$,
$M$.
\begin{Lemma}\label{lem:AvsKB}
On $V_{N-1}$,
\begin{gather}\label{eq:KA}
\frac{qKA-AK}{q-1} = a^{-1} K^2+ a I,\\
\label{eq:BA}
\frac{qBA-AB}{q-1} = b^{-1} B^2+ b I.
\end{gather}
\end{Lemma}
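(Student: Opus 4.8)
The plan is to verify each of \eqref{eq:KA} and \eqref{eq:BA} by checking that the two sides agree on a basis of $V_{N-1}$, using the three-term expansion of $A$ from Lemma~\ref{lem:Ataui} together with the eigenbasis description of $K$, $B$ from Definition~\ref{def:kbm}. For \eqref{eq:KA} the natural basis is $\lbrace \tau_i \rbrace_{i=0}^{N-1}$ of $V_{N-1}$ (Lemma~\ref{lem:Vnbasis}); note that although $A$ maps $V_{N-1}$ into $V_N$, the maps $K$ and $B$ lie in $\operatorname{End}(V)$ with $V=V_N$, so they are defined on $\tau_N$ and $\eta_N$ and the stated identities make sense on all of $V_{N-1}$.

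Fix $i$ with $0 \leq i \leq N-1$. By Lemma~\ref{lem:Ataui} we have $A\tau_i = a_i \tau_i + \tau_{i+1}$, and by Definition~\ref{def:kbm} we have $K\tau_k = q^{-k}\tau_k$ for $0 \leq k \leq N$. Hence
\begin{gather*}
AK\tau_i = q^{-i}\bigl(a_i \tau_i + \tau_{i+1}\bigr), \qquad
KA\tau_i = a_i q^{-i}\tau_i + q^{-i-1}\tau_{i+1},
\end{gather*}
so that $qKA\tau_i - AK\tau_i = a_i q^{-i}(q-1)\tau_i$ and therefore
\begin{gather*}
\frac{qKA-AK}{q-1}\tau_i = a_i q^{-i}\tau_i.
\end{gather*}
On the other hand $\bigl(a^{-1}K^2 + aI\bigr)\tau_i = \bigl(a^{-1}q^{-2i} + a\bigr)\tau_i$. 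By \eqref{eq:aibi}, $a_i q^{-i} = \bigl(aq^i + a^{-1}q^{-i}\bigr)q^{-i} = a + a^{-1}q^{-2i}$, so the two sides of \eqref{eq:KA} agree on $\tau_i$; since $\lbrace \tau_i \rbrace_{i=0}^{N-1}$ spans $V_{N-1}$, \eqref{eq:KA} follows.

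Equation \eqref{eq:BA} is obtained by the same computation, run instead on the basis $\lbrace \eta_i \rbrace_{i=0}^{N-1}$ of $V_{N-1}$: one uses $A\eta_i = b_i \eta_i + \eta_{i+1}$ from Lemma~\ref{lem:Ataui}, the relations $B\eta_k = q^{-k}\eta_k$ from Definition~\ref{def:kbm}, and the identity $b_i q^{-i} = b + b^{-1}q^{-2i}$ from \eqref{eq:aibi}. There is no real obstacle here; the only point needing a moment's care is the domain issue noted above, namely that $K$ and $B$ must be applied to $\tau_N$ and $\eta_N$ when $i = N-1$, which is legitimate since $K,B \in \operatorname{End}(V)$.
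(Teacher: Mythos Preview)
Your proof is correct and follows essentially the same approach as the paper: both verify \eqref{eq:KA} by applying each side to $\tau_i$ for $0 \leq i \leq N-1$, using Lemma~\ref{lem:Ataui}, Definition~\ref{def:kbm}, and \eqref{eq:aibi}, and then handle \eqref{eq:BA} by the analogous computation with $\eta_i$. Your explicit remark about the domain issue (that $K$, $B$ are defined on all of $V$, hence on $\tau_N$, $\eta_N$) is a welcome clarification not spelled out in the paper's version.
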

\begin{proof}
We first show (\ref{eq:KA}).
For $0 \leq i \leq N-1$ apply
each side of
(\ref{eq:KA}) to $\tau_i$, and evaluate the result using
Lemma
\ref{lem:Ataui} along with
(\ref{eq:aibi})
and
Definition
\ref{def:kbm}. We have
\begin{gather*}
KA\tau_i = K(a_i \tau_i + \tau_{i+1}) = q^{-i} a_i \tau_i + q^{-i-1}\tau_{i+1},\\
AK\tau_i = q^{-i}A\tau_i = q^{-i}(a_i \tau_i + \tau_{i+1}),\\
\big(a^{-1}K^2+ a I\big)\tau_i =\big(a^{-1}q^{-2i}+ a\big)\tau_i =
q^{-i}a_i \tau_i.
\end{gather*}
By these comments we obtain~(\ref{eq:KA}). Equation~(\ref{eq:BA}) is similarly obtained.
\end{proof}

\begin{Lemma}\label{lem:Avspsi}
On $V_{N-1}$,
\begin{gather}
q \psi A - A \psi = \frac{ (q+1)ab M^{-1} - (q+ab)I}{ab-1}.\label{eq:Avspsi}
\end{gather}
\end{Lemma}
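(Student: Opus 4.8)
The plan is to verify (\ref{eq:Avspsi}) by evaluating both sides on the basis $\lbrace \tau_i \rbrace_{i=0}^{N-1}$ of $V_{N-1}$, exactly as was done for the companion identities in Lemma~\ref{lem:AvsKB}. Both sides of (\ref{eq:Avspsi}) are $\mathbb F$-linear maps from $V_{N-1}$, so it suffices to check agreement on each $\tau_i$ with $0 \leq i \leq N-1$.

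First I would compute the left-hand side on $\tau_i$. Using Lemma~\ref{lem:Ataui} and the normalization (\ref{eq:Normal}),
\begin{gather*}
q\psi A \tau_i = q a_i \vartheta_i \tau_{i-1} + q \vartheta_{i+1} \tau_i, \qquad A \psi \tau_i = \vartheta_i a_{i-1} \tau_{i-1} + \vartheta_i \tau_i,
\end{gather*}
so that $(q\psi A - A\psi)\tau_i = (q a_i - a_{i-1})\vartheta_i \tau_{i-1} + (q\vartheta_{i+1} - \vartheta_i)\tau_i$, where the $\tau_{i-1}$ term is vacuous when $i=0$ because $\vartheta_0 = 0$. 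I would then rewrite $q a_i - a_{i-1}$ using Lemma~\ref{lem:aidiff}, the quantity $q\vartheta_{i+1}-\vartheta_i$ using Lemma~\ref{lem:vthdiff}, and $\vartheta_i$ using Lemma~\ref{lem:vartheta}(i); after simplification the coefficient of $\tau_{i-1}$ becomes $(q+1)a(q^i-1)(1-abq^{i-1})/(1-ab)$ and the coefficient of $\tau_i$ becomes $(q+ab-(q+1)abq^i)/(1-ab)$.

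Next I would compute the right-hand side on $\tau_i$. By Lemma~\ref{Mitauj} the matrix of $M^{-1}$ in the basis $\lbrace \tau_i \rbrace$ is bidiagonal, giving $M^{-1}\tau_i = q^i \tau_i + (q^i-1)(aq^{i-1}-b^{-1})\tau_{i-1}$. Substituting this into $\big((q+1)ab M^{-1} - (q+ab)I\big)/(ab-1)$ produces explicit coefficients of $\tau_{i-1}$ and $\tau_i$, and the remaining task is the routine verification that these match the two coefficients found above; the identity $aq^{i-1}-b^{-1} = b^{-1}(abq^{i-1}-1)$ together with clearing the $ab-1$ versus $1-ab$ denominators makes this immediate.

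I do not expect any real obstacle here. The only points requiring care are the $i=0$ boundary case, where all $\tau_{i-1}$ contributions on both sides drop out (one checks both sides reduce to $q\tau_0$), and the consistent handling of the sign and denominator conventions $1-ab$ versus $ab-1$ across the two computations. An alternative would be to deduce (\ref{eq:Avspsi}) from (\ref{eq:KA}) using $M^{-1}K = I + (q^{-1}-1)(b^{-1}-a)\psi$ from Proposition~\ref{prop:MKB}(ii) and $M\psi = q\psi M$, but that route would require controlling how $A$ interacts with $M^{-1}$, which is not directly available, so the basis calculation is the cleaner approach.
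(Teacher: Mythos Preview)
Your proposal is correct and follows essentially the same approach as the paper: apply each side of (\ref{eq:Avspsi}) to $\tau_i$ for $0\le i\le N-1$, compute $\psi A\tau_i$ and $A\psi\tau_i$ via Lemma~\ref{lem:Ataui} and (\ref{eq:Normal}), compute $M^{-1}\tau_i$ via Lemma~\ref{Mitauj}, and compare coefficients using Lemmas~\ref{lem:aidiff}, \ref{lem:vthdiff}, \ref{lem:vartheta}(i). The paper's proof is the same calculation, written slightly more tersely.
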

\begin{proof}
For $0 \leq i \leq N-1$ apply each side of~(\ref{eq:Avspsi}) to $\tau_i$.
Using
Lemma~\ref{lem:Ataui} and~(\ref{eq:Normal}),
\begin{gather*}
\psi A \tau_i = \psi (a_i \tau_i + \tau_{i+1}) =a_i \vartheta_i \tau_{i-1} + \vartheta_{i+1} \tau_i,\\
A \psi \tau_i = \vartheta_i A \tau_{i-1} = \vartheta_i (a_{i-1} \tau_{i-1}+ \tau_i).
\end{gather*}
Using Lemma~\ref{lem:vartheta}(i)
and Lemma~\ref{Mitauj},
\begin{gather*}
M^{-1} \tau_i = q^i \tau_i + (q-1)\big(a-b^{-1}\big)q^{i-1}\vartheta_i\tau_{i-1}.
\end{gather*}
By the above comments and Lemmas~\ref{lem:aidiff},
\ref{lem:vthdiff} we get the result.
\end{proof}

\begin{Lemma}\label{lem:AvsMpsi}
On $V_{N-1}$,
\begin{gather}\label{eq:AvsMpsi}
\frac{ q A M^{-1} - M^{-1} A}{q-1} =
\big(a^{-1} + b^{-1}\big) I+
\big(q-q^{-1}\big)\big(1-a^{-1} b^{-1}\big)\psi.
\end{gather}
\end{Lemma}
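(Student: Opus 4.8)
The plan is to verify~\eqref{eq:AvsMpsi} by evaluating both sides on the basis $\lbrace \tau_i \rbrace_{i=0}^{N-1}$ of $V_{N-1}$, following the pattern of the proofs of Lemmas~\ref{lem:AvsKB} and~\ref{lem:Avspsi}. Fix $i$ with $0 \leq i \leq N-1$. For the action of $A$ I would use Lemma~\ref{lem:Ataui}, giving $A\tau_i = a_i \tau_i + \tau_{i+1}$. For the action of $M^{-1}$ I would use the formula $M^{-1}\tau_i = q^i \tau_i + (q-1)\big(a-b^{-1}\big)q^{i-1}\vartheta_i \tau_{i-1}$, which already occurs in the proof of Lemma~\ref{lem:Avspsi} (it follows from Lemmas~\ref{lem:vartheta}(i) and~\ref{Mitauj}). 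For $\psi$ I would use $\psi\tau_i = \vartheta_i \tau_{i-1}$ from~\eqref{eq:Normal}.

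Carrying this out, each of $qAM^{-1}\tau_i$ and $M^{-1}A\tau_i$ becomes a linear combination of $\tau_{i+1}$, $\tau_i$, $\tau_{i-1}$, and I would compare the difference coefficient by coefficient. The coefficient of $\tau_{i+1}$ is $q\cdot q^i - q^{i+1}=0$, so that term drops out. Writing $c_i = (q-1)\big(a-b^{-1}\big)q^{i-1}\vartheta_i$, the coefficient of $\tau_i$ in the difference is $(q-1)q^i a_i + qc_i - c_{i+1} = (q-1)q^i\big(a_i - \big(a-b^{-1}\big)(\vartheta_{i+1}-\vartheta_i)\big)$; substituting $a - b^{-1} = (ab-1)/b$ together with the value of $\vartheta_{i+1}-\vartheta_i$ from Lemma~\ref{lem:vthdiff} and $a_i = aq^i + a^{-1}q^{-i}$, the inner factor collapses to $\big(a^{-1}+b^{-1}\big)q^{-i}$, so this coefficient equals $(q-1)\big(a^{-1}+b^{-1}\big)$, matching the $\big(a^{-1}+b^{-1}\big)I$ term on the right after dividing by $q-1$. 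The coefficient of $\tau_{i-1}$ in the difference is $c_i(qa_{i-1}-a_i)$; using $a_i - qa_{i-1} = \big(1-q^2\big)a^{-1}q^{-i}$ from Lemma~\ref{lem:aidiff}, this equals $(q-1)\big(q^2-1\big)q^{-1}a^{-1}\big(a-b^{-1}\big)\vartheta_i = (q-1)\big(q-q^{-1}\big)\big(1-a^{-1}b^{-1}\big)\vartheta_i$, which matches the $\psi$-term on the right (since $\psi\tau_i = \vartheta_i\tau_{i-1}$) after dividing by $q-1$.

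Since the two sides of~\eqref{eq:AvsMpsi} agree on the basis $\lbrace \tau_i\rbrace_{i=0}^{N-1}$ of $V_{N-1}$, they agree on $V_{N-1}$, which is the claim. The only substantive work is the two coefficient computations, and the main obstacle, such as it is, is spotting the cancellations: rewriting $a-b^{-1}$ as $(ab-1)/b$ before invoking Lemma~\ref{lem:vthdiff} for the $\tau_i$-coefficient, and combining Lemma~\ref{lem:aidiff} with the identities $\big(q^2-1\big)q^{-1} = q-q^{-1}$ and $a^{-1}\big(a-b^{-1}\big) = 1-a^{-1}b^{-1}$ for the $\tau_{i-1}$-coefficient. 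No new idea beyond those already used for Lemmas~\ref{lem:AvsKB} and~\ref{lem:Avspsi} is required.
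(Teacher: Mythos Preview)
Your proposal is correct and follows essentially the same approach as the paper: the paper's proof also applies both sides of~\eqref{eq:AvsMpsi} to $\tau_i$ for $0 \leq i \leq N-1$ and evaluates using~\eqref{eq:Normal} and Lemmas~\ref{lem:Ataui}, \ref{Mitauj}, \ref{lem:aidiff}, \ref{lem:vthdiff}. Your write-up simply makes the coefficient-by-coefficient verification explicit, which the paper leaves to the reader.
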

\begin{proof} For $0 \leq i \leq N-1$ apply each side
of~(\ref{eq:AvsMpsi}) to~$\tau_i$. Evaluate the result
using~(\ref{eq:Normal}) and Lemmas~\ref{lem:Ataui},
\ref{Mitauj}
along with Lemmas~\ref{lem:aidiff}, \ref{lem:vthdiff}.
\end{proof}

\begin{Proposition}\label{prop:AvPsi}
On $V_{N-2}$,
\begin{gather*}
A^2 \psi - \big(q+q^{-1}\big)A \psi A + \psi A^2+ \big(q-q^{-1}\big)^2 \psi =
\frac{ (1-q)\big(1+q^{-1} ab\big)}{1-ab} A + \frac{\big(q-q^{-1}\big)(a+b)}{1-ab} I.
\end{gather*}
\end{Proposition}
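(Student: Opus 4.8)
The plan is to derive the relation from the two ``$q$-commutator'' identities already in hand, Lemma~\ref{lem:Avspsi} and Lemma~\ref{lem:AvsMpsi}, rather than computing directly on a basis. Set $X = q\psi A - A\psi$, so that by Lemma~\ref{lem:Avspsi} we have $X = cM^{-1} + dI$ on $V_{N-1}$, where
\[
c = \frac{(q+1)ab}{ab-1}, \qquad d = \frac{q+ab}{1-ab}.
\]
The first step is the purely formal identity
\[
AX - q^{-1}XA = (q+q^{-1})A\psi A - A^2\psi - \psi A^2
\]
on $V_{N-2}$, which follows at once from $AX = qA\psi A - A^2\psi$ and $XA = q\psi A^2 - A\psi A$. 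Here one checks, using that $A$ maps $V_{n-1}$ into $V_n$ and $\psi$ lowers degree, that every composite occurring restricts legitimately: $X$ maps $V_{N-1}$ into $V_{N-1}$, and each term in the displayed identity sends $V_{N-2}$ into $V_{N-1}\subseteq V$. Consequently, on $V_{N-2}$,
\[
A^2\psi - (q+q^{-1})A\psi A + \psi A^2 = q^{-1}XA - AX,
\]
so the left-hand side of the Proposition equals $q^{-1}XA - AX + (q-q^{-1})^2\psi$ on $V_{N-2}$.

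Next I would substitute $X = cM^{-1} + dI$, which gives $q^{-1}XA - AX = c\bigl(q^{-1}M^{-1}A - AM^{-1}\bigr) + d(q^{-1}-1)A$. Applying Lemma~\ref{lem:AvsMpsi} (whose identity holds on $V_{N-1}$, hence on $V_{N-2}\subseteq V_{N-1}$) in the rearranged form $q^{-1}M^{-1}A - AM^{-1} = q^{-1}(M^{-1}A - qAM^{-1})$, one obtains
\[
q^{-1}M^{-1}A - AM^{-1} = (q^{-1}-1)\bigl[(a^{-1}+b^{-1})I + (q-q^{-1})(1-a^{-1}b^{-1})\psi\bigr]
\]
on $V_{N-2}$. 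Substituting this, the left-hand side of the Proposition becomes, on $V_{N-2}$,
\[
c(q^{-1}-1)(a^{-1}+b^{-1})I + \Bigl[(q-q^{-1})^2 + c(q^{-1}-1)(q-q^{-1})(1-a^{-1}b^{-1})\Bigr]\psi + d(q^{-1}-1)A.
\]

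It then remains only to simplify the scalar coefficients, using $ab(a^{-1}+b^{-1}) = a+b$, the identity $c(1-a^{-1}b^{-1}) = q+1$, and $(q+1)(q^{-1}-1) = -(q-q^{-1})$. The coefficient of $\psi$ is $(q-q^{-1})^2 + (q-q^{-1})\cdot(q+1)(q^{-1}-1) = (q-q^{-1})^2 - (q-q^{-1})^2 = 0$; the coefficient of $I$ is $\frac{(q+1)(q^{-1}-1)(a+b)}{ab-1} = \frac{(q-q^{-1})(a+b)}{1-ab}$; and the coefficient of $A$ is $d(q^{-1}-1) = \frac{(q+ab)q^{-1}(1-q)}{1-ab} = \frac{(1-q)(1+q^{-1}ab)}{1-ab}$. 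These match the right-hand side, completing the proof. I expect the one delicate point to be the domain bookkeeping in the first two steps (verifying that each composition of $A$'s, $\psi$, and $M^{-1}$ acts on the appropriate $V_n$); once that is settled the algebra afterward is routine.

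As an alternative one may apply both sides to the basis $\{\tau_i\}_{i=0}^N$ via $A\tau_i = a_i\tau_i + \tau_{i+1}$ (Lemma~\ref{lem:Ataui}) and $\psi\tau_i = \vartheta_i\tau_{i-1}$ (equation~\eqref{eq:Normal}); then the coefficient of $\tau_{i-1}$ reduces to the $(q+q^{-1},0,-(q-q^{-1})^2)$-recurrence satisfied by $\{a_i\}$ (Lemmas~\ref{lem:closedformthreetermS99} and~\ref{lem:bgv}), while the coefficients of $\tau_i$ and $\tau_{i+1}$ reduce to identities among the $\vartheta_j$ furnished by Lemma~\ref{lem:vthdiff}. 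I expect the operator route above to be the shorter of the two.
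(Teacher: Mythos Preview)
Your proof is correct and follows essentially the same route as the paper: let $X$ be the common value in Lemma~\ref{lem:Avspsi}, form a $q$-commutator of $A$ with $X$, substitute $X=cM^{-1}+dI$, and evaluate the resulting $M^{-1}$-term via Lemma~\ref{lem:AvsMpsi}. The paper phrases this as ``compute $\frac{qAX-XA}{q-1}$ and evaluate using Lemma~\ref{lem:AvsMpsi}'', which up to the harmless scalar $-q^{-1}(q-1)$ is exactly your $q^{-1}XA-AX$; your explicit coefficient bookkeeping and domain checks are a welcome expansion of that terse hint.
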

\begin{proof} Let $X$ denote the expression on either side
of~(\ref{eq:Avspsi}).
Compute
\begin{gather*}
\frac{q A X - X A}{q-1}
\end{gather*}
and evaluate the result using Lemma~\ref{lem:AvsMpsi}.
\end{proof}

\begin{Proposition}\label{prop:ppA}
On $V_{N-1}$,
\begin{gather*}
\psi^2 A - \big(q+q^{-1}\big) \psi A \psi + A \psi^2 = \frac{(1-q)\big(1+ q^{-1}ab\big)}{1-ab}
\psi.
\end{gather*}
\end{Proposition}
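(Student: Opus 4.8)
The plan is to follow the same template as the proof of Proposition~\ref{prop:AvPsi}, but with the roles of $A$ and $\psi$ interchanged. Let $X$ denote the common value of the two sides of~\eqref{eq:Avspsi} on $V_{N-1}$, so that on $V_{N-1}$ one has simultaneously
\begin{gather*}
X = q\psi A - A\psi, \qquad X = \frac{(q+1)ab\,M^{-1} - (q+ab)I}{ab-1}.
\end{gather*}
For $v \in V_{N-1}$ we have $Av \in V_N$, hence $\psi A v \in V_{N-1}$, and $\psi v \in V_{N-2}$, hence $A\psi v \in V_{N-1}$; thus $Xv \in V_{N-1}$. Since $\psi$ also lowers degree, the composites $\psi X$ and $X\psi$ are well defined on $V_{N-1}$ (for $X\psi$ one uses $\psi v \in V_{N-2}\subseteq V_{N-1}$).

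First I would compute the combination $q^{-1}\psi X - X\psi$ on $V_{N-1}$ by expanding $X = q\psi A - A\psi$. A direct expansion gives
\begin{gather*}
q^{-1}\psi X - X\psi = \psi^2 A - \big(q+q^{-1}\big)\psi A\psi + A\psi^2,
\end{gather*}
which is exactly the left-hand side of the claimed identity; the coefficients $q^{-1}$ and $-1$ are the unique ones forcing the coefficients of $\psi^2 A$ and of $A\psi^2$ to both equal $1$.

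Next I would evaluate the same combination $q^{-1}\psi X - X\psi$ using the second expression for $X$, together with the relation $M\psi = q\psi M$ of Lemma~\ref{lem:KBM}(iii), equivalently $\psi M^{-1} = q\,M^{-1}\psi$ on $V$. In $q^{-1}\psi X$ the $M^{-1}$-term becomes $q^{-1}(q+1)ab\,\psi M^{-1} = (q+1)ab\,M^{-1}\psi$, which exactly cancels the $M^{-1}$-term of $X\psi$; only the scalar contributions survive, and a short simplification gives
\begin{gather*}
q^{-1}\psi X - X\psi = \frac{(q+ab)\big(q-1\big)}{q(ab-1)}\,\psi = \frac{(1-q)\big(1+q^{-1}ab\big)}{1-ab}\,\psi.
\end{gather*}
Equating the two evaluations of $q^{-1}\psi X - X\psi$ yields the proposition. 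There is no real obstacle here beyond commutator bookkeeping already seen in the proof of Proposition~\ref{prop:AvPsi}; the only point requiring a little care is keeping track that all identities are being applied on $V_{N-1}$ (which is why one needs $X$ to preserve $V_{N-1}$ and $M\psi = q\psi M$ to be a global identity on $V$).
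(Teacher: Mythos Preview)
Your proof is correct and follows essentially the same approach as the paper: both let $X$ be the common value of the two sides of~\eqref{eq:Avspsi}, form a $q$-commutator of $X$ with $\psi$, and simplify using $M\psi = q\psi M$. The only cosmetic difference is that the paper computes $qX\psi - \psi X$ (yielding $-q$ times the desired left-hand side) whereas you compute $q^{-1}\psi X - X\psi = -q^{-1}(qX\psi - \psi X)$, which gives the left-hand side directly; the two are scalar multiples of one another and the remaining bookkeeping is identical.
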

\begin{proof} Let $X$ denote the expression on either side of~(\ref{eq:Avspsi}). Compute $qX \psi - \psi X$ and evaluate
the result using $q M^{-1}\psi = \psi M^{-1}$.
\end{proof}

\begin{Proposition}\label{prop:AAM}
On $V_{N-2}$,
\begin{gather*}
A^2 M^{-1} - \big(q+q^{-1}\big)AM^{-1}A+M^{-1}A^2 + \big(q-q^{-1}\big)^2 M^{-1}\\
\qquad{}=
(q-1)\big(q-q^{-1}\big)\big( q^{-1}+ a^{-1}b^{-1}\big) I - q^{-1}(q-1)^2 \big(a^{-1}+b^{-1}\big) A.
\end{gather*}
\end{Proposition}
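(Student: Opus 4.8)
The plan is to carry out, one level higher, the argument used for Proposition~\ref{prop:AvPsi}: iterate the first‑order relation of Lemma~\ref{lem:AvsMpsi} in $A$, and then remove the leftover $\psi$‑term with Lemma~\ref{lem:Avspsi}. Let $Y$ denote the common value of the two sides of~\eqref{eq:AvsMpsi}; it is defined on $V_{N-1}$, and since $A V_{N-2}\subseteq V_{N-1}$ the map $qYA-AY$ is defined on $V_{N-2}$. I will compute $qYA-AY$ on $V_{N-2}$ in two ways and compare.

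Using the left‑hand expression $(q-1)Y = qAM^{-1}-M^{-1}A$, a short expansion gives
\begin{gather*}
(q-1)(qYA-AY) = q\bigl(qAM^{-1}-M^{-1}A\bigr)A - A\bigl(qAM^{-1}-M^{-1}A\bigr) = -q\bigl(A^2M^{-1}-\bigl(q+q^{-1}\bigr)AM^{-1}A+M^{-1}A^2\bigr),
\end{gather*}
so $qYA-AY$ is, up to the scalar $-q/(q-1)$, the $q$-Serre–type expression on the left‑hand side of the proposition (before the $M^{-1}$‑summand is included). Using instead the right‑hand expression $Y=\bigl(a^{-1}+b^{-1}\bigr)I+\bigl(q-q^{-1}\bigr)\bigl(1-a^{-1}b^{-1}\bigr)\psi$ gives
\begin{gather*}
qYA-AY = (q-1)\bigl(a^{-1}+b^{-1}\bigr)A + \bigl(q-q^{-1}\bigr)\bigl(1-a^{-1}b^{-1}\bigr)\bigl(q\psi A-A\psi\bigr).
\end{gather*}
Into this last line I substitute Lemma~\ref{lem:Avspsi}, that is, $q\psi A-A\psi=\bigl((q+1)abM^{-1}-(q+ab)I\bigr)/(ab-1)$; since $\bigl(1-a^{-1}b^{-1}\bigr)/(ab-1)=(ab)^{-1}$, the $\psi$‑term becomes a linear combination of $M^{-1}$, $A$ and $I$ only.

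Equating the two computations of $qYA-AY$ and multiplying through by $(1-q)/q$ produces the asserted identity after elementary simplification of the coefficients, the useful reductions being $(1-q^2)\bigl(q-q^{-1}\bigr)/q=-\bigl(q-q^{-1}\bigr)^2$ and $(q+ab)/(qab)=q^{-1}+a^{-1}b^{-1}$; in particular the summand $\bigl(q-q^{-1}\bigr)^2 M^{-1}$ of the proposition arises from the $M^{-1}$‑coefficient $(1-q)(q+1)\bigl(q-q^{-1}\bigr)/q$ and is then transposed to the left‑hand side. I expect the only real work to be this scalar bookkeeping, together with the routine check that every composition above makes sense on $V_{N-2}$ so that the relations of Lemmas~\ref{lem:AvsMpsi} and~\ref{lem:Avspsi} (valid on $V_{N-1}$) may legitimately be invoked; no idea beyond re‑using the already‑established first‑order relations is required.
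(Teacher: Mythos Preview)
Your proposal is correct and follows precisely the paper's own argument: the paper's proof reads ``Let $Y$ denote the expression on either side of~(\ref{eq:AvsMpsi}). Compute $qYA-AY$ and evaluate the result using Lemma~\ref{lem:Avspsi},'' which is exactly what you do. Your scalar bookkeeping checks out, including the identification of the $M^{-1}$-coefficient and the domain considerations on $V_{N-2}$.
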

\begin{proof} Let $Y$ denote the expression on either side of~(\ref{eq:AvsMpsi}). Compute $q YA - AY $
and evaluate the result using
Lemma~\ref{lem:Avspsi}.
\end{proof}

\begin{Proposition}\label{prop:MMA}On $V_{N-1}$,
\begin{gather*}
M^{-2}A - \big(q+q^{-1}\big) M^{-1} A M^{-1} + A M^{-2} =
(q-1)\big(q^{-1}-1\big)\big(a^{-1}+b^{-1}\big) M^{-1}.
\end{gather*}
\end{Proposition}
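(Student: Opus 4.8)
The plan is to follow the same template used in Propositions~\ref{prop:AvPsi}, \ref{prop:ppA} and~\ref{prop:AAM}: take one of the already established ``$q$-commutator'' identities involving $A$ and form a suitable $q$-commutator of it with $M^{-1}$, then read off the claimed relation by evaluating the result in two ways. The natural choice here is Lemma~\ref{lem:AvsMpsi}, since its right-hand side is already linear in $\psi$.

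Concretely, let $Y$ denote the expression on either side of~\eqref{eq:AvsMpsi}; thus on $V_{N-1}$ we have
\begin{gather*}
Y = \frac{qAM^{-1}-M^{-1}A}{q-1} = \big(a^{-1}+b^{-1}\big)I + \big(q-q^{-1}\big)\big(1-a^{-1}b^{-1}\big)\psi.
\end{gather*}
I would compute $qM^{-1}Y - YM^{-1}$ on $V_{N-1}$ two ways. Using the left-hand expression for $Y$, the product $qM^{-1}Y-YM^{-1}$ expands into a combination of $M^{-2}A$, $M^{-1}AM^{-1}$ and $AM^{-2}$; collecting terms (the middle coefficient being $(q^2+1)/q = q+q^{-1}$) one finds that $qM^{-1}Y-YM^{-1}$ equals $-q(q-1)^{-1}$ times
\begin{gather*}
M^{-2}A - \big(q+q^{-1}\big)M^{-1}AM^{-1} + AM^{-2}.
\end{gather*}
Using instead the right-hand expression for $Y$, the scalar part contributes $(a^{-1}+b^{-1})(q-1)M^{-1}$, while the $\psi$-part contributes a multiple of $qM^{-1}\psi-\psi M^{-1}$, which vanishes: from $M\psi=q\psi M$ (Lemma~\ref{lem:KBM}(iii)) we get $qM^{-1}\psi=\psi M^{-1}$, exactly the relation used in the proof of Proposition~\ref{prop:ppA}. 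Equating the two evaluations and multiplying through by $-(q-1)q^{-1}$ gives the assertion, using the elementary identity $-(q-1)^2q^{-1}=(q-1)(q^{-1}-1)$.

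This is a routine computation and I do not anticipate a genuine obstacle. The only points demanding a little care are the bookkeeping in the expansion of $qM^{-1}Y-YM^{-1}$ via the left-hand side of~\eqref{eq:AvsMpsi}, and the observation that, because $A$ occurs with multiplicity one in every term that appears (unlike in Propositions~\ref{prop:AvPsi} and~\ref{prop:AAM}, where $A^2$ forces a passage to $V_{N-2}$), the identity here is valid on all of $V_{N-1}$, consistent with the statement.
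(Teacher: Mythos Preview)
Your proposal is correct and follows exactly the same approach as the paper: the paper's proof is a one-sentence ``Let $Y$ denote the expression on either side of~(\ref{eq:AvsMpsi}); compute $qM^{-1}Y - YM^{-1}$ and evaluate the result using $qM^{-1}\psi = \psi M^{-1}$,'' and you have spelled out precisely this computation, including the correct coefficient $-q(q-1)^{-1}$ and the identity $-(q-1)^2q^{-1}=(q-1)(q^{-1}-1)$. Your remark about why the identity is valid on all of $V_{N-1}$ (only one factor of $A$ appears) is a nice addition not made explicit in the paper.
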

\begin{proof} Let $Y$ denote the expression on either side of~(\ref{eq:AvsMpsi}).
Compute $q M^{-1} Y - Y M^{-1}$ and evaluate the result using $q M^{-1} \psi = \psi M^{-1}$.
\end{proof}

In Note~\ref{note:3trec} we mentioned
a 3-term recurrence satisfied by the polynomials
$\lbrace w_i \rbrace_{i=0}^N$ and
$\lbrace w'_i \rbrace_{i=0}^N$.
Our next goal is to describe this recurrence.

\begin{Lemma}\label{prop:3trec} We have
\begin{gather*}
x w_i = w_{i+1} + q^{-i}\big(a^{-1}+b^{-1}\big)w_i + \big(1-q^{-i}\big)\big(1-q^{1-i}a^{-1}b^{-1}\big)w_{i-1}
\end{gather*}
for $0 \leq i \leq N-1$, where $w_0=1$ and $w_{-1}=0$.
\end{Lemma}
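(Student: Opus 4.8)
The plan is to identify $xw_i$ with $Aw_i$, where $A$ is the multiplication-by-$x$ map of Definition~\ref{def:A}, and to read off the three coefficients of the recurrence from the operator identity in Lemma~\ref{lem:AvsMpsi}. Fix $i$ with $0\le i\le N-1$. Then $w_i\in V_i\subseteq V_{N-1}$, so $Aw_i\in V_{i+1}$, and by Lemma~\ref{lem:wibasis}(ii) we may write $Aw_i=\sum_{k=0}^{i+1}c_k w_k$ for unique scalars $c_k$. Since multiplication by $x$ carries a monic polynomial of degree $i$ to a monic polynomial of degree $i+1$ and $w_{i+1}$ is monic of degree $i+1$ by Lemma~\ref{lem:wibasis}(i), comparing leading terms gives $c_{i+1}=1$ immediately.

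Next I would apply the identity $\frac{qAM^{-1}-M^{-1}A}{q-1}=(a^{-1}+b^{-1})I+(q-q^{-1})(1-a^{-1}b^{-1})\psi$ of Lemma~\ref{lem:AvsMpsi} to the vector $w_i$ (legitimate since $w_i\in V_{N-1}$). Using $Mw_i=q^{-i}w_i$ from Definition~\ref{def:kbm}, hence $M^{-1}w_i=q^{i}w_i$, together with $\psi w_i=\vartheta_i w_{i-1}$ from Lemma~\ref{lem:psiW}, the left-hand side becomes $\frac{1}{q-1}\sum_{k=0}^{i}c_k(q^{i+1}-q^{k})w_k$ (the $k=i+1$ term drops out), while the right-hand side is $(a^{-1}+b^{-1})w_i+(q-q^{-1})(1-a^{-1}b^{-1})\vartheta_i w_{i-1}$. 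Comparing coefficients in the basis $\{w_k\}_{k=0}^N$ then yields: for $0\le k\le i-2$, $c_k(q^{i+1}-q^{k})=0$, and since $3\le i+1-k\le i+1\le N$ the hypothesis $q^{m}\ne 1$ for $1\le m\le N$ forces $c_k=0$; the coefficient of $w_i$ gives $c_i q^{i}=a^{-1}+b^{-1}$, so $c_i=q^{-i}(a^{-1}+b^{-1})$; and the coefficient of $w_{i-1}$ gives $c_{i-1}q^{i-1}(q+1)=(q-q^{-1})(1-a^{-1}b^{-1})\vartheta_i$, whence $c_{i-1}=q^{-i}(q-1)(1-a^{-1}b^{-1})\vartheta_i$ using $\frac{q-q^{-1}}{q+1}=q^{-1}(q-1)$.

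Finally I would simplify $c_{i-1}$ with the closed form $\vartheta_i=\frac{1-q^{-i}}{1-q^{-1}}\cdot\frac{1-a^{-1}b^{-1}q^{1-i}}{1-a^{-1}b^{-1}}\,q^{i-1}$ of Lemma~\ref{lem:vartheta}(ii): substituting and using $\frac{q-1}{1-q^{-1}}=q$ collapses $c_{i-1}$ to $(1-q^{-i})(1-q^{1-i}a^{-1}b^{-1})$. Assembling the coefficients gives $xw_i=Aw_i=w_{i+1}+q^{-i}(a^{-1}+b^{-1})w_i+(1-q^{-i})(1-q^{1-i}a^{-1}b^{-1})w_{i-1}$, which is the asserted recurrence; the case $i=0$ is consistent because the $w_{i-1}$-coefficient vanishes and $w_{-1}=0$. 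I do not anticipate a genuine obstacle: the only points requiring care are the bookkeeping in the coefficient comparison and invoking the nondegeneracy hypothesis ($q^m\ne 1$ for $1\le m\le N$, and hence $q^2\ne 1$ since $N\ge 2$) to kill the lower coefficients $c_k$ with $k\le i-2$ and to justify the divisions.
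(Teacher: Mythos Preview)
Your argument is correct and is essentially the paper's own proof: both expand $Aw_i$ in the basis $\{w_k\}$, pin down the top coefficient by monicity, and then apply the identity of Lemma~\ref{lem:AvsMpsi} to $w_i$ together with $M^{-1}w_k=q^kw_k$ and $\psi w_i=\vartheta_i w_{i-1}$ to read off the remaining coefficients. The only cosmetic difference is that the paper treats $i=0$ separately via Example~\ref{eq:wsmallex}(ii), whereas you absorb it into the general computation by noting that the $w_{i-1}$-coefficient vanishes at $i=0$.
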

\begin{proof}The result holds for $i=0$, since $w_1=x-a^{-1}-b^{-1}$
by Example~\ref{eq:wsmallex}(ii).
Assume that $i\geq 1$. By Lemma~\ref{lem:wibasis}(i) there exist
scalars $\lbrace \alpha_k \rbrace_{k=0}^{i+1}$ in $\mathbb F$
such that
$xw_i = \sum_{k=0}^{i+1} \alpha_k w_k$ and $\alpha_{k+1}=1$.
To obtain $\lbrace \alpha_k \rbrace_{k=0}^{i}$, apply each side of~(\ref{eq:AvsMpsi}) to $w_i$ and evaluate the result using
Lemma~\ref{lem:psiW} along with
$M^{-1}w_j = q^j w_j$ for $0 \leq j \leq i$.
After a brief calculation this yields
$\alpha_i = q^{-i}\big(a^{-1}+b^{-1}\big)$ and
$\alpha_{i-1} =
\big(1-q^{-i}\big)\big(1-q^{1-i}a^{-1}b^{-1}\big)$ and
$\alpha_k = 0 $ for $0 \leq k \leq i-2$.
The result follows.
\end{proof}

\begin{Lemma}\label{prop:3trecprme} We have
\begin{gather*}
x w'_i = w'_{i+1} + q^{i}(a+b)w'_i + \big(1-q^{i}\big)\big(1-q^{i-1}ab\big)w'_{i-1}
\end{gather*}
$0 \leq i \leq N-1$, where $w'_0=1$ and $w'_{-1}=0$.
\end{Lemma}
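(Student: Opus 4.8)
The plan is to deduce this recurrence from Lemma~\ref{prop:3trec} via the substitution $q \mapsto q^{-1}$, $a \mapsto a^{-1}$, $b \mapsto b^{-1}$, which is exactly the symmetry recorded in Note~\ref{note:qi} and Note~\ref{note:wprime}.

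First I would check that this substitution maps the present setup to itself. It fixes each scalar $a_i = aq^i+a^{-1}q^{-i}$ and $b_i = bq^i+b^{-1}q^{-i}$ by Note~\ref{note:qi}, so it fixes the polynomials $\tau_i$, $\eta_i$ of~(\ref{eq:tau}),~(\ref{eq:eta}); it fixes each $\vartheta_i$ by Lemma~\ref{lem:vartheta}(i),~(ii); it fixes the operator ``multiplication by $x$'' (which involves none of $a,b,q$); and it fixes the normalized map $\psi$, since $\psi$ is characterized by $\psi\tau_i = \vartheta_i\tau_{i-1}$ and $\psi\eta_i=\vartheta_i\eta_{i-1}$ (Lemma~\ref{lem:charNorm}). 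It also preserves the nondegeneracy hypothesis~(\ref{eq:nondeg}), since $q^{-i}\neq 1$ exactly when $q^i\neq 1$ and $a^{-1}b^{-1}q^{1-i}\neq 1$ exactly when $abq^{i-1}\neq 1$. Hence every identity among these objects that is valid for all admissible $a,b,q$ remains valid after the substitution, and by Note~\ref{note:wprime} the substitution interchanges $\lbrace w_i\rbrace_{i=0}^N$ with $\lbrace w'_i\rbrace_{i=0}^N$.

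Then I would apply the substitution to the identity of Lemma~\ref{prop:3trec}. The left side $xw_i$ becomes $xw'_i$. On the right, $w_{i+1}\mapsto w'_{i+1}$; the coefficient $q^{-i}(a^{-1}+b^{-1})$ of $w_i$ becomes $q^{i}(a+b)$ and $w_i\mapsto w'_i$; and the coefficient $(1-q^{-i})(1-q^{1-i}a^{-1}b^{-1})$ of $w_{i-1}$ becomes $(1-q^{i})(1-q^{i-1}ab)$ with $w_{i-1}\mapsto w'_{i-1}$. The boundary data $w_0=1$, $w_{-1}=0$ maps to $w'_0=1$, $w'_{-1}=0$ (Lemma~\ref{lem:wibasis}(i), Definition~\ref{def:wi}). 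This is precisely the asserted recurrence for $0\leq i\leq N-1$.

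There is no real obstacle here; the only point I would make explicit is the one above, that the substitution is a legitimate symmetry of the entire construction. If one preferred a self-contained argument paralleling the proof of Lemma~\ref{prop:3trec}, one could instead introduce $M'\in\operatorname{End}(V)$ with $M'w'_i=q^{-i}w'_i$, note that $\psi w'_i=\vartheta_i w'_{i-1}$ by the substituted form of Lemma~\ref{lem:psiW} (using the invariance of $\vartheta_i$), apply the substituted form of Lemma~\ref{lem:AvsMpsi} to $w'_i$, and read off the coefficients of $w'_{i+1}$, $w'_i$, $w'_{i-1}$; but the symmetry route is shorter and I would use it.
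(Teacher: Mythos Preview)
Your proposal is correct and follows precisely the paper's own approach: the paper's proof simply says to replace $q\mapsto q^{-1}$, $a\mapsto a^{-1}$, $b\mapsto b^{-1}$ in Lemma~\ref{prop:3trec} and invoke Note~\ref{note:wprime}. Your version is more explicit about why this substitution is a valid symmetry of the setup, but the underlying argument is the same.
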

\begin{proof} In Proposition~\ref{prop:3trec} replace
$q\mapsto q^{-1}$,
$a\mapsto a^{-1}$,
$b\mapsto b^{-1}$
and use Note~\ref{note:wprime}.
\end{proof}

This completes our description of $\cal L$ and $\Delta$ for the twin recurrent data from Case~I of Lemma~\ref{lem:twin}. In this description we encountered analogs of the results from Section~\ref{section1} about the double lowering operator of a tridiagonal pair. This connection suggests that double lowering operators on polynomials can be used to further develop the theory of
tridiagonal pairs; we hope to pursue this in the future.

\subsection*{Acknowledgement} The author would like to thank Kazumasa Nomura
for giving this paper a close reading and offering many valuable comments.

\pdfbookmark[1]{References}{ref}
\LastPageEnding


\begin{thebibliography}{99}
\footnotesize\itemsep=0pt

\bibitem{alnajjar}
Alnajjar H., Leonard pairs associated with the equitable generators of the
 quantum algebra {$U_q(\mathfrak{sl}_2)$}, \href{https://doi.org/10.1080/03081087.2011.565757}{\textit{Linear Multilinear Algebra}}
 \textbf{59} (2011), 1127--1142.

\bibitem{alnajjarC}
Alnajjar H., Curtin B., A family of tridiagonal pairs related to the quantum
 affine algebra {$U_q(\widehat{\mathfrak{sl}_2})$}, \href{https://doi.org/10.13001/1081-3810.1147}{\textit{Electron.~J.
 Linear Algebra}} \textbf{13} (2005), 1--9.

\bibitem{aw}
Askey R., Wilson J., A set of orthogonal polynomials that generalize the
 {R}acah coefficients or {$6-j$} symbols, \href{https://doi.org/10.1137/0510092}{\textit{SIAM~J. Math. Anal.}}
 \textbf{10} (1979), 1008--1016.

\bibitem{bannaiIto}
Bannai E., Ito T., Algebraic combinatorics. {I}.~Association schemes, The
 Benjamin/Cummings Publishing Co., Inc., Menlo Park, CA, 1984.

\bibitem{bas2}
Baseilhac P., An integrable structure related with tridiagonal algebras,
 \href{https://doi.org/10.1016/j.nuclphysb.2004.11.014}{\textit{Nuclear Phys.~B}} \textbf{705} (2005), 605--619,
 \href{https://arxiv.org/abs/math-ph/0408025}{arXiv:math-ph/0408025}.

\bibitem{bas5}
Baseilhac P., The {$q$}-deformed analogue of the {O}nsager algebra: beyond the
 {B}ethe ansatz approach, \href{https://doi.org/10.1016/j.nuclphysb.2006.08.008}{\textit{Nuclear Phys.~B}} \textbf{754} (2006),
 309--328, \href{https://arxiv.org/abs/math-ph/0604036}{arXiv:math-ph/0604036}.

\bibitem{bockting1}
Bockting-Conrad S., Two commuting operators associated with a tridiagonal pair,
 \href{https://doi.org/10.1016/j.laa.2012.02.007}{\textit{Linear Algebra Appl.}} \textbf{437} (2012), 242--270,
 \href{https://arxiv.org/abs/1110.3434}{arXiv:1110.3434}.

\bibitem{bockting}
Bockting-Conrad S., Tridiagonal pairs of {$q$}-{R}acah type, the double
 lowering operator {$\psi$}, and the quantum algebra {$U_q(\mathfrak{sl}_2)$},
 \href{https://doi.org/10.1016/j.laa.2013.12.007}{\textit{Linear Algebra Appl.}} \textbf{445} (2014), 256--279,
 \href{https://arxiv.org/abs/1307.7410}{arXiv:1307.7410}.

\bibitem{bocktingQexp}
Bockting-Conrad S., Some $q$-exponential formulas involving the double lowering
 operator $\psi$ for a tridiagonal pair, \href{https://arxiv.org/abs/1907.01157}{arXiv:1907.01157}.

\bibitem{bocktingTer}
Bockting-Conrad S., Terwilliger P., The algebra {$U_q(\mathfrak{sl}_2)$} in
 disguise, \href{https://doi.org/10.1016/j.laa.2014.07.022}{\textit{Linear Algebra Appl.}} \textbf{459} (2014), 548--585,
 \href{https://arxiv.org/abs/1307.7572}{arXiv:1307.7572}.

\bibitem{BCN}
Brouwer A.E., Cohen A.M., Neumaier A., Distance-regular graphs,
 \textit{Ergebnisse der Mathematik und ihrer Grenzgebiete~(3)}, Vol.~18,
 \href{https://doi.org/10.1007/978-3-642-74341-2}{Springer-Verlag}, Berlin, 1989.

\bibitem{DateRoan2}
Date E., Roan S., The structure of quotients of the {O}nsager algebra by
 closed ideals, \href{https://doi.org/10.1088/0305-4470/33/16/316}{\textit{J.~Phys.~A: Math. Gen.}} \textbf{33} (2000),
 3275--3296, \href{https://arxiv.org/abs/math.QA/9911018}{arXiv:math.QA/9911018}.

\bibitem{gr}
Gasper G., Rahman M., Basic hypergeometric series, \textit{Encyclopedia of
 Mathematics and its Applications}, Vol.~35, Cambridge University Press,
 Cambridge, 1990.

\bibitem{zhedmut}
Granovskii Ya.I., Lutzenko I.M., Zhedanov A.S., Mutual integrability, quadratic
 algebras, and dynamical symmetry, \href{https://doi.org/10.1016/0003-4916(92)90336-K}{\textit{Ann. Physics}} \textbf{217} (1992),
 1--20.

\bibitem{Ha}
Hartwig B., The tetrahedron algebra and its finite-dimensional irreducible
 modules, \href{https://doi.org/10.1016/j.laa.2006.09.024}{\textit{Linear Algebra Appl.}} \textbf{422} (2007), 219--235,
 \href{https://arxiv.org/abs/math.RT/0606197}{arXiv:math.RT/0606197}.

\bibitem{HT}
Hartwig B., Terwilliger P., The tetrahedron algebra, the {O}nsager algebra, and
 the {$\mathfrak{sl}_2$} loop algebra, \href{https://doi.org/10.1016/j.jalgebra.2006.09.011}{\textit{J.~Algebra}} \textbf{308}
 (2007), 840--863, \href{https://arxiv.org/abs/math-ph/0511004}{arXiv:math-ph/0511004}.

\bibitem{TD00}
Ito T., Tanabe K., Terwilliger P., Some algebra related to {$P$}- and
 {$Q$}-polynomial association schemes, in Codes and Association Schemes
 ({P}iscataway, {NJ}, 1999), \textit{DIMACS Ser. Discrete Math. Theoret.
 Comput. Sci.}, Vol.~56, \href{https://doi.org/10.1090/dimacs/056/14}{Amer. Math. Soc.}, Providence, RI, 2001, 167--192,
 \href{https://arxiv.org/abs/math.CO/0406556}{arXiv:math.CO/0406556}.

\bibitem{tdanduq}
Ito T., Terwilliger P., Tridiagonal pairs and the quantum affine algebra
 {$U_q(\widehat{\mathfrak{sl}}_2)$}, \href{https://doi.org/10.1007/s11139-006-0242-4}{\textit{Ramanujan~J.}} \textbf{13} (2007),
 39--62, \href{https://arxiv.org/abs/math.QA/0310042}{arXiv:math.QA/0310042}.

\bibitem{IT:Krawt}
Ito T., Terwilliger P., Tridiagonal pairs of {K}rawtchouk type, \href{https://doi.org/10.1016/j.laa.2007.07.014}{\textit{Linear
 Algebra Appl.}} \textbf{427} (2007), 218--233, \href{https://arxiv.org/abs/0706.1065}{arXiv:0706.1065}.

\bibitem{nonnil}
Ito T., Terwilliger P., Two non-nilpotent linear transformations that satisfy
 the cubic {$q$}-{S}erre relations, \href{https://doi.org/10.1142/S021949880700234X}{\textit{J.~Algebra Appl.}} \textbf{6}
 (2007), 477--503, \href{https://arxiv.org/abs/math.QA/0508398}{arXiv:math.QA/0508398}.

\bibitem{Ev}
Ito T., Terwilliger P., Finite-dimensional irreducible modules for the
 three-point {$\mathfrak{sl}_2$} loop algebra, \href{https://doi.org/10.1080/00927870802185963}{\textit{Comm. Algebra}}
 \textbf{36} (2008), 4557--4598, \href{https://arxiv.org/abs/0707.2313}{arXiv:0707.2313}.

\bibitem{tdqrac}
Ito T., Terwilliger P., Tridiagonal pairs of {$q$}-{R}acah type,
 \href{https://doi.org/10.1016/j.jalgebra.2009.04.008}{\textit{J.~Algebra}} \textbf{322} (2009), 68--93, \href{https://arxiv.org/abs/0807.0271}{arXiv:0807.0271}.

\bibitem{augmented}
Ito T., Terwilliger P., The augmented tridiagonal algebra, \href{https://doi.org/10.2206/kyushujm.64.81}{\textit{Kyushu~J.
 Math.}} \textbf{64} (2010), 81--144, \href{https://arxiv.org/abs/0904.2889}{arXiv:0904.2889}.

\bibitem{equit}
Ito T., Terwilliger P., Weng C., The quantum algebra {$U_q(\mathfrak{sl}_2)$}
 and its equitable presentation, \href{https://doi.org/10.1016/j.jalgebra.2005.07.038}{\textit{J.~Algebra}} \textbf{298} (2006),
 284--301, \href{https://arxiv.org/abs/math.QA/0507477}{arXiv:math.QA/0507477}.

\bibitem{koekoek}
Koekoek R., Lesky P.A., Swarttouw R.F., Hypergeometric orthogonal polynomials
 and their {$q$}-analogues, \textit{Springer Monographs in Mathematics},
 \href{https://doi.org/10.1007/978-3-642-05014-5}{Springer-Verlag}, Berlin, 2010.

\bibitem{miki}
Miki K., Finite dimensional modules for the {$q$}-tetrahedron algebra,
 \textit{Osaka~J. Math.} \textbf{47} (2010), 559--589.

\bibitem{NomTerK}
Nomura K., Terwilliger P., Krawtchouk polynomials, the {L}ie algebra
 {$\mathfrak{sl}_2$}, and {L}eonard pairs, \href{https://doi.org/10.1016/j.laa.2012.02.006}{\textit{Linear Algebra Appl.}}
 \textbf{437} (2012), 345--375, \href{https://arxiv.org/abs/1201.1645}{arXiv:1201.1645}.

\bibitem{NomTerTB}
Nomura K., Terwilliger P., Totally bipartite tridiagonal pairs,
 \href{https://arxiv.org/abs/1711.00332}{arXiv:1711.00332}.

\bibitem{tersub1}
Terwilliger P., The subconstituent algebra of an association scheme.~{I},
 \href{https://doi.org/10.1023/A:1022494701663}{\textit{J.~Algebraic Combin.}} \textbf{1} (1992), 363--388.

\bibitem{2lintrans}
Terwilliger P., Two linear transformations each tridiagonal with respect to an
 eigenbasis of the other, \href{https://doi.org/10.1016/S0024-3795(01)00242-7}{\textit{Linear Algebra Appl.}} \textbf{330} (2001),
 149--203, \href{https://arxiv.org/abs/math.RA/0406555}{arXiv:math.RA/0406555}.

\bibitem{qSerre}
Terwilliger P., Two relations that generalize the {$q$}-{S}erre relations and
 the {D}olan--{G}rady relations, in Physics and Combinatorics 1999 ({N}agoya),
 \href{https://doi.org/10.1142/9789812810199_0013}{World Sci. Publ.}, River Edge, NJ, 2001, 377--398, \href{https://arxiv.org/abs/math.QA/0307016}{arXiv:math.QA/0307016}.

\bibitem{ter2004}
Terwilliger P., Leonard pairs and the {$q$}-{R}acah polynomials, \href{https://doi.org/10.1016/j.laa.2004.02.014}{\textit{Linear
 Algebra Appl.}} \textbf{387} (2004), 235--276, \href{https://arxiv.org/abs/math.QA/0306301}{arXiv:math.QA/0306301}.

\bibitem{ter2005}
Terwilliger P., Two linear transformations each tridiagonal with respect to an
 eigenbasis of the other; comments on the parameter array, \href{https://doi.org/10.1007/s10623-004-4862-7}{\textit{Des. Codes
 Cryptogr.}} \textbf{34} (2005), 307--332, \href{https://arxiv.org/abs/math.RA/0306291}{arXiv:math.RA/0306291}.

\bibitem{madrid}
Terwilliger P., An algebraic approach to the {A}skey scheme of orthogonal
 polynomials, in Orthogonal Polynomials and Special Functions, \textit{Lecture
 Notes in Math.}, Vol.~1883, \href{https://doi.org/10.1007/978-3-540-36716-1_6}{Springer}, Berlin, 2006, 255--330,
 \href{https://arxiv.org/abs/math.QA/0408390}{arXiv:math.QA/0408390}.

\bibitem{univTer}
Terwilliger P., The universal {A}skey--{W}ilson algebra, \href{https://doi.org/10.3842/SIGMA.2011.069}{\textit{SIGMA}}
 \textbf{7} (2011), 069, 24~pages, \href{https://arxiv.org/abs/1104.2813}{arXiv:1104.2813}.

\bibitem{uawe}
Terwilliger P., The universal {A}skey--{W}ilson algebra and the equitable
 presentation of {$U_q(\mathfrak{sl}_2)$}, \href{https://doi.org/10.3842/sigma.2011.099}{\textit{SIGMA}} \textbf{7} (2011),
 099, 26~pages, \href{https://arxiv.org/abs/1107.3544}{arXiv:1107.3544}.

\bibitem{pospart}
Terwilliger P., The {$q$}-{O}nsager algebra and the positive part of
 {$U_q(\widehat{\mathfrak{sl}}_2)$}, \href{https://doi.org/10.1016/j.laa.2017.01.027}{\textit{Linear Algebra Appl.}}
 \textbf{521} (2017), 19--56, \href{https://arxiv.org/abs/1506.08666}{arXiv:1506.08666}.

\bibitem{qOnsUniv}
Terwilliger P., The {$q$}-{O}nsager algebra and the universal {A}skey--{W}ilson
 algebra, \href{https://doi.org/10.3842/SIGMA.2018.044}{\textit{SIGMA}} \textbf{14} (2018), 044, 18~pages,
 \href{https://arxiv.org/abs/1801.06083}{arXiv:1801.06083}.

\bibitem{terLop}
Terwilliger P., Tridiagonal pairs of {$q$}-{R}acah type, the {B}ockting
 operator {$\psi$}, and {$L$}-operators for {$U_q(L(\mathfrak{sl}_2))$},
 \href{https://doi.org/10.26493/1855-3974.1283.456}{\textit{Ars Math. Contemp.}} \textbf{14} (2018), 55--65, \href{https://arxiv.org/abs/1608.07613}{arXiv:1608.07613}.

\bibitem{vidter}
Terwilliger P., Vidunas R., Leonard pairs and the {A}skey--{W}ilson relations,
 \href{https://doi.org/10.1142/S0219498804000940}{\textit{J.~Algebra Appl.}} \textbf{3} (2004), 411--426,
 \href{https://arxiv.org/abs/math.QA/0305356}{arXiv:math.QA/0305356}.

\bibitem{vidunas}
Vidunas R., Simultaneously lowering operators, \textit{RIMS K\=oky\=uroku}
 \textbf{1593} (2008), 78--86.

\bibitem{zhedhidden}
Zhedanov A.S., ``{H}idden symmetry'' of {A}skey--{W}ilson polynomials,
 \href{https://doi.org/10.1007/BF01015906}{\textit{Theoret. and Math. Phys.}} \textbf{89} (1991), 1146--1157.

\end{thebibliography}
\end{document}